\newcommand{\la}{\langle}
\newcommand{\ra}{\rangle}
\newcommand{\leqs}{\leqslant}
\newcommand{\geqs}{\geqslant}
\newcommand{\vs}{\vspace{2mm}}
\newcommand{\Aut}{\operatorname{Aut}}
\newcommand{\Un}{{\rm U}}
\newcommand{\Li}{{\rm L}}
\newcommand{\imod}[1]{\allowbreak\mkern4mu({\operator@font mod}\,\,#1)}
\newtheorem{theorem}{Theorem} 
\newtheorem*{conj*}{Conjecture}
\newtheorem*{thrm*}{Theorem}
\newtheorem*{nota}{Notation}
\newtheorem{thm}{Theorem}[section] 
\newtheorem{prop}[thm]{Proposition} 
\newtheorem{lem}[thm]{Lemma}
\newtheorem{cor}[thm]{Corollary}
\theoremstyle{definition}
\newtheorem{remark}{Remark}
\newtheorem{rmk}[thm]{Remark}
\newtheorem{defn}[thm]{Definition}
\newtheorem*{example*}{Example}
\begin{document}

\author{Emily V. Hall}
\address{E.V. Hall, School of Mathematics, University of Bristol, Bristol BS8 1UG, UK}
\email{ky19128@bristol.ac.uk}
\makeatletter
\@namedef{subjclassname@2020}{\textup{2020} Mathematics Subject Classification}
\makeatother

\subjclass[2020]{20B15, 20E32, 20E28}

\begin{abstract}
Let $G$ be a transitive permutation group acting on a finite set $\Omega$ with $|\Omega|\geqs 2$. An element of $G$ is said to be a derangement if it has no fixed points on $\Omega$, and by a theorem of Jordan from 1872, $G$ contains such an element. In particular, by a theorem of Fein, Kantor and Schacher, $G$ contains a derangement of prime power order. Nevertheless there exist groups in which there are no derangements of prime order, these groups are called elusive groups. Defining a natural extension of this we say $G$ is almost elusive if it contains a unique conjugacy class of derangements of prime order. In recent work with Burness, we reduced the problem of determining the almost elusive quasiprimitive groups to the almost simple and 2-transitive affine cases. Additionally we classified the primitive almost elusive almost simple groups with socle an alternating group, a sporadic group or a group of Lie type with (twisted) Lie rank equal to 1. In this paper we complete the classification of the primitive almost elusive almost simple classical groups.
\end{abstract}

\title[]{Almost elusive classical groups} 
\maketitle
\section{Introduction}

Let $G\leqs{\rm Sym}(\Omega)$ be a transitive permutation group on a finite set $\Omega$ with $|\Omega|\geqslant 2$ and point stabiliser $H$. An element $x\in G$ is said to be a \emph{derangement} if it has no fixed points on $\Omega$, or equivalently if $x^G\cap H=\emptyset$ where $x^G$ denotes the conjugacy class of $x$ in $G$.
A classical theorem of Jordan \cite{Jordan} from 1872 guarantees the existence of derangements in $G$. This result has some interesting applications in number theory and topology, as discussed by Serre in \cite{serre}, and has led to various extensions in recent years. 
For example, in the 1980s Fein, Kantor and Schacher proved in \cite{FKS}, that derangements of prime power order always exist in finite transitive permutation groups. 
It is interesting to note that the proof given in \cite{FKS} requires the classification of finite simple groups, which is in clear contrast to the basic group theoretic concepts, such as the orbit-counting lemma, required to prove the existence of derangements.
 
Although the existence of prime power order derangements is guaranteed within transitive permutation groups, the existence of prime order derangements is not. 
We follow \cite{CGJKKMN} and call a transitive permutation group \emph{elusive} if it does not contain a derangement of prime order. These elusive permutation groups have been widely studied in recent years and have been the subject of many papers (for example \cite{AG,G,GK,GMPV,Xu}), although a complete classification of the transitive elusive groups is still to be completed. A large class of elusive groups was classified by Giudici in \cite{G}. Here it is shown that if $G$ is an elusive group with at least one transitive minimal normal subgroup, then there exists a positive integer $k$ such that $G={\rm M}_{11}\wr K$ acting with its product action on $\Delta^k$, where $|\Delta|=12$ and $K\leqs S_k$ is transitive. In particular, Giudici's theorem in \cite{G} shows that the only elusive almost simple primitive group is ${\rm M}_{11}$ with its action on 12 points (recall $G$ is almost simple if the socle $G_0$ of $G$ is a nonabelian finite simple group, giving $G_0\leqs G\leqs \Aut(G_0)$. Additionally a primitive group is a group in which all point stabilisers are maximal subgroups). 

In a different direction, one can consider the number of conjugacy classes of derangements. Since the number of fixed points of an element is unaffected by conjugation, the set of derangements in $G$ may be written as a union of conjugacy classes. The primitive permutation groups with one conjugacy class of derangements were classified by Burness and Tong-Viet in \cite{BTV1}. Here they show that $G$ has a unique conjugacy class of derangements if and only if $G$ is sharply 2-transitive or $(G,H)=(A_5,D_{10})$ or $(\Li_2(8){:}3,D_{18}{:}3)$. In later work of Guralnick \cite{Gur} it is shown that the same conclusion holds for all transitive groups.

In \cite{BHall} a natural extension of elusivity, combining conjugacy and derangements of prime order was introduced. We say a transitive permutation group acting on a set $\Omega$ is \emph{almost elusive} if there exists exactly one conjugacy class of derangements of prime order. We often say that the pair $(G,H)$ is almost elusive if $G$ is almost elusive with its action on the set of right cosets of $H$. A version of the O'Nan-Scott theorem for quasiprimitive groups, established by Praeger \cite{P93}, is used in \cite{BHall} to essentially reduce the problem of determining the almost elusive quasiprimitive groups to the almost simple and 2-transitive affine cases (recall that a finite permutation group is \emph{quasiprimitive} if every nontrivial normal subgroup is transitive). The classification of the almost elusive quasiprimitive groups was initiated in \cite{BHall}, in which the following result was shown;
\begin{thrm*}
Let $G\leqs{\rm Sym}(\Omega)$ be a primitive almost elusive almost simple permutation group with point stabiliser $H$. Additionally let the socle of $G$ be an alternating, a sporadic or a group of Lie type with (twisted) Lie rank equal to 1. Then $(G,H)$ is known. 
\end{thrm*}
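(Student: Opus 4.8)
\medskip
\noindent\emph{Proof strategy.}
The argument rests on one elementary observation together with the known classification of maximal subgroups in each family. If $r$ is a prime with $r\mid|G|$ but $r\nmid|H|$, then \emph{every} element of order $r$ in $G$ is a derangement, since no $G$-conjugate of such an element can lie in $H$. Hence at most one prime can divide $|G|$ without dividing $|H|$: two such primes $r\neq s$ would yield derangements of order $r$ and of order $s$, hence at least two classes of prime-order derangements, contradicting almost elusivity. This splits the problem into two regimes. In the first, $|H|$ and $|G|$ have the same set of prime divisors, and then every prime-order derangement comes from a $G$-class of prime order disjoint from $H$, the total number of such classes being required to equal one. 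In the second, there is a unique prime $r$ with $r\nmid|H|$; since all elements of order $r$ are then derangements and there is only one class of prime-order derangements, it follows that $G$ has a single conjugacy class of elements of order $r$ and no prime-order derangement at any other prime. In both regimes the outcome is governed by the factorisation of $|G:H|$ and by a comparison of the prime-order conjugacy class data of $G$ and of $H$, which I would analyse using fixed-point-ratio bounds, class fusion, and Zsigmondy's theorem on primitive prime divisors of $q^{i}-1$.

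For socle a sporadic group this reduces to a finite verification: the candidate maximal subgroups $H$ are listed in the \textsc{Atlas}, supplemented by the work of Wilson and collaborators on the larger sporadic groups, and comparing the prime divisors of $|H|$ with those of $|G|$ eliminates almost all of them at once; for the few survivors one checks the relevant prime-order class fusion directly, using \textsc{Magma} or \textsc{GAP} where explicit matrices or character tables are needed. For socle $A_{n}$ one invokes the O'Nan--Scott description of the maximal subgroups of $S_{n}$ and $A_{n}$: the intransitive subgroups $(S_{k}\times S_{n-k})\cap G$, the imprimitive subgroups $(S_{a}\wr S_{b})\cap G$, and the primitive maximal subgroups. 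For the last, the Praeger--Saxl and Mar\'oti order bounds (with a separate estimate in the product-action and diagonal cases) force $\pi(|H|)$ to be far smaller than $\pi(|G|)$ unless $n$ is small, so the ``at most one missing prime'' criterion leaves only a short list. For the intransitive and imprimitive families one instead analyses cycle types: a permutation of prime order $r$ is a derangement on $k$-subsets, respectively on the set of blocks of a uniform partition, precisely according to congruence conditions relating $k$ (or the block data) to the numbers of fixed points and $r$-cycles, and this yields a clean set of solutions.

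The technical core, and the step I expect to be the main obstacle, is the rank~$1$ Lie type case, where $G_{0}\in\{{\rm L}_{2}(q),\,{\rm U}_{3}(q),\,{}^{2}B_{2}(q),\,{}^{2}G_{2}(q)\}$ and one must handle genuinely infinite families. The maximal subgroups are known in each case (Dickson's theorem for ${\rm L}_{2}(q)$, and analogous lists for ${\rm U}_{3}(q)$, the Suzuki groups and the small Ree groups): in each family the list consists of a Borel subgroup, a bounded number of further subgroups of ``geometric'' type (torus normalisers, subfield subgroups ${\rm L}_{2}(q_{0})$, and the like), and finitely many ``exceptional'' subgroups such as $A_{4}$, $S_{4}$, $A_{5}$ or ${\rm L}_{2}(7)$. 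For each family one writes down $|G:H|$ and applies Zsigmondy's theorem to produce a primitive prime divisor of some $q^{i}-1$ that cannot divide $|H|$; this forces a unique missing prime and pins $q$ down to finitely many values or to explicit congruence classes. One is then left, for each surviving candidate, with deciding whether the single prime-order class disjoint from $H$ is genuinely unique, which requires knowing, family by family, exactly which unipotent and semisimple classes of prime order occur in $G$ and how they fuse into $H$, together with the way the diagonal, field and graph automorphisms permute these classes, since $G$ may properly contain $G_{0}$. Carrying this through for all four families and assembling the output with the sporadic and alternating contributions produces the explicit list of pairs $(G,H)$ asserted by the theorem.
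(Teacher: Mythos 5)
Your proposal is correct and takes essentially the same route as the paper and its predecessor \cite{BHall}: the key reduction is the observation that at most one prime may divide the order of the group but not that of the stabiliser (the paper states this for $G_0$ and $H_0=H\cap G_0$, as in Proposition \ref{p:aereduc}, which is the marginally sharper form), followed by a Zsigmondy/primitive-prime-divisor analysis of the known maximal subgroup lists and a count of prime-order conjugacy classes, taking into account the action of diagonal, field and graph automorphisms. No substantive differences to report.
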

For example if $n=p^a$ for some prime $p$, then $S_n$ with its natural action on $n$-points is almost elusive. This example is easy to see since any derangement of prime order must be the product of $n/p$ disjoint $p$-cycles, and these elements form a unique conjugacy class. We remark that in particular, there are infinitely many primitive almost simple almost elusive groups, which is in clear contrast to the elusive case. To complete the classification of primitive almost elusive groups it remains to handle the almost simple groups of Lie type with (twisted) Lie rank at least 2 and the affine groups. Our goal in this paper is to complete the classification for classical groups over $\mathbb{F}_q$, where $q=p^f$ with $p$ prime. 

Let $G\leqs {\rm Sym}(\Omega)$ be a primitive almost simple classical group over $\mathbb{F}_q$, with socle $G_0$ and point stabiliser $H$. We let $V$ denote the natural module of $G_0$ and write $\dim V=n$. A theorem by Aschbacher \cite{asch} regarding the subgroup structure of classical groups roughly states that $H$ is contained in one of nine subgroup collections. These collections consist of eight geometric collections (denoted as $\mathcal{C}_1,\dots,\mathcal{C}_8$): which include stabilisers of appropriate subspaces, direct sum and tensor product decompositions of $V$, and the non-geometric collection (denoted $\mathcal{S}$) containing the almost simple subgroups acting irreducibly on $V$.  We will often refer to the \emph{type} of $H$, for the geometric subgroups this gives an approximate description of the structure of $H\cap{\rm PGL}(V)$ (our usage is consistent with \cite[p.58]{KL}), and for non-geometric subgroups the \textit{type} of $H$ denotes the socle of $H$. For example, if $G_0=\Un_n(q)$ with $H$ of type ${\rm GU}_1(q)\perp {\rm GU}_{n-1}(q)$, then $H$ is the stabiliser of a 1-dimensional non-degenerate subspace of $V$. Note we adopt the standard notation and use $P_i$ to denote a maximal parabolic subgroup, this is the stabiliser in $G$ of an $i$-dimensional totally singular subspace of $V$.
We extensively use books by Kleidman and Liebeck \cite{KL}, and Bray, Holt and Roney-Dougal \cite{BHR}, for information on the structure and order of the point stabilisers.

Define $\mathcal{G}$ to be the set of finite simple classical groups over $\mathbb{F}_q$ of twisted Lie rank at least 2. 
We exclude from $\mathcal{G}$ any groups which are isomorphic to a finite simple group that is not a classical group of twisted Lie rank at least 2 (for example $\Li_4(2)\cong A_8$). Additionally, we exclude any duplicates (via isomorphism) of groups with twisted Lie rank at least 2 (for example ${\rm PSp}_4(3)\cong \Un_4(2)$, so we only include $\Un_4(2)$ in $\mathcal{G}$). Thus
\begin{equation}
\mathcal{G}=\{\Li_n(q)\mid n\geqs 3\}\cup\{\Un_n(q), {\rm PSp}_n(q)\mid n\geqs 4\}\cup\{{\rm P}\Omega^{\epsilon}_n(q)\mid n\geqs 7\}\setminus \mathcal{F}.
\end{equation}
where $\mathcal{F}=\{\Li_3(2),\Li_4(2),{\rm PSp}_4(2)^{'},{\rm PSp}_4(3)\}$. See \cite[Proposition 2.9.1]{KL} for a full list of isomorphisms between small dimensional classical groups.
\vspace{1.25cm}

\begin{theorem}\label{t:mainthrm}
Let $G\leqs{\rm Sym}(\Omega)$ be a finite almost simple primitive permutation group with classical socle $G_0\in\mathcal{G}$ and point stabiliser $H$. Then $G$ is almost elusive only if one of the following holds:
\begin{itemize}
\item[{\rm (i)}] $G_0= \Un_n(q)$, $H$ is of type ${\rm GU}_1(q)\perp {\rm GU}_{n-1}(q)$, $q$ is even and $n\geqs 5$ is a prime divisor of $q+1$. 
\item[{\rm (ii)}] $(G,H)$ is one of the cases recorded in Table \ref{tab:maintab}.
\end{itemize}
Moreover, each group appearing in Table \ref{tab:maintab} is almost elusive.
\end{theorem}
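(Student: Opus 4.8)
Since $G$ is primitive, $H$ is maximal in $G$, so by Aschbacher's theorem \cite{asch} the point stabiliser lies in one of the nine subgroup collections $\C_1,\dots,\C_8,\ms$; the plan is to work through these collections and, for each resulting pair $(G,H)$, decide whether $G$ can be almost elusive. Two elementary observations drive the analysis. First, if $r$ is a prime dividing $|G|$ but not $|H|$, then every element of $G$ of order $r$ is a derangement; hence there is at most one such prime $r$, and when one exists $G$ must possess a single conjugacy class of elements of order $r$. Second, a prime $r$ dividing both $|G|$ and $|H|$ may still contribute several $G$-classes of derangements of order $r$ --- for instance semisimple elements of a fixed order realised with different eigenvalue multiplicities, or, when $r=p$, unipotent elements of order $p$ --- so to rule $(G,H)$ out it suffices to exhibit two such classes. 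To make the second point effective one combines the conjugacy class data for classical groups recorded in \cite{BHR} with fixed point ratio estimates, which determine whether a prescribed class meets $H$. The final (``moreover'') assertion is handled separately: the sporadic entries of Table~\ref{tab:maintab} by direct computation in \textsc{Magma}, and the infinite unitary family of part (i) by an explicit determination of which prime-order classes meet $H$.

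For the collections $\C_2,\dots,\C_8$ and the non-geometric collection $\ms$, the point stabiliser satisfies a strong order bound --- Liebeck's bound on $|H|$ for $H\in\ms$, and the explicit estimates of \cite{KL} for the geometric classes --- whereas $|G|$ is divisible by primitive prime divisors of $q^d-1$ for every $d$ in a large range, by Zsigmondy's theorem. For all but finitely many pairs $(n,q)$ one then finds either two distinct primitive prime divisors not dividing $|H|$, or a single primitive prime divisor $r$ of some $q^d-1$ together with at least two $G$-classes of elements of order $r$, none meeting $H$; in either case $G$ is not almost elusive. The finitely many residual pairs are dispatched by computer, and they account for the corresponding entries of Table~\ref{tab:maintab}.

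The substantial case is $\C_1$, the stabilisers of subspaces, where $|H|$ is comparatively large and shares most of its prime divisors with $|G|$, so the counting above is too weak and one must analyse the fusion of semisimple and unipotent classes in $H$. We split the argument according to the type of subspace stabilised --- totally singular (the parabolic subgroups $P_m$) or non-degenerate of dimension $m$ --- and according to $(G_0,m)$. Apart from one infinite family and finitely many sporadic exceptions (the latter recorded in Table~\ref{tab:maintab}), every such $(G,H)$ is shown not to be almost elusive: typically $|\O|$ is divisible by a primitive prime divisor $r$ of some $q^d-1$, with $d$ chosen so that every element of $G$ of order $r$ is a derangement, and one then produces a second derangement class --- a suitable unipotent class when $p\mid|\O|$, or a second semisimple class. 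The infinite surviving family is $G_0=\Un_n(q)$ with $H$ of type ${\rm GU}_1(q)\perp {\rm GU}_{n-1}(q)$, the stabiliser of a non-degenerate $1$-space, for which $|\O|=q^{n-1}(q^n-(-1)^n)/(q+1)$. Here one shows that the unique class of prime-order derangements consists of semisimple elements of order a primitive prime divisor of $q^{2n}-1$, acting irreducibly on $V$, and that the conditions $q$ even, $n\geqs 5$, $n$ prime and $n\mid q+1$ are precisely what guarantee both that this is the only derangement class and that it is a single class.

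The main obstacle is exactly this $\C_1$ analysis, and within it the parabolic subgroups: since $|P_m|$ is large, fixed point ratios are large and most prime-order classes fail to be derangement classes, so ruling out almost elusivity --- or, in the unitary family, confirming it --- requires a careful class-by-class fusion computation rather than a soft counting argument, carried out uniformly in terms of the Gaussian-binomial index $|\O|$ and its primitive prime divisors.
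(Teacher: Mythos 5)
Your overall architecture matches the paper's: Aschbacher's theorem to organise the point stabilisers, the observation that at most one prime may divide $|G_0|$ but not $|H_0|$, primitive prime divisors of $q^d-1$ to manufacture derangements, \textsc{Magma} for the residual small cases, and the isolation of the type ${\rm GU}_1(q)\perp{\rm GU}_{n-1}(q)$ unitary family as the one surviving infinite case. However, two of the specific mechanisms you invoke would not deliver the conclusions you draw from them. For $H\in\mathcal{S}$ you argue from ``Liebeck's bound on $|H|$'' that for all but finitely many $(n,q)$ two primitive prime divisors of $|G_0|$ fail to divide $|H|$. An order bound cannot control \emph{which} primes divide $|H|$: an almost simple irreducible subgroup with socle $\Li_2(s)$ or $\Li_d(s)$ can be very small and yet have order divisible by a primitive prime divisor of $q^{n-1}-1$ or $q^n-1$ (these are exactly the embeddings catalogued in \cite{GPPS}). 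The paper's proof of this step rests on the Guralnick--Penttila--Praeger--Saxl classification of subgroups of ${\rm GL}_n(q)$ whose order is divisible by a primitive prime divisor of $q^i-1$ with $n/2<i\leqs n$ (Proposition \ref{p:gpps} and Table \ref{tab:GPPStab}), together with the explicit tables of \cite{BHR} for $n\leqs 12$; some structural input of this kind is unavoidable here.

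Second, you propose to decide whether a prescribed class meets $H$ by ``fixed point ratio estimates''. An estimate bounds $|x^G\cap H|/|x^G|$ from above but can never certify that this quantity is zero, which is what a derangement argument requires; the paper instead uses exact fusion information, namely the parametrisation of semisimple classes by multisets of eigenvalues over $\mathbb{F}_{q^i}$ (so that an element $[\Lambda^2]Z$ with trivial $1$-eigenspace visibly fixes no $1$-space) and the parity constraints on Jordan block multiplicities for unipotent classes in symplectic and orthogonal groups (Lemma \ref{l:osjordan}). Relatedly, your claim that the conditions in part (i) ``guarantee'' a unique derangement class overshoots: the paper shows these conditions are necessary but far from sufficient, since one additionally needs $2nf+1$ to be the \emph{unique} primitive prime divisor of $q^{2n}-1$ and $G/G_0$ to project onto the full group of field automorphisms (Proposition \ref{p:uniprop}, Corollary \ref{c:unitary case cor}, Remark \ref{r:rmk1}). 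Because the theorem is an ``only if'' statement this last point does not falsify your claimed conclusion, but the careful uniqueness analysis — including the number-theoretic lemmas bounding unique primitive prime divisors (Lemmas \ref{l:ppds4}, \ref{l:ppd2a3}, \ref{l:lic1prime}, \ref{l:uniq ppd}) and the count of $G$-classes under field automorphisms (Lemma \ref{l:phiuni}) — is the technical heart of the paper's treatment of case (i) and is absent from your sketch.
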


\begin{table}
\[
\begin{array}{lcllc} \hline
G_0 & &\mbox{Type of } H &G &  r \\ \hline\rule{0pt}{2.5ex} 
\Li_3(4) &\mathcal{C}_1 &   {\rm GL}_1(4)\oplus{\rm GL}_2(4)&\Li_3(4).2_1,\, \Li_3(4).2_3,\,\Li_3(4).2^2& 7\\
            &\mathcal{C}_5 &   {\rm GL}_3(2) &\Li_3(4).2_1,\, \Li_3(4).2_2,\,\Li_3(4).2^2&   5\\
            &\mathcal{S} &  A_6 &   \Li_3(4).2_3 &  7\\

\Un_6(2) &\mathcal{C}_5 &  {\rm Sp}_6(2) &\Un_6(2).2  & 11\\

             &\mathcal{S} & \Un_4(3) & \Un_6(2).2  & 11\\

\Un_5(2) &\mathcal{C}_1 &  {\rm GU}_1(2)\perp {\rm GU}_4(2) &\Un_5(2).2 &  11 \\
             &\mathcal{C}_2 & {\rm GU}_1(2)\wr S_5 & \Un_5(2).2& 11 \\

\Un_4(3) & \mathcal{C}_1 & P_2 & \Un_4(3).2_2 &  7 \\

\Un_4(2) &\mathcal{C}_1 &   P_1 & &  5\\
             &\mathcal{C}_2 & {\rm GU}_1(2)\wr S_4 & & 5\\
             &\mathcal{C}_5 & {\rm Sp}_4(2) & \Un_4(2).2 & 3 \\
{\rm PSp}_6(2) &\mathcal{C}_1 &  {\rm Sp}_2(2)\perp{\rm Sp}_4(2) &  & 7\\
                       &\mathcal{C}_8 & {\rm O}^{+}_6(2) &    &  3 \\
                       &\mathcal{C}_8 &  {\rm O}^{-}_6(2) &  &  7 \\
                
{\rm PSp}_4(7) &\mathcal{C}_1 & P_2 & & 5\\
\hline
\end{array}
\]
\caption{Primitive almost elusive groups with socle $G_0 \in \mathcal{G}$}
\label{tab:maintab}
\end{table}

In order to  make the following remark and state our next theorem (Theorem \ref{t:primedivs}), we introduce the notion of a \emph{primitive prime divisor}. We say that a prime divisor of $q^n-1$ is a primitive prime divisor if and only if it does not divide $q^i-1$ for all $0\leqs i<n$.
\begin{remark}\label{r:rmk1}
We anticipate that there are no almost elusive examples as in case (i) of Theorem \ref{t:mainthrm}. In later results (see Proposition \ref{p:uniprop}) we prove, if an almost elusive example appears in case (i) we additionally require that $q=2^f$ and $2nf+1$ is the unique primitive prime divisor of $q^{2n}-1$. In Section \ref{s:numthry} (in particular in Remark \ref{r:n=2aj}), we discuss how finding values of $f$ and $n$ for which these conditions are satisfied comes down to being able to solve a particular Diophantine equation, which currently does not have a complete set of integer solutions. In addition, we can use Lemma \ref{l:prime fac of f} to deduce that $f=2^an^b$ for some integers $a,b>0$, and with the aid of a computer we then deduce that $f,n>100$ for an almost elusive group to appear in case (i). 
\end{remark}

\begin{remark}\label{r:tabrmk}
Here we provide some useful remarks regarding Table \ref{tab:maintab}
\begin{itemize}\addtolength{\itemsep}{0.2\baselineskip}
\item[{\rm (a)}] The fourth column in Table \ref{tab:maintab}, labeled $G$, lists the groups $G$ for which $(G,H)$ is almost elusive. If there is no entry in this column, it implies that $(G,H)$ is almost elusive for all $G$ with socle $G_0$. The fifth column in Table \ref{tab:maintab} is labeled $r$, this denotes the order of the elements in the unique conjugacy of derangements of prime order. 

\item[{\rm (b)}] For the cases with $G_0=\Li_3(4)$ in Table \ref{tab:maintab} the recorded groups $G$ in the fourth column are defined using Atlas notation \cite{WebAt}. That is $\Li_3(4).2_1=\Li_3(4).\langle \iota\phi \rangle $, $\Li_3(4).2_2=\Li_3(4).\langle \phi \rangle$, $\Li_3(4).2_3=\Li_3(4).\langle \iota\rangle$ and $\Li_3(4).2^2=\Li_3(4).\langle \iota,\phi \rangle$, where $\iota$ denotes the inverse-transpose graph automorphism and $\phi$ denotes a field automorphism of order $2$. Similarly for the case with $G_0=\Un_4(3)$ in Table \ref{tab:maintab}. Here $\Un_4(3).2_2=\Un_4(3).\langle \gamma \rangle $, where $\gamma$ is an involutary graph automorphism and the centraliser of $\gamma$ in $\Un_4(3)$ is ${\rm PSp}_4(3)$.
\end{itemize}
\end{remark}

We recall that $x\in G$ is a derangement if and only if $x^G\cap H=\emptyset$. Since $G_0$ is a normal subgroup of $G$, it is easy to see that $G$ is almost elusive only if $\pi(G_0)-\pi(H_0)\leqs 1$, where $H_0:=H\cap G_0$ and $\pi(X)$ denotes the number of distinct prime divisors of $|X|$. The subgroups $M$ of a simple group $G_0$ with $\pi(G_0)=\pi(M)$ are described by Liebeck, Praeger and Saxl in \cite[Corollary 5]{LPS}. Here we extend this result for maximal subgroups of classical groups. Define $\widetilde{\mathcal{G}}$ to be the set of all finite simple classical groups over $\mathbb{F}_q$, not including duplicates obtained via isomorphisms (for example $\Li_2(7)\cong \Li_3(2)$ so we include $\Li_2(7)$ only). For the following theorem see Section \ref{s:primedivs} for the tables and Remark \ref{r:pi tables} for more details.

\begin{theorem} \label{t:primedivs}
Let $G\leqs {\rm Sym}(\Omega)$ be a finite almost simple primitive permutation group with point stabiliser $H$ and socle $G_0\in \widetilde{\mathcal{G}}$. Then $\pi(G_0)\leqs \pi(H_0)+1$ if and only if one of the following holds;
\begin{itemize}
\item[{\rm (i)}] $\pi(G_0)=\pi(H_0)$ and $(G_0,H)$ is one of the cases recorded in Table \ref{tab:pi0}; or
\item[{\rm (ii)}] $\pi(G_0)=\pi(H_0)+1$ and either $(G_0,H)$ is found in Table \ref{tab:pi2}, or $(G_0,H,i)$ is one of the cases recorded in Table \ref{tab:pi1} and there exists a unique primitive prime divisor of $q^i-1$.
\end{itemize}
\end{theorem}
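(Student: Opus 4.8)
The plan is to proceed by a case analysis on the Aschbacher class of the maximal subgroup $H$, in parallel with the proof strategy of \cite[Corollary 5]{LPS} but keeping track of the slightly weaker hypothesis $\pi(G_0)\leqs\pi(H_0)+1$ rather than $\pi(G_0)=\pi(H_0)$. First I would reduce to the case where $H$ is a maximal subgroup of $G_0$ itself: if $H$ is a maximal subgroup of $G$ with $\pi(G_0)\leqs\pi(H_0)+1$, then $H_0=H\cap G_0$ is a proper subgroup of $G_0$ of index $|G:H|$, and since the prime divisors of $|G_0|$ and $|\mathrm{Aut}(G_0)|$ differ only by the divisors of $|\mathrm{Out}(G_0)|$ (which lie in $\{2,3\}$ together with $p$ and small divisors of $f$), the condition transfers to a bounded list of possible maximal overgroups of $H_0$ in $G_0$. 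This lets me work with the classification of maximal subgroups of $G_0$ via Aschbacher's theorem and the tables of \cite{KL,BHR}.

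Next, for each geometric class $\mathcal{C}_1,\dots,\mathcal{C}_8$ and the class $\mathcal{S}$, I would write down the order of a typical member $H_0$ in factored form and compare $\pi(|G_0|)$ with $\pi(|H_0|)$. The key arithmetic input here is Zsygmondy's theorem: $|G_0|$ is (up to the characteristic $p$ and a bounded set of exceptions) divisible by a primitive prime divisor of $q^i-1$ for each relevant $i\leqs n$ (or $2n$ in the unitary case), and a parabolic or reducible-type subgroup will typically miss the primitive prime divisors corresponding to the ``large'' cyclotomic factors. So in most classes the deficiency $\pi(G_0)-\pi(H_0)$ grows with $n$ (or with $q$), forcing $n$ and $q$ to be small; the surviving infinite families are exactly those where $H_0$ already contains representatives of almost all the cyclotomic factors — e.g. $\mathcal{C}_1$ subgroups of type $P_1$ or $\mathrm{GU}_1\perp\mathrm{GU}_{n-1}$, certain $\mathcal{C}_2$ imprimitive subgroups, the subfield subgroups in $\mathcal{C}_5$, and $\mathcal{C}_8$ subgroups — and these give the entries of Tables \ref{tab:pi1} and \ref{tab:pi2}. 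The dividing line between ``$\pi(G_0)=\pi(H_0)$'' and ``$\pi(G_0)=\pi(H_0)+1$'' is controlled precisely by whether exactly one primitive prime divisor of some $q^i-1$ is omitted by $H_0$, which is why Table \ref{tab:pi1} is indexed by the triple $(G_0,H,i)$ together with the uniqueness condition on the primitive prime divisor of $q^i-1$.

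Finally, for the bounded residual cases — those with $n$ and $q$ both small after the Zsygmondy reductions, together with the sporadic $\mathcal{S}$-subgroups and the $\mathcal{C}_6$ extraspecial-normaliser subgroups — I would verify the prime-divisor condition directly, using the explicit orders from \cite{BHR} and the \textsc{Atlas} \cite{WebAt}, and a short computer computation where the list is long. Assembling the outcomes of all classes yields Tables \ref{tab:pi0}, \ref{tab:pi1} and \ref{tab:pi2}, and the three cases of the statement. I expect the main obstacle to be the $\mathcal{C}_1$ and $\mathcal{C}_2$ analysis: there the omitted primitive prime divisors depend delicately on the dimension of the stabilised subspace and on divisibility relations among $n$, $q\pm1$ and the block sizes, so pinning down exactly when the deficiency is $0$ versus $1$ versus $\geqs 2$ requires careful bookkeeping with cyclotomic factors (and, in the unitary case, with the behaviour of $q^i-1$ for $i$ of each parity), rather than a single uniform estimate.
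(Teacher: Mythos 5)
Your strategy for the geometric classes $\mathcal{C}_1,\dots,\mathcal{C}_8$ matches the paper's: compare $|G_0|$ with $|H_0|$ using the factored orders from \cite{KL,BHR}, hunt for primitive prime divisors of $q^i-1$ that divide $|G_0|$ but miss $|H_0|$, use Zsigmondy to guarantee their existence, and push the residue down to a finite list checked directly. That part is sound, and your observation that the boundary between deficiency $0$, $1$ and $\geqs 2$ is governed by whether exactly one primitive prime divisor is omitted (hence the uniqueness condition indexing Table \ref{tab:pi1}) is exactly the right picture.

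The genuine gap is in your treatment of the class $\mathcal{S}$. You propose to ``write down the order of a typical member'' and then fold the $\mathcal{S}$-subgroups into the ``bounded residual cases'' to be checked against \cite{BHR} and the {\sc Atlas}. But \cite{BHR} only classifies maximal subgroups for $n\leqs 12$, and for $n\geqs 13$ there is \emph{no} complete list of the $\mathcal{S}$-subgroups of a classical group: this class is not bounded and cannot be enumerated case by case, so a direct order comparison is unavailable. The paper closes this hole with the theorem of Guralnick, Penttila, Praeger and Saxl \cite{GPPS}, which classifies the subgroups of ${\rm GL}_n(q)$ whose order is divisible by a primitive prime divisor of $q^i-1$ with $n/2<i\leqs n$; combined with the elementary fact (Lemma \ref{l:r>n+2}) that for $n\geqs 7$ the group $|G_0|$ is, with finitely many exceptions, divisible by two distinct primes exceeding $n+2$, this forces $\pi(G_0)-\pi(H_0)\geqs 2$ for every unlisted $\mathcal{S}$-subgroup in dimension $n\geqs 13$ (the alternating socles $A_{n+1},A_{n+2}$ being excluded separately since all their prime divisors are at most $n+2$). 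Without \cite{GPPS} or an equivalent input bounding which irreducible subgroups can contain large primitive prime divisors, your argument cannot be completed for $H\in\mathcal{S}$ and $n\geqs 13$. A secondary, minor point: your opening reduction to ``maximal overgroups of $H_0$ in $G_0$'' is not needed (the statement concerns only $\pi(G_0)$ and $\pi(H_0)$, both intrinsic), and it glosses over the novelty subgroups in $\mathcal{N}$, where $H_0$ is not maximal in $G_0$; the paper simply treats $\mathcal{N}$ as a further collection alongside $\mathcal{C}$.
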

In Section 4 we provide a proof of Theorem \ref{t:primedivs}. This essentially provides us with a reduction for the cases to be handled in the proof of Theorem \ref{t:mainthrm}, to the cases recorded in Tables \ref{tab:pi0}, \ref{tab:pi2} and \ref{tab:pi1}. We organise the proof in Section 4 via Aschbacher's subgroup structure theorem (discussed in Section \ref{s:subgrp struc}). For the geometric subgroups we approach the proof by a direct comparison of $|G_0|$ and $|H_0|$, which are both presented in \cite{KL}. Our main approach here is to search for primitive prime divisors of integers of the form $q^i-1$ which divide $|G_0|$ but not $|H_0|$. In some cases the size and uniqueness of these primitive prime divisors is required to complete the proof. For this we rely on some new number theoretic results in Section 2 on primitive prime divisors (see Lemma \ref{l:ppd2a3} for example), which we hope may also have independent applications in other areas of research. The non-geometric subgroups are slightly harder to handle since it is not possible to list all such subgroups for a classical group in general. In this case for $n\geqs 13$ we use a result of Guralnick et al. in \cite{GPPS}, which allows us to easily find distinct primitive prime divisors that divide $|G_0|$ but not $|H_0|$ in the majority of cases. When $n\leqs 12$ we use similar techniques to the geometric subgroups, using the tables in \cite{BHR} to read off $|H_0|$.

We complete the proof of Theorem \ref{t:mainthrm} in the final section, Section 5. Some small groups that arise can be handled directly with the aid of {\sc Magma} \cite{Mag} (see Proposition \ref{p:smalldim}). 
In the general setting we aim to construct derangements of distinct prime order (for cases in Table \ref{tab:pi1}, only one other prime order derangement needs to be found). Additionally, we often use results on conjugacy classes (see Section 3.2) combined with results on primitive prime divisors (see Section 2) to show the number of $G$-classes of derangements of a particular prime order. The subspace subgroups prove to be the trickiest cases to handle and in some instances require special attention. In particular, the cases in which $H$ is the stabiliser of a non-degenerate 1-space prove to be the hardest to handle (this is not surprising since these are relatively large subgroups). In fact it is precisely a subgroup of this type for $G_0=\Un_n(q)$ that leads to the special case outlined in part (i) of Theorem \ref{t:mainthrm} (see Proposition \ref{p:uniprop} for more details).

We will complete the classification of the primitive almost elusive permutation groups in a sequel, where we will handle the affine groups and the remaining exceptional groups of Lie type. 

\vs

\noindent \textbf{Notation.} Our group theoretic notation is fairly standard. Let $A$ and $B$ be groups and $n$ a positive integer. We write $C_n$ or $n$ to denote a cyclic group of order $n$ and $[n]$ to denote an unspecified soluble group of order $n$. An unspecified extension of $A$ by $B$ will be denoted as $A.B$, and we use $A{:}B$ if the extension splits. For positive integers $a$ and $b$ we use $(a,b)$ to denote the greatest common divisor of $a$ and $b$. Additionally we use $(a)_b$ to denote the largest power of $b$ dividing $a$. For simple groups we adopt the notation of Kleidman and Liebeck \cite{KL}. For instance, 
$$ {\rm PSL}_n(q)=\Li_n(q),\,\,\,{\rm PSU}_n(q)=\Un_n(q). $$
If $G$ is a simple orthogonal group then we write $G={\rm P}\Omega^{\epsilon}_n(q)$ where $\epsilon=\circ$ if $n$ is odd and $\epsilon=-$ (respectively $+$) if $n$ is even and the underlying quadratic form has Witt defect 1 (respectively 0). In the case $n$ is odd we often write $G=\Omega_n(q)$.

\vs

\noindent \textbf{Acknowledgments.}
I would like to thank my Ph.D. supervisor Tim Burness for his guidance and helpful comments. I would also like to acknowledge the financial support of the Heilbronn Institute for Mathematical Research.

\section{Number theoretic preliminaries}\label{s:numthry}
In this section, we present several number-theoretic results that will be needed in the proofs of both Theorem \ref{t:mainthrm} and Theorem \ref{t:primedivs}. Throughout the section we let $n$ be a positive integer and $q=p^f$ such that $p$ is prime and $f\geqs 1$. Our first result is \cite[Lemma 2.6]{BTV}.

\begin{lem}\label{l:btv}
Let $r$ and $s$ be primes and let $v$ and $w$ be positive integers. If $r^v + 1 =s^w$ then one of the following holds:
\begin{itemize}\addtolength{\itemsep}{0.2\baselineskip}
    \item[{\rm (i)}] $(r,s,v,w)=(2,3,3,2)$.
    \item[{\rm (ii)}] $(r,w)=(2,1)$ and $s=2^v+1$ is a Fermat prime. 
    \item[{\rm (iii)}] $(s,v)=(2,1)$ and $r=2^w-1$ is a Mersenne prime. 
\end{itemize}
\end{lem}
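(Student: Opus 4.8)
The plan is to prove this by entirely elementary means, using parity and the standard factorizations of $x^v\pm 1$, and splitting into the cases $r$ odd and $r=2$.

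First suppose $r$ is odd. Then $s^w=r^v+1$ is even, so $s=2$ and $r^v=2^w-1$. I would first rule out $v$ even: if $w=1$ then $r^v=1$, which is impossible, while if $w\geqs 2$ then $r^v=2^w-1\equiv -1\pmod 4$, whereas an even power of an odd number is $\equiv 1\pmod 4$; hence $v$ is odd. With $v$ odd we may write
$$2^w=r^v+1=(r+1)\bigl(r^{v-1}-r^{v-2}+\cdots-r+1\bigr).$$
The cofactor is an alternating sum of $v$ odd terms, hence odd, and if $v\geqs 3$ it exceeds $1$ (as $r^{v-1}-r^{v-2}=r^{v-2}(r-1)>1$); an odd divisor of $2^w$ greater than $1$ is a contradiction. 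Therefore $v=1$ and $r=2^w-1$. Since $r$ is prime, $w$ must be prime as well (otherwise a proper divisor $a\mid w$ gives the proper factor $2^a-1\mid 2^w-1$), so $r$ is a Mersenne prime and we are in case (iii).

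Now suppose $r=2$, so $s$ is odd and $2^v+1=s^w$. If $w=1$ then $s=2^v+1$ is prime; writing $v=2^k m$ with $m$ odd, $2^{2^k}+1$ divides $(2^{2^k})^m+1=2^v+1$, so primality forces $m=1$ and $s$ is a Fermat prime, giving case (ii). If $w\geqs 2$, I would instead use
$$2^v=s^w-1=(s-1)\bigl(s^{w-1}+s^{w-2}+\cdots+s+1\bigr).$$
When $w$ is odd the cofactor is a sum of $w$ odd terms, hence odd and $>1$ (since $w\geqs 3$), contradicting that it divides $2^v$; so $w$ is even, say $w=2k$, and then $2^v=(s^k-1)(s^k+1)$ displays two powers of $2$ differing by $2$. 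The only such pair is $(2,4)$, so $s^k=3$, forcing $s=3$, $k=1$, $w=2$, and $2^v=8$, i.e. $v=3$; this is case (i).

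I do not anticipate a genuine obstacle: the argument is self-contained and short, the only mildly delicate points being the parity bookkeeping for the two cofactors and the elementary fact that two powers of $2$ differing by $2$ must be $2$ and $4$. One could alternatively dispose of the subcase $v,w\geqs 2$ in one stroke by invoking Mih\u{a}ilescu's theorem (formerly Catalan's conjecture), but the direct treatment above avoids heavy machinery and produces the precise list of exceptions.
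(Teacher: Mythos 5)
Your proof is correct and complete. For comparison: the paper does not prove this lemma at all — it is quoted verbatim from Burness and Tong-Viet, \cite[Lemma 2.6]{BTV}, so there is no internal argument to measure yours against. Your elementary derivation is the standard one for this classical fact: parity forces $\{r,s\}$ to contain $2$; in the case $s=2$ the factorisation $r^v+1=(r+1)(r^{v-1}-\cdots+1)$ with $v$ odd kills $v\geqs 3$ because the cofactor is an odd divisor of $2^w$ exceeding $1$; in the case $r=2$, $w\geqs 2$ the factorisation $s^w-1=(s-1)(s^{w-1}+\cdots+1)$ forces $w$ even and then $(s^k-1)(s^k+1)=2^v$ exhibits two powers of $2$ differing by $2$, pinning down the Catalan solution $(2,3,3,2)$. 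All the parity bookkeeping checks out (the one slightly compressed step — that the alternating cofactor exceeds $1$ for $v\geqs 3$ — is immediate since it equals $(r^v+1)/(r+1)$), and the reductions of $2^v+1$ prime to $v$ a power of $2$ and of $2^w-1$ prime to $w$ prime are the standard Fermat/Mersenne observations. This self-contained treatment is a reasonable substitute for the citation, and it avoids any appeal to Mih\u{a}ilescu's theorem.
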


\begin{lem}\label{l:Lemma A.4}
Let $r$ be a prime divisor of $q-\epsilon$ where $\epsilon =\pm 1$. Then 
\begin{equation*}
(q^n-\epsilon)_r=
\begin{cases}
(q-\epsilon)_r(n)_r & n \mbox{ odd, or } r \mbox{ odd and } \epsilon=+ 1\\
(q^2-1)_2(n)_2/2 & n\mbox { even, } r=2, \epsilon =+ 1\\
(r,2) & n \mbox{ even, }\epsilon=- 1
\end{cases}
\end{equation*}
\end{lem}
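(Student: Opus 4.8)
The plan is to prove the three cases of Lemma~\ref{l:Lemma A.4} by a careful analysis of the $r$-adic valuation of $q^n - \epsilon$, using the factorisation $q^n - \epsilon = (q - \epsilon)(q^{n-1} + \epsilon q^{n-2} + \cdots + \epsilon^{n-1})$ when $n$ is odd (or when $\epsilon = +1$ for any $n$), and switching to $q^n - \epsilon = (q-1)(q^{n-1} + \cdots + 1)$ together with $q^n + 1 = (q+1)(q^{n-1} - q^{n-2} + \cdots + 1)$ when $\epsilon = -1$. The basic tool throughout is the Lifting the Exponent lemma: if $r$ is an odd prime dividing $q - \epsilon$, then $(q^n - \epsilon^n)_r = (q - \epsilon)_r (n)_r$, which immediately handles the first case when $n$ is odd, and also when $r$ is odd and $\epsilon = +1$ (here $\epsilon^n = 1 = \epsilon$, so $q^n - \epsilon = q^n - \epsilon^n$).

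First I would dispose of the generic case: assume $r$ is odd and either $n$ is odd or $\epsilon = +1$. Writing $\epsilon = \pm 1$, in both sub-cases $\epsilon^n = \epsilon$, so $q^n - \epsilon = q^n - \epsilon^n$, and Lifting the Exponent gives $(q^n - \epsilon)_r = (q-\epsilon)_r (n)_r$. The case $r = 2$ with $n$ odd still needs $\epsilon^n = \epsilon$ (true since $n$ is odd), but now $q$ is odd (as $2 \mid q - \epsilon$), and $q^n - \epsilon = (q-\epsilon)(q^{n-1} + \epsilon q^{n-2} + \cdots + 1)$; since $n$ is odd the second factor is a sum of $n$ odd terms, hence odd, so $(q^n - \epsilon)_2 = (q-\epsilon)_2 = (q-\epsilon)_2 (n)_2$ because $(n)_2 = 1$. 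This completes the first case.

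Next, the case $n$ even, $r = 2$, $\epsilon = +1$: here $q$ is odd and I would use the telescoping identity $q^n - 1 = (q-1)(q+1)\prod(\text{cyclotomic-type factors})$, or more directly iterate $q^{2m} - 1 = (q^m-1)(q^m+1)$. Write $n = 2^a m$ with $m$ odd; then $q^n - 1 = (q^m - 1)(q^m+1)(q^{2m}+1)\cdots(q^{2^{a-1}m}+1)$. Each factor $q^{2^j m} + 1$ with $j \geq 1$ is $\equiv 2 \pmod 4$ since $q^{2^j m}$ is an odd square, contributing exactly one factor of $2$; the factor $q^m - 1$ contributes $(q-1)_2$ (sum of $m$ odd terms argument as above, or LTE for $p=2$ with odd exponent); the factor $q^m + 1$ contributes $(q+1)_2$. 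Summing: $(q^n-1)_2 = (q-1)_2(q+1)_2 \cdot 2^{a-1} = (q^2-1)_2 (n)_2 / 2$, since $(q^2-1)_2 = (q-1)_2(q+1)_2$ and $(n)_2 = 2^a$. The main obstacle in this paper is keeping the bookkeeping of these $2$-adic valuations straight, particularly the off-by-a-factor-of-$2$ that produces the "$/2$", so I would be careful to treat the edge case $a = 1$ (i.e. $n \equiv 2 \pmod 4$), where there are no "middle" factors $q^{2^j m}+1$ with $j \geq 1$, and check the formula still reads $(q^2-1)_2 \cdot 2 / 2 = (q^2-1)_2$.

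Finally, the case $n$ even, $\epsilon = -1$: I want to show $(q^n + 1)_r = (r,2)$, i.e. $r \nmid q^n + 1$ if $r$ is odd, and $(q^n+1)_2 = 2$. Since $n$ is even, $q^n$ is a perfect square, so $q^n \equiv 1 \pmod{r}$ would be needed for $r \mid q^n - 1$; but $r \mid q + 1$ means $q \equiv -1 \pmod r$, so $q^n \equiv 1 \pmod r$ as $n$ is even, hence $q^n + 1 \equiv 2 \pmod r$, giving $r \nmid q^n+1$ for odd $r$. For $r = 2$: $q$ is odd, $q^n$ is an odd square so $q^n \equiv 1 \pmod 8$, whence $q^n + 1 \equiv 2 \pmod 8$ and $(q^n+1)_2 = 2 = (r,2)$. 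Assembling the three cases gives the stated piecewise formula.
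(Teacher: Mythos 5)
Your proof is correct. Note, however, that the paper does not actually prove this lemma: it simply cites \cite[Lemma A.4]{BG_book}, so there is no argument in the text to compare yours against. Your self-contained derivation is the standard one and all three cases check out: the odd-prime case (and the $n$ odd, $\epsilon^n=\epsilon$ reduction) is exactly Lifting the Exponent; the $r=2$, $n$ odd case is correctly handled separately via the parity of the cofactor $q^{n-1}+\epsilon q^{n-2}+\cdots+\epsilon^{n-1}$ (a sum of $n$ odd terms), which is needed because LTE in its usual form assumes $r$ odd -- your opening sentence slightly overstates what LTE "immediately" gives, but you repair this in the next paragraph; the telescoping $q^{2^am}-1=(q^m-1)(q^m+1)\prod_{j=1}^{a-1}(q^{2^jm}+1)$ with each middle factor congruent to $2$ modulo $4$ correctly produces the $(q^2-1)_2(n)_2/2$ formula, and you rightly check the $a=1$ edge case; and the final case follows from $q^n\equiv 1$ modulo $r$ (resp.\ modulo $8$ when $r=2$). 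The only cosmetic slip is the stray superscript in ``$\hat{x}=[\Lambda^s]$''-style notation elsewhere -- here, your factorisation $q^n-\epsilon=(q-\epsilon)(q^{n-1}+\cdots)$ is only valid when $\epsilon^n=\epsilon$, which you correctly restrict to $n$ odd or $\epsilon=+1$, so no gap arises.
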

\begin{proof}
This is \cite[Lemma A.4]{BG_book}
\end{proof}

In the remainder of the section we provide results regarding primitive prime divisors, which are an invaluable tool in the proofs of Theorems \ref{t:mainthrm} and \ref{t:primedivs}. Let $a\geqs 2$ be an integer. We say a prime divisor of $a^n-1$ is a \emph{primitive prime divisor} (of $a^n-1$) if it does not divide $a^i-1$ for all $0\leqs i<n$. We define 
$$P_a^n=\{r\mid r \mbox{ is a primitive prime divisor of } a^n-1\}.$$ 
A theorem of Zsigmondy \cite{Zsig} from the 1890s, states that $P_a^n\not=\emptyset$ unless either $(n,a)=(1,2),(6,2)$, or $n=2$ and $a=p$ is a Mersenne prime.

The following result has an elementary proof. Details of the first part can be found in \cite[Lemma A.1]{BG_book}, and the second is an easy consequence of Fermat's Little Theorem. 

\begin{lem}
\label{l:Lemma A.1}
Assume that $r\in P_a^n$ is an odd prime and let $m$ be a positive integer. Then $r$ divides $a^m-1$ if and only if $n$ divides $m$. Additionally $r\equiv 1\imod n $.
\end{lem}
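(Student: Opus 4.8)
The plan is to work with the multiplicative order of $a$ modulo $r$. Since $r\in P_a^n$ we have $r\mid a^n-1$, and in particular $r\nmid a$, so $a$ is a unit modulo $r$ and its multiplicative order $d:=\operatorname{ord}_r(a)$ is well defined. The key elementary fact I would invoke is that for any positive integer $m$ one has $a^m\equiv 1\imod r$ if and only if $d\mid m$; this is immediate from the division algorithm applied to $m$ together with the minimality of $d$.

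First I would show that $d=n$. From $r\mid a^n-1$ we get $d\mid n$. If $d<n$, then since $0\leqs d<n$ and $r\mid a^d-1$, this contradicts the hypothesis that $r$ is a \emph{primitive} prime divisor of $a^n-1$. Hence $d=n$, and the first assertion follows at once: $r$ divides $a^m-1$ if and only if $a^m\equiv 1\imod r$, which holds if and only if $n=d$ divides $m$.

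For the congruence $r\equiv 1\imod n$, I would apply Fermat's Little Theorem: since $r\nmid a$ we have $a^{r-1}\equiv 1\imod r$, so by the first part (taking $m=r-1$) we conclude $n\mid r-1$, that is, $r\equiv 1\imod n$.

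The argument is entirely routine; the only point deserving a word of care is the observation that $r\nmid a$, ensuring the order is defined, which is where one uses that $r$ is a prime divisor of $a^n-1$ rather than of $a$ itself. I do not anticipate any genuine obstacle.
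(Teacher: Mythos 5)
Your argument is correct and is exactly the standard one: the paper does not write out a proof but simply cites \cite[Lemma A.1]{BG_book} for the first assertion and notes that the congruence $r\equiv 1\imod n$ follows from Fermat's Little Theorem, which is precisely the route you take via the multiplicative order of $a$ modulo $r$. No gaps.
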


For the purposes of this paper we are mainly interested in finding the size of unique primitive prime divisors. In particular, for which $n$, $q$ and $d$ do we get $P_q^n=\{dn+1\}$. The remainder of this section is dedicated to discussing this problem.

The following lemma provides a connection between primitive prime divisors of $p^{fn}-1$ and $q^n-1$, which will be useful in our discussion on unique primitive prime divisors. We state the lemma in more general setting.

\begin{lem}\label{l:prime fac of f}
Let $a,b$ and $c$ be positive integers such that $b\geqs 2$ and $a=b^c$. Let $n\geqs 2$ with prime factorisation $n=s_1^{g_1}\dots s_t^{g_t}$ where the $s_i$ are distinct primes and each $g_i$ is a positive integer. Then $P_b^{cn}\subseteq P_a^n$, with equality if and only if one of the following holds:
\begin{itemize}
\item[{\rm (i)}] $(n,b)=(6,2)$ and $c$ is prime;
\item[{\rm (ii)}] $n=2$, $c$ is prime and $b$ is a Mersenne prime; or 
\item[{\rm (iii)}] $c=s_1^{h_1}\dots s_t^{h_t}$ with every $h_i\geqs 0$.
\end{itemize}
Moreover $|P_a^n|=1$ only if ${\rm (i),(ii)}$, or ${\rm (iii)}$ holds, or $(n,c,b)=(3,2,2),(2,3,2)$.
\end{lem}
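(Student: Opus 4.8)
My plan is to recast both sides as statements about multiplicative orders and then invoke Zsigmondy's theorem. Since $a=b^c$ we have $a^n-1=b^{cn}-1$, so $P_a^n$ and $P_b^{cn}$ are sets of prime divisors of the \emph{same} integer. Fix such a prime $r$; it is coprime to $b$, so let $d$ be the multiplicative order of $b$ modulo $r$, and recall the order of $b^c$ modulo $r$ is $d/(d,c)$. Then $r\in P_b^{cn}$ if and only if $d=cn$, while $r\in P_a^n$ if and only if $d/(d,c)=n$, that is, $d=n(d,c)$; as $d=cn$ satisfies the latter, the inclusion $P_b^{cn}\subseteq P_a^n$ is immediate. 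Writing $S=\{d\geqs 1: d=n(d,c)\}$, the set $P_a^n$ is the disjoint union $\bigsqcup_{d\in S}P_b^d$, and hence $P_b^{cn}=P_a^n$ if and only if $P_b^d=\emptyset$ for every $d\in S\setminus\{cn\}$. To describe $S$, put $c_0=\prod_{i=1}^t(c)_{s_i}$ and $c'=c/c_0$, so $c'$ is the largest divisor of $c$ coprime to $n$; a prime-by-prime analysis of the condition $(nm,c)=m$ then shows that the admissible values of $m=(d,c)$ are precisely $m=c_0k$ with $k\mid c'$, whence $S=\{nc_0k:k\mid c'\}$. In particular $\max S=cn$, $\min S=nc_0$, and $|S|$ equals the number of divisors of $c'$; so $S=\{cn\}$ exactly when $c'=1$, which is condition (iii), and in that case $P_b^{cn}=P_a^n$ holds trivially.

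Next, assume (iii) fails, so $c'>1$ and $|S|\geqs 2$. Every element of $S$ is at least $n\geqs 2$, so by Zsigmondy's theorem \cite{Zsig} we have $P_b^d=\emptyset$ for $d\in S$ only if $(d,b)=(6,2)$ or ($d=2$ and $b$ is a Mersenne prime); these shapes force $b=2$ and $b$ a Mersenne prime (in particular $b\neq 2$) respectively, so for a fixed base $b$ at most one value of $d\geqs 2$ can satisfy $P_b^d=\emptyset$. Therefore $P_b^{cn}=P_a^n$ forces $|S\setminus\{cn\}|\leqs 1$, that is, $c'$ is prime, $S=\{cn,nc_0\}$, and $P_b^{nc_0}=\emptyset$; equivalently $(nc_0,b)=(6,2)$ or ($nc_0=2$ and $b$ Mersenne). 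The first possibility yields $n=6$, $c_0=1$ and $c=c'$ prime (necessarily coprime to $6$), which is case (i); the second yields $n=2$, $c_0=1$ and $c=c'$ an odd prime with $b$ Mersenne, which is case (ii). Combined with the straightforward reverse implications — noting in passing that the borderline sub-cases $c\in\{2,3\}$ of (i) and $c=2$ of (ii) already fall under (iii) — this proves the equivalence asserted for $P_b^{cn}=P_a^n$.

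For the final assertion, suppose $|P_a^n|=1$; we may assume (iii) fails, so $|S|\geqs 2$. Since $P_a^n=\bigsqcup_{d\in S}P_b^d$ is a singleton, all but one of the sets $P_b^d$ $(d\in S)$ are empty, so the observation above that at most one $P_b^d$ can be empty forces $|S|=2$; hence $S=\{cn,nc_0\}$ with $c'$ prime. If $P_b^{nc_0}=\emptyset$, then $P_b^{cn}=P_a^n$ and the equality criterion already gives (i) or (ii). Otherwise $P_b^{cn}=\emptyset$, so $(cn,b)=(6,2)$ or ($cn=2$ and $b$ Mersenne); since $cn=2$ would give $c=1$ and hence (iii), we are left with $cn=6$, $b=2$ and $(n,c)\in\{(2,3),(3,2)\}$ (the remaining option $(n,c)=(6,1)$ again giving (iii)). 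A direct check confirms that these triples do satisfy the hypothesis — indeed $P_8^2=\{3\}$ and $P_4^3=\{7\}$ — which completes the argument.

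The step I expect to be most delicate is the explicit description of $S$ in terms of the prime factorisations of $c$ and $n$, together with the (finite but fiddly) bookkeeping of the Zsigmondy exceptions: one must carefully separate the small triples that are genuinely new from those already subsumed by condition (iii).
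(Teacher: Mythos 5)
Your proof is correct, and it takes a genuinely different route from the paper's. The paper argues directly with the definition: it writes $c=mk$ with $m$ the part of $c$ supported on the primes dividing $n$, and then runs three separate cases, in each one either exhibiting an explicit primitive prime divisor of some $b^{v}-1$ lying in $P_a^n\setminus P_b^{cn}$ or showing that every non-primitive prime divisor of $b^{cn}-1$ fails to lie in $P_a^n$. You instead translate everything into multiplicative orders, which yields the clean structural identity $P_a^n=\bigsqcup_{d\in S}P_b^d$ with $S=\{nc_0k : k\mid c'\}$ computed explicitly (your prime-by-prime verification that the admissible gcd values are exactly $c_0k$ with $k\mid c'$ is correct, and your parameter $c_0$ is precisely the paper's $m$). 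From there the equality criterion, the identification of cases (i) and (ii) with the Zsigmondy exceptions at $d=\min S=nc_0$, and the ``moreover'' clause all follow from the single observation that for a fixed base $b$ at most one exponent $d\geqs 2$ can have $P_b^d=\emptyset$. What your approach buys is uniformity and extra information: the decomposition gives $|P_a^n|=\sum_{d\in S}|P_b^d|$, which makes the final assertion (where the paper only remarks that $|P_a^n|=1$ forces $P_b^{cn}=P_a^n$ or $P_b^{cn}=\emptyset$) an immediate count rather than a separate observation, and it makes transparent why the borderline parameters $c\in\{2,3\}$ in (i) and $c=2$ in (ii) are absorbed into (iii). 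What the paper's argument buys is that it needs nothing beyond Lemma \ref{l:Lemma A.1} and Zsigmondy as black boxes, whereas you additionally invoke the formula for the order of a power, though that is elementary. Both proofs are complete; yours is arguably the more conceptual of the two.
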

\begin{proof}
Assume $r\in P_b^{cn}$. Then by definition $r$ divides $b^{cn}-1$ but does not divide $b^i-1$ for all $0\leqs i <cn$. Thus it is easy to see that $r\in P_a^n$, so it follows that $P_b^{cn}\subseteq P_a^n$.  In order to prove the first part of the lemma it remains to prove the equality condition. Equality is clear for $c=1$ so for the remainder of the proof we may assume $c\geqs 2$.

Write $c=mk$ where $m=s_1^{h_1}\dots s_t^{h_t}$ with all $h_i\geqs 0$, and $k\geqs 1$ with $(k,n)=1$. From here we define three separate cases:

\begin{itemize}
\item[{\rm (a)}] $k>1$, $(n,m,b)\neq(6,1,2)$, and $(n,m)\neq(2,1)$ when $b$ is a Mersenne prime. 
\item[{\rm (b)}] $k>1$ and $(n,m,b)=(6,1,2)$, or $(n,m)=(2,1)$ and $b$ is a Mersenne prime.
\item[{\rm (c)}] $k=1$.
\end{itemize}

First consider case (a). Define $v=mn$ and take $r\in P_b^v$ (note that our assumptions on $n,m$ and $b$ imply that such an $r$ always exists). Since $v<cn$, by definition $r\not\in P_b^{cn}$. However $v$ divides $cn$ but not $cd=mkd$ for any $0\leqs d<n$, implying $r\in P_a^n$ by Lemma \ref{l:Lemma A.1}. Therefore $P_b^{cn}\neq P_a^n$.

Next let us turn to case (b). Here $c=k$ and we let $c=p_1^{x_1}\dots p_l^{x_l}$ be the prime factorisation of $c$. 
Suppose first that $c$ is composite and take $r\in P_b^{p_1n}$. Then by definition $r\not\in P_b^{cn}$ since $p_1n<cn$. However $p_1n$ divides $cn$, but not $cd$ for all $0\leqs d<n$ since $(c,n)=1$. Therefore $r\in P_a^n$, and so $P_b^{cn}\neq P_a^n$. 
Finally suppose that $c$ is prime. Take $r$ to be a prime divisor of $a^n-1=b^{cn}-1$ such that $r\not\in P_b^{cn}$. Then $r\in P_b^j$ for some $1\leqs j<cn$. Suppose $j=1$, then $r$ divides $b^c-1=a-1$ implying $r\not\in P_a^n$. Now suppose that $j\geqs 2$ and note that $j$ divides $cn$ by Lemma \ref{l:Lemma A.1}. Thus since $c$ is prime, either $j$ divides $cd$ for some $0\leqs d<n$ or $j=n$. However $P_b^n=\emptyset$ by Zsigmondy's theorem implying that $j\neq n$. Therefore $r\not\in P_a^n$ by Lemma \ref{l:Lemma A.1}, so $P_a^n\subseteq P_b^{cn}$ and in particular $P_b^{cn}=P_a^n$.

Finally let us assume that $k=1$ as in case (c). Here $c=m=s_1^{h_1}\dots s_t^{h_t}$. As for case (b) we take $r$ to be a prime divisor of $a^n-1=b^{cn}-1$ such that $r\not\in P_b^{cn}$. Then $r\in P_b^j$ for some $1\leqs j<cn$ such that $j$ divides $cn$. 
Thus $j=s_1^{w_1}\dots s_t^{w_t}$ where $w_i\leqs g_i+h_i$ (note that since $j\neq cn$ we must have that $w_i<g_i+h_i$ for at least one value of $i$). Define $d=s_1^{z_1}\dots s_t^{z_t}$ where 
\[
z_i=
\begin{cases}
w_i-h_i & \mbox{if } w_i-h_i>0\\
0 & \mbox{otherwise}.
\end{cases}
\]
Then $d$ divides $n$ since $z_i\leqs g_i$ for all $i$, and in particular $d\neq n$ since $j\neq cn$. By construction $j$ divides $cd$, so $r\not\in P_a^n$. Thus equality holds.

For the remaining part of the lemma it is easy to see that $|P_a^n|=1$ only if $P_b^{cn}=P_a^n$ or $|P_b^{cn}|=0$. Thus the result follows. 
\end{proof}

We now begin our discussion on unique primitive prime divisors of $q^n-1$ ($q=p^f$ is a prime power). Assume that $|P_q^n|=1$. Then for $n\geqs2$, Lemma \ref{l:prime fac of f} tells us that $P_q^n$ contains every primitive prime divisor of $P_p^{fn}$. Thus if $P_p^{fn}\neq\emptyset$, using Lemma \ref{l:Lemma A.1} we obtain a lower bound for the unique primitive prime divisor $r$ of $q^n-1$, that is $r\geqs nf+1$. We dedicate the remainder of this section to improving this bound on $r$ for certain values of $n$.

\begin{lem}\label{l:ppds4}
Let $n=2^a$ for some $a\geqs 2$. If $P_q^n=\{r\}$ then either $r\geqs 4nf+1$, or one of the following holds;
\begin{itemize}
\item[{\rm (i)}] $(n,q)=(4,2),(4,3),(4,7)$ and $r=nf+1=5$; 
\item[{\rm (ii)}] $(n,q)=(4,4),(8,2)$ and $r=2nf+1=17$; or
\item[{\rm (iii)}] $(n,q)=(4,5),(4,239)$ and $r=3nf+1=13$.
\end{itemize}
\end{lem}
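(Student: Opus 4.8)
The plan is to reduce, via Lemma~\ref{l:prime fac of f}, to a study of the cyclotomic polynomial $\Phi_N$ with $N:=nf$, and then to solve the resulting exponential Diophantine equations. Since $n=2^a$ with $a\geq 2$ we have $4\mid N$, so $N\geq 4$ and $N\notin\{2,6\}$, whence $P_p^N\neq\emptyset$ by Zsigmondy's theorem. As $|P_q^n|=1$ and $n\geq 4$, the exceptional parameter values $(n,f,p)=(3,2,2),(2,3,2)$ of Lemma~\ref{l:prime fac of f} do not arise, so one of its conditions (i)--(iii) holds; since $n$ is a power of $2$, only (iii) is possible, which forces $f=2^h$ for some $h\geq 0$ and $P_p^N=P_q^n=\{r\}$. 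Hence $N=2^b$ with $b=a+h\geq 2$ and $\Phi_N(p)=p^{2^{b-1}}+1$. If $s$ is an odd prime dividing $\Phi_N(p)$ then $s\nmid N$, so the order of $p$ modulo $s$ is $N$ and thus $s\in P_p^N=\{r\}$; moreover $2\mid\Phi_N(p)$ precisely when $p$ is odd, in which case the $2$-part of $\Phi_N(p)$ equals $2$ because $p^{2^{b-1}}\equiv 1\imod{8}$. Therefore there is an integer $e\geq 1$ with $p^{2^{b-1}}+1=r^e$ if $p=2$ and $p^{2^{b-1}}+1=2r^e$ if $p$ is odd. By Lemma~\ref{l:Lemma A.1}, $r\equiv 1\imod{N}$, so $r\geq N+1$; I assume for contradiction that $r<4N+1=4nf+1$, which forces $r\in\{2^b+1,\,2^{b+1}+1,\,3\cdot 2^b+1\}$.

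For the case $p=2$ we have $2^{2^{b-1}}+1=r^e$, and applying Lemma~\ref{l:btv} (with its ``$r$'' taken to be the prime $2$) leaves only $e=1$ with $r=2^{2^{b-1}}+1$ a Fermat prime. The bound $r\leq 3\cdot 2^b+1$ then gives $2^{b-1}-b\leq 1$, i.e. $b\in\{2,3\}$, yielding $r=5$, $N=4$ (so $(n,q)=(4,2)$ and $r=nf+1$) and $r=17$, $N=8$ (so $(n,q)\in\{(4,4),(8,2)\}$ and $r=2nf+1$): precisely cases (i) and (ii).

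For the case $p$ odd we have $p^{2^{b-1}}+1=2r^e$, where $r\in\{5,13\}$ when $b=2$ (the only primes $\equiv 1\imod{4}$ that are at most $13$). If $b\geq 3$, write $X=p^{2^{b-2}}$, a perfect square with $X\geq 9$, so that $X^2+1=2r^e$; a size estimate shows $e\geq 2$, and then writing $X=W^2$ one applies Ljunggren's theorem on $x^4+1=2y^2$ (whose only positive integer solution has $x=1$) when $e$ is even, and results on $x^2+1=2y^n$ with odd $n\geq 3$ otherwise, to reach a contradiction; hence $b=2$. It then remains to determine all solutions of $p^2+1=2\cdot 5^e$ and $p^2+1=2\cdot 13^e$; factoring in the Gaussian integers $\mathbb{Z}[i]$, and invoking Ljunggren's theorem on $x^2+1=2y^4$ for the solution $p=239$, one finds $(p,e)\in\{(3,1),(7,2)\}$ and $\{(5,1),(239,4)\}$ respectively. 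This gives $r=5$ with $(n,q)\in\{(4,3),(4,7)\}$ (and $r=nf+1$), and $r=13$ with $(n,q)\in\{(4,5),(4,239)\}$ (and $r=3nf+1$): cases (i) and (iii). Assembling the surviving possibilities proves the lemma.

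The main obstacle is the odd-$p$ analysis. Unlike the $p=2$ case, the equation $p^{2^{b-1}}+1=2r^e$ does not fit the format of Lemma~\ref{l:btv}, and both the elimination of all (infinitely many candidate) pairs $(b,r)$ with $b\geq 3$ and the exact resolution of $p^2+1=2\cdot 5^e$ and $p^2+1=2\cdot 13^e$ require genuine Diophantine input --- Ljunggren-type theorems, together with Catalan--Mihailescu for $p=2$ --- rather than elementary estimates alone.
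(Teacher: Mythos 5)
Your argument is correct and follows essentially the same route as the paper's: characterise $P_q^n$ as the set of odd prime divisors of $q^{n/2}+1$ (with $2$-part exactly $2$ when $q$ is odd), deduce $r\equiv 1\imod{nf}$ with $r\leqs 3nf+1$ from $\emptyset\neq P_p^{nf}\subseteq P_q^n$, and then split on the parity of $q$, settling the even case with Lemma \ref{l:btv} and the odd case with Ljunggren-type results on the equations $x^2+1=2y^k$. The only difference is organisational: the paper caps the exponent at $l\leqs 2$ by citing \cite[Lemma 2.6]{ZLT} (which packages the same classical theorems of Ljunggren that you invoke) and then finishes by comparing the exponential $p^{nf/2}+1$ with the polynomial $2(dnf+1)^l$, whereas you first force $nf=4$ in the odd case via $x^4+1=2y^2$ and then solve $p^2+1=2r^e$ for the two candidate primes $r\in\{5,13\}$.
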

\begin{proof}
Suppose $P_q^n=\{r\}$. Since $P_q^n$ contains every primitive prime divisor of $p^{nf}-1$, it follows that $r=dnf+1$ for some $d\geqs 1$ and so we may assume that $d=1,2$ or $3$. Using Lemma \ref{l:Lemma A.1} it is easy to see that a prime $s$ is an element of $P_q^n$ if and only if $s$ is an odd prime divisor of $q^{n/2}+1$. 

First let us assume that $q$ is even. Then $q^{n/2}+1=r^l$ for some $l\geqs 1$. By Lemma \ref{l:btv} we must have $l=1$ and $r$ is a Fermat prime, that is $2^{nf/2}+1=dnf+1$. It is straightforward to show that for $d=3$ there are no solutions, for $d=2$ the only solutions are $(n,q)=(4,4),(8,2)$, and similarly for $d=1$ the only solution is $(n,q)=(4,2)$.

Now assume that $q$ is odd, it follows that $q^{n/2}+1=2r^l$ for some $l\geqs 1$. Then \cite[Lemma 2.6]{ZLT} tells us that if $l\geqs 3$ then $(n,q)=(4,239)$ (note here that $(r,l)=(13,4)$). Therefore we may now assume that $l=1$ or $l=2$ and thus
$$p^{\frac{1}{2}nf}+1=2(dnf+1) \;\mbox{ or }\; p^{\frac{1}{2}nf}+1=2(dnf+1)^2$$ 
for $d=1,2$ or $3$. From here it is straightforward to show that $(n,q)=(4,3),(4,5),(4,7)$ are the only solutions. Note that $(r,d,l)=(5,1,1)$ for $(n,q)=(4,3)$, $(r,d,l)=(13,3,1)$ for $(n,q)=(4,5)$ and $(r,d,l)=(5,1,2)$ for $(n,q)=(4,7)$. The result follows. 
\end{proof}

\begin{rmk}
We note that in Lemma \ref{l:ppds4} the case $n=2$ is not handled. It is not difficult to see that $r\in P_q^2$ if and only if $r$ is an odd prime divisor of $q+1$. Thus if $P_q^2=\{r\}$ then for some positive integers $k$ and $l$ either $q$ is even and $q+1=r^l$, or $q$ is odd and $q+1=2^kr^l$. The case when $q$ is even can be handled using Lemma \ref{l:btv}, showing that either $(q,r)=(8,3)$ or $q+1$ is a Fermat prime. However, the case when $q$ is odd leaves a much harder Diophantine equation to solve. In this case we were unable to obtain a full solution, although Lemma \ref{l:prime fac of f} does provide restrictions on $q$. In particular if $P_q^2=\{r\}$ for $q$ odd, then either $q=9$ (in which case $r=5$), $f=2^m$ for some $m\geqs 0$, or $p$ is a Mersenne prime with $f$ prime. 
\end{rmk}

We will now state some results from the literature that will be useful in the proofs of our number-theoretic results. The first is a theorem of Nagell \cite{Nag}.

\begin{thm}\label{t:nag}
Let $p\geqs 3$ be a prime. The only integer solutions to the equation $$x^2+x+1=3y^p$$ are $(x,y)=(1,1)$ and $(x,y)=(-2,1)$.
\end{thm}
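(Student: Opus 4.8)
This is a classical theorem of Nagell; the plan is to reduce it, via a factorisation in the Eisenstein integers, to a single Diophantine equation, and then to invoke the (genuinely hard) resolution of that equation. Suppose $x^2+x+1=3y^p$ with $p\geqs3$ prime. Since $x^2+x+1>0$ and $p$ is odd we have $y\geqs1$, and $y=1$ forces $x\in\{1,-2\}$ (the two integer roots of $t^2+t+1=3$), so I would assume $y\geqs2$ and seek a contradiction. Reducing modulo $3$ gives $x\equiv1\pmod3$; writing $x=1+3k$ one then finds $v_3(x^2+x+1)=1$, hence $v_3(3y^p)=1$ and so $3\nmid y$.

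The next step is to pass to $\mathcal{O}=\Z[\omega]$, $\omega=e^{2\pi i/3}$, which is a principal ideal domain with unit group the sixth roots of unity, and in which $3$ ramifies as a unit times $\lambda^2$ with $\lambda=1-\omega$ and $\lambda\bar\lambda=3$. Put $\beta=(x+1)+\omega\in\mathcal{O}$, so that $\beta\bar\beta=\mathrm{N}(\beta)=x^2+x+1=3y^p$. A valuation count using $3\nmid y$ shows $v_\lambda(\beta)=1$, so $\beta=\lambda\gamma$ with $\mathrm{N}(\gamma)=y^p$; moreover the identity $\beta-\bar\beta=\omega-\omega^2$, divided through by $\lambda$, becomes a relation of the shape $\gamma+\omega^2\bar\gamma=\omega$, which forces $\gcd(\gamma,\bar\gamma)=1$. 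Unique factorisation in $\mathcal{O}$ then gives $\gamma=\zeta\delta^p$ for a unit $\zeta$ and some $\delta\in\mathcal{O}$ with $\mathrm{N}(\delta)=y$.

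From there I would translate into a Lucas sequence. For $p\geqs5$ every unit of $\mathcal{O}$ is a $p$-th power (as $\gcd(p,6)=1$), so the unit factors can be absorbed, yielding $\alpha\in\mathcal{O}$ with $\alpha^p+\bar\alpha^p=1$ and $\mathrm{N}(\alpha)=y\geqs2$; equivalently, writing $\alpha^p=\tfrac12(1+c\sqrt{-3})$ with $c$ odd, the problem becomes the equation $3c^2+1=4y^p$. In Lucas-sequence language, with $P=\mathrm{Tr}(\alpha)\in\Z$ and $Q=\mathrm{N}(\alpha)=y$, the sequence $V_0=2$, $V_1=P$, $V_{k+1}=PV_k-QV_{k-1}$ satisfies $V_p=1$. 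The degenerate possibility (that $\alpha/\bar\alpha$ is a root of unity) is killed immediately, since then $V_p=Q^{(p-1)/2}m$ for an integer $m$, forcing $V_p=0$ or $|V_p|\geqs4$. Ruling out the non-degenerate case $V_p=1$ with $Q\geqs2$ is the actual content of Nagell's theorem; note that $3c^2+1=4y^p$ is ``self-reproducing'' under the $\mathcal{O}$-construction above, so a naive descent cannot close the argument. Nagell's proof completes this step by an intricate but elementary argument; a modern treatment would instead bound $p$ effectively — via Baker's method on linear forms in logarithms, or via the theory of primitive divisors of Lucas sequences (the Bilu--Hanrot--Voutier theorem applied to the associated sequence $U_k$ with $U_0=0$, $U_1=1$, together with $V_p^2-DU_p^2=4Q^p$ and $-D=3n^2$) — and then dispose of the finitely many remaining primes one at a time, each amounting to a single Thue or elliptic-curve equation.

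It remains to treat $p=3$, which the unit-absorption step misses because the cubes of units in $\mathcal{O}$ are only $\pm1$; here one either carries the explicit unit $\zeta$ through the computation, producing a short list of sub-cases, or observes that $x^2+x+1=3y^3$ defines a genus-$1$ curve whose integral points are classically determined (recastable as an $S$-integral point computation on a Mordell curve), again yielding only $y=1$. The main obstacle throughout is precisely the step excluding $V_p=1$ for $Q\geqs2$, uniformly in the prime $p$: because $p$ is unbounded no single Thue- or Mordell-equation computation suffices, and one genuinely needs either Nagell's elementary descent or a deep primitive-divisor theorem; the remainder is routine computation in $\Z[\omega]$.
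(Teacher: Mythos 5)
The paper does not prove this statement at all: it is quoted as a known theorem of Nagell, with the proof outsourced entirely to the citation \cite{Nag} (just as Proposition \ref{p:blnag} is outsourced to \cite{BL}). So the relevant comparison is between your reduction and a bare citation.

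Your set-up is correct and follows the classical route: the congruence forcing $x\equiv 1\imod 3$ and $v_3(x^2+x+1)=1$, the norm form $\beta\bar\beta=x^2+x+1$ with $\beta=(x+1)+\omega$ in $\Z[\omega]$, the coprimality of $\gamma$ and $\bar\gamma$ from $\gamma+\omega^2\bar\gamma=\omega$, the absorption of units into $p$-th powers when $(p,6)=1$, and the resulting Lucas-sequence equation $V_p=1$ with $Q=y$ (equivalently $3c^2+1=4y^p$) are all sound, and your dismissal of the degenerate case is fine. However, as you yourself say, \emph{the entire content of the theorem} is the exclusion of $V_p=1$ for $Q\geqslant 2$ uniformly in the prime $p$, and at that point you do not give an argument: you point at "Nagell's intricate but elementary argument" or at Baker's method / the Bilu--Hanrot--Voutier primitive-divisor theorem, and the $p=3$ case is likewise waved at as a "classically determined" genus-one computation. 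So, read as a self-contained proof, the proposal has a genuine gap: everything you actually carry out is the routine reduction, and the step that makes the theorem true is still a citation. That is not a criticism of the strategy -- it is the right strategy, and for the purposes of this paper the honest move is exactly what the author does, namely to cite \cite{Nag} (or a modern treatment via primitive divisors) for the whole statement rather than to reprove a Nagell--Ljunggren-type result from scratch. But you should be clear that what you have written is a reduction plus a reference, not a proof.
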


\begin{prop}\label{p:blnag}
Let $x,y,a$ and $b$ be integers such that $|x|,|y|>1$, $a>2$ and $b\geqs 2$. Suppose $(x,y,a,b)$ is a solution to 
\[
\frac{x^a-1}{x-1}=y^b,
\]
such that $(x,y,a,b)\neq (3,11,5,2),(7,20,4,2),(18,7,3,3)$ or $(-19,7,3,3)$. Then the following hold
\begin{itemize}
\item[{\rm (i)}] $b\geqs 3$ is prime.
\item[{\rm (ii)}] The least prime divisor $r$ of $a$ satisfies $r\geqs 5$.
\item[{\rm (iii)}] $|x|\geqs 10^4$ and $x$ has a prime divisor $r\equiv 1\imod b$.
\end{itemize}
\end{prop}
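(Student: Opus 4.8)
The statement is in effect a repackaging of deep results on the Nagell--Ljunggren equation $\frac{x^{n}-1}{x-1}=y^{q}$ and its companion $\frac{x^{n}+1}{x+1}=y^{q}$, so the plan is to bring the hypothesis into the form in which those results are stated and then quote them. When $x<0$ one passes, via $X=-x>1$, to a companion equation of the same type --- for $a$ odd this is exactly $\frac{X^{a}+1}{X+1}=y^{b}$, and the remaining sign configurations do not occur in the intended applications --- and in this way the negative exceptional tuple $(-19,7,3,3)$ comes from the identity $\frac{19^{3}+1}{19+1}=7^{3}$; thus we may assume $x\geqs 2$. Next, if $\ell$ is any prime divisor of $b$, then $(x,\,y^{b/\ell},\,a,\,\ell)$ is again a solution with $|y^{b/\ell}|>1$, and it is not one of the four listed exceptions (if $b=\ell$ this is our hypothesis, and if $b>\ell$ then $y^{b/\ell}$ is a proper perfect power, while $11,20,7$ are not). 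Hence it suffices to establish (i)--(iii) for $b$ prime, together with a separate check that $b$ is not composite.

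With these reductions in hand I would argue as follows. If $b$ is even, then reduction to the prime $\ell=2$ yields a solution of $\frac{x^{n}-1}{x-1}=y_{0}^{2}$ with $n=a>2$ and $x\geqs 2$; by Ljunggren's classical theorem the only such solutions are $(x,n)=(3,5)$ and $(7,4)$, for which $y_{0}\in\{11,20\}$, and since neither $11$ nor $20$ is a perfect power this forces $b=2$ and $(x,y,a,b)$ to be one of the two excluded tuples. So $b$ is odd. If in addition $b$ is prime, then the statements that $b\geqs 5$ apart from the solution $(18,7,3,3)$ (and its companion), that $a$ is coprime to $6$, that $|x|\geqs 10^{4}$, and that $x$ has a prime divisor $r\equiv 1\imod b$ are precisely the conclusions of the Bugeaud--Mih\u{a}ilescu analysis of this equation; the remaining possibility, that $b$ is an odd composite, is excluded within the same body of work. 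Part of (iii) also admits an elementary proof that I would record: from $y^{b}-1=\frac{x^{a}-1}{x-1}-1=x\cdot\frac{x^{a-1}-1}{x-1}$ we get $x\mid y^{b}-1$, so every prime $r\mid x$ satisfies $\operatorname{ord}_{r}(y)\mid b$; as $b$ is prime this order is $1$ or $b$, and in the latter case $r\equiv 1\imod b$. Excluding the degenerate case in which every prime divisor of $x$ divides $y-1$ combines the lifting-the-exponent valuation (which bounds $|x|$ in terms of $b$ and $|y-1|$ when $a\leqs b$) with the growth of $y^{b}=1+x+\cdots+x^{a-1}$, and it is here that the deep bound $|x|\geqs 10^{4}$ is genuinely required. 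For the smallest values of $a$ one can alternatively appeal to the classical results surrounding Theorem~\ref{t:nag} and to the resolution of the related Lebesgue--Nagell-type equation $Z^{2}+3=4y^{b}$.

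The main obstacle is that the substantive ingredients are not things one reproves here: they rest on Mih\u{a}ilescu's cyclotomic machinery (the Stickelberger and Thaine relations used in the resolution of Catalan's conjecture), on lower bounds for linear forms in two and three logarithms, and, in the small-$a$ range, on the modular method for ternary equations. I would therefore present the proposition as a consequence of these cited theorems, the only original content being the reductions above, the elementary order argument for part of (iii), and a short {\sc Magma} verification that no further exceptions lie below the stated numerical bounds.
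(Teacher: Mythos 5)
The paper offers no argument for this proposition: its entire proof is the single line ``This is \cite[Proposition 1]{BL}'', the statement being lifted directly from Bennett--Levin. Your proposal instead tries to reassemble that result from its primary sources (Ljunggren for exponent $2$, Nagell/Ljunggren for small prime divisors of $a$, the Bugeaud--Mih\u{a}ilescu cyclotomic bounds for the rest), glued together by the reduction $(x,y,a,b)\mapsto(x,y^{b/\ell},a,\ell)$ for $\ell$ a prime divisor of $b$. That is a genuinely different and far more labour-intensive route; what the paper's approach buys is that this assembly has already been carried out and packaged in \cite{BL}, so nothing needs to be re-derived or re-attributed.

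As a self-contained reconstruction, however, your sketch has two concrete gaps. First, the passage from $x<0$ to the companion equation $\frac{X^a+1}{X+1}=y^b$ only works for $a$ odd; when $a$ is even and $x\leqs -2$ one has $\frac{x^a-1}{x-1}<0$, so solutions with $b$ odd and $y<-1$ are possible a priori, and the substitution $X=-x$, $Y=-y$ then yields $\frac{X^a-1}{X+1}=Y^b$, which is not one of the two shapes your cited theorems address. Dismissing this configuration as ``not occurring in the intended applications'' proves a weaker statement than the one asserted, which quantifies over all integers $x$ with $|x|>1$. Second, the claim that $b$ is prime is not actually established: your reduction shows that a solution with composite $b$ produces one with prime exponent $\ell$, and Ljunggren's theorem disposes of $\ell=2$, but for odd primes $\ell$ with $(a,6)=1$ the Nagell--Ljunggren equation is \emph{not} fully resolved, so the mere existence of a prime-exponent solution is no contradiction. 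To conclude that $b$ itself is prime one needs the further input (present in \cite{BL} but only gestured at in your phrase ``excluded within the same body of work'') that $y$ cannot be a proper perfect power in such a solution. Your elementary observation that $x\mid y^b-1$, so every prime $r\mid x$ satisfies $r\equiv 1\imod b$ or $r\mid y-1$, is correct and worth keeping, but the exclusion of the second alternative and the bound $|x|\geqs 10^4$ are again simply quoted. In short, your reductions are essentially sound but incomplete, whereas the paper sidesteps the entire issue by citing the packaged result.
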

\begin{proof}
This is \cite[Proposition 1]{BL}
\end{proof}

The following lemma is a generalisation of \cite[Lemma 4.14]{BHall}.

\begin{lem}\label{l:ppd2a3}
Let $n=2^a3$ for some $a\geqs 0$. If $P_q^n=\{r\}$ then either $r\geqs 4nf+1$, or one of the following holds
\begin{itemize}
\item[{\rm (i)}] $(n,q)=(3,4),(6,3),(6,4),(6,5),(6,8),(6,19),(12,2)$ and $r=nf+1$; or
\item[{\rm (ii)}] $(n,q)=(3,2),(6,23)$ and $r=2nf+1$; 
\end{itemize}
\end{lem}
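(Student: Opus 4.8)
\textbf{Proof plan for Lemma \ref{l:ppd2a3}.}

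The plan is to run the same kind of argument as in Lemma \ref{l:ppds4}, but now with $n=2^a3$ and hence with the key auxiliary quantity $q^{n/3}+1 = q^{2^a}+1$ replaced by the cyclotomic-type factor governing $P_q^n$. First I would record, via Lemma \ref{l:Lemma A.1}, exactly which primes lie in $P_q^n$: a prime $s$ divides $q^m-1$ iff $n\mid m$, so $s\in P_q^n$ precisely when $s$ is an odd prime divisor of $\Phi_n(q)$, and since $n=2^a3$ we have $\Phi_n(q)=q^{2^a}-q^{2^{a-1}}+1$ for $a\geqs 1$ and $\Phi_3(q)=q^2+q+1$ for $a=0$. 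As in the previous lemma, since $P_q^n$ contains every primitive prime divisor of $p^{nf}-1$ (Lemma \ref{l:prime fac of f}), if $P_q^n=\{r\}$ then $r=dnf+1$ for some $d\geqs 1$, and it suffices to rule out or pin down the cases $d=1,2,3$. The strategy is: assume $r\leqs 3nf+1$ and show we land in case (i) or (ii).

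The case $a=0$ (so $n=3$): here $\Phi_3(q)=q^2+q+1=(q^3-1)/(q-1)$, and $q^2+q+1=r^l$ for some $l\geqs1$ since $q^2+q+1$ is odd. If $l=1$ then $q^2+q+1=dnf+1=3dpf^2\cdot$(something) — more precisely $q^2+q = 3dpf\cdot\dots$; here I would just observe $q^2<q^2+q+1=r\leqs 9f+1\leqs 9q+1$ forces $q$ small, and check $q\in\{2,3,4,5,7,8\}$ directly against $q^2+q+1\in\{7,13,21,31,57,73\}$, getting $(n,q)=(3,4)$ with $r=13=nf+1$ (note $4=2^2$, $f=2$) and $(n,q)=(3,2)$ with $r=7=2nf+1$. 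If $l\geqs 2$, then $(q^3-1)/(q-1)=r^l$; Theorem \ref{t:nag} (with the factor $3$, noting $3\mid q^2+q+1$ iff $q\equiv 1\bmod 3$) or Proposition \ref{p:blnag} applied to $\frac{q^3-1}{q-1}=r^l$ handles this — the relevant BL-exception is $(18,7,3,3)$, i.e. $q=18$, which is not a prime power, so no genuine solution with $l\geqs 2$ survives, or one checks it gives a value of $r$ far above $3nf+1$.

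The case $a\geqs 1$ (so $n=6,12,24,\dots$): now $\Phi_n(q)=q^{2^a}-q^{2^{a-1}}+1$ is odd, so $\Phi_n(q)=r^l$. For $a=1$ this is $q^2-q+1$; writing $q^3+1=(q+1)(q^2-q+1)$ one can deploy Lemma \ref{l:btv} and Lemma \ref{l:Lemma A.4} to control the common factor and reduce to $q^2-q+1=r$ or $=3r$ etc., then compare with $dnf+1\leqs 18f+1$. The bound $q^{2^{a-1}}(q^{2^{a-1}}-1)+1 = r \leqs 3nf+1 = 3\cdot 2^a\cdot 3f+1 \leqs 9\cdot 2^a\log_p q\cdot\dots$ forces both $a$ and $q$ to be small; I expect only $a=1$ survives, giving $n=6$, and then a finite check over small $q$ yields $(6,3),(6,4),(6,5),(6,8),(6,19)$ with $r=nf+1$ and $(6,23)$ with $r=2nf+1$, plus $(12,2)$ with $r=nf+1=13$ (here $\Phi_{12}(2)=13$) from the $a=2$, $q=2$ corner. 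For $l\geqs 2$ I would again invoke Proposition \ref{p:blnag} or a direct Mihailescu/Nagell-type argument on $\Phi_n(q)=r^l$, checking that none of the listed BL-exceptions $(3,11,5,2),(7,20,4,2),(18,7,3,3),(-19,7,3,3)$ correspond to our shape with a prime power $q$.

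\textbf{Main obstacle.} The genuinely delicate part is the sub-case $l\geqs 2$, i.e. when the unique primitive prime divisor appears to a higher power in $\Phi_n(q)$: there one cannot argue by crude size bounds and must lean on the deep Diophantine input (Nagell's Theorem \ref{t:nag}, Proposition \ref{p:blnag}, and the Ljubljana–type result used for $q^{n/2}+1=2r^l$ in Lemma \ref{l:ppds4}), and must carefully verify that each exceptional solution listed in those results either fails to have $q$ a prime power or produces an $r$ exceeding $3nf+1$ (hence falling into the ``$r\geqs 4nf+1$'' alternative rather than contradicting the statement). Bookkeeping the $\gcd$ between $q+1$ (or $q^{2^{a-1}}+1$) and $\Phi_n(q)$ — which by Lemma \ref{l:Lemma A.4} is $1$ or $3$ — so as to correctly separate ``$\Phi_n(q)=r^l$'' from ``$\Phi_n(q)=3r^l$'' is the other place where care is needed; everything else is a finite, if tedious, verification over small $n$ and $q$.
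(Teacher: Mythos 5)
Your overall strategy coincides with the paper's: identify $P_q^n$ with the odd prime divisors of the relevant cyclotomic factor ($q^2+q+1$ for $a=0$, $q^{2^a}-q^{2^{a-1}}+1$ for $a\geqs 1$), observe that this factor equals $r^l$ or $3r^l$ according to whether $3$ divides it (which happens only for $a=0$ with $q\equiv 1\imod 3$, or $a=1$ with $q\equiv 2\imod 3$), dispose of large exponents $l$ via Theorem \ref{t:nag} and Proposition \ref{p:blnag}, and finish with a finite check for small $l$. However, there is a concrete error in your treatment of the exponent $l\geqs 2$ branch. You assert that each exception listed in Proposition \ref{p:blnag} ``either fails to have $q$ a prime power or produces an $r$ exceeding $3nf+1$''. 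This is false for $(x,y,a,b)=(-19,7,3,3)$: with the substitution $x=-q^{2^{a-1}}$ this is exactly $q=19$, $n=6$, $q^2-q+1=7^3$, giving $r=7=nf+1\leqs 3nf+1$. Far from being discardable, this exception is the sole source of the case $(6,19)$ in the statement; your plan instead attributes $(6,19)$ to the $l=1$ finite check, where it cannot arise since $q^2-q+1=343$ is not prime. As written, your $l\geqs 2$ verification would therefore misfire, and the internal inconsistency shows the case analysis has not actually been carried out.

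There are also smaller slips: for $(n,q)=(3,4)$ you record $r=13=nf+1$, but $q^2+q+1=21=3\cdot 7$ gives $r=7$ (and indeed $nf+1=7$); the value $13$ belongs to $q=3$, where $r=13=4nf+1$ and hence falls into the non-exceptional alternative rather than case (i). These do not change the final list, but they indicate that the finite checks need to be redone carefully before the argument can be considered complete.
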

\begin{proof}
Suppose $P_q^n=\{r\}$. Since $P_q^n$ contains every primitive prime divisor of $p^{nf}-1$, it follows that $r=dnf+1$ for some $d\geqs 1$ and so we may assume that $d=1,2$ or $3$. We split the proof into two separate cases namely, $a=0$ and $a\geqs 1$. We note that the proofs of both cases are similar but we provide the details for completeness.

Assume first that $a=0$, that is $n=3$. In this case $r$ divides $q^2+q+1$. If $s\geqs 5$ is a prime divisor of $q^2+q+1$ it is easy to check that $s$ does not divide $q-1$, thus $s$ is a primitive prime divisor of $q^3-1$ implying that $r=s$. Since $q^2+q+1$ is indivisible by 9 and odd it follows that either $q^2+q+1=r^l$, or $q\equiv 1\imod 3$ and $q^2+q+1=3r^l$ for some positive integer $l$.

Suppose $q\equiv 1\imod 3$ and $q^2+q+1=3r^l$. By Theorem \ref{t:nag}, if $l\geqs 3$ then there are no integer solutions $(q,r)$. Thus we may assume $l=1$ or $2$, so
$$p^{2f}+p^f+1=3(3df+1)\mbox{ or } p^{2f}+p^f+1=3(3df+1)^2.$$
It is straightforward to check that $q=4$ is the only possibility, with $r=7$ and $q^2+q+1=3r$.

Finally suppose $q\not\equiv 1\imod 3$ and $q^2+q+1=r^l$. If $l\geqs 2$ then by applying Propositon \ref{p:blnag} we deduce that there are no solutions, so we may assume $l=1$. It is straightforward to check that $q=2$ is the only possibility, with $r=7$ and $q^2+q+1=r$.

Now let us assume that $a\geqs 1$. In a similar manner to the $a=0$ case it is easy to show that if $s\geqs 5$ is a prime divisor of $q^{2^a}-q^{2^{a-1}}+1$, then $s=r$.  Since $q^{2^a}-q^{2^{a-1}}+1$ is odd and indivisible by 9 (in particular indivisible by 3 when $a\geqs 2$), it follows that either $q^{2^a}-q^{2^{a-1}}+1=r^l$, or $a=1$ with $q\equiv 2\imod 3$ and $q^{2^a}-q^{2^{a-1}}+1=3r^l$ for some positive integer $l$.

Suppose $a=1$ with $q\equiv 2\imod 3$ and $q^2-q+1=3r^l$. By Theorem \ref{t:nag}, if $l\geqs 3$ then there are no integer solutions $(q,r)$, so we may assume $l=1$ or $2$ that is,
$$p^{2f}-p^f+1=3(6df+1)\mbox{ or } p^{2f}-p^f+1=3(6df+1)^2.$$
It is straightforward to check that $(q,r)=(5,7),(8,19),(23,13)$ are the only solutions.

Finally suppose $q^{2^a}-q^{2^{a-1}}+1=r^l$. Setting $x=-q^{2^{a-1}}$ we get an integer solution to the equation $x^2+x+1=r^l$. By applying Proposition \ref{p:blnag}, if $l\geqs 2$ the only solution is $(x,r,l)=(-19,7,3)$, that is $n=6$, $q=19$, $r=7$ and $q^2-q+1=r^3$. Therefore we may now assume that $l=1$. From here it is straightforward to show that for $l=1$ the only solutions are $(n,q)=(6,3)$ with $r=nf+1=7$ or $(n,q)=(6,4),(12,2)$ with $r=nf+1=13$.
\end{proof}

\begin{rmk}\label{r:n=2aj}
As the prime decomposition of $n$ becomes more complicated it becomes increasingly more difficult to find the size of a unique primitive prime divisor of $q^n-1$. In particular, the associated Diophantine equation becomes more challenging to both find and solve. Here we discuss some of the issues faced for $n=2^aj$ such that $j\geqs5$ is an odd prime and $a\geqs 0$. (Note the case $j=3$ was handled in Lemma \ref{l:ppd2a3}).

If there exists a unique primitive prime divisor $r$ of $q^n-1$, then $(n,q,r)$ must be a solution to 
\begin{equation*}
\frac{q^{2^{a-1}j}+1}{q^{2^{a-1}}+1}=(j,q^{2^{a-1}}+1)r^l \,\,\mbox{ if } a\geqs 1 \mbox{ or }
\end{equation*} 
\begin{equation*}
\frac{q^{n}-1}{q-1}=(n,q-1)r^l \,\, \mbox{ if } a=0
\end{equation*} 
for some positive integer $l$. These are both particular examples of the general \textit{Nagell-Ljunggren equation}; for which there currently does not exist a complete set of integer solutions. However bounds on the potential solutions have been established (see \cite{Mih}). 
This means that for $n=2^aj$ with $j\geqs 5$ we are unable to provide a lemma as general as Lemmas \ref{l:ppds4} and \ref{l:ppd2a3}. Nevertheless we can give some details in the cases when $q=2^f$ for some positive integer $f$ and $a=0$ (see Lemma \ref{l:lic1prime}) or $a=1$ (see Lemma \ref{l:uniq ppd}). 
\end{rmk}

Recall that for positive integers $a$ and $b$, the notation $(a)_b$ denotes the largest power of $b$ dividing $a$. 

\begin{lem}\label{l:lic1prime}
Let $n\geqs 5$ be an odd prime and $q=2^f$ for some positive integer $f$. If $P_q^n=\{r\}$ then $r\geqs 4nf+1$.
\end{lem}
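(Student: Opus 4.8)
The plan is to show that the hypothesis $P_q^n=\{r\}$ forces $\frac{q^n-1}{q-1}$ to be equal to $r$ itself (a single prime), after which the bound $r\geqs 4nf+1$ drops out of a trivial size estimate. The first step is to pin down $f$. Apply the \enquote{moreover} part of Lemma~\ref{l:prime fac of f} with $b=2$, $c=f$ and $a=q$: since $n\geqs 5$ is an odd prime, conditions (i) and (ii) there, and the sporadic pairs $(n,c,b)=(3,2,2),(2,3,2)$, are all impossible, so the only way to have $|P_q^n|=1$ is (iii), namely $f=n^h$ for some integer $h\geqs 0$. A useful consequence is that $n\nmid q-1$: if $h=0$ this is clear since $q-1=1$, while if $h\geqs 1$ then $n\mid 2^{n^h}-1$ would force the multiplicative order of $2$ modulo $n$ to divide $n^h$, whereas that order divides $n-1$ and is therefore coprime to $n^h$, so it would have to be $1$, which is absurd.

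Next I would observe that $\frac{q^n-1}{q-1}$ is a power of $r$. Since $n$ is prime, $\frac{q^n-1}{q-1}=1+q+\cdots+q^{n-1}\equiv n\imod{q-1}$, so $\gcd\!\big(q-1,\tfrac{q^n-1}{q-1}\big)=\gcd(q-1,n)=1$ by the previous step. Hence every prime divisor $s$ of $\frac{q^n-1}{q-1}$ satisfies $s\nmid q-1$, so the order of $q$ modulo $s$ divides $n$ but is not $1$, forcing it to equal $n$; that is, $s\in P_q^n=\{r\}$. Therefore $\frac{q^n-1}{q-1}=r^l$ for some integer $l\geqs 1$, i.e. $(x,a,y,b)=(q,n,r,l)$ solves $\frac{x^a-1}{x-1}=y^b$ with $|x|,|y|>1$ and $a=n>2$.

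To finish, suppose for contradiction that $l\geqs 2$. Since $x=q=2^f$ is none of the exceptional values $3,7,18,-19$, Proposition~\ref{p:blnag} applies, and part (iii) supplies a prime divisor of $q$ congruent to $1$ modulo $l$; but $2$ is the only prime divisor of $q$, so $l\mid 1$, a contradiction. Hence $l=1$ and $r=\frac{q^n-1}{q-1}=1+q+\cdots+q^{n-1}$. If $h=0$ then $q=2$ and $r=2^n-1\geqs 4n+1$ for every $n\geqs 5$ (the gap $2^n-4n-2$ is positive and increasing for $n\geqs 3$). If $h\geqs 1$ then $f=n^h\geqs n$, so $r>q^{n-1}=2^{f(n-1)}\geqs 2^{4f}\geqs 4f^2+1\geqs 4nf+1$. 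In all cases $r\geqs 4nf+1$, as required.

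The crux of the argument is the opening reduction: the reason this lemma has no exceptional cases (unlike Lemmas~\ref{l:ppds4} and~\ref{l:ppd2a3}) is precisely that Lemma~\ref{l:prime fac of f} forces $f$ to be a power of $n$, which removes the potential factor $\gcd(n,q-1)$ and leaves the clean equation $\frac{q^n-1}{q-1}=r^l$, handled in one line by Proposition~\ref{p:blnag} together with the fact that $q$ is a prime power of $2$. Were that factor present, one would instead have to contend with a genuine Nagell-Ljunggren equation $\frac{q^n-1}{q-1}=nr^l$ for which no complete solution is known; so everything after the reduction is routine, and I anticipate no serious obstacle there.
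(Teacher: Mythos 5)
Your proof is correct and follows essentially the same route as the paper: both reduce, via Lemma \ref{l:prime fac of f} and the observation that every prime divisor of $(q^n-1)/(q-1)$ is a primitive prime divisor of $q^n-1$, to the Nagell--Ljunggren-type equation $(q^n-1)/(q-1)=r^l$ and then invoke Proposition \ref{p:blnag}. The only cosmetic difference is that the paper first pins down $r=2nf+1$ by a parity argument and shows the resulting equation has no solutions for any $l$, whereas you keep $r$ general, conclude $l=1$, and read off the bound from $r=(q^n-1)/(q-1)$ directly.
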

\begin{proof}
Suppose $P_q^n=\{r\}$. Then in the usual manner, $r$ is the unique primitive prime divisor of $2^{fn}-1$ and so $r=dnf+1$ for some $d\geqs 1$. By Lemma \ref{l:prime fac of f} we know that $f=n^j$ for some $j\geqs 0$. Thus it follows that $d$ must be even since both $r$ and $n$ are odd primes, so we may assume $r=2nf+1$, that is $r=2n^{j+1}+1$.

Assume $n$ divides $2^f-1=2^{n^j}-1$. Then by Lemma \ref{l:Lemma A.1}, since $n$ is prime, $n$ is a primitive prime divisor of $2^{n^t}-1$ for some $1\leqs t\leqs j$. This implies that $n\equiv 1\imod {n^t}$ by Lemma \ref{l:Lemma A.1}, which is an obvious contradiction. Thus for any prime divisor $k$ of $2^f-1$, we conclude that $(2^{fn}-1)_k=(2^f-1)_k$ (see Lemma \ref{l:Lemma A.4}).

Suppose $s$ is a prime divisor of $(2^{fn}-1)/(2^f-1)$. By the above argument it follows that $s$ divides $2^{fn}-1=2^{n^{j+1}}-1$ but does not divide $2^f-1=2^{n^j}-1$. Therefore using Lemma \ref{l:Lemma A.1} once again, we see that $s$ is a primitive prime divisor of $2^{fn}-1$.

In particular we conclude that,
\begin{equation}\label{e:2fn}
\frac{2^{fn}-1}{2^f-1}=(2nf+1)^l
\end{equation}
for some positive integer $l$. By applying Proposition \ref{p:blnag} we see there are no solutions to (\ref{e:2fn}) when $l\geqs 2$. Additionally, it is straightforward to show that for $l=1$ there are also no solutions. Thus the result follows.
\end{proof}

\begin{lem}\label{l:uniq ppd}
Let $n\geqs 5$ be an odd prime and $q=2^f$ for some positive integer $f$. If $P_{q}^{2n}=\{r\}$ then either $r\geqs 4nf+1$, or $r=2nf+1$ and one of the following holds:
\begin{itemize}
\item[{\rm (i)}] $(n,q)=(5,2)$; or
\item[{\rm (ii)}] $n$ divides $q+1$. 
\end{itemize}
Moreover, if $P_{q}^{2n}=\{r\}$ then $f=2^an^b$ for some integers $a,b\geqs 0$.
\end{lem}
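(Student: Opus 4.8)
The plan is to follow the strategy of Lemma~\ref{l:lic1prime}: reduce to a Nagell--Ljunggren equation and invoke Proposition~\ref{p:blnag}. Suppose $P_q^{2n}=\{r\}$. Applying Lemma~\ref{l:prime fac of f} with $b=2$, $c=f$ and exponent $2n$ gives $P_2^{2nf}\subseteq P_q^{2n}$, and since $n\geqs 5$ Zsigmondy's theorem ensures $P_2^{2nf}\neq\emptyset$, so $P_2^{2nf}=\{r\}$ and Lemma~\ref{l:Lemma A.1} yields $r\equiv 1\imod{2nf}$. Write $r=2knf+1$ with $k\geqs 1$; if $k\geqs 2$ then $r\geqs 4nf+1$ and we are done, so assume $r=2nf+1$. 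It then remains to show $(n,q)=(5,2)$ or $n\mid q+1$, so assume in addition that $n\nmid q+1$.

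Next I would isolate the factor of $q^n+1$ carrying $r$. As $r\in P_q^{2n}$ divides $q^{2n}-1=(q^n-1)(q^n+1)$ but not $q^n-1$, it divides $q^n+1$; and $r\nmid q+1$ since $q+1\mid q^2-1$ and $2<2n$. For $n$ an odd prime one computes $\frac{q^n+1}{q+1}\equiv n\imod{q+1}$, so $\gcd\!\big(\tfrac{q^n+1}{q+1},q+1\big)=1$ under the assumption $n\nmid q+1$. Any odd prime $s$ dividing $q^n+1$ satisfies $\mathrm{ord}_s(q)\in\{2,2n\}$, and $\mathrm{ord}_s(q)=2$ forces $s\mid q+1$; hence every prime divisor of $\tfrac{q^n+1}{q+1}$ lies in $P_q^{2n}=\{r\}$, giving
\[
\frac{q^n+1}{q+1}=r^l=(2nf+1)^l
\]
for some $l\geqs 1$. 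Putting $x=-q=-2^f$, this reads $\tfrac{x^n-1}{x-1}=r^l$ with $|x|\geqs 2$, $|r|>1$ and $n>2$.

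The decisive step is to exclude $l\geqs 2$ using Proposition~\ref{p:blnag}. None of its four exceptional tuples has first entry of the shape $-2^f$, so conclusion~(iii) applies and $x$ must possess a prime divisor congruent to $1$ modulo $l$; but $x=-2^f$ has $2$ as its only prime divisor, which is impossible for $l\geqs 2$. Hence $l=1$, so $\tfrac{q^n+1}{q+1}=2nf+1$. Since $\tfrac{q^n+1}{q+1}>q^{n-1}/2=2^{f(n-1)-1}$ while $2nf+1<3nf$, this forces $2^{f(n-1)-1}<3nf$, and a routine check over primes $n\geqs 5$ and integers $f\geqs 1$ leaves only $(n,q)=(5,2)$, where indeed $\tfrac{2^5+1}{2+1}=11=2\cdot 5\cdot 1+1$. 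This proves the main assertion. For the ``moreover'' clause, the last sentence of Lemma~\ref{l:prime fac of f} (again with $b=2$, $c=f$, exponent $2n$) together with $2n\neq 6$, $2n\neq 2$ and $(2n,f,2)\notin\{(3,2,2),(2,3,2)\}$ forces alternative~(iii) of that lemma, i.e. $f=2^an^b$ for some $a,b\geqs 0$.

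I expect the main obstacle to be the reduction to $\tfrac{q^n+1}{q+1}=r^l$: establishing that no prime other than $r$ can divide $\tfrac{q^n+1}{q+1}$ requires combining the coprimality $\gcd\big(\tfrac{q^n+1}{q+1},q+1\big)=1$ with the order computation for odd primes dividing $q^n+1$, and this is precisely the place where the hypotheses $n$ prime and $n\nmid q+1$ enter. Once the equation is in Nagell--Ljunggren form, Proposition~\ref{p:blnag} and a short size estimate finish the argument with little effort.
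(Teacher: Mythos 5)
Your proposal is correct and follows essentially the same route as the paper: both reduce (under the assumption $n\nmid q+1$) to the Nagell--Ljunggren-type equation $\tfrac{q^n+1}{q+1}=(2nf+1)^l$, eliminate $l\geqs 2$ via Proposition \ref{p:blnag}, settle $l=1$ by a direct check yielding $(n,q)=(5,2)$, and obtain the ``moreover'' clause from Lemma \ref{l:prime fac of f}. You simply fill in details the paper leaves as ``straightforward'' (the coprimality and order argument, and the size estimate for $l=1$), so no substantive difference.
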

\begin{proof}
Suppose $P_{q}^{2n}=\{r\}$. Again in the usual manner we have that $r=2nfd+1$ for some $d\geqs 1$, so we may assume $d=1$.

Assume $s$ is a prime divisor of $q^{2n}-1$. Then since $n$ is an odd prime, either $s$ is a divisor of $q^2-1$ or $s$ is a primitive prime divisor of $q^n-1$ or $q^{2n}-1$. Thus any prime divisor of $q^n+1$ is either a primitive prime divisor of $q^{2n}-1$ or is a divisor of $q+1$. By Lemma \ref{l:Lemma A.4} it follows that
\begin{equation}\label{e:q2n-1}
\frac{q^{n}+1}{q+1}=(n,q+1)(2nf+1)^l
\end{equation}
for some positive integer $l$. 

Suppose that $n$ does not divide $q+1$. Then $(n,q+1)=1$ and by applying Proposition \ref{p:blnag} we see there are no solutions to \eqref{e:q2n-1} for $l\geqs 2$. It remains to deal with the case when $l=1$. Here it is straightforward to show that $(n,q)=(5,2)$ is the only solution (note here that $r=11$). This concludes the first part of the lemma. 
The final part of the lemma is a straightforward application of Lemma \ref{l:prime fac of f}.
\end{proof}
 
\section{Classical groups}

In this section we set up notation and record several preliminary results for classical groups that will be useful in the proofs of our main theorems. In particular, we will briefly discuss Aschbacher's subgroup structure theorem and recall information on prime order conjugacy classes. As before we let $G\leqs {\rm Sym}(\Omega)$ be a primitive almost simple classical group over $\mathbb{F}_q$ with socle $G_0$ and point stabiliser $H$. Let $V$ denote the natural module of $G_0$ such that $n=\dim(V)$ and let $q=p^f$ for some prime $p$ and positive integer $f$.

\subsection{Subgroup structure}\label{s:subgrp struc}
 
Recall that a point stabiliser in a primitive permutation group is a maximal subgroup. The main theorem we use on the subgroup structure of finite classical groups is due to Aschbacher. In \cite{asch}, he defines nine separate collections of subgroups. The first eight collections (denoted $\mathcal{C}_1,\dots,\mathcal{C}_8$) are often referred to as the \textit{geometric} collections: the members of these collections are defined in terms of the underlying geometry of the natural module $V$. For example, the $\mathcal{C}_2$ collection consists of the stabilisers of an appropriate direct sum decomposition of $V$. The ninth collection (denoted $\mathcal{S}$) is often called the \emph{non-geometric} collection, and consists of almost simple groups that act absolutely irreducibly on $V$ (see \cite[p.3]{KL} for a formal definition of this collection). Note we adopt the definition of these collections used by Kleidman and Liebeck in \cite{KL}, which differs slightly from the original set up in \cite{asch}. A brief description of each of these collections are outlined in Table \ref{tab:geom}. We write $\mathcal{C}=\mathcal{C}_1\cup\dots\cup\mathcal{C}_8$ and often refer to the subgroups in the $\mathcal{C}_1$ collection as the \emph{subspace} subgroups since this collection consists of stabilisers of subspaces, or pairs of subspaces of $V$. Conversely, we refer to any subgroup not contained in $\mathcal{C}_1$ as a \emph{non-subspace} subgroup. 

\begin{table}
\[
\begin{array}{ll} \hline
\mathcal{C}_1 & \mbox{Stabilisers of subspaces, or pairs of subspaces of $V$}\\
\mathcal{C}_2 & \mbox{Stabilisers of direct sum decompositions } V=\bigoplus^t_{i=1}V_i \mbox{, where} \dim V_i=a\\
\mathcal{C}_3 & \mbox{Stabilisers of prime degree extension fields of } \mathbb{F}_q\\
\mathcal{C}_4 & \mbox{Stabilisers of tensor product decompositions } V=V_1\otimes V_2\\
\mathcal{C}_5 & \mbox{Stabilisers of prime index subfields of } \mathbb{F}_q\\
\mathcal{C}_6 & \mbox{Normalisers of symplectic-type $r$-groups, } r\neq p\\
\mathcal{C}_7 & \mbox{Stabilisers of tensor product decompositions }V=\bigotimes^t_{i=1}V_i \mbox{, where } \dim V_i=a \\
\mathcal{C}_8 & \mbox{Stabilisers of non-degenerate forms on $V$}\\
\mathcal{S}& \mbox{Almost simple absolutely irreducible subgroups}\\
\hline
\end{array}
\]
\caption{Aschbacher's subgroup collections}
\label{tab:geom}
\end{table}

Set $H_0=H\cap G_0$ and assume that $H\not\in\mathcal{S}$. If $H_0$ is a maximal subgroup of $G_0$ then $H_0\in\mathcal{C}$. However there are a few cases in which $H_0$ is non-maximal: this leads to a small additional subgroup collection when $G_0={\rm PSp}_4(2^f)$ or ${\rm P}\Omega^+_8(q)$, that arises due to the existence of exceptional automorphisms. Following \cite{BG_book}, we denote this subgroup collection $\mathcal{N}$ and refer to the elements of $\mathcal{N}$ as \textit{novelty} subgroups. Note the maximal subgroups in the case when $G_0={\rm P}\Omega^+_8(q)$ were determined up to conjugacy by Kleidman \cite{Klei}. The members of $\mathcal{N}$ are outlined in Table \ref{tab:novelty}.

\begin{table}
\[
\begin{array}{lll} \hline
G_0 & \mbox{Type of } H & \mbox{Conditions}\\
\hline \rule{0pt}{2.5ex}
{\rm PSp}_4(2^f) & {\rm O}^{\epsilon}_2(q)\wr S_2 & \\
                         & {\rm O}^-_2(q^2).2& \\
                         & [q^4].{\rm GL}_1(q)^2& \\
{\rm P}\Omega_8^+(q) & {\rm GL}^{\epsilon}_1(q)\times{\rm GL}^{\epsilon}_3(q)& \\
                         &{\rm O}^-_2(q^2)\times {\rm O}^-_2(q^2) & \\
                         &{\rm G}_2(q) & \\
                         & [2^9].{\rm SL}_3(2)& q=p>2 \\
                         & [q^{11}].{\rm GL}_2(q){\rm GL}_1(q)^2& \\
\hline
\end{array}
\]
\caption{The collection $\mathcal{N}$}
\label{tab:novelty}
\end{table}
The following is a version of Aschbacher's theorem 
\begin{thm}
Let $G$ be an almost simple classical group with socle $G_0$, and let $H$ be a maximal subgroup of $G$ not containing $G_0$. Then $H\in \mathcal{C}\cup\mathcal{N}\cup\mathcal{S}$.
\end{thm}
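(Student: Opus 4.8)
The plan is to derive this statement as a restatement of Aschbacher's theorem from \cite{asch}, reconciled with the Kleidman--Liebeck bookkeeping of the geometric collections in \cite{KL} and supplemented by a direct appeal to the known maximal subgroups of the two families of socles admitting an exceptional graph automorphism.

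First I would apply the main theorem of \cite{asch}: any maximal subgroup $H$ of $G$ not containing $G_0$ lies in one of nine families, namely eight geometric families (the stabilisers of the various geometric configurations on $V$ described in Table \ref{tab:geom}) together with a ninth family consisting of the almost simple subgroups that act absolutely irreducibly, primitively and tensor-indecomposably and are not realisable over a proper subfield. The eight geometric families of \cite{asch} and the collections $\mathcal{C}_1,\dots,\mathcal{C}_8$ of \cite{KL} differ only in how certain degenerate or overlapping members are distributed; \cite[\S1]{KL} (see also the introductory sections of \cite{BHR}) makes this comparison explicit, so if $H$ lies in a geometric Aschbacher family then $H\in\mathcal{C}$. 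If instead $H$ lies in the ninth family and $H_0=H\cap G_0$ is maximal in $G_0$, then by definition $H\in\mathcal{S}$.

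It then remains to treat the possibility that $H$ is maximal in $G$ but $H_0$ is not maximal in $G_0$; by the standard analysis (e.g. \cite[\S1]{KL}) this forces $G$ to induce an exceptional graph automorphism, so that $G_0=\PSp_4(2^f)$ or $G_0={\rm P}\Omega^+_8(q)$. For $G_0={\rm P}\Omega^+_8(q)$ the complete list of maximal subgroups of every almost simple group with this socle was determined by Kleidman \cite{Klei}; for $G_0=\PSp_4(2^f)$ the analogous list is classical and is tabulated in \cite{BHR}. Inspecting these lists, the maximal subgroups that are not already accounted for by $\mathcal{C}\cup\mathcal{S}$ are exactly those of the types recorded in Table \ref{tab:novelty}, which we collect into the family $\mathcal{N}$. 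Hence every maximal $H$ not containing $G_0$ lies in $\mathcal{C}\cup\mathcal{N}\cup\mathcal{S}$, as claimed.

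The only genuinely delicate step is the last one: verifying that the novelty subgroups are precisely those listed in Table \ref{tab:novelty}, with no omissions and no redundancies. Everything else is a matter of quoting \cite{asch} and matching the conventions of \cite{KL} and \cite{BHR}, so there is no new mathematical content in this theorem.
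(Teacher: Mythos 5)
Your proposal is correct and follows essentially the same route as the paper, which states this result without proof as a citation of Aschbacher \cite{asch}, with the surrounding discussion in Section \ref{s:subgrp struc} making exactly the points you do: the translation to the Kleidman--Liebeck conventions for $\mathcal{C}_1,\dots,\mathcal{C}_8$, and the emergence of the novelty collection $\mathcal{N}$ from the exceptional automorphisms of ${\rm PSp}_4(2^f)$ and ${\rm P}\Omega^+_8(q)$, justified via \cite{Klei} and \cite{BHR}. Your identification of the verification of Table \ref{tab:novelty} as the only nontrivial step is accurate.
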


Following \cite{KL}, we will often refer to the \emph{type} of a maximal subgroup $H$ of $G$. If $H\in\mathcal{S}$ we use the type of $H$ to denote the socle of the almost simple group $H$. Additionally, if $H\in\mathcal{C}\cup\mathcal{N}$ then in most cases the type provides the approximate group-theoretic structure of the subgroup, and moreover often suggests which kind of object is stabilised by $H$. For example, take $G_0=\Li_n(q)$ and $H$ of type ${\rm GL}_a(q)\wr S_t$, then $H$ is the stabiliser of a direct sum decomposition $V=V_1\oplus\dots\oplus V_t$, where $\dim V_i=a$. There are some exceptions to this type of notation, namely we use $P_i$ to denote the stabiliser of a totally singular $i$-space and note that if $G_0=\Li_n(q)$, then all subspaces are totally singular. Additionally, in the case $G_0=\Li_n(q)$ with $n\geqs 3$ we use $P_{i,n-i}$ to denote the stabiliser of a pair of subspaces $U$ and $W$ such that $U<W$ with $\dim(U)=i$ and $\dim(W)=n-i$. We note that subgroups of type $P_{i,n-i}$ are only maximal in the linear case when $G$ contains a graph or a graph-field automorphism. These are another example of a novelty subgroup (since $H_0$ is non-maximal in $G_0$ in this case); however following \cite{KL} we include these subgroups in the $\mathcal{C}_1$ collection.

Kleidman and Liebeck's book, \cite{KL}, is the definitive reference for details on the existence, maximality and structure of the geometric subgroups. In \cite{KL}, they provide a complete description of the structure of the geometric subgroups for all $n$, and determine their maximality (up to conjugacy) for $n\geqs 13$. Additionally, Bray, Holt and Roney-Dougal (\cite{BHR}), recently completely determined all  maximal subgroups (up to conjugacy) of the low-dimensional classical groups with $n\leqs 12$.

\subsection{Conjugacy classes}\label{s:conjclass}
In this subsection we aim to provide a brief description of the conjugacy classes of prime order elements in classical groups, and introduce notation for the representatives of such classes. For the description of these conjugacy classes we follow \cite[Chapter 3]{BG_book}. 
Additionally we follow \cite[Chapter 2]{BG_book} for the definitions of the outer automorphisms of finite simple classical groups. In particular the diagonal, field, graph and graph-field automorphisms. We note that due to a theorem of Steinberg \cite[Theorem 30]{Stein} any element in an almost simple classical group is the product of an inner, a diagonal, a field and a graph automorphism. 
 
Let ${\rm Inndiag}(G_0)$ denote the subgroup of $\Aut(G_0)$ generated by the inner and diagonal automorphisms of $G_0$ (for example ${\rm Inndiag}(\Li_n(q))={\rm PGL}_n(q)$). Take $x\in {\rm Inndiag}(G_0)$ to be an element of prime order $r$. The analysis of conjugacy classes of elements of prime order in classical groups is split into two cases: semisimple and unipotent. We say that $x$ is \emph{semisimple} if $(r,p)=1$ and we say $x$ is \emph{unipotent} if $r=p$. We begin with a discussion of the semisimple elements. 

\subsubsection{Semisimple elements}
For the purposes of this paper we focus our attention on semisimple elements of odd prime order (although we direct the reader to \cite[Chapter 3]{BG_book} for information on semisimple involutions). We begin by stating \cite[Theorem 4.2.2(j)]{GLS}. 

\begin{thm}\label{t:semisimp}
Suppose $x\in {\rm Inndiag}(G_0)$ is a semisimple element of prime order. Then $x^{G_0}=x^{{\rm Inndiag}(G_0)}$.
\end{thm}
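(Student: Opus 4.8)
The statement asserts that for a semisimple element $x$ of prime order in $\mathrm{Inndiag}(G_0)$, the $G_0$-conjugacy class and the $\mathrm{Inndiag}(G_0)$-conjugacy class coincide, i.e. the outer diagonal automorphisms do not fuse any $G_0$-classes of such elements. Since this is quoted verbatim as \cite[Theorem 4.2.2(j)]{GLS}, my plan is to indicate the standard algebraic-group argument behind it rather than to reprove it from first principles; the proof environment in the paper will presumably just cite GLS, but here is the shape of the reasoning.

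First I would pass to the ambient simple algebraic group. Write $G_0$ as (the derived subgroup of the fixed points of) a Frobenius endomorphism $F$ on a simple algebraic group $\bar G$ of adjoint type, so that $\mathrm{Inndiag}(G_0) = \bar G^F$ and $G_0 = O^{p'}(\bar G^F)$. Lift $x$ to $\bar x \in \bar G^F$ of the same (prime, $p'$) order; being semisimple, $\bar x$ lies in an $F$-stable maximal torus and in particular $C_{\bar G}(\bar x)$ is a connected reductive group — this is the key input, valid because $\bar G$ is of adjoint type (equivalently, $\bar G$ has a simply connected dual; centralisers of semisimple elements in adjoint-type groups are connected).

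The main step is then the standard cohomological count: the set of $\bar G^F$-classes inside the single $\bar G$-class of $\bar x$ is in bijection with $H^1(F, C_{\bar G}(\bar x))$, and likewise the $G_0$-classes inside that $\bar G$-class are parametrised by $H^1(F, C_{\bar G}(\bar x)\cap \bar G_{\mathrm{der}})$ or, more precisely, one compares the two class-counting formulas using the exact sequence relating $G_0$, $\bar G^F$, and the diagonal automorphism group. Because $C_{\bar G}(\bar x)$ is connected, Lang's theorem forces $H^1(F, C_{\bar G}(\bar x)) = 1$, so there is a \emph{single} $\bar G^F$-class in the $\bar G$-class of $\bar x$. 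One then checks that the $G_0$-class is not smaller, i.e. that $\mathrm{Inndiag}(G_0)$ acting by conjugation cannot break the $\bar G^F$-class into several $G_0$-classes; this is where the hypothesis that $x$ has \emph{prime} order (hence $\bar x$ generates a group on which the relevant diagonal automorphisms act with controlled fixed behaviour) is used, together with the fact that $\bar G^F / G_0$ is the (abelian) diagonal automorphism group and the exponent/order constraints pin down the fusion.

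The part I expect to be the real obstacle is precisely this last bookkeeping: translating ``$C_{\bar G}(\bar x)$ connected'' plus ``$x$ of prime order'' into the equality $x^{G_0} = x^{\bar G^F}$ rather than merely $x^{G_0} \subseteq x^{\bar G^F}$ with possibly proper containment. In general a $\bar G^F$-class can split into several $G_0$-classes, and one must rule this out here; the cleanest route is to observe that the number of $G_0$-classes fusing to $x^{\bar G^F}$ divides $|\bar G^F : G_0 \cdot C_{\bar G^F}(x)|$ and then argue, using connectedness of $C_{\bar G}(\bar x)$ and the structure of the diagonal automorphism group (cyclic, of order dividing $n$, $q\mp 1$, etc., coprime or controlled relative to $r$), that this index contributes nothing. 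Since all of this is exactly the content of the GLS reference, in the paper itself I would simply write: ``This is \cite[Theorem 4.2.2(j)]{GLS}.''
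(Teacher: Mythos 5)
Your bottom line --- that the paper simply quotes this result as \cite[Theorem 4.2.2(j)]{GLS} and offers no proof --- is exactly what the paper does, so as a deliverable the proposal matches. However, the sketch you give of the underlying argument breaks at its pivot point. You assert that centralisers of semisimple elements in an \emph{adjoint} group are connected; this is false, and in fact you have Steinberg's connectedness theorem backwards: $C_{\bar G}(s)$ is connected for all semisimple $s$ when the derived subgroup of $\bar G$ is \emph{simply connected}, whereas in the adjoint group the component group $C_{\bar G}(s)/C_{\bar G}(s)^{\circ}$ can be nontrivial (already in ${\rm PGL}_2$ the image of ${\rm diag}(1,-1)$ has a disconnected centraliser). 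The parenthetical ``equivalently, $\bar G$ has a simply connected dual'' conflates the group with its dual: connectedness of centralisers in the dual group is what one gets from $Z(\bar G)$ being connected, but the element $x$ lives in $\bar G$ with $\bar G^{F}={\rm Inndiag}(G_0)$, not in the dual. Consequently the step ``Lang's theorem forces $H^1(F,C_{\bar G}(\bar x))=1$, so there is a single $\bar G^F$-class in the $\bar G$-class'' does not go through, and in any case it addresses the wrong splitting: the theorem is not about whether the geometric class meets $\bar G^F$ in a single class, but about whether a given $\bar G^F$-class breaks into several $G_0$-classes, i.e.\ whether $C_{\bar G^F}(x)$ surjects onto $\bar G^F/G_0$. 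That is precisely the step you defer as ``bookkeeping'', so the decisive point is missing.

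For the record, the standard argument runs differently and does not actually need the prime-order hypothesis. Since $x$ is semisimple it lies in some maximal torus of $\bar G$, hence in $C_{\bar G}(x)^{\circ}$, hence in $Z(C_{\bar G}(x)^{\circ})$, and therefore in \emph{every} maximal torus of the connected reductive group $C_{\bar G}(x)^{\circ}$; by Lang--Steinberg this group has an $F$-stable maximal torus $T$, which is then an $F$-stable maximal torus of $\bar G$ containing $x$, so $T^F\leqs C_{\bar G^F}(x)$. Applying Lang's theorem to the connected preimage of $T$ in the simply connected cover shows that the connecting map $T^F\to H^1(F,Z(\bar G_{\rm sc}))\cong \bar G^F/G_0$ is surjective, i.e.\ $\bar G^F=T^F G_0\leqs C_{\bar G^F}(x)G_0$, which is exactly the equality $x^{G_0}=x^{\bar G^F}$. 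If you only intend to cite GLS in the paper, none of this matters; but as written your sketch would not survive being expanded into a proof.
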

In particular, this tells us that for prime order semisimple elements in $G_0$ there are the same number of conjugacy classes in $G_0$ as in ${\rm Inndiag}(G_0)$.

We now discuss the notation we use for semisimple elements. Take $x\in{\rm Inndiag}(G_0)$ to be an element of odd prime order $r\neq p$ such that $r$ is a primitive prime divisor of $q^i-1$ for some $i\geqs 1$. Following \cite{BG_book}, we define 
\begin{equation*}
c=
\begin{cases}
2i & \mbox{if } i \mbox{ is odd and } G_0\neq \Li_n(q)\\
i/2 & \mbox{if } i\equiv 2\imod 4 \mbox{ and } G_0= \Un_n(q)\\
i &\mbox{otherwise}
\end{cases}.
\end{equation*} 

Assume $c\geqs 2$ then by \cite[Lemma 3.1.3]{BG_book} we can write 
$$x=\hat{x}Z,$$
for some unique $\hat{x}$ of order $r$ in the corresponding matrix group to $G_0$ and where $Z$ is the center of this matrix group.
In order to highlight some of the relevant notation we provide the details in the linear case, that is in the case when $G_0=\Li_n(q)$ (note that the notation and set up is similar in the other classical groups with slight variations, see \cite[Chapter 3]{BG_book}). Set $G={\rm GL}_n(q)$ and let $\hat{x}\in G$ be an element of order $r$ and $T(r)$ denote the set of non-trivial $r^{{\rm th}}$ roots of unity in $\mathbb{F}_{q^i}$. Then $\hat{x}$ is diagonalisable over $\mathbb{F}_{q^i}$ but not any proper subfield, so $\hat{x}$ fixes a direct sum decomposition 
$$V=U_1\oplus \dots\oplus U_s\oplus C_V(\hat{x})$$
by Maschke's Theorem. Here $C_V(\hat{x})$ denotes the 1-eigenspace of $\hat{x}$ and each $U_j$ is an $i$-dimensional subspace on which $\hat{x}$ acts irreducibly. The set of eigenvalues of $\hat{x}$ on $U_j\otimes\mathbb{F}_{q^i}$ are of the form $\Lambda=\{\lambda,\lambda^q,\dots,\lambda^{q^{i-1}}\}$ for some $\lambda\in T(r)$; this is precisely an orbit on $T(r)$ under the action of the Frobenius automorphism $\sigma:\mu\longmapsto \mu^q$ of $\mathbb{F}_{q^i}$. There are $t=(r-1)/i$ distinct $\sigma$-orbits, which we label as $\Lambda_1,\dots,\Lambda_t$. By \cite[Lemma 3.1.7]{BG_book}, two elements of order $r$ in $G$ are conjugate if and only if they have the same multiset of eigenvalues in $\mathbb{F}_{q^i}$, so we may abuse notation and write 
$$\hat{x}=[\Lambda_1^{a_1},\dots,\Lambda_t^{a_t},I_e],$$
where $a_j$ denotes the multiplicity of $\Lambda_j$ as a multiset of eigenvalues of $\hat{x}$ on $V\otimes \mathbb{F}_{q^i}$ and $e=\dim C_V(\hat{x})$. In particular, the conjugacy classes of elements of order $r$ in ${\rm PGL}_n(q)={\rm Inndiag}(\Li_n(q))$ are uniquely determined by the multisets of eigenvalues in $\mathbb{F}_{q^i}$. With suitable changes there are similar descriptions for the other classical groups ${\rm GU}_n(q)$, ${\rm Sp}_n(q)$ and ${\rm O}^{\epsilon}_n(q)$. For example, the main difference in ${\rm Sp}_n(q)$ is that when $i$ is odd the $\Lambda_j$ sets arise in inverse pairs. Thus elements of order $r$ in ${\rm PGSp}_n(q)$, when $i$ is odd, have the form $x=[(\Lambda_1,\Lambda_1^{-1})^{a_1},\dots,(\Lambda_{t/2},\Lambda_{t/2}^{-1})^{a_{t/2}},I_e]Z$. We direct the reader to \cite[Chapter 3]{BG_book} for more details.

Now assume $c=1$. In this case we still represent elements of prime order $r$ as $x=\hat{x}Z$, with $\hat{x}$ an element of order $r$ in the corresponding matrix group to $G_0$ and $Z$ is the center of this matrix group. However in this case $x$ does not lift to a unique element $\hat{x}$ of order $r$ in the corresponding matrix group. Here extra conjugacy classes of elements of order $r$ exist that are not present in the case $c\geqs 2$. We note that $c=1$ only occurs when $G_0=\Li_n(q)$ and $i=1$ or $G_0=\Un_n(q)$ and $i=2$. In both of these cases a similar description of conjugacy is available, see \cite[Propostion 3.2.2]{BG_book} and \cite[Propostion 3.3.3]{BG_book} respectively. 

\subsubsection{More on unitary groups}
We now turn our attention to the unitary case. These results will be useful to prove part (i) of Theorem \ref{t:mainthrm} (see Proposition \ref{p:uniprop}). 

Let $G_0=\Un_n(q)$ and suppose that $r$ is a prime divisor of $|G_0|$ such that $r$ is a primitive prime divisor of $q^i-1$ where $i\equiv 2\imod 4$ and $i\geqs 10$. In order to handle case (i) of Theorem \ref{t:mainthrm} we need to know how the non-inner automorphisms of $G_0$ act on the set of conjugacy classes of elements of order $r$ in $G_0$. 
Any element $x\in {\rm PGU}_n(q)$ of order $r$ is in fact an element of $G_0$ since $|{\rm PGU}_n(q){:}G_0|=(n,q+1)$ is indivisible by $r$. Additionally, $x$ can be written as $x=\hat{x}Z$ where $Z=Z({\rm GU}_n(q))$ and $\hat{x}\in{\rm GU}_n(q)$ is of order $r$. By \cite[Proposition 3.3.2]{BG_book} $\hat{x}$ fixes an orthogonal decomposition of $V$ (the natural $G_0$ module) into irreducible blocks and $C_V(\hat{x})$. Here $C_V({\hat{x}})$ is non-degenerate (or trivial) and the irreducible blocks are non-degenerate $i/2$-spaces on which $\hat{x}$ acts irreducibly with eigenvalues 
\begin{equation}\label{e:unilam}
\Lambda_j=\{\lambda_j,\lambda_j^{q^2},\dots,\lambda_j^{q^{2(b-1)}}\},
\end{equation}
for some $r^{{\rm th}}$ root of unity $\lambda_j\in\mathbb{F}_{q^i}$ and where $b=i/2$. Thus we may abuse notation and write $\hat{x}=[\Lambda_1^{a_1},\dots,\Lambda_s^{a_s},I_e]$, where $s=(r-1)/b$, $a_j$ is the multiplicity of $\Lambda_j$ in the multiset of eigenvalues of $\hat{x}$ and $e=\dim C_V(\hat{x})$ (note the $\Lambda_j$ sets coincide with the $\sigma^2$-orbits in $T(r)$, where $\sigma^2:\mu\longmapsto\mu^{q^2}$). From \cite[Proposition 3.3.2]{BG_book} there exists a bijection 
$$\theta:(a_1,\dots,a_s)\longmapsto([\Lambda_1^{a_1},\dots,\Lambda_s^{a_s},I_e]Z)^{{\rm PGU}_n(q)}$$
between the non-zero $s$-tuples $(a_1,\dots,a_s)\in \mathbb{N}_0^s$ such that $i\sum_{j}a_j\leqs 2n$ and the set of ${\rm PGU}_n(q)$ classes of elements of order $r$ in $G_0$. Additionally we note that $x^{G_0}=x^{{\rm PGU}_n(q)}$ by Theorem \ref{t:semisimp} for any element of order $r$ in $G_0$.

Following \cite{KL}, for $x\in \Aut(G_0)$ we use $\ddot{x}$ to denote the coset $G_0x\in {\rm Out}(G_0)=\Aut(G_0)/G_0$. By 
\cite[Proposition 2.3.5]{KL}, 
$${\rm Out}(G_0)=\langle \ddot{\delta}\rangle{:} \langle \ddot{\phi} \rangle,$$ 
where $|\ddot{\delta}|=(n,q+1)$, $|\ddot{\phi}|=2f$ and $\ddot{\delta}^{\ddot{\phi}}=\ddot{\delta}^p$.
With respect to an orthonormal basis $\{v_1,\dots,v_n\}$ for $V$ we may assume that $\phi$ is the field automorphism of order $2f$ corresponding to the Frobenius map $\sum_i \lambda_iv_i\longmapsto \sum_i\lambda_i^pv_i$ on $V$, and $\delta$ is the diagonal automorphism of order $(n,q+1)$ induced by conjugation by $[\mu,I_{n-1}]$, where $\mu\in\mathbb{F}_{q^2}$ has order $q+1$. 

Take $G=G_0.J$ such that $J\leqs {\rm Out}(G_0)$ and define $\Phi=\{\Lambda_1,\dots,\Lambda_s\}$. Since the diagonal automorphisms act trivially on the set $\Phi$, they do not affect the number of conjugacy classes of elements of order $r$ in $G_0$. However the field automorphisms $\phi^l\in\langle \phi \rangle$ act on $\Phi$ as 
$$\phi^l\cdot\Lambda_j=\{\mu_j^{p^l},\mu_j^{q^2p^l},\dots,\mu_j^{q^{2(b-1)}p^l}\}\in\Phi,$$
 so $\langle \phi \rangle$ induces a permutation on $\Phi$. Thus the number of $G$-classes of elements of order $r$ in $G_0$ depends entirely on how $J$ projects onto $\langle \ddot{\phi} \rangle$. In Lemma \ref{l:phiuni} we prove precisely how many orbits the group $\langle \phi^k \rangle$ has with its action on $\Phi$, where $k$ is some divisor of $2f$.

We begin by stating a well known number theoretic result (see \cite[Proposition 3.3.4]{Numthry} for example). 
\begin{lem}\label{l:mod solns}
Suppose that $a,b,c\in\mathbb{Z}$ such that $a,c\neq 0$ and $(a,c)=d$. Then the linear congruence $ax-b\equiv 0\imod c$ has solutions if and only if $d$ divides $b$. Moreover, if $d$ divides $b$ then there exist exactly $d$ solutions modulo $c$. 
\end{lem}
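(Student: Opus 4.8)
The plan is to recognise this as the standard theory of linear congruences and reduce it to a linear Diophantine equation via B\'ezout's identity. First I would note that, for a fixed $x\in\Z$, the relation $ax-b\equiv 0\imod c$ holds if and only if there is some $y\in\Z$ with $ax+cy=b$; so the congruence is solvable if and only if $b$ lies in the set $S=\{ax+cy:x,y\in\Z\}$. Since $d=(a,c)$ divides both $a$ and $c$, every element of $S$ is a multiple of $d$, so $S\subseteq d\Z$; conversely $d\in S$ by B\'ezout, and $S$ is closed under $\Z$-linear combinations, so $d\Z\subseteq S$. Hence $S=d\Z$, and the congruence is solvable precisely when $d\mid b$, giving the first assertion.

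For the counting statement I would assume $d\mid b$ and fix one solution $x_0$. Then an arbitrary $x\in\Z$ is a solution if and only if $a(x-x_0)\equiv 0\imod c$. Writing $a=da'$ and $c=dc'$ with $(a',c')=1$, this is equivalent to $c'\mid a'(x-x_0)$, and cancelling the unit $a'$ modulo $c'$ (using $(a',c')=1$) this holds if and only if $x\equiv x_0\imod{c'}$. Finally I would pass to residues modulo $c$: the solutions in a complete residue system mod $c=dc'$ are exactly $x_0,\,x_0+c',\,x_0+2c',\dots,x_0+(d-1)c'$, and these $d$ values are pairwise incongruent modulo $c$, so there are precisely $d$ solutions modulo $c$.

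There is no substantial obstacle here — the result is elementary — and the only steps needing a moment's care are the cancellation passing from $c'\mid a'(x-x_0)$ to $c'\mid(x-x_0)$, where coprimality of $a'$ and $c'$ is essential, and the bookkeeping that the arithmetic progression $x_0+jc'$ for $0\leqs j<d$ yields exactly $d$ distinct classes modulo $c$ (no more, because a further step $x_0+dc'$ returns to $x_0$ mod $c$, and no fewer, because $0<jc'<c$ for $0<j<d$).
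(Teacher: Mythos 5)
Your proof is correct and is the standard argument: reduce the congruence to the linear Diophantine equation $ax+cy=b$ via B\'ezout for solvability, then count solutions by showing they form a single residue class modulo $c/d$, which splits into exactly $d$ classes modulo $c$. The paper gives no proof of this lemma at all — it is quoted as a well-known result with a citation to \cite[Proposition 3.3.4]{Numthry} — so there is nothing to compare against; your argument supplies exactly the classical proof that reference contains.
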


Recall that $P_a^b$ denotes the set of primitive prime divisors of $a^b-1$ for positive integers $a$ and $b$. In the following lemma we use $i$, $\Lambda_j$, $\Phi$, $q$ and $\phi$ as defined above. 

\begin{lem}\label{l:phiuni}
Suppose $r\in P_q^i\cap P_p^m$ for some $m\leqs if$ and let $D=\langle \phi^k \rangle$ where $2f=kh$. Define $a:=(m,f)$ and $d:=(a,k)$. Then the orbits of $D$ acting on $\Phi$ are of size $\frac{at}{k}$, where $t=2$ if $\frac{k}{d}$ is odd, otherwise $t=1$.
\end{lem}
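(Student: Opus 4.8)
\textbf{Proof plan for Lemma \ref{l:phiuni}.}

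The plan is to analyse the action of $D = \langle \phi^k \rangle$ on $\Phi$ by translating it into a purely arithmetic question about cosets in the cyclic group of $r$-th roots of unity, or equivalently about residues modulo $r$. Recall that each $\Lambda_j$ is a $\sigma^2$-orbit $\{\lambda_j, \lambda_j^{q^2}, \dots, \lambda_j^{q^{2(b-1)}}\}$ in $T(r)$, where $b = i/2$ and $\sigma^2 : \mu \mapsto \mu^{q^2}$; these orbits partition $T(r)$, so $\Phi$ is in bijection with the orbit set $T(r)/\langle q^2 \rangle$. Writing $r \in P_q^i$ and fixing a generator $\zeta$ of the (cyclic, order-$r$) group $\mu_r$ of $r$-th roots of unity, the map $\zeta^u \mapsto u$ identifies $T(r) = \mu_r \setminus \{1\}$ with $(\mathbb{Z}/r\mathbb{Z})^\times$, and the $\sigma^2$-action becomes multiplication by $q^2$. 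So $\Phi \leftrightarrow (\mathbb{Z}/r\mathbb{Z})^\times / \langle q^2 \rangle$, a set of size $s = (r-1)/b$ since $q^2$ has order $b = i/2$ modulo $r$ (this uses Lemma \ref{l:Lemma A.1}: $r$ is primitive for $q^i - 1$, and one checks $q^2$ has order $b$ because $i \equiv 2 \pmod 4$).

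Next I would express the $\phi^k$-action in these coordinates. Since $\phi : \sum \lambda_i v_i \mapsto \sum \lambda_i^p v_i$ corresponds on $\mu_r$ to raising to the $p$-th power, $\phi^k$ corresponds to multiplication by $p^k$ on $(\mathbb{Z}/r\mathbb{Z})^\times$. Therefore the orbits of $D$ on $\Phi$ are exactly the orbits of $\langle p^k, q^2 \rangle$ on $(\mathbb{Z}/r\mathbb{Z})^\times$, and since these are cosets of the subgroup $\langle p^k, q^2 \rangle \leqslant (\mathbb{Z}/r\mathbb{Z})^\times$, every $D$-orbit on $\Phi$ has the same size, namely $|\langle p^k, q^2 \rangle| / b$. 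So the whole lemma reduces to computing the order of the subgroup of $(\mathbb{Z}/r\mathbb{Z})^\times$ generated by $p^k$ and $q^2$. Now $q = p^f$, so $q^2 = p^{2f}$ and $2f = kh$, giving $q^2 = (p^k)^h \in \langle p^k \rangle$; hence $\langle p^k, q^2 \rangle = \langle p^k \rangle$ and I just need the multiplicative order of $p^k$ modulo $r$. Using the hypothesis $r \in P_p^m$, the order of $p$ modulo $r$ is $m$ (Lemma \ref{l:Lemma A.1}), so the order of $p^k$ modulo $r$ is $m / (m, k)$. This would give orbit size $\dfrac{m}{(m,k)\, b} = \dfrac{2m}{(m,k)\, i}$, which I then need to reconcile with the claimed formula $\dfrac{at}{k}$ where $a = (m,f)$, $d = (a,k)$, and $t = 2$ or $1$ according as $k/d$ is odd or even; this is the step where Lemma \ref{l:mod solns} (on counting solutions of linear congruences) and careful $2$-adic bookkeeping enter, using $2f = kh$ to relate $(m,k)$ to $(m,f) = a$ and the parity of $k/d$.

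The main obstacle I anticipate is precisely this last reconciliation: showing $\dfrac{m}{(m,k)} \cdot \dfrac{2}{i} = \dfrac{at}{k}$, i.e. that $\dfrac{2mk}{i} = at(m,k)$. One has $b = i/2$ equal to the order of $q^2 = p^{2f}$ mod $r$, which is $m/(m,2f)$; since $2f = kh$ this is $m/(m,kh)$, and here the interaction between $(m,k)$, $(m,h)$ and $(m,kh)$ is not multiplicative in general, so the $2$-part must be tracked separately — this is exactly where the auxiliary integer $d = (a,k)$ and the dichotomy on the parity of $k/d$ come from, and where Lemma \ref{l:mod solns} is invoked to count how the congruence governing membership in $\langle p^k \rangle$ behaves. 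Once the identity $b\,(m,k) = m\,(\text{order of }p^k) $ is unwound in terms of $a$ and $d$ and the parity case split is made, the orbit-size formula $\frac{at}{k}$ follows, and the fact that all orbits have equal size is automatic from the coset description. I would also record the edge cases ($k = 2f$, or $m = if$) to confirm the formula degenerates correctly.
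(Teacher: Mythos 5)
Your route is genuinely different from the paper's and, as far as it goes, cleaner. The paper fixes $\Lambda_j$, computes the stabiliser $D_{\Lambda_j}$ by unwinding the congruence $p^{2fw+lk}\equiv 1\imod r$ with the help of Lemma \ref{l:mod solns}, and then applies orbit--stabiliser; you instead identify $\Phi$ with $(\mathbb{Z}/r\mathbb{Z})^{\times}/\langle q^2\rangle$ and read off the common orbit size as a subgroup index, which makes the equality of all orbit sizes automatic and reduces the whole lemma to orders of elements modulo $r$. Your computation is correct: since $q^2=(p^k)^h$ you get $\langle p^k,q^2\rangle=\langle p^k\rangle$, of order $m/(m,k)$, hence orbit size $\frac{2m}{(m,k)\,i}=\frac{2a}{(m,k)}$, using $m=ai$ (which follows, exactly as in the paper's opening paragraph, from $r\in P_q^i\cap P_p^m$).

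The obstacle you flag at the end --- reconciling $\frac{2a}{(m,k)}$ with $\frac{at}{k}$ --- is real, but it is not a defect of your argument: the identity $\frac{2mk}{i}=at(m,k)$ that you would need is false in general, because the displayed formula in the lemma should read $\frac{at}{d}$ rather than $\frac{at}{k}$. Indeed, writing $f=az$ with $(z,i)=1$ and using $k\mid 2f$ and $i\equiv 2\imod 4$, a prime-by-prime comparison gives $(m,k)=(2a,k)=\frac{2d}{t}$, so your expression equals $\frac{at}{d}$; this coincides with $\frac{at}{k}$ only when $k\mid a$. A concrete check: take $G_0=\Un_5(8)$ and $r=11\in P_8^{10}\cap P_2^{10}$, so $i=m=10$, $f=3$, $a=1$, and $\Phi$ has $s=2$ elements; for $k=3$ (so $D=\langle\phi^3\rangle$, $d=1$, $t=2$) the map $\mu\mapsto\mu^{8}$ interchanges the two classes, so the unique orbit has size $2=\frac{at}{d}$, whereas $\frac{at}{k}=\frac{2}{3}$ is not even an integer. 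The paper's own proof essentially establishes the correct membership criterion ($\phi^{lk}\in D_{\Lambda_j}$ if and only if $a\mid lk$, respectively $2a\mid lk$, in the two parity cases), but then counts the admissible $l$ in $\{0,\dots,h-1\}$ as if they were spaced $a/k$ apart rather than $a/(a,k)$ apart, which is exactly where $k$ gets substituted for $d$. So do not try to force your answer into the form $\frac{at}{k}$: your value $\frac{2m}{(m,k)\,i}=\frac{at}{d}$ is the correct one, and the substitution of $d$ for $k$ should be carried through to Corollary \ref{c:unitary case cor}, whose count of classes becomes $2wd/t$ rather than $2wk/t$.
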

\begin{proof}

Since $a=(m,f)$ we may write $m=av$ and $f=az$ such that $(v,z)=1$. By assumption $r$ is a primitive prime divisor of both $q^i-1$ and $p^m-1$, which implies that $v=i$, that is $m=ai$. We note that $a$ is odd since $(a,i)=1$ and $i$ is even.

In view of the orbit stabiliser theorem we focus our attentions on the size of the stabilisers in $D$ of each $\Lambda_j\in\Phi$, which we denote as $D_{\Lambda_j}$. 
Fix $\Lambda_j\in\Phi$ and take $\phi^{lk}\in D$ for some $0\leqs l<h$. We may assume $l\geqs 0$, otherwise we have the identity element. Here $\phi^{lk}\in D_{\Lambda_j}$ if and only if $\lambda_j^{q^{2w}p^{lk}}=\lambda_j$ for some $1\leqs w\leqs i/2-1$. Since $\lambda_j\in T(r)$, this occurs if and only if $p^{2wf+lk}\equiv 1\imod r$. In turn this occurs if and only if $m$ divides $2fw+lk$, which is equivalent to saying that $lk=xa$ for some $1\leqs x\leqs 2z-1$ and that there exists a $c\in\{1,\dots,2z-1\}$ such that $2zw=ci-x$. 

Assume $lk=xa$ for some $1\leqs l<h$ and $1\leqs x\leqs 2z-1$. First suppose that $x$ is odd. Since $i$ is even there does not exist a $c$ such that $2zw=ci-x$. Next suppose $x$ is even and note that $z$ is odd since $(i,z)=1$. Then by Lemma \ref{l:mod solns} there always exists at least one $1\leqs c \leqs 2z-1$ such that $ci-x\equiv 0\imod {2z}$. So we conclude that $\phi^{lk}\in D_{\Lambda_j}$ for $1\leqs l<h$ if and only if $lk=xa$ for some even $1\leqs x\leqs 2z-1$.

Suppose first that $k/d$ is even. Then $lk/a$ is even, so $|D_{\Lambda_j}|$ is precisely the number of multiples of $a/k$ in $\{0,\dots,h-1\}$. Thus $|D_{\Lambda_j}|=2z$ since $h=2za/k$. 
Finally suppose that $k/d$ is odd. In this case $lk/a$ is even if and only if $l$ is even. Thus in a similar manner $|D_{\Lambda_j}|$ is precisely the number of even multiples of $a/k$ in $\{0,\dots,h-1\}$, so $|D_{\Lambda_j}|=z$.

Since the size of $|D_{\Lambda_j}|$ is independent of $j$ we conclude that all the orbits have the same size. Thus the result follows by the orbit stabiliser theorem since $|D|=h=2za/k$.
\end{proof}

\begin{cor}\label{c:unitary case cor}
Let $G$ be an almost simple group with socle $G_0=\Un_n(q)$, where $n\geqs 5$ is odd. Let $r$ be a primitive prime divisor of both $q^{2n}-1$ and $p^m-1$ and define $a:=(m,f)$. Then there exists a unique $G$-class of elements of order $r$ in $G_0$ if and only if
\begin{itemize}
\item[{\rm (i)}] $r=2na+1$; and 
\item[{\rm (ii)}] $G/G_0$ projects onto $\langle \ddot{\phi}\rangle$.
\end{itemize}
\end{cor}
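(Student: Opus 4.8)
The plan is to count the $G$-classes of elements of order $r$ in $G_0$ directly, by combining the parametrisation of such classes by the set $\Phi$ with the orbit computation carried out in Lemma \ref{l:phiuni}.

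First I would fix the invariants attached to $r$. Since $r$ is a primitive prime divisor of $q^{2n}-1$, the multiplicative order of $q$ modulo $r$ is $i=2n$, and as $n\geqs 5$ is odd we have $i\equiv 2\imod 4$ and $i\geqs 10$, so the set-up above applies with $b=i/2=n$. Here the constraint $i\sum_j a_j\leqs 2n$ forces $\sum_j a_j=1$, so the bijection $\theta$ identifies the ${\rm PGU}_n(q)$-classes of elements of order $r$ in $G_0$ — equivalently, by Theorem \ref{t:semisimp}, the $G_0$-classes — with the $s=(r-1)/n$ elements of $\Phi=\{\Lambda_1,\dots,\Lambda_s\}$. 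As explained just before the corollary, the diagonal automorphisms fix every $\Lambda_j$, so the number of $G$-classes of such elements equals the number of orbits on $\Phi$ of the image $\overline J$ of $G/G_0$ under the projection of ${\rm Out}(G_0)$ onto $\langle\ddot\phi\rangle$.

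Since $\overline J$ is a subgroup of the cyclic group $\langle\ddot\phi\rangle$ of order $2f$, I would write $\overline J=\langle\ddot\phi^k\rangle$ with $k\mid 2f$, so that $G/G_0$ projects onto $\langle\ddot\phi\rangle$ exactly when $k=1$. Applying Lemma \ref{l:phiuni} with $i=2n$, every orbit of $\overline J$ on $\Phi$ has size $at/k$ with $t\in\{1,2\}$, so the number of $G$-classes is $k(r-1)/(nat)$ and $G$ has a unique class of elements of order $r$ in $G_0$ if and only if $k(r-1)=nat$. To read off conditions (i) and (ii) I need the lower bound $r\geqs 2na+1$: arguing as in the proof of Lemma \ref{l:phiuni}, write $m=av$ with $a=(m,f)$; since $r$ is a primitive prime divisor of both $q^{2n}-1$ and $p^m-1$ we get $v=2n$, hence $m=2na$, and then Lemma \ref{l:Lemma A.1} gives $r\equiv 1\imod{2na}$. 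Now $t\leqs 2$ gives $nat\leqs 2na$, while $k\geqs 1$ and $r\geqs 2na+1$ give $k(r-1)\geqs r-1\geqs 2na$; so $k(r-1)=nat$ forces $t=2$, $k=1$ and $r=2na+1$. Conversely, if $r=2na+1$ and $k=1$ then $(a,1)=1$ gives $t=2$, whence $at/k=2a=(r-1)/n=|\Phi|$ and $\overline J$ is transitive on $\Phi$, i.e. $G$ has a unique class. This yields the stated equivalence.

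The argument is essentially bookkeeping once Lemma \ref{l:phiuni} is in place; the only points needing care are the identification of the class set with $\Phi$ (in particular the computations $b=n$, $\sum_j a_j=1$ and $s=(r-1)/n$) and marrying the orbit formula to the divisibility bound $r\geqs 2na+1$. I expect the latter — deducing $m=2na$ and hence $r\geqs 2na+1$ — to be the main substantive step, and it is already contained in the proof of Lemma \ref{l:phiuni}, so no genuinely new ingredient is required: the corollary is a repackaging of that lemma.
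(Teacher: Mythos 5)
Your argument is correct and follows essentially the same route as the paper: parametrise the $G_0$-classes of order-$r$ elements by $\Phi$ via the representatives $[\Lambda_j]Z$, deduce $m=2na$ and $r\equiv 1\imod{2na}$, and count orbits using Lemma \ref{l:phiuni}. The only cosmetic difference is that the paper treats the case where $G/G_0$ projects trivially to $\langle\ddot{\phi}\rangle$ separately (there $K_{G_0}(G,r)=|\Phi|=2aw\geqs 2$ directly) rather than feeding $k=2f$ into the orbit formula, but your inequality $k(r-1)\geqs 2na\geqs nat$ disposes of that case all the same.
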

\begin{proof}
Let $K_{G_0}(G,r)$ be the number of $G$-classes of elements of order $r$ in $G_0$ and let $J = G/G_0 \leqs {\rm Out}(G_0)$. 
Since $r$ is a primitive prime divisor of $p^m-1$, $m=2na$ and $r=2naw+1$ for some positive integer $w$. 
The ${\rm PGU}_n(q)$-classes of elements of order $r$ in $G_0$ are represented by the elements $x_j=[\Lambda_j]Z$, where $\Lambda_j$ is as defined in \eqref{e:unilam} and $1\leqs j\leqs s=(r-1)/n=2aw$ (see \cite[Proposition 3.3.2]{BG_book}).

Assume first that the projection of $J$ to $\langle \ddot{\phi}\rangle$ is trivial. Then $G\leqs {\rm PGU}_n(q)$ so $K_{G_0}(G,r)=2aw\geqs 2$. Now assume that the projection of $J$ to $\langle \ddot{\phi}\rangle$ is nontrivial, say $J$ projects onto $\langle \ddot{\phi}^k\rangle$ for some $1\leqs k<2f$. Then by Lemma \ref{l:phiuni}, $K_{G_0}(G,r)= 2kw/t$ where $t=2$ if $k/(a,k)$ is odd and $t=1$ otherwise. Therefore $K_{G_0}(G,r)= 2kw/t=1$ if and only if $k=w=1$.
\end{proof}

\subsubsection{Unipotent elements}

Now we say a few words on the unipotent classes of elements of order $p$. Let $x\in {\rm Inndiag}{(G_0)}$ be an element of order $p$. Then by \cite[Lemma 3.1.3]{BG_book}, $x$ lifts to an element of order $p$ in the corresponding matrix group, so we may write $x=\hat{x}Z$, where again $\hat{x}$ is an element of order $p$ in the corresponding matrix group to $G_0$ and $Z$ is the center of this matrix group. Then here we may write $\hat{x}$ as the Jordan decomposition of $\hat{x}$ on $V$, 
$$\hat{x}=[J_p^{a_p},\dots,J_1^{a_1}],$$
where $J_i$ denotes a standard unipotent Jordan block of size $i$ and $a_i$ its multiplicity. In particular, in the symplectic and orthogonal cases there are conditions on the multiplicities of the Jordan blocks. A convenient source for the following lemma is \cite[Lemma 3.4.1, 3.5.1]{BG_book}.

\begin{lem}\label{l:osjordan}
Suppose $\hat{x}\in{\rm Sp}_n(q)$ or ${\rm O}^{\epsilon}_n(q)$ is an element of order $p$ odd and with Jordan form $[J_p^{a_p},\dots,J_1^{a_1}]$. If $\hat{x}\in{\rm Sp}_n(q)$ then $a_i$ is even for all odd $i$. Similarly if $\hat{x}\in{\rm O}^{\epsilon}_n(q)$ then $a_i$ is even for all even $i$.
\end{lem}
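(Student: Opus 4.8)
The statement is classical (it is essentially Wall's analysis of unipotent classes in the symplectic and orthogonal groups), and the quickest route would be to cite \cite[Lemmas 3.4.1 and 3.5.1]{BG_book}; but I would prefer to prove it directly as follows. Work with a non-degenerate $\hat{x}$-invariant bilinear form $B$ on $V$ of \emph{type} $\epsilon$, where $\epsilon=-1$ (so $B$ alternating) when $\hat{x}\in{\rm Sp}_n(q)$ and $\epsilon=+1$ (so $B$ symmetric) when $\hat{x}\in{\rm O}^{\epsilon}_n(q)$ — in odd characteristic this bilinear form carries the same information as the quadratic form. Put $N=\hat{x}-1$; since $p$ is odd and $\hat{x}^p=1$ we have $N^p=0$. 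Write $K_j=\ker N^j$ and $r_j=\#\{\text{Jordan blocks of }\hat{x}\text{ of size }\geqs j\}=\sum_{i\geqs j}a_i$, so that $a_j=r_j-r_{j+1}$ and $\dim(K_j/K_{j-1})=r_j$; the goal is to show that $a_j$ is even whenever $\epsilon(-1)^{j-1}=-1$, which is exactly ``$j$ odd'' in the symplectic case and ``$j$ even'' in the orthogonal case.

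The first step is the orthogonality relation $(\operatorname{im}N^j)^{\perp}=K_j$ (equivalently $K_j^{\perp}=\operatorname{im}N^j$) for all $j\geqs0$. Iterating $B(v,Nw)=B(\hat{x}^{-1}v,w)-B(v,w)$, which follows from $\hat{x}$-invariance of $B$, gives $B(v,N^jw)=B\big((\hat{x}^{-1}-1)^jv,w\big)$; since $B$ is non-degenerate, $v\in(\operatorname{im}N^j)^{\perp}$ iff $(\hat{x}^{-1}-1)^jv=0$, and $\ker(\hat{x}^{-1}-1)^j=\ker(\hat{x}-1)^j=K_j$ because $\hat{x}^{-1}-1=-\hat{x}^{-1}(\hat{x}-1)$ with $\hat{x}^{-1}$ commuting with $\hat{x}-1$.

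Next, for each $j\geqs1$ I would introduce a bilinear form on the graded piece $\operatorname{gr}_j:=K_j/K_{j-1}$ by $\langle\bar a,\bar b\rangle_j:=B(a,N^{j-1}b)$ for $a,b\in K_j$. This is well defined: $N^{j-1}$ kills $K_{j-1}$, and $N^{j-1}K_j\subseteq\operatorname{im}N^{j-1}=K_{j-1}^{\perp}$, so neither choice of representative affects the value. The key computation is its symmetry type: using the identity of the first step together with the fact that $N^{j-1}b\in K_1=C_V(\hat{x})$ is fixed by $\hat{x}$, one obtains $\langle\bar b,\bar a\rangle_j=\epsilon(-1)^{j-1}\langle\bar a,\bar b\rangle_j$. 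So $\langle\,,\,\rangle_j$ is symmetric if $\epsilon(-1)^{j-1}=+1$, and skew-symmetric — hence alternating, since $p$ is odd — if $\epsilon(-1)^{j-1}=-1$.

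Finally I would pin down the non-degenerate quotient. Since $N^{j-1}$ induces an isomorphism $\operatorname{gr}_j\xrightarrow{\sim}N^{j-1}K_j$, the radical of $\langle\,,\,\rangle_j$ is governed by the pairing $K_j\times N^{j-1}K_j\to\mathbb{F}_q$ induced by $B$: its right radical is $N^{j-1}K_j\cap K_j^{\perp}=N^{j-1}K_j\cap\operatorname{im}N^j$, which in a Jordan basis is the span of the socle vectors of the blocks of size $\geqs j+1$, of dimension $r_{j+1}$. Hence this pairing has rank $r_j-r_{j+1}=a_j$, the radical of $\langle\,,\,\rangle_j$ on $\operatorname{gr}_j$ has dimension $r_{j+1}$, and $\langle\,,\,\rangle_j$ descends to a non-degenerate form of type $\epsilon(-1)^{j-1}$ on an $\mathbb{F}_q$-space of dimension $a_j$. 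When $\epsilon(-1)^{j-1}=-1$ this form is alternating, and a non-degenerate alternating form forces even dimension; so $a_j$ is even — that is, $a_i$ is even for all odd $i$ when $\hat{x}\in{\rm Sp}_n(q)$ and for all even $i$ when $\hat{x}\in{\rm O}^{\epsilon}_n(q)$. The one place where real care is needed is this last step: one must correctly identify the radicals of the auxiliary pairing $K_j\times N^{j-1}K_j\to\mathbb{F}_q$ using the orthogonality relation of the first step, and combine this with the sign arising in the type computation of the third step.
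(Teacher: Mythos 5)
Your proof is correct, and it takes a genuinely different route from the paper: the paper's entire proof of this lemma is a citation of \cite[Lemmas 3.4.1 and 3.5.1]{BG_book} (a reference you also correctly identify as the quick option), whereas you give a self-contained argument via the standard filtration $K_j=\ker(\hat{x}-1)^j$ and the induced $\pm$-symmetric forms on the graded pieces. I checked the three steps: the orthogonality relation $(\operatorname{im}N^j)^{\perp}=K_j$ follows as you say from iterating $B(v,Nw)=B((\hat{x}^{-1}-1)v,w)$ and the factorisation $\hat{x}^{-1}-1=-\hat{x}^{-1}N$; the sign $\epsilon(-1)^{j-1}$ in the symmetry computation is right (the key point being that $N^{j-1}b\in K_1$ is fixed by $\hat{x}$, so the powers of $\hat{x}^{-1}$ disappear); and the radical computation correctly identifies $N^{j-1}K_j\cap\operatorname{im}N^j$ as having dimension $r_{j+1}$ (a form-free Jordan-basis count), so the induced form on a space of dimension $a_j$ is non-degenerate and alternating exactly when $\epsilon(-1)^{j-1}=-1$, forcing $a_j$ even in the asserted cases. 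What your approach buys is transparency and independence from \cite{BG_book}: it makes visible why the parity constraint flips between the symplectic and orthogonal cases (it is exactly the sign $\epsilon(-1)^{j-1}$), at the cost of about a page of argument where the paper spends one line; for the purposes of this paper the citation is entirely adequate, but your proof would be the right thing to include if one wanted the lemma to stand alone. The only caveat, which you flag yourself, is that the argument as written uses $p$ odd in two places (passing between the quadratic and bilinear forms, and upgrading skew-symmetric to alternating), which is consistent with the hypothesis of the lemma.
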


In the linear case elements of order $p$ are conjugate in ${\rm GL}_n(q)$ if and only if they have the same Jordan form. Again there is a similar description of the conjugacy classes of unipotent elements in other classical groups ${\rm GU}_n(q)$, ${\rm Sp}_n(q)$ and ${\rm O}^{\epsilon}_n(q)$. In the symplectic and orthogonal cases the main difference is in the case $p=2$. Here elements of order $p$ have Jordan form $[J_2^s,J_1^{n-2s}]$ and $G_0$ contains at least two conjugacy classes of elements with this Jordan form when $s$ is even. We refer the reader to \cite[Chapter 3]{BG_book} for more details.

\section{Proof of Theorem \ref{t:primedivs}}\label{s:primedivs}
Let $G$ be an almost simple classical group with socle $G_0\in\widetilde{\mathcal{G}}$, where we recall that $\widetilde{\mathcal{G}}$ is the set of all finite simple classical groups over $\mathbb{F}_q$, not including duplicates obtained via isomorphisms
and $q=p^f$ for $p$ prime and $f\geqs 1$. Let $H$ be a point stabiliser in $G$ and recall that $H_0=H\cap G_0$ and $\pi(X)$ denotes the number of prime divisors of $|X|$. In this section we prove Theorem \ref{t:primedivs}, providing a classification of the pairs $(G_0,H)$ such that $\pi(G_0)-\pi(H_0)\leqs 1$.
Recall that $H\in \mathcal{C}\cup \mathcal{N}\cup \mathcal{S}$ by Aschbacher's subgroup structure theorem (see Section \ref{s:subgrp struc}). We split the proof of Theorem \ref{t:primedivs} into two subsections. Firstly we handle the case when $H\in \mathcal{C}\cup\mathcal{N}$, that is $H$ is a geometric or novelty subgroup. Secondly, we handle the cases in which $H$ is a non-geometric subgroup, that is $H\in\mathcal{S}$. For this section we adopt a different approach in general, since unlike the subgroups in $\mathcal{C}\cup\mathcal{N}$ a complete list of the subgroups contained in $\mathcal{S}$ does not exist. For these subgroups we use results found in Guralnick et al. \cite{GPPS}, in which they provide a description of subgroups $M\in {\rm GL}_n(q)$ such that $|M|$ is divisible by a primitive prime divisor of $q^i-1$ for $\frac{n}{2}<i\leqs n$. This provides a way of finding prime divisors of $|G_0|$ that do not divide $|H_0|$. Additionally, for certain low dimensional cases we use the tables in Bray, Holt and Roney-Dougal \cite{BHR}.

\begin{table}
\[
\begin{array}{llcll} \hline
\mbox{Case} & G_0 & &\mbox{Type of } H & \mbox{Conditions} \\ \hline
\mbox{I} & \Li_n(q) &\mathcal{C}_1 & {\rm P}_1 & (n,q)=(6,2)\\
\mbox{II} &            &                    & {\rm GL}_1(q)\oplus{\rm GL}_{n-1}(q) & (n,q)=(6,2)\\
\mbox{III}&            &\mathcal{S} & A_7 & (n,q)=(4,2)\\
\mbox{IV} & \Un_n(q) &\mathcal{C}_1 & {\rm P}_2 & (n,q)=(4,2)\\
\mbox{V} &              &\mathcal{C}_5 & {\rm Sp}_n(q) & (n,q)=(4,2)\\
\mbox{VI}&          &\mathcal{S} &{\rm M}_{22} & (n,q)=(6,2)\\
\mbox{VII} &             &                 & \Li_{2}(11) & (n,q)=(5,2)\\
\mbox{VIII} &             &                 & \Li_{2}(7) & (n,q)=(3,3)\\
\mbox{IX} &             &                 & \Li_{3}(4) & (n,q)=(4,3)\\
\mbox{X} &             &                 & A_7 & (n,q)=(3,5),(4,3)\\
\mbox{XI} & {\rm PSp}_n(q) & \mathcal{C}_3 & {\rm Sp}_{n/2}(q^2) & n=4 \\
\mbox{XII} &                      & \mathcal{C}_8 & {\rm O}^-_n(q) & n\equiv 0\imod 4,\mbox{ or } (n,q)=(6,2) \\
\mbox{XIII} &                        & \mathcal{S}& A_7 & (n,q)=(4,7)\\
\mbox{XIV} &{\rm P}\Omega^{+}_n(q) & \mathcal{C}_1 & {\rm P}_m & (n,q,m)=(8,2,1),(8,2,4) \\
\mbox{XV} &                                    &                      & {\rm O}_1(q)\perp{\rm O}_{n-1}(q) & n\equiv 0\imod 4 \\
\mbox{XVI} &                                    &                      &{\rm Sp}_{n-2}(q) & n\equiv 0\imod 4\\
\mbox{XVII} &                                      & \mathcal{S} & \Omega_7(q) & n=8 \mbox{ and } p\neq2 \\
\mbox{XVIII} &                                     &                  & {\rm Sp}_6(q) & n=8 \mbox{ and } p=2 \\
\mbox{XIX} &             &                 & A_9& (n,q)=(8,2)\\
\mbox{XX} & \Omega_n(q) & \mathcal{C}_1&{\rm O}_1(q)\perp{\rm O}^-_{n-1}(q) & n\equiv 1\imod 4 \\
\hline
\end{array}
\]
\caption{ $\pi(G_0)=\pi(H_0)$}
\label{tab:pi0}
\end{table}

\begin{table}
\[
\begin{array}{lllll} \hline
 G_0 & (n,q,r)& \mbox{Type of } H\\ \hline
\Li_n(q) & (7,2,127) & {\rm P}_2,\, {\rm GL}_2(q)\oplus{\rm GL}_{5}(q),\,{\rm P}_{1,6}\\
           & (6,2,31)  & {\rm P}_2,\, {\rm GL}_2(q)\oplus{\rm GL}_{4}(q),\,{\rm P}_{1,5}\\
           & (5,3,13)  & {\rm M}_{11} \\
           & (4,4,17) & {\rm GL}_4(q^{1/2})\\
           & (3,8,73)&{\rm GL}_1(q)\wr S_3,\, {\rm GL}_3(q^{1/3})\\
           & (3,4,7)&  A_6 \\
           & (3,4,5) & {\rm GL}_3(q^{1/2})\\
           & (2,2^k-1,2) & {\rm P}_1 \\
           & (2,2^k-1,2^k-1)&{\rm GL}_1(q)\wr S_2 \\
           & (2,2^k+1,2^k+1)& {\rm GL}_{1}(q^2) \\
           & (2,2^f,2^f-1)&  {\rm GL}_{1}(q^2)\\
           & (2,p,p)^{\dagger}&  2^2.{\rm Sp}_2(2) \\
           & (2,q,p)^{\dagger\dagger} & A_5 \\
          & (2,9,3)&  {\rm GL}_{1}(q^2) \\
 \Un_n(q) & (6,2,11)&\Un_4(3) \\
              & (5,2,11) &{\rm P}_2 ,\,{\rm GU}_1(q)\wr S_5 \\
              & (4,5,13) &\Un_4(2)  ,\,A_7 \\
              & (4,3,7) &2^{4}.{\rm Sp}_{4}(2) \\
              & (4,2,5) &{\rm P}_1 ,\,{\rm GU}_1(q)\wr S_4  \\
              & (3,9,73) & {\rm GU}_1(q)\wr S_3\\
              & (3,5,7) &A_6 \\
              & (3,5,5)&\Li_2(7)  \\
              & (3,4,13) &{\rm GU}_1(q)\wr S_3  \\
              & (3,3,7) &{\rm GU}_1(q)\wr S_3 \\
 {\rm PSp}_n(q) & (8,2,17) &{\rm P}_1 ,\, {\rm P}_4 ,\,{\rm Sp}_2(q)\perp{\rm Sp}_{6}(q),\, A_{10}\\
                      & (8,2,7)&{\rm Sp}_{4}(q^2) \\
                      & (6,3,5) &\Li_2(13) \\
                      & (6,2,7) &{\rm P}_1 ,\, {\rm Sp}_2(q)\perp{\rm Sp}_{4}(q) \\
                      & (6,2,5)&{\rm P}_3 \\
                      & (4,8,3) &{\rm Sz}(q) \\
                      & (4,7,7)& 2^{1+4}.{\rm O}^-_{4}(2)\\
                      & (4,5,13) &2^{1+4}.{\rm O}^-_{4}(2),\,  A_6\\
{\rm P}\Omega^{+}_n(q) & (10,2,17) & {\rm P}_5 ,\,  {\rm GL}_{5}(q).2\\
                                    &(8,3,13) &{\rm O}_1(q)\wr S_8,\, 2_+^{1+6}.{\rm O}^+_{6}(2) ,\, \Omega^+_8(2) \\
                                    &(8,2,7) &{\rm O}^-_{4}(q)\wr S_2,\, {\rm O}^{+}_{4}(q^2)\\
                                    &(8,2,5) &{\rm P}_3 ,\, {\rm GL}_1(q)\times{\rm GL}_3(q)\\

{\rm P}\Omega^{-}_{n}(q) & (10,2,17) &A_{10} ,\,{\rm M}_{12}\\
                                      & (8,2,17) & {\rm O}^-_2(2)\perp{\rm O}^+_6(2) \\
                                      & (8,2,7) &{\rm O}^{-}_{4}(q^2)  \\

\Omega_n(q) & (7,3,13) & {\rm O}_1(q)\wr S_7 ,\, {\rm Sp}_6(2) ,\,A_9  \\
\hline
\end{array}
\]
\caption{Cases with $\pi(G_0)=\pi(H_0)+1$: Part I}
\label{tab:pi2}

\end{table}

\begin{table}
\[
\begin{array}{llllll} \hline
 \mbox{Case} & G_0 & &\mbox{Type of } H & \mbox{Conditions} & i\\ \hline
\mbox{L1} & \Li_n(q) &\mathcal{C}_1 & {\rm P}_1 && n\\
\mbox{L2} &            &                    &{\rm GL}_1(q)\oplus {\rm GL}_{n-1}(q) &&n \\
\mbox{L3} &            &                    &{\rm P}_{1,n-1} & n=3, q=p \mbox{ Mersenne} & n\\
\mbox{L4} &            &\mathcal{C}_2 & {\rm GL}_{1}(q)\wr S_n & (n,p)=(2,2) & n\\
\mbox{L5} &            &\mathcal{C}_3 & {\rm GL}_{n/2}(q^2) & n=4,6& n-1\\
\mbox{L6} &            &\mathcal{C}_5 & {\rm GL}_{n}(q^{1/2}) & n=2 & n\\
\mbox{L7} &            &\mathcal{C}_8 &{\rm Sp}_n(q) & n=4,6& n-1 \\
\mbox{L8} &            &                    &{\rm O}^{\epsilon}_n(q) & (\epsilon,n)=(\circ, 3),(-,4) &3 \\
\mbox{U1} & \Un_n(q) &\mathcal{C}_1 & {\rm P}_{n/2} & n=4,6 &2n-2 \\
\mbox{U2} &              &                    & {\rm P}_1 & n=3  & 2n \\
\mbox{U3} &              &                    &{\rm GU}_1(q)\perp{\rm GU}_{n-1}(q) & & \mbox{See Remark \ref{r:pi tables}(e)} \\
\mbox{U4} &              &\mathcal{C}_2 &{\rm GL}_{n/2}(q^2).2 &n=4,6& 2n-2 \\ 
\mbox{U5} &              &\mathcal{C}_5 &{\rm Sp}_n(q) & n=4,6& 2n-2\\
\mbox{U6} &              &                    & {\rm O}_n^{-}(q) & n=4& 2n-2\\
\mbox{U7} &              &                    & {\rm O}_n(q) & n=3& 2n\\
\mbox{S1} & {\rm PSp}_n(q) &\mathcal{C}_1 & {\rm P}_1 & n\equiv 0 \imod 4& n\\
\mbox{S2} &                      &                    & {\rm P}_2 & n=4 & n\\
\mbox{S3} &                      &                    & {\rm Sp}_2(q)\perp{\rm Sp}_{n-2}(q) & n\equiv 0\imod 4 & n \\
\mbox{S4} &                      &\mathcal{C}_2 & {\rm GL}_{n/2}(q).2& n=4& n\\
\mbox{S5} &                      &                    &{\rm Sp}_{n/2}(q)\wr S_2 & n=4& n\\
\mbox{S6} &                      &\mathcal{C}_3 &{\rm Sp}_{n/3}(q^3) &n=6 & n-2\\
\mbox{S7} &                      &\mathcal{C}_5 & {\rm Sp}_n(q^{1/2}) & n=4& n \\
\mbox{S8} &                      &\mathcal{C}_8 & {\rm O}^{+}_n(q) & & n\\
\mbox{S9} &                      &                    & {\rm O}^{-}_n(q) & n\equiv 2\imod 4 &n/2 \\
\mbox{S10} &                    &\mathcal{S}     & {\rm G}_2(q) & n=6 & n-2\\
\mbox{S11} &                    &                     & \Li_2(q) & n=4 & n\\
\mbox{O}1 &{\rm P}\Omega^{+}_{n}(q) &\mathcal{C}_1 &{\rm P}_1 &n\equiv 0\imod 4& n-2\\
\mbox{O2} &                                        &                    & {\rm P}_4 & n=8 &n-2\\
\mbox{O3} &                                        &                    &{\rm Sp}_{n-2}(q) &  n\equiv 2\imod 4 & n/2\\
\mbox{O4} &                                        &                    & {\rm O}_1(q)\perp{\rm O}_{n-1}(q) & n\equiv 2\imod 4 & n/2\\
\mbox{O5} &                                        &                    &{\rm O}^{+}_2(q)\perp{\rm O}^{+}_{n-2}(q)& n\equiv 0\imod 4 & n-2\\
\mbox{O6}&                                         &                    &{\rm O}_2^-(q)\perp{\rm O}^-_{n-2}(q) & n\equiv 0\imod 4& (n-2)/2\\
\mbox{O7} &                                        &                    & {\rm O}^{-}_2(q)\perp{\rm O}^{-}_{n-2}(q) & n\equiv 2\imod 4 & n/2\\

\mbox{O8} &                                        &\mathcal{C}_2 & {\rm GL}_{n/2}(q).2 & n=8& n-2\\
\mbox{O9} &                                        &\mathcal{C}_3 & {\rm GU}_{n/2}(q) & n=8& (n-2)/2\\
\mbox{O10} &                                      &\mathcal{C}_5 & {\rm O}^{-}_n(q^{1/2}) & n=8 & n-2 \\
\mbox{O11} &                                      &\mathcal{N}    & G_2(q) & n=8 & n/2\\
\mbox{O12} & {\rm P}\Omega^{-}_n(q)  & \mathcal{C}_1&{\rm P}_1 &   n\equiv 2\imod 4& n\\
\mbox{O13} &                                     &                    &{\rm Sp}_{n-2}(q) & & n  \\
\mbox{O14} &                                     &                    &{\rm O}_1(q)\perp{\rm O}_{n-1}(q) & &n \\
\mbox{O15} &                                     &                    &{\rm O}^{+}_2(q)\perp{\rm O}^{-}_{n-2}(q)&n\equiv 2 \imod 4  &n \\
\mbox{O16} &  \Omega_n(q)  &\mathcal{C}_1 &{\rm P}_1 & n\equiv 1\imod 4 & n-1\\
\mbox{O17} &                      &                    &{\rm O}_1(q)\perp{\rm O}^{+}_{n-1}(q) & &n-1\\
\mbox{O18} &                      &                    &{\rm O}_1(q)\perp{\rm O}^{-}_{n-1}(q) &  n\equiv 3\imod 4 \mbox &(n-1)/2\\
\mbox{O19} &                      &                    &{\rm O}^{\mu}_{2}(q)\perp{\rm O}_{n-2}(q)& n\equiv 1\imod 4&n-1\\
\mbox{O20} &                      &\mathcal{S}    & {\rm G}_2(q) & n=7& n-3\\
\hline
\end{array}
\]
\caption{ Cases with $\pi(G_0)=\pi(H_0)+1$:Part II}
\label{tab:pi1}
\end{table}

\begin{rmk}\label{r:pi tables}
Here we provide some remarks on Tables \ref{tab:pi0},\ref{tab:pi2} and \ref{tab:pi1};
\begin{itemize}\addtolength{\itemsep}{0.2\baselineskip}
\item[{\rm (a)}] The conditions presented for the cases in Tables \ref{tab:pi0} and \ref{tab:pi1} are in addition to the conditions given for existence and maximality in \cite[Tables 3.5.A-F]{KL} and \cite[Section 8.2]{BHR}.

\item[{\rm (b)}] The cases recorded in Table \ref{tab:pi2} are specific cases in which $\pi(G_0)-\pi(H_0)=1$ and either $(G_0,H)$ does not appear in Table \ref{tab:pi1}, or it appears in Table \ref{tab:pi1} but there does not exist a primitive prime divisor of $q^i-1$.

\item[{\rm (c)}] In Table \ref{tab:pi2} the second column is labeled $(n,q,r)$. Here $r$ indicates the unique prime that divides $|G_0|$ but not $|H_0|$.

\item[{\rm (d)}]For cases L6, S7 and O10 in Table \ref{tab:pi1}, we specifically require that there is a unique primitive prime divisor of $(q^{1/2})^{2i}-1$. For $i$ even this is equivalent to there being a unique primitive prime divisor of $q^i-1$ (see Lemma \ref{l:prime fac of f}).
\item[{\rm (e)}] In Table \ref{tab:pi2} the $\dagger$ indicates that $q^2-1=2^a3^b$ for some $a,b\geqs 0$ and $\dagger\dagger$ indicates that $q^2-1=2^a3^b5^c$ for some $a,b,c\geqs 0$ and $q\neq 9$.

\item[{\rm (f)}] In case U3 of Table \ref{tab:pi1} we have 
\[
i:=
\begin{cases}
n & n\equiv 0\imod 4\\
n/2 & n\equiv 2\imod 4\\
2n &\mbox{otherwise}
\end{cases}.
\]
\end{itemize}
\end{rmk}

Before we begin the proof we state the following result that will be useful for both geometric and non-geometric subgroups.

\begin{lem}\label{l:r>n+2}
If $n\geqs 7$, then either $(n,q)\in\{(10,2), (9,2),(8,3), (8,2),(7,3),(7,2)\}$ , or there exist distinct prime divisors $r,s> n+2$ of
\begin{equation}\label{e:prod1}
N:=\prod_{i=1}^{m}(q^{2i}-1);
\end{equation}
where $m=\lceil \frac{n-2}{2} \rceil$.
\end{lem}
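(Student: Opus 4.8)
The plan is to exhibit two distinct primitive prime divisors of suitable factors $q^j-1$ of $N$ and then to argue, using the number-theoretic results of Section \ref{s:numthry}, that they must exceed $n+2$ unless $(n,q)$ lies in the displayed list. Note first that $m=\lceil(n-2)/2\rceil\geqs 3$ for $n\geqs 7$, and that $N$ is divisible by $q^j-1$ whenever $j$ divides $2i$ for some $1\leqs i\leqs m$; in particular $q^{2m}-1$, $q^{2m-2}-1$ and $q^{m}-1$ all divide $N$, and the three indices $2m$, $2m-2$, $m$ are pairwise distinct since $m\geqs 3$.

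First I would dispose of existence. By Zsigmondy's theorem, since $2m\geqs 6$ the number $q^{2m}-1$ has a primitive prime divisor unless $2m=6$ and $q=2$, i.e. $(n,q)\in\{(7,2),(8,2)\}$; and since $2m-2\geqs 4$ the number $q^{2m-2}-1$ has a primitive prime divisor unless $2m-2=6$ and $q=2$, i.e. $(n,q)\in\{(9,2),(10,2)\}$ (the Mersenne exception does not arise because $2m-2>2$). All four pairs are among the six exceptions, so from now on I may fix $r\in P_q^{2m}$ and $s\in P_q^{2m-2}$; by Lemma \ref{l:Lemma A.1} both are odd and $r\neq s$ since $2m\neq 2m-2$.

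Next, size. By Lemma \ref{l:Lemma A.1}, $r\equiv 1\imod{2m}$ and $s\equiv 1\imod{2m-2}$, so $r\geqs 2m+1$ and $s\geqs 2m-1$; moreover if $r\neq 2m+1$ then $r\geqs 4m+1$, and since $2m\geqs n-2$ this yields $r\geqs 2n-3>n+2$ for $n\geqs 7$, while if $s\neq 2m-1$ then $s\geqs 4m-3\geqs 2n-7$, which exceeds $n+2$ once $n\geqs 10$. Hence the conclusion can only fail if $P_q^{2m}=\{2m+1\}$, or (when $7\leqs n\leqs 9$, or whenever the trivial bound on $s$ is in force) $P_q^{2m-2}=\{2m-1\}$. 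I then invoke the structural lemmas of Section \ref{s:numthry}: if $P_q^{2m}=\{2m+1\}$ then, since $P_q^{2m}$ contains every primitive prime divisor of $p^{2mf}-1$ by Lemma \ref{l:prime fac of f} and such a divisor exists by Zsigmondy (we have already excluded $2m=6,q=2$), Lemma \ref{l:Lemma A.1} forces $2mf+1\leqs 2m+1$, so $f=1$ and $q=p$; and then Lemma \ref{l:ppds4} (if $2m$ is a power of $2$), Lemma \ref{l:ppd2a3} (if $2m=2^{a}3$), or Lemmas \ref{l:lic1prime}--\ref{l:uniq ppd} together with Proposition \ref{p:blnag} (if $2m$, or $m$, has an odd prime factor $\geqs 5$) pin $(2m,q)$, hence $(n,q)$, down to a short explicit list; the same analysis applies to $P_q^{2m-2}$ and, when needed, to $P_q^{m}$.

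Finally, in each of the finitely many residual pairs surviving this reduction --- all of small dimension and with $q\in\{2,3\}$ by the $f=1$ conclusion --- I would check directly that either $(n,q)$ is one of the six listed exceptions, or two primes $>n+2$ dividing $N$ can be exhibited explicitly; for instance, for $q=2$ and $n\in\{11,12\}$ one takes the primitive prime divisors $17\in P_2^{8}$ and $31\in P_2^{5}$, both of which divide $N$ since $q^{8}-1$ and $q^{5}-1\mid q^{10}-1$ divide $N$, and both of which exceed $n+2$. The main obstacle is the third step: carefully routing the possibility $P_q^{i}=\{i+1\}$ through the several Diophantine lemmas of Section \ref{s:numthry}, whose hypotheses restrict the arithmetic shape of $i$, and then handling the borderline small-dimensional cases where the least primitive prime divisor of $q^{2m}-1$ really is at most $n+2$ and a replacement large prime must be located among the remaining factors of $N$.
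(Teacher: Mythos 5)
Your existence step and the congruence bounds $r\equiv 1\imod{2m}$, $s\equiv 1\imod{2m-2}$ are fine, and your reduction to $f=1$ via Lemma \ref{l:prime fac of f} is correct. The genuine gap is in your third step: the claim that the lemmas of Section \ref{s:numthry} ``pin $(2m,q)$ down to a short explicit list'' whenever $P_q^{2m}=\{2m+1\}$. Those lemmas only treat exponents of the very special shapes $2^a$ (Lemma \ref{l:ppds4}), $2^a\cdot 3$ (Lemma \ref{l:ppd2a3}), or $j$ and $2j$ for an odd prime $j$ \emph{with the additional hypothesis} $q=2^f$ (Lemmas \ref{l:lic1prime} and \ref{l:uniq ppd}). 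Your indices $2m\in\{n-2,n-1\}$ and $2m-2$ range over all even integers $\geqs 4$, so for instance $n=23,24$ forces you to classify the primes $p$ with $P_p^{22}=\{23\}$, i.e.\ to solve $(p^{11}+1)/(p+1)=11^{\epsilon}\cdot 23^{l}$; for $n=31$ you need $P_q^{30}=\{31\}$, i.e.\ essentially $\Phi_{30}(q)=31^l$. Proposition \ref{p:blnag} only constrains such equations (it gives a \emph{lower} bound $|x|\geqs 10^4$, not an upper bound), and Remark \ref{r:n=2aj} states explicitly that no complete solution set is known for the general Nagell--Ljunggren equation. So this step would fail: there is no route through the paper's number theory that disposes of $P_q^{2m}=\{2m+1\}$ for a general even index $2m$.

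The missing idea is the paper's pigeonhole argument, which avoids uniqueness questions altogether for large $n$: for $n\geqs 25$ one takes the six consecutive even indices $j$ with $n-12\leqs j\leqs n-1$ and observes that if the largest primitive prime divisor $r_j=jk_j+1$ had $k_j=1$ for five or more of them, the corresponding set would contain three consecutive odd numbers greater than $3$, one of which is divisible by $3$ and hence not prime --- a contradiction. Thus at least two indices satisfy $r_j\geqs 2j+1>n+2$, with no Diophantine input at all. For $7\leqs n\leqs 24$ the paper then deliberately chooses indices of tractable shape, namely $i\in\{4,6,8,12\}$ (where Lemmas \ref{l:ppds4} and \ref{l:ppd2a3} do apply) or $i=14$ (where $i+1=15$ is composite, so every primitive prime divisor is automatically at least $29$), rather than the indices $2m,2m-2$ near $n$. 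If you want to salvage your approach you would need to replace step 3 with an argument of this kind; as written, the case where $2m$ or $2m-2$ has an odd prime factor at least $5$ and $q$ is odd is not covered.
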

\begin{proof}
Suppose $P_q^i\neq\emptyset$, then we let $r_i$ denote the largest primitive prime divisor of $q^i-1$. Recall that $r_i=ik_i+1$ for some $k_i\geqs 1$ (see Lemma \ref{l:Lemma A.1}).
 
Assume first that $n\geqs 25$ and let $A=\{j\mid n-12\leqs j\leqs n-1 \mbox { and } j \mbox{ is even}\}$. Take $B=\{r_i\mid i\in A\}$, then every element of $B$ is an odd prime divisor of \eqref{e:prod1}. Suppose first that $k_i=1$ for all $i\in A$. Then $B$ is a set of six consecutive odd numbers all greater than 3, so at least two are not prime, which is a contradiction. Now suppose that $k_i\geqs2$ for exactly one $i\in A$. Then $B$ contains at least three consecutive odd numbers all greater than 3, implying that not all elements of $B$ are prime, which is again a contradiction. Thus $k_i\geqs 2$ for at least two $i\in A$, that is $r_i\geqs 2i+1$. Therefore the lemma holds for $n\geqs 25$, since $2i+1>n+2$.

Now assume $11\leqs n \leqs 24$ and $q\neq 2$. Note that $r_8\geqs 41>n+2$ by Lemma \ref{l:ppds4}, so for this case it remains to find an additional prime divisor of \eqref{e:prod1} larger than $n+2$. For $16\leqs n\leqs 24$ we can take $r_{14}\geqs 29$. By Lemma \ref{l:ppd2a3} we know $r_{12}>25$, so for $n=15,14$ or 13 we take $r_{12}$. Finally if $n=11$ or $12$ then for $q\not\in\{3,5,7,239\}$ Lemma \ref{l:ppds4} implies that $r_4\geqs 17$, and it is straightforward to calculate that for $q\in\{3,5,7,239\}$, $r_{10}\geqs61$. Thus the lemma holds in this case.

Next assume $7\leqs n\leqs 10$ and $q\neq 2$. By Lemma \ref{l:ppds4} and Lemma \ref{l:ppd2a3}, if $q\not\in\{3,5,7,19\}$ then $r_4,r_6\geqs 13>n+2$. The cases $q\in\{3,5,7,19\}$ can easily be handled by direct calculation.

Finally suppose that $q=2$ and $7\leqs n\leqs 24$, this can be done by a straightforward direct calculation. In particular, if $15\leqs n\leqs 24$ then $r_7=127$ and $r_5=31$ (these divide $q^{14}-1$ and $q^{10}-1$ respectively). If $11\leqs n\leqs 14$ then $r_8=17$ and $r_5=31$. For $n=9,10$ the only prime divisor of \eqref{e:prod1} larger than $n+2$ is 17. Additionally if $n=7,8$ there are no prime divisors of \eqref{e:prod1} larger than $n+2$ . 
\end{proof}

\begin{cor}\label{c:r>n+2}
Let $G_0$ be a finite simple classical group over $\mathbb{F}_q$. Let $n$ be the dimension of the natural module of $G_0$ and assume $n\geqs 7$. Then $|G_0|$ is not divisible by distinct primes $r,s> n+2$ only if $(n,q)=(10,2), (9,2), (8,3), (8,2),(7,3),(7,2)$.
\end{cor}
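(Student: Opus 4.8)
The plan is to obtain this as an essentially immediate consequence of Lemma \ref{l:r>n+2}. Write $X$ for the quasisimple matrix group covering $G_0$, so that $X$ is one of ${\rm SL}_n(q)$, ${\rm SU}_n(q)$, ${\rm Sp}_n(q)$ or $\Omega^{\epsilon}_n(q)$, and $G_0 = X/Z$ with $Z = Z(X)$. Set $N = \prod_{i=1}^{m}(q^{2i}-1)$ with $m = \lceil (n-2)/2 \rceil$, as in Lemma \ref{l:r>n+2}. The first step is to check that $N$ divides $|X|$. Since $2m \leqs n$, this can be read off directly from the order formulas in \cite{KL}: in the linear and unitary cases the $m$ pairwise distinct factors $q^2-1,\, q^4-1,\, \dots,\, q^{2m}-1$ of $N$ occur among the even-degree factors $q^j-1$ ($2 \leqs j \leqs n$, $j$ even) of $|{\rm SL}_n(q)|$ and $|{\rm SU}_n(q)|$, and in the symplectic and orthogonal cases $\prod_{i=1}^{m}(q^{2i}-1)$ occurs verbatim as a sub-product of $|{\rm Sp}_n(q)|$, of $|\Omega_n(q)|$ (for $n$ odd), or of $|\Omega^{\epsilon}_n(q)|$ (for $n$ even); hence $N \mid |X|$ in every case.

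The second step is to descend from $X$ to the simple quotient $G_0 = X/Z$. The order of $Z$ divides $\gcd(n,q-1)$ in the linear case, $\gcd(n,q+1)$ in the unitary case, $\gcd(2,q-1)$ in the symplectic case, and $4$ in the orthogonal case; in particular, as $n \geqs 7$ we have $\gcd(n, q \mp 1) \mid n < n+2$ and $4 < n+2$, so no prime greater than $n+2$ divides $|Z|$. Therefore, if $r$ is a prime with $r > n+2$ and $r \mid N$, then $r \mid |X|$ and $r \nmid |Z|$, so $r$ divides $|G_0| = |X|/|Z|$.

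Combining these observations, Lemma \ref{l:r>n+2} gives, for $n \geqs 7$, that either $(n,q) \in \{(10,2),(9,2),(8,3),(8,2),(7,3),(7,2)\}$, or $N$ has two distinct prime divisors $r,s > n+2$, in which case both $r$ and $s$ divide $|G_0|$ by the second step. Taking the contrapositive of the latter alternative yields precisely the corollary. I do not expect a genuine obstacle here: the only points requiring a little care are the uniform bookkeeping of the four families of order formulas (in particular making sure all the needed even exponents $2,4,\dots,2m$ with $2m\leqs n$ really occur) and the crude bound on $|Z(X)|$, the substance of the argument being entirely contained in Lemma \ref{l:r>n+2}.
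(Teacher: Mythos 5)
Your proof is correct and follows essentially the same route as the paper: the paper's own argument is a two-line appeal to the order formulas in \cite[Table 5.1.A]{KL} together with Lemma \ref{l:r>n+2}, which is exactly what you carry out, just with the bookkeeping (that the relevant factors $q^{2i}-1$ occur in each order formula and that the centre is too small to absorb a prime exceeding $n+2$) made explicit. The only cosmetic quibble is that in the odd orthogonal case $|X|$ equals $\tfrac{1}{2}q^{m^2}N$ for $q$ odd, so $N$ need not literally divide $|X|$; but since you only ever use that an \emph{odd} prime $r>n+2$ dividing $N$ divides $|X|$, the argument is unaffected.
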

\begin{proof}
By \cite[Table 5.1.A]{KL} when considering prime divisors of $|G_0|$ it suffices to consider the prime divisors of 
\begin{equation*}
N:=\prod_{i=1}^{m}(q^{2i}-1);
\end{equation*}
where $m=\lceil \frac{n-2}{2} \rceil$. Thus the result is clear by Lemma \ref{l:r>n+2}.
\end{proof}

\vs

\subsection{Geometric subgroups} 

Here we prove Theorem \ref{t:primedivs} for $H\in\mathcal{C}\cup \mathcal{N}$. Recall that $\mathcal{C}=\mathcal{C}_1\cup \dots\cup\mathcal{C}_8$ denotes the geometric subgroups, and $\mathcal{N}$ denotes the collection of novelty subgroups in Table \ref{tab:novelty}. These subgroup collections are discussed in Section \ref{s:subgrp struc}. We begin the section by stating a useful result for when $G_0=\Un_n(q)$. 

\begin{lem}
\label{l:m<n-1}
Let $m$ and $n$ be positive integers such that $n\geqs 2$ and $m\leqs n-1$ with equality possible only if $n$ is even. Suppose $r$ is a primitive prime divisor of $q^i-1$ where $i:=a\lfloor \frac{n}{2} \rfloor$ with
\begin{equation}
\label{eq:m<n-1}
a=
\begin{cases}
2 &  n\equiv 0,1\imod 4\\
1  &  n\equiv 2,3 \imod 4\\
\end{cases}.
\end{equation}
Then $r$ does not divide $|\Un_m(q)|$. 
\end{lem}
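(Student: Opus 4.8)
The plan is to exploit the well-known fact (a consequence of Zsigmondy's theorem plus the structure of orders of unitary groups) that the prime divisors of $|\Un_m(q)|$ are exactly the divisors of $\prod_{j} (q^j - (-1)^j)$ for $j$ up to $m$, or equivalently the primitive prime divisors of $q^k - 1$ only for the specific values of $k$ that genuinely divide $|\Un_m(q)|$. Concretely, recall from \cite[Table 5.1.A]{KL} (or \cite[Proposition 3.3.3]{BG_book}) that
\[
|{\rm GU}_m(q)| = q^{m(m-1)/2}\prod_{j=1}^{m}(q^j-(-1)^j),
\]
so a primitive prime divisor $r$ of $q^i-1$ divides $|\Un_m(q)|$ only if $r \mid q^j - (-1)^j$ for some $1 \leqs j \leqs m$. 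By Lemma \ref{l:Lemma A.1}, since $r \in P_q^i$ with $i \geqs 2$ (note $i = a\lfloor n/2\rfloor \geqs 2$ because $n \geqs 2$), $r$ divides $q^j-1$ iff $i \mid j$, and $r$ divides $q^j+1$ iff $i \mid 2j$ but $i \nmid j$, i.e. iff $j \equiv i/2 \imod i$ with $i$ even. So the smallest $j$ with $r \mid q^j - (-1)^j$ is $j = i/2$ when $i$ is even, and $j = i$ when $i$ is odd.

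First I would split into the two parity cases for $i$. When $i$ is odd: by \eqref{eq:m<n-1} this forces $a = 1$, hence $n \equiv 2,3 \imod 4$ and $i = \lfloor n/2 \rfloor$; since $\lfloor n/2\rfloor$ is odd exactly when $n \equiv 2,3 \imod 4$ this is consistent. Here the minimal $j$ is $i = \lfloor n/2 \rfloor$, so I need $\lfloor n/2 \rfloor > m$. Since $m \leqs n-1$, and with equality only when $n$ is even, I would check $\lfloor n/2\rfloor > n-1$ fails for $n \geqs 3$ — so this case needs a sharper argument. The point is that $r \mid q^j - (-1)^j$ with $j$ a multiple of $i$ and $j \leqs m \leqs n-1$; but actually when $i$ is odd the relevant $j$ must be an \emph{odd} multiple of $i$ for the $q^j+1$ case (giving $q^j - (-1)^j = q^j+1$), or any multiple for $q^j-1$; the smallest such $j$ is $i$ itself, and I need $i > m$. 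For $n \equiv 3 \imod 4$, $i = (n-1)/2$ and $m \leqs n-1$, so $i > m$ fails in general — this signals I've misread the statement, and on rereading, the hypothesis is $m \leqs n-1$ with equality only if $n$ even; for $n \equiv 3 \imod 4$ (odd) we get $m \leqs n-2$. Then I'd want $(n-1)/2 > n-2$, i.e. $n < 3$, still false. So the genuine argument must use that $i$ odd combined with $G_0 = \Un_n(q)$ means $r$ is a ppd of $q^{2i}-1$ in disguise — i.e. go back to how $c$ and the relevant cyclotomic index are defined for unitary groups.

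Let me restructure: the cleanest route is to use the substitution $u = -q$, so $|\Un_m(q)|$ (up to the $p$-part) divides $\prod_{j=1}^m (u^j - 1)$, and a prime $r$ dividing this with $r \nmid q$ must lie in $P_u^{j}$ for some $j \leqs m$ after accounting for multiplicities, hence $r \mid u^j - 1$ for some $j \leqs m$; translating back, $r \mid q^{2j}-1$ for some $j \leqs m$ (using $u^j - 1 \mid q^{2j}-1$), so the multiplicative order of $q$ mod $r$ divides $2m$. If $r$ is a primitive prime divisor of $q^i - 1$ then that order is exactly $i$, so $i \mid 2m$, giving $i \leqs 2m \leqs 2(n-1)$. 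The job is then to show that the hypothesised $i = a\lfloor n/2\rfloor$ with $a$ as in \eqref{eq:m<n-1} forces $i > 2m$, or more precisely contradicts $i \mid 2m$ — e.g. when $a=2$, $i = 2\lfloor n/2\rfloor$ which is $\geqs n-1$ so $i \mid 2m \leqs 2(n-1)$ forces $i = 2m$ hence $m = \lfloor n/2\rfloor$ and a closer look (using that for $n \equiv 0,1 \imod 4$ the order constraint from the unitary structure is actually that $j$ must be odd when pairing with $q^j+1$) rules it out. I expect the main obstacle to be precisely this bookkeeping: pinning down for which $j \leqs m$ one genuinely has $r \mid u^j - 1$ versus merely $r \mid q^{2j}-1$, since $|\Un_m(q)|$ does not contain all of $q^{2j}-1$ but only the "unitary part". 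I would handle it by the explicit factorisation $|{\rm GU}_m(q)|_{p'} = \prod_{j=1}^m |q^j - (-1)^j|$ and Lemma \ref{l:Lemma A.1} to determine, for a ppd $r$ of $q^i-1$, the exact set of $j$ with $r \mid q^j - (-1)^j$, then compare the minimum of that set against $m$ using the numerical relations between $i$, $a$, $n$ and the bound $m \leqs n-1$, splitting into the four residue classes of $n$ modulo $4$.
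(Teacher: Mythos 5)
Your overall strategy is the right one, and it is essentially the approach the paper intends (the paper's proof is literally ``an easy application of Lemma \ref{l:Lemma A.1}'' to the factorisation $|{\rm GU}_m(q)|=q^{m(m-1)/2}\prod_{j=1}^m(q^j-(-1)^j)$). But as written the argument does not close, and the step at which it goes wrong is concrete: when you compute ``the smallest $j$ with $r\mid q^j-(-1)^j$'' you decouple the sign from the parity of $j$. In the product the factor at index $j$ is $q^j-1$ only when $j$ is \emph{even} and $q^j+1$ only when $j$ is \emph{odd}. Hence: if $i$ is odd, $r$ divides no $q^j+1$ at all (as you note), and $r\mid q^j-1$ forces $i\mid j$ with $j$ even, so the smallest relevant index is $2i$, not $i$; if $i$ is even, the index $j=i/2$ is only relevant when $i/2$ is odd (otherwise the factor there is $q^{i/2}-1$, which $r$ does not divide), so when $i/2$ is even the smallest relevant index is $i$, not $i/2$. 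Because you took the minimum to be $i$ in the odd case, you concluded that the required inequality fails for $n\equiv 2,3\imod 4$ and that ``this case needs a sharper argument''; with the correct minimum $2i=2\lfloor n/2\rfloor$ the comparison with $m$ is immediate ($2i=n>n-1\geqs m$ when $n\equiv 2$, and $2i=n-1>n-2\geqs m$ when $n\equiv 3$, since equality $m=n-1$ is excluded for $n$ odd). Likewise for $n\equiv 0,1\imod 4$ one has $i=2\lfloor n/2\rfloor$ with $i/2=\lfloor n/2\rfloor$ even, so the smallest relevant index is $i\geqs n-1$, which again exceeds $m$ in both subcases.

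The detour via $u=-q$ makes matters worse rather than better: from $r\mid q^{2j}-1$ for some $j\leqs m$ you may conclude $i\mid 2j$ for that particular $j$, hence $i\leqs 2m$, but not $i\mid 2m$; and in any case this substitution discards exactly the parity information that makes the lemma true. Your final paragraph states the correct plan (determine the exact set of $j$ with $r\mid q^j-(-1)^j$ via Lemma \ref{l:Lemma A.1} and compare its minimum with $m$ in the four residue classes of $n$ modulo $4$), but it is never carried out, so as submitted the proof is incomplete.
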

\begin{proof}
The proof is an easy application of Lemma \ref{l:Lemma A.1}.
\end{proof}

\begin{prop}
Theorem \ref{t:primedivs} holds if $H\in\mathcal{C}\cup \mathcal{N}$.
\end{prop}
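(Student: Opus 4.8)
The plan is to work through Aschbacher's collections $\mathcal{C}_1,\dots,\mathcal{C}_8$ together with the novelty collection $\mathcal{N}$ one at a time, for each of the four families of classical socles $G_0$ ($\Li_n$, $\Un_n$, $\PSp_n$, $\mathrm{P}\Omega^\epsilon_n$). For a fixed pair $(G_0,H)$ the whole question is whether $\pi(G_0)-\pi(H_0)\leqs 1$, so the basic tool is a direct comparison of $|G_0|$ and $|H_0|$ using the explicit order formulas in \cite[Tables 5.1.A, 3.5.A--F]{KL}. The key observation is that a primitive prime divisor $r\in P_q^i$ dividing $|G_0|$ but not $|H_0|$ witnesses a prime of $G_0$ not dividing $H_0$; by Lemma \ref{l:Lemma A.1} two such divisors coming from different values of $i$ are automatically distinct. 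So the strategy for \emph{excluding} a pair is: exhibit two integers $i\neq j$ with $P_q^i,P_q^j\neq\emptyset$, both $q^i-1$ and $q^j-1$ dividing $|G_0|$, and neither dividing $|H_0|$ — then $\pi(G_0)\geqs\pi(H_0)+2$. For the families where $n$ is large one obtains the two required primitive prime divisors almost immediately; the surviving cases are small-dimensional or highly constrained, and those are exactly what is recorded in Tables \ref{tab:pi0}, \ref{tab:pi2}, \ref{tab:pi1}.

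Concretely, I would first dispose of large $n$ using Lemma \ref{l:r>n+2} and Corollary \ref{c:r>n+2}: for $n\geqs 7$ (outside the six exceptional pairs $(n,q)\in\{(10,2),(9,2),(8,3),(8,2),(7,3),(7,2)\}$) there are distinct primes $r,s>n+2$ dividing $N=\prod_{i=1}^{\lceil(n-2)/2\rceil}(q^{2i}-1)$, hence dividing $|G_0|$. The point is that for \emph{non-subspace} geometric subgroups $H$ the order $|H_0|$ is bounded (essentially $|H_0|<q^{n^2/2+\text{something small}}$ or $H_0$ acts on a much smaller space), so both $r$ and $s$ fail to divide $|H_0|$, giving $\pi(G_0)\geqs\pi(H_0)+2$ and killing the pair. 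The $\mathcal{C}_1$ subspace subgroups ($P_m$, $\mathrm{GL}_1\oplus\mathrm{GL}_{n-1}$, $\mathrm{Sp}_n$-type in $\Un_n$, nondegenerate-space stabilisers in the orthogonal and symplectic cases, etc.) are the delicate ones because $H_0$ is large; here one must track which $q^i-1$ divide $|H_0|$ carefully, typically finding just \emph{one} primitive prime divisor $r$ of the top-degree factor $q^n-1$ (or $q^{n-1}-1$, $q^{2n}-1$, $q^{n/2}-1$ according to type) that escapes $H_0$, which is why these cases land in Table \ref{tab:pi1} with an explicit degree $i$ and the proviso ``there exists a unique primitive prime divisor of $q^i-1$''; for $\Un_n$ the relevant divisibility facts are packaged in Lemma \ref{l:m<n-1}. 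Along the way one uses Lemma \ref{l:Lemma A.4} to control $2$-parts and the structure of $q\pm1$.

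The remaining work is the finite list of small cases: $n\leqs 6$ (or $\leqs 12$ where the relevant $\mathcal{C}$-subgroups of larger classical groups are concerned, e.g. $\mathcal{C}_8$-subgroups $\mathrm{O}^\epsilon_n$ of $\PSp_n$ or $\mathcal{C}_2,\mathcal{C}_3,\mathcal{C}_5$-subgroups) plus the six exceptional pairs left over from Corollary \ref{c:r>n+2}, plus the $\mathcal{N}$-subgroups from Table \ref{tab:novelty} (which only occur for $G_0=\PSp_4(2^f)$ and $\mathrm{P}\Omega^+_8(q)$). For each such $(G_0,H)$ I would factor $|G_0|$ and $|H_0|$ directly from \cite{KL} and \cite[Section 8.2, Tables in Chapter 8]{BHR}, count $\pi$, and record the outcome: if $\pi(G_0)=\pi(H_0)$ put it in Table \ref{tab:pi0}; if the gap is $1$ and there is no primitive-prime-divisor parametrisation, put it in Table \ref{tab:pi2} with the witnessing prime $r$; if the gap is $1$ and it fits a uniform subspace pattern, it is the corresponding row of Table \ref{tab:pi1}. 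The cyclotomic-factorisation bookkeeping for $\mathrm{P}\Omega^\epsilon_n$ (distinguishing $n\equiv 0,1,2,3\imod 4$, and $\epsilon=\pm$) is the fussiest part, and the genuine obstacle — the place where the easy two-primitive-prime-divisor argument fails and real number theory is needed — is precisely the $\Un_n(q)$ nondegenerate $1$-space stabiliser of type $\mathrm{GU}_1(q)\perp\mathrm{GU}_{n-1}(q)$ (case U3), where one is forced to invoke the uniqueness/size results for primitive prime divisors (Lemmas \ref{l:ppds4}, \ref{l:ppd2a3}, \ref{l:lic1prime}, \ref{l:uniq ppd}) rather than just their existence; this is flagged in Remark \ref{r:pi tables}(f) and is the seed of case (i) of Theorem \ref{t:mainthrm}.
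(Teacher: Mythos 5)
Your overall strategy --- compare $|G_0|$ and $|H_0|$ via the order formulas in \cite{KL}, hunt for primitive prime divisors of $q^i-1$ that divide $|G_0|$ but not $|H_0|$, note that two such divisors of distinct degrees are automatically distinct primes, and clean up the small cases by direct factorisation --- is exactly the paper's. But there is a genuine gap in the step you use to dispose of large $n$: you claim that for every \emph{non-subspace} geometric $H$ the two primes $r,s>n+2$ supplied by Lemma \ref{l:r>n+2} both fail to divide $|H_0|$, ``because $|H_0|$ is bounded''. That inference is a non sequitur (an upper bound on $|H_0|$ says nothing about which specific large primes divide it), and the conclusion is false. The decisive counterexample is the $\mathcal{C}_8$ collection in ${\rm PSp}_n(q)$ with $q$ even: $H_0={\rm O}^{\epsilon}_n(q)$ has index roughly $\tfrac{1}{2}q^{n/2}(q^{n/2}+\epsilon)$ in ${\rm Sp}_n(q)$, so the \emph{only} primes of $|G_0|$ that can escape $|H_0|$ are the odd divisors of $q^{n/2}+\epsilon$, and a short cyclotomic computation shows these reduce to the primitive prime divisors of $q^n-1$ (for $\epsilon=+$), respectively of $q^{n/2}-1$ or nothing at all according to $n \bmod 4$ (for $\epsilon=-$). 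These families survive for \emph{all} $n$ --- they are cases S8 and S9 of Table \ref{tab:pi1} and case XII of Table \ref{tab:pi0} --- so a proof that follows your plan literally would wrongly eliminate them and produce incomplete tables. The same caution applies, for bounded but not tiny $n$, to the large $\mathcal{C}_2$, $\mathcal{C}_3$ and $\mathcal{C}_5$ subgroups (e.g.\ ${\rm Sp}_{n/2}(q^2)<{\rm PSp}_4(q)$, case XI, has $\pi(G_0)=\pi(H_0)$).

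The paper's actual proof therefore does not use Lemma \ref{l:r>n+2} as a blanket tool for large $n$; it reserves that lemma for the subgroups whose prime divisors really are bounded by about $n+1$ (the $\mathcal{C}_6$ normalisers of symplectic-type $r$-groups and the type ${\rm O}_1(q)\wr S_n$ subgroups), and for every other collection it does the cyclotomic bookkeeping you describe for $\mathcal{C}_1$: identify exactly which degrees $i$ contribute factors $q^i-1$ to $|H_0|$ and which remaining degrees in $|G_0|$ have nonempty $P_q^i$, using Lemmas \ref{l:Lemma A.1}, \ref{l:Lemma A.4} and \ref{l:m<n-1}, and falling back on Lemma \ref{l:btv} when the escaping primes are the odd divisors of a quantity like $q^{n/2}\pm 1$ or $q-1$ that must be shown to be (or not to be) a prime power. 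One smaller point: the place where the heavy uniqueness results on primitive prime divisors (Lemmas \ref{l:ppds4}, \ref{l:ppd2a3}, \ref{l:lic1prime}, \ref{l:uniq ppd}) are genuinely needed is the proof of Theorem \ref{t:mainthrm}, not of the present proposition; for Theorem \ref{t:primedivs} the case U3 is handled by Lemma \ref{l:m<n-1} alone, and the number theory that does enter this proof is of the Lemma \ref{l:btv} type just described.
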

\begin{proof}
This may be shown by inspection of the orders of $G_0$ (see \cite[Table 5.1.A]{KL}) and of $H_0$ (see \cite{KL} and \cite{BHR}). In general we search for primitive prime divisors of integers of the form $q^i-1$ that divide $|G_0|$ but not $|H_0|$. The approach of the proof is similar in most cases, so we only provide details in a handful of cases:
\begin{itemize}
\item[{\rm (a)}] $G_0={\rm P}\Omega^+_n(q)$ and $H$ is of type ${\rm O}^-_m(q)\perp{\rm O}^-_{n-m}(q)$ with $2\leqs m<\frac{n}{2}$ even.
\smallskip

\item[{\rm (b)}]$G_0=\Un_n(q)$ and $H$ of type ${\rm P}_m$ with $1\leqs m \leqs n/2$.
\smallskip

\item[{\rm (c)}]$G_0={\Li}_n(q)$ and $H$ is of type ${\rm GL}_{m}(q^k)$ where $n=mk$ and $k$ is prime.
\smallskip

\item[{\rm (d)}]$G_0=\Li_n(q),\Un_n(q),{\rm PSp}_n(q)$ or ${\rm P}\Omega^{+}_n(q)$ and $H$ is a $\mathcal{C}_6$ subgroup.
\smallskip

\item[{\rm (e)}]$G_0={\rm P}\Omega^{\epsilon}_n(q)$ and $H$ is of type ${\rm O}_{1}(q)\wr S_n$.
\end{itemize}

\vs
\vs

\noindent\textbf{Case (a):} \textit{ $G_0={\rm P}\Omega^+_n(q)$, $H$ is of type ${\rm O}^-_m(q)\perp{\rm O}^-_{n-m}(q)$ with $2\leqs m<\frac{n}{2}$ even.}
\vs
\vs

From \cite[Proposition 4.1.6]{KL} all prime divisors of $|H_0|$ divide 
\begin{equation*}
A:=q(q^{\frac{n-m}{2}}+1)\prod_{i=1}^{\frac{n-m-2}{2}}(q^{2i}-1)
\end{equation*}
(to see this note that $n-m>m$). Thus in particular, any primitive prime divisor of $q^j-1$ such that $j>n-m$ is not a prime divisor of $|H_0|$.

Assume first that $m\geqs 6$ (note this implies that $n\geqs 14$, so all primitive prime divisors we take in this case exist). Since $n-m\leqs n-6$ any primitive prime divisor of $q^{n-2}-1$ or of $q^{n-4}-1$ divides $|G_0|$ but not $|H_0|$, so $\pi(G_0)-\pi(H_0)\geqs 2$. 

Now assume $m=4$, as before any primitive prime divisor of $q^{n-2}-1$ does not divide $|H_0|$. If $n\equiv 2 \imod 4$ then by Lemma \ref{l:Lemma A.1} any primitive prime divisor of $q^{\frac{n}{2}}-1$ is not a divisor of $|H_0|$, since $\frac{n}{2}$ is odd and $n-m<n$. By similar reasoning if $n\equiv 0\imod 4$ any primitive prime divisor of $q^{((n-2)/2)}-1$ does not divide $|H_0|$. Thus $\pi(G_0)-\pi(H_0)\geqs 2$.

Finally assume $m=2$. By the same reasoning as above any primitive prime divisors of $q^{\frac{n}{2}}-1$ when $n\equiv 2\imod 4$ or $q^{((n-2)/2)}-1$ when $n\equiv 0\imod 4$ divide $|G_0|$ but not $|H_0|$. These are the only possible primes that divide $|G_0|$ that do not divide $|H_0|$. Thus the result follows.  
\vs 
\vs 

\noindent\textbf{Case (b):} \textit{ $G_0=\Un_n(q)$, $H$ of type ${\rm P}_m$ with $1\leqs m \leqs n/2$}.
\vs
\vs

Here 
\begin{equation*}
\begin{split}
|H_0|& =dq^{m(2n-m)}(q^2-1)|{\rm SL}_m(q^2)||{\rm SU}_{n-2m}(q)|\\
        & =dq^b\prod_{i=1}^m(q^{2i}-1)\prod_{i=2}^{n-2m}(q^i-(-1)^i)
\end{split}
\end{equation*}
where $d=1/(q+1,n)$ and $b=n(n-1)/2$ (see \cite[Proposition 4.1.18]{KL}).

We first assume that $(n,m)\neq(3,1),(4,2),(6,3)$ and $(n,q)\neq (4,2),(5,2),(6,2)$. These assumptions ensure the existence of the primitive prime divisors taken in the main argument. 
Take $r$ and $s$ to be primitive prime divisors of $q^i-1$ and $q^j-1$ respectively, where 
\begin{equation*}
i  = \begin{cases}
2n-2 & n \mbox{ is even}\\
2n & n\mbox{ is odd}
\end{cases} \\ 
,
\;\;j  =  \begin{cases}
 2n-6 & m\neq1 \mbox{ and } n\mbox{ is even}\\
 2n-4 & m\neq1 \mbox{ and } n\mbox{ is odd}\\
 a\lfloor n/2\rfloor & m=1
 \end{cases}
\end{equation*}
and $a$ is as defined in \eqref{eq:m<n-1}. By inspection of $|G_0|$ it is easy to see that both $r$ and $s$ are prime divisors of $|G_0|$. For example, since $i/2\leqs n$ is odd, we know that $(q^{i/2}+1)$ divides $|G_0|$, and by definition $r$ is a prime divisor of $q^{i/2}+1$. However, since $2m,2(n-2m)<i$ the definition of a primitive prime divisor implies that $r$ cannot divide $|H_0|$. Using a similar argument for $m\neq 1$ we see that $s$ is not a divisor of $|H_0|$. Finally if $m=1$, then $|H_0|=dq^b(q^2-1)|\Un_{n-2}(q)|$ and so Lemma \ref{l:m<n-1} implies that $s$ does not divide $|H_0|$. Thus $\pi(G_0)-\pi(H_0)\geqs 2$. 

Assume that $(n,m)=(3,1)$ and take $r$ to be a prime divisor of $|G_0|=dq^3(q^2-1)(q^3+1)$ that does not divide $|H_0|=dq^3(q^2-1)$. Then $r$ must be odd since $|H_0|$ is even and must be a prime divisor of $q^3+1$. Thus $r$ must be a primitive prime divisor of either $q^6-1$ or $q^2-1$ by Lemma \ref{l:Lemma A.1}. However $r$ cannot be a prime divisor of $q^2-1$, so the only possible prime divisors of $|G_0|$ that do not divide $|H_0|$ are the primitive prime divisors of $q^6-1$. Thus the result holds in this case. The cases with $(n,m)=(4,2)$ and $(6,3)$ are similar; here the only possible prime divisors of $|G_0|$ that do not divide $|H_0|$ are primitive prime divisors of $q^{2n-2}-1$.

The final cases to handle are those in which $(n,q)=(4,2),(5,2)$ or $(6,2)$. These can be handled by direct calculation of $|G_0|$ and $|H_0|$. Here the only cases with $\pi(G_0)-\pi(H_0)\leqs 1$ are $(n,q,m)=(4,2,1),(4,2,2),(5,2,2),(6,2,3)$ (These are recorded in Table \ref{tab:pi2} and case U1 in Table \ref{tab:pi1}).
\vs
\vs

\noindent\textbf{Case (c):} \textit{$G_0={\Li}_n(q)$, $H$ is of type ${\rm GL}_{m}(q^k)$ where $n=mk$ and $k$ is prime.}
\vs
\vs

Here all prime divisors of $|H_0|$ divide $$A=kq^{\frac{n(m-1)}{2}}\prod_{i=1}^m(q^{ki}-1)$$ by \cite[Proposition 4.3.6]{KL}.

Assume first that $k\neq n$ or $2$ (note this implies $n\geqs 6$). Take $r$ and $s$ to be primitive prime divisors of $q^{n-1}-1$ and $q^{n-2}-1$ respectively. Then $r\geqs n$ and $s\geqs n-1$ by Lemma \ref{l:Lemma A.1}, so $r,s>k$. Additionally we note that $n-2>k(m-1)$ so $r$ and $s$ do not divide $\prod_{i=1}^{m-1}(q^{ki}-1)$. Similarly both $n-1$ and $n-2$ do not divide $n=km$, therefore we conclude that $r$ and $s$ are distinct prime divisors of $|G_0|$ that do not divide $|H_0|$. 

Next assume that $n\geqs 7$ and $k=n$ or $2$. By using a similar argument to the one seen above we see that primitive prime divisors of $q^{n-3}-1$ and $q^i-1$ , where $i=n-1$ if $k=2$ and $n-2$ otherwise, are divisors of $|G_0|$ but not $|H_0|$.
Thus it remains to deal with the cases $(n,k)=(6,2),(5,5),(4,2),(3,3)$ and $(2,2)$.

Assume $(n,k)=(4,2)$ or $(6,2)$. Here the only possible prime divisors of $|G_0|$ that do not divide $|H_0|$ are primitive prime divisors of $q^{n-1}-1$. For example, if $(n,k)=(4,2)$ then $A=2q^2(q^2-1)(q^4-1)$ and $|G_0|=dq^6(q^2-1)(q^3-1)(q^4-1)$ where $d=1/(q-1,4)$. Thus $\pi(G_0)-\pi(H_0)$ is precisely the number of primitive prime divisors of $q^{n-1}-1$, so the result follows.

Next suppose that $(n,k)=(5,5)$. Here $A=5(q^5-1)$ so any primitive prime divisor of $q^3-1$ divides $|G_0|$ and not $|H_0|$. If $p\neq 5$ then $p$ does not divide $|H_0|$ implying that $\pi(G_0)-\pi(H_0)\geqs 2$, so we may assume $p=5$. Take $s$ to be the largest primitive prime divisor of $q^4-1$. Then $s$ divides $|H_0|$ if and only if $s=5$, which by Lemma \ref{l:ppds4} occurs if and only if $q=5$. The final case $q=5$ can be handled by direct calculation. 

Now suppose that $(n,k)=(3,3)$ then 
$$|H_0|=\frac{3(q^3-1)}{(q-1)(q-1,3)}.$$
Immediately we note that if $p\neq 3$ then $p$ does not divide $|H_0|$. Additionally, by Lemma \ref{l:Lemma A.4}, if $r\neq 3$ is a prime divisor of $q-1$, then $r$ is a prime divisor of $|G_0|$ that does not divide $|H_0|$. 
Suppose first that $p\neq 3$. Then since $p$ does not divide $|H_0|$ we may assume $q-1=3^l$ for some $l\geqs 1$ (otherwise $\pi(G_0)-\pi(H_0)\geqs 2$). By Lemma \ref{l:btv} this occurs if and only if $q=4$. It is a simple calculation to show that 2 and 5 divide $|G_0|$ but not $|H_0|$ when $q=4$.
Now suppose that $p=3$. Then 3 does not divide $q-1$ so we may assume that $q-1=r^l$, which by Lemma \ref{l:btv} occurs if and only if $q=9$ or 3. The case $q=3$ does not occur since $H$ is not maximal (see \cite[Table 8.3]{BHR}), and $q=9$ can be handled by direct calculation showing that both 2 and 7 divide $|G_0|$ but not $|H_0|$. 

Finally suppose that $(n,k)=(2,2)$. Here $|H_0|=2.(q+1)$, so we see immediately that the only prime divisors of $|G_0|$ that do not divide $|H_0|$ are $p$ if $p\geqs 3$ and any odd prime divisor of $q-1$. Thus we may assume that either $p=2$ and $q-1=r^l$ for some prime $r$ and $l\geqs 1$, or that $p\geqs 3$ and $q-1=2^l$ for some $l\geqs 1$ (note that in either case $\pi(G_0)-\pi(H_0)=1$). First assume $p=2$ and $q-1=r^l$, by Lemma \ref{l:btv} this happens if and only if $q-1=2^f-1$ is a Mersenne prime. Next assume that $p\geqs 3$ and $q-1=2^l$ then by Lemma \ref{l:btv} this happens if and only if $q=9$ or $q$ is a Fermat prime. Thus the result follows. 

\vs
\vs

\noindent\textbf{Case (d):} \textit{$G_0=\Li_n(q),\Un_n(q),{\rm PSp}_n(q)$ or ${\rm P}\Omega^{+}_n(q)$ and $H$ is a $\mathcal{C}_6$ subgroup.}
\vs
\vs

Here $n=r^m$ with $r$ prime and $p\neq r$. From \cite[Proposition 4.6.5-9]{KL} all prime divisors of $|H_0|$ divide 
$$A:=r\prod_{i=1}^{m}(r^i+1)(r^i-1).$$
Thus if $s$ is a prime divisor of $|H_0|$ then $s\leqs r^m+1=n+1$. In this proof we will use $s_i$ to denote the largest primitive prime divisor of $q^i-1$. 

Suppose first that $n\geqs 7$. By Lemma \ref{l:r>n+2} we easily reduce to the cases $(n,q)=(9,2),(8,3),(8,2),(7,3)$ and $(7,2)$, which can be handled by directly computing $|G_0|$ and $|H_0|$. For example, if $(n,q)=(7,2)$ then $|G_0|=2^{21}.3^4.5.7^2.31.127$ and $|H_0|=7.|{\rm Sp}_2(7)|=2^4.3.7$ implying that $\pi(G_0)-\pi(H_0)\geqs 2$. 

Next suppose $n=5$, then $G_0=\Li_5(q)$ or $\Un_5(q)$ and $A=2^3.3.5$. By Lemma \ref{l:Lemma A.1} we have $s_5,s_{10}\geqs 11$. Additionally $s_4\geqs 13$ when $q\not\in\{2,3,7\}$ by Lemma \ref{l:ppds4}. Note that if $q\in\{2,3,7\}$ then $H$ is not maximal (see \cite[Tables 8.18 and 8.20]{BHR}), thus we do not need to consider these cases. Therefore we conclude that $\pi(G_0)-\pi(H_0)\geqs 2$.

Now suppose $n=4$, then $A=2.3^2.5$. Assume $p\geqs 11$, then $p$ does not divide $|H_0|$ and $s_4\geqs 13$ by Lemma \ref{l:ppds4}, so $\pi(G_0)-\pi(H_0)\geqs 2$. Now assume $p\leqs 7$, then we reduce to the cases $G_0=\Li_4(5),\Un_4(3),\Un_4(7),{\rm PSp}_4(3),{\rm PSp}_4(5)$ and ${\rm PSp}_4(7)$, see \cite{BHR}, (recall that ${\rm PSp}_4(3)\not\in \widetilde{\mathcal{G}}$), which all may be handled by direct calculation. For example $|{\rm PSp}_4(5)|=2^6.3^2.5^4.13$ and in this case $|H_0|=2^6.3.5$, so here $\pi(G_0)-\pi(H_0)=1$.

Suppose $n=3$ then the only prime divisors of $|H_0|$ are 2 and 3. For maximality of $H$, we must have that $p>3$ (see \cite{BHR}), so $p$ does not divide $|H_0|$. Additionally $s_3,s_6\geqs 7$ by Lemma \ref{l:Lemma A.1}, so $\pi(G_0)-\pi(H_0)\geqs 2$.

Finally suppose $n=2$. Then $G_0=\Li_2(q)$ with $q=p\geqs 5$, and the only prime divisors of $|H_0|$ are 2 and 3. Therefore $p$ does not divide $|H_0|$. Additionally the only other possible divisors of $|G_0|$ that do not divide $|H_0|$ are divisors of $q^2-1$ greater than 3. Thus if $q^2-1=2^a.3^b$ for some $a,b\geqs 0$ then $\pi(G_0)-\pi(H_0)=1$, otherwise we have $\pi(G_0)-\pi(H_0)\geqs2$.  

\vs
\vs

\noindent\textbf{Case (e):} \textit{ $G_0={\rm P}\Omega^{\epsilon}_n(q)$, $H$ is of type ${\rm O}_{1}(q)\wr S_n$.}
\vs
\vs

Here $q=p\geqs 3$ and by \cite[Proposition 4.2.15]{KL} all prime divisors of $|H_0|$ divide $n!$. Thus using Lemma \ref{l:r>n+2} we immediately reduce to the cases $(n,q)=(8,3)$ and $(7,3)$. These remaining cases can be handled by direct calculation showing that $\pi(G_0)-\pi(H_0)\leqs 1$ only if $(\epsilon,n,q)=(+,8,3),(\circ,7,3)$. In particular, $\pi(G_0)-\pi(H_0)= 1$ in both cases. 
\end{proof}

\subsection{Non-geometric subgroups}${}$
\vs 

In this subsection we deal with the subgroups contained in the $\mathcal{S}$ collection (see Section \ref{s:subgrp struc} for more details). For $H\in \mathcal{S}$ we recall that if $H$ is of type $S$, then $S$ denotes the socle of the subgroup $H$. For $n\leqs 12$ all of the non-geometric subgroups are known and can be found in the relevant tables in \cite[Section 8.2]{BHR}. Thus the proof for low dimensional cases follows a similar structure to the proof for the geometric subgroups. For $n\geqs 13$ the main tool here is a result of Guralnick et al. \cite{GPPS} which describes the subgroups $M$ of ${\rm GL}_n(q)$ such that $|M|$ is divisible by a primitive prime divisor of $q^i-1$ for $\frac{n}{2}<i\leqs n$ (see \cite[Examples 2.1-2.9]{GPPS}). This tells us the following 

\begin{prop}\label{p:gpps}
Suppose $n\geqs 13$, then $H_0$ has order divisible by a primitive prime divisor of $q^i-1$ where $\frac{n}{2}< i\leqs n$ only if either $S=A_m$ and $m=n+1,n+2$ or $(S,n,i)$ is found in Table \ref{tab:GPPStab}. 
\end{prop}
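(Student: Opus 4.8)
The plan is to derive Proposition \ref{p:gpps} directly from the classification of Guralnick, Penttila, Praeger and Saxl in \cite{GPPS}. The key observation is that if $H \in \mathcal{S}$ with socle $S$, then $H_0 = H \cap G_0$ sits inside the classical group preserving the natural module $V$, and hence $H_0$ can be viewed (after passing to a suitable quasisimple preimage and then to the ambient general linear group) as an irreducible subgroup $M \leqslant {\rm GL}_n(q_0)$ for an appropriate field $\mathbb{F}_{q_0}$, where $q_0 \mid q$. If $|H_0|$ is divisible by a primitive prime divisor $r \in P_q^i$ with $\frac{n}{2} < i \leqslant n$, then $r$ is also a primitive prime divisor of $q_0^j - 1$ for the appropriate $j$ with $\frac{n}{2} < j \leqslant n$ (using Lemma \ref{l:prime fac of f} to track how primitive prime divisors behave under field extension), so the hypotheses of the main classification theorem of \cite{GPPS} are met.

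First I would recall precisely the statement of \cite[Main Theorem]{GPPS} (or the list in \cite[Examples 2.1--2.9]{GPPS}): a subgroup $M \leqslant \Gamma{\rm L}_n(q_0)$ whose order is divisible by such a primitive prime divisor falls into one of nine explicitly described classes, most of which are geometric in nature (reducible, imprimitive, field-extension, tensor, subfield, classical, etc.). Since we are assuming $H \in \mathcal{S}$, the subgroup $H_0$ acts absolutely irreducibly on $V$ and is almost simple modulo scalars, so all of the non-$\mathcal{S}$-type examples of \cite{GPPS} are excluded by the Aschbacher-theoretic hypothesis. What survives is exactly the ``Example 2.9'' (or analogous) case listing the almost simple examples: the natural permutation modules giving $S = A_m$ with $m = n+1$ or $n+2$, together with a finite list of sporadic and cross-characteristic representations, which is what populates Table \ref{tab:GPPStab}. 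I would therefore cross-check the relevant table of \cite{GPPS} against the irreducible-module data, discard any entry where the relevant representation does not actually give a maximal $\mathcal{S}$-subgroup of a classical group in the range $n \geqslant 13$, and collect the remainder into Table \ref{tab:GPPStab}.

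I would then organise the argument as follows: (1) set up the reduction from $H_0$ to an irreducible $M \leqslant {\rm GL}_n(q_0)$ and verify that the primitive-prime-divisor hypothesis transfers, invoking Lemma \ref{l:prime fac of f} and Lemma \ref{l:Lemma A.1} to control the arithmetic; (2) quote \cite{GPPS} to obtain the list of possibilities for $M$; (3) eliminate all geometric-type possibilities using $H \in \mathcal{S}$; (4) for each surviving almost simple candidate, record whether it can occur for some $n \geqslant 13$ and which values of $i$ in the range $(\frac{n}{2}, n]$ arise, assembling Table \ref{tab:GPPStab}; and (5) separate out the alternating-group case $S = A_m$, $m \in \{n+1, n+2\}$, which is genuinely an infinite family and so is stated outside the table.

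The main obstacle I anticipate is step (4): the bookkeeping needed to match the abstract entries in the \cite{GPPS} tables (which are stated for subgroups of ${\rm GL}_n$ over possibly proper subfields, and often for quasisimple rather than almost simple groups) against the specific dimensions, fields, and values of $i$ needed here, while correctly handling the subfield subtleties (a representation defined over $\mathbb{F}_{q_0}$ with $q_0 < q$ contributes primitive prime divisors of $q_0^j-1$, not $q^i-1$, unless $j$ and $i$ are compatible). One must be careful that the primitive prime divisor $r$ of $q^i-1$ really does divide $|H_0|$ in the ambient group over $\mathbb{F}_q$, and that the range $\frac{n}{2} < i \leqslant n$ is preserved; Lemma \ref{l:prime fac of f} is exactly the tool to handle this, but applying it uniformly across all the table entries is the fiddly part. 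A secondary nuisance is confirming which of the low-rank exceptional representations in \cite{GPPS} survive the constraint $n \geqslant 13$ and the maximality requirements of \cite{KL}; these are finite checks but need to be done carefully to get Table \ref{tab:GPPStab} exactly right.
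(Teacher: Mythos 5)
Your proposal follows essentially the same route as the paper: the paper gives no separate proof of Proposition \ref{p:gpps}, stating it as an immediate consequence of the classification in \cite[Examples 2.1--2.9]{GPPS} of subgroups of ${\rm GL}_n(q)$ with order divisible by a primitive prime divisor of $q^i-1$ for $\frac{n}{2}<i\leqs n$, with the geometric examples excluded because $H\in\mathcal{S}$ and the surviving nearly simple examples assembled into Table \ref{tab:GPPStab}. Your extra care about subfields is largely unnecessary here, since membership of $\mathcal{S}$ already forces $H_0$ to be absolutely irreducible and not realisable over a proper subfield modulo scalars, but the overall plan and bookkeeping match what the paper does.
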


\begin{table}
\[
\begin{array}{lll} \hline
S & n & i \\ \hline
{\rm M}_{23} & 22& 22\\
{\rm M}_{24} & 23& 22\\
J_1 & 20 & 18\\
J_3 &18 &16,18\\
{\rm Co}_3 & 23 & 22\\
{\rm Co}_2 & 23 & 22\\
{\rm Co}_1 & 24 & 22\\
{\rm Ru} & 28& 28\\
{\rm Sz}(8) & 14 &12\\
{\rm G}_2(3) & 14 &12\\
{\rm PSp}_4(4) & 18 & 16\\
\Li_d(s), d\geqs 3 & \frac{s^d-1}{s-1}-1,\frac{s^d-1}{s-1} &  \frac{s^d-1}{s-1}-1\\[3pt]
\Un_d(s),d\geqs 3 & \frac{s^d+1}{s+1}-1,\frac{s^d+1}{s+1} &  \frac{s^d+1}{s+1}-1\\[3pt]
{\rm PSp}_{2d}(s) & \frac{1}{2}(s^n-1),\frac{1}{2}(s^n+1) &  \frac{1}{2}(s^n-1)\\[3pt]
{\rm PSp}_{2d}(3) &  \frac{1}{2}(3^n-1),\frac{1}{2}(3^n+1) & \frac{1}{2}(3^n-3)\\[3pt]
\Li_2(s) & s-1,s,s+1 & s-2  \\[2pt]
                       & s,s+1 & s \\[2pt]
                       & s-1,s,s+1 & s-1 \\[2pt]
                       &\frac{1}{2}(s-1),\frac{1}{2}(s+1) & \frac{1}{2}(s-1) \\[3pt]
                       &\frac{1}{2}(s-1),\frac{1}{2}(s+1) & \frac{1}{2}(s-3)\\[3pt]
\hline
\end{array}
\]
\caption{ The table for Lemma \ref{l:nongeom}}
\label{tab:GPPStab}
\end{table}

\begin{lem}
Theorem \ref{t:primedivs} holds if $H\in\mathcal{S}$, $n\geqs 13$ and $S=A_m$ with $m=n+1$ or $n+2$.
\end{lem}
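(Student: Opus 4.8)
The claim concerns the case where $G_0 \in \widetilde{\mathcal{G}}$ has natural module of dimension $n \geqs 13$ and $H \in \mathcal{S}$ with socle $S = A_m$, $m \in \{n+1, n+2\}$. The strategy is to show that $\pi(G_0) > \pi(H_0) + 1$ in \emph{every} such case, so that no pairs $(G_0, H)$ contribute to the tables of Theorem \ref{t:primedivs}. The key observation is that $|H_0|$ divides $|S_{m}| = m!$ (up to the small outer part $S_m/A_m$ and the scalar multiplier $d$, none of which introduce new primes beyond those dividing $m!$), while $m \leqs n+2$. Hence every prime divisor of $|H_0|$ is at most $n+2$. It therefore suffices to exhibit at least \emph{three} distinct primes dividing $|G_0|$ that are strictly greater than $n+2$; then $\pi(G_0) - \pi(H_0) \geqs 3 - 1 = 2 > 1$.

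\textbf{Key steps.} First I would invoke Corollary \ref{c:r>n+2}: for $n \geqs 7$, $|G_0|$ is divisible by two distinct primes $r, s > n+2$ except when $(n,q) \in \{(10,2),(9,2),(8,3),(8,2),(7,3),(7,2)\}$, all of which have $n \leqs 10 < 13$ and so are irrelevant here. This already gives $\pi(G_0) \geqs \pi(H_0) + 2$ in most cases, but I need a slightly stronger count: I want a \emph{third} large prime, or alternatively I need to be careful because $\pi(H_0)$ might be only marginally below $\pi(G_0)$ if $|H_0|$ happens to contain nearly every prime up to $n+2$. The cleanest route is to strengthen Lemma \ref{l:r>n+2}: examining its proof, for $n \geqs 13$ one in fact finds several primitive prime divisors of the various $q^{2i}-1$ ($1 \leqs i \leqs \lceil (n-2)/2 \rceil$) that exceed $n+2$ — e.g. for $n \geqs 25$ the argument produces at least two indices $i$ with $r_i \geqs 2i+1$, and by extending the set $A$ one gets more. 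For $13 \leqs n \leqs 24$ the explicit bounds on $r_4, r_6, r_8, r_{10}, r_{12}, r_{14}$ from Lemmas \ref{l:ppds4} and \ref{l:ppd2a3} (together with direct checks for $q \in \{2,3,5,7,19,239\}$) yield at least three distinct primes exceeding $n+2$. So the concrete plan is: (i) handle $n \geqs 25$ by enlarging the counting argument in the proof of Lemma \ref{l:r>n+2} to produce three large prime divisors; (ii) handle $13 \leqs n \leqs 24$ by a finite case analysis on $q$, citing the primitive prime divisor bounds and doing direct computation for the finitely many small exceptional $q$.

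\textbf{Main obstacle.} The delicate point is that bounding primes of $|H_0|$ by $n+2$ and exhibiting two (rather than three) large primes of $|G_0|$ only gives $\pi(G_0) - \pi(H_0) \geqs 2$ \emph{provided} $H_0$ actually realises all primes up to $n+2$ — which is typically false, but proving it is false in general would require knowing $\pi(m!)$ versus $\pi(|G_0|)$ precisely, and that is messy. So I would avoid that entirely and commit to producing \emph{three} distinct primes $> n+2$ dividing $|G_0|$; then regardless of $H_0$ we get $\pi(G_0) - \pi(H_0) \geqs 3 - 1 = 2$. The hard part will be verifying the third large prime exists uniformly for all $n \geqs 13$ and all $q$: for $q = 2$ the relevant $q^{2i}-1$ have relatively few large primitive prime divisors (the proof of Lemma \ref{l:r>n+2} notes that for small $n$ with $q=2$ there can be as few as one prime exceeding $n+2$), so the range $13 \leqs n \leqs 16$ with $q = 2$ will need individual attention — here one uses $r_7 = 127$ (dividing $2^{14}-1$), $r_5 = 31$ (dividing $2^{10}-1$), and $r_8 = 17$ (dividing $2^{16}-1$), which are three distinct primes all exceeding $n+2 \leqs 18$; wait, $17 < 18$ when $n=16$, so for $n=16, q=2$ one instead notes $m = 17$ or $18$ and checks directly that $|A_{18}|$ is not divisible by $31$ or $127$, giving $\pi(G_0) - \pi(H_0) \geqs 2$ immediately from just $r_5, r_7$. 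In short, the argument is a routine but slightly fiddly combination of Corollary \ref{c:r>n+2}, the size bounds on unique/large primitive prime divisors from Section \ref{s:numthry}, and a handful of explicit {\sc Magma}-style order computations for small $(n,q)$; no conceptual difficulty, just careful bookkeeping to ensure the prime count strictly exceeds $\pi(H_0) + 1$ in all cases.
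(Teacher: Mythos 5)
Your opening step is exactly the paper's proof, and it is already complete: since $S=A_m$ with $m\leqs n+2$, every prime divisor of $|H_0|$ divides $(n+2)!$ and hence is at most $n+2$; since $n\geqs 13$, Corollary \ref{c:r>n+2} gives two distinct primes $r,s>n+2$ dividing $|G_0|$; and since $|H_0|$ divides $|G_0|$, the prime divisors of $|H_0|$ form a subset of those of $|G_0|$ not containing $r$ or $s$, so $\pi(G_0)\geqs\pi(H_0)+2$ unconditionally. The "main obstacle" you describe is not an obstacle but a counting error: whether or not $H_0$ realises every prime up to $n+2$ is irrelevant to this lower bound, and the computation $3-1=2$ reveals the confusion — with two large primes one gets $\pi(G_0)-\pi(H_0)\geqs 2$, and with three one would get $\geqs 3$, not $\geqs 2$. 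There is nothing to subtract, because no prime $>n+2$ can divide $|H_0|$.

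The detour this error forces on you is also genuinely problematic, not merely redundant. A third prime exceeding $n+2$ dividing the product $N=\prod_{i=1}^{m}(q^{2i}-1)$ of Lemma \ref{l:r>n+2} does not always exist: for $(n,q)=(13,2)$ one has $m=6$ and the prime divisors of $\prod_{i=1}^{6}(2^{2i}-1)$ are $3,5,7,11,13,17,31$, of which only $17$ and $31$ exceed $n+2=15$. So the strengthened version of Lemma \ref{l:r>n+2} you propose is false as stated at the level of $N$, and rescuing it would require passing to the full order of each $G_0$ and the case-by-case patching you begin to sketch — all of which collapses back to the two-prime argument (as your own treatment of $n=16$, $q=2$ inadvertently demonstrates). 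Delete everything after the first invocation of Corollary \ref{c:r>n+2} and the proof is correct and identical to the paper's.
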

\begin{proof}
Assume $S=A_m$ with $m=n+1$ or $n+2$. Note that all prime divisors of $|H_0|$ divide $(n+2)!$. However by assumption $n\geqs 13$ so $|G_0|$ is divisible by at least two primes larger than $n+2$ by Corollary \ref{c:r>n+2}. Thus $\pi(G_0)-\pi(H_0)\geqs 2$.
\end{proof}

\begin{lem}\label{l:nongeom}
Theorem \ref{t:primedivs} holds if $H\in\mathcal{S}$, $n\geqs 13$ and $S\neq A_m$ for $m=n+1$ or $n+2$ .
\end{lem}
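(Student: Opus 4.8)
The plan is to combine Proposition \ref{p:gpps} with the order estimates of Corollary \ref{c:r>n+2}. By Proposition \ref{p:gpps}, since we are assuming $S \neq A_m$ for $m = n+1, n+2$, the subgroup $H_0$ has order divisible by a primitive prime divisor of $q^i-1$ with $\frac{n}{2} < i \leqs n$ only if $(S,n,i)$ appears in Table \ref{tab:GPPStab}. So I would split into two cases: either $(S,n,i)$ is one of the finitely many sporadic/fixed lines of Table \ref{tab:GPPStab} (the rows with $S \in \{{\rm M}_{23}, {\rm M}_{24}, J_1, J_3, {\rm Co}_3, {\rm Co}_2, {\rm Co}_1, {\rm Ru}, {\rm Sz}(8), {\rm G}_2(3), {\rm PSp}_4(4)\}$), or $S$ lies in one of the infinite families $\Li_d(s)$, $\Un_d(s)$, ${\rm PSp}_{2d}(s)$, $\Li_2(s)$ with $n$ as prescribed, or else $H_0$ has order divisible by \emph{no} primitive prime divisor of $q^i-1$ for any $i$ in the range $\frac{n}{2} < i \leqs n$.

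For the last case, the idea is that $|G_0|$ is divisible by primitive prime divisors of $q^i-1$ for several values of $i$ with $\frac n2 < i \leqs n$ (these exist by Zsigmondy, with the listed exceptions $(n,q)$ being irrelevant here since $n \geqs 13$), and none of them divide $|H_0|$; since primitive prime divisors for distinct $i$ are distinct, we immediately get $\pi(G_0) - \pi(H_0) \geqs 2$. More carefully, $|G_0|$ (as a classical group of dimension $n \geqs 13$) is divisible by primitive prime divisors of $q^{n}-1$ or $q^{n-1}-1$ and of $q^{n-2}-1$ or $q^{n-3}-1$ depending on type (using that the relevant cyclotomic-type factors divide $|G_0|$, via \cite[Table 5.1.A]{KL} as in Corollary \ref{c:r>n+2}), and these give at least two distinct primes not dividing $|H_0|$. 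For the finitely many sporadic lines of Table \ref{tab:GPPStab}, the dimension $n$ and the group $S$ are completely determined, so I would check each by direct computation: the prime divisors of $|H_0|$ are among those of $|\Aut(S)| \cdot (\text{small diagonal/field contributions})$, and one verifies $|G_0|$ — whose prime divisors by Corollary \ref{c:r>n+2} include at least two primes exceeding $n+2$, most of which cannot divide $|S|$ — has at least two prime divisors missing from $|H_0|$. (For instance ${\rm Sz}(8)$ with $n=14$: $|{\rm Sz}(8)| = 2^6 \cdot 5 \cdot 7 \cdot 13$, while $|G_0|$ for $n=14$ is divisible by primitive prime divisors of $q^{12}-1, q^{10}-1, q^8 - 1, \ldots$, easily forcing $\pi(G_0) - \pi(H_0) \geqs 2$.)

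The remaining, and genuinely substantive, case is the infinite families. Here $S = \Li_d(s)$, $\Un_d(s)$, ${\rm PSp}_{2d}(s)$ or $\Li_2(s)$, the defining field $\mathbb{F}_s$ of $S$ is unrelated a priori to $\mathbb{F}_q$, and $n$ is a prescribed (large) function of $d$ and $s$, e.g. $n = \frac{s^d-1}{s-1}$ or $n = \frac{s^d-1}{s-1} - 1$, etc. The strategy is: the prime divisors of $|H_0|$ are those of $|\Aut(S)|$ together with contributions from $q$ and from the scalars, so they all divide $s \cdot q \cdot \prod_{j \leqs d}(s^j - 1)(s^j+1)$ (roughly — the precise bound depends on the family), and in particular a prime divisor of $|H_0|$ coprime to $pq$ divides $s^j \mp 1$ for some $j \leqs d$ (or $j \leqs 2$ in the $\Li_2$ case). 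On the other hand $|G_0|$ is divisible by primitive prime divisors $r_i \in P_q^i$ for many $i$ in the range, say $\frac n2 < i \leqs n$, and such $r_i$ satisfy $r_i \equiv 1 \imod i$ by Lemma \ref{l:Lemma A.1}, so $r_i \geqs i + 1 > \frac n2$. The main obstacle — and the part that needs the most care — is showing that at least two such $r_i$ fail to divide the (comparatively small) number $s \cdot q \cdot \prod_{j \leqs d}(s^{2j}-1)$: this requires comparing the size $\frac n2$ with the largest prime that can divide $s^j \mp 1$ for $j \leqs d$, which in turn needs the relation between $n$ and $(d,s)$ — for the linear/unitary families $n \approx s^{d-1}$ so primes dividing $|\Aut(S)|$ are at most roughly $s^d \ll n^2$ but could still exceed $n/2$, so one must instead argue via Lemma \ref{l:Lemma A.1} that a primitive prime divisor of $q^i-1$ dividing $s^j-1$ would force $i \mid (\text{something small})$, contradicting $i > n/2$ once $q \geqs s$ or handling $q < s$ separately. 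I would organise this by: (1) bounding $|H_0|$ above by a product over $j \leqs d$ of $(s^{2j}-1)$ times $pq$; (2) noting $|G_0|$ is divisible by $r_i$ for, say, two values of $i$ with $n-c \leqs i \leqs n$ (for an appropriate small constant $c$ depending on type); (3) showing each such $r_i$, being $\equiv 1 \imod i$ and hence $> n/2 \geqs$ (the relevant bound on the order of $s$ mod $r_i$), cannot divide any $s^{2j}-1$ with $j \leqs d$ — here the key inequality is $i > 2d$ (equivalently $\frac n2 > 2d$), which holds comfortably since $n \geqs 13$ and $n$ grows exponentially in $d$ in every listed family; and (4) concluding $\pi(G_0) - \pi(H_0) \geqs 2$. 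The only lines needing a slightly different treatment are the $\Li_2(s)$ rows, where $n$ can be as small as $\frac{s-1}{2}$, $\frac{s+1}{2}$ or $s\pm 1$; there $d$ is tiny so primes dividing $|\Aut(\Li_2(s))|$ divide $s(s-1)(s+1)$, i.e.\ are at most $s+1 \leqs 2n+1$, and again two primitive prime divisors $r_i$ with $i$ near $n$ exceed this bound and do the job.
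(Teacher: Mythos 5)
Your overall skeleton (invoke Proposition \ref{p:gpps}, then exhibit at least two primitive prime divisors of $q^i-1$ with $\frac{n}{2}<i\leqs n$ that divide $|G_0|$ but not $|H_0|$) is the right one, and your treatment of the case where $(S,n)$ does not occur in Table \ref{tab:GPPStab} matches the paper. But there is a genuine gap in your handling of the infinite families, and it stems from not using the $i$-column of Table \ref{tab:GPPStab}. Your step (3) claims that a primitive prime divisor $r_i$ of $q^i-1$ with $i>n/2$ cannot divide $s^{2j}-1$ for any $j\leqs d$ because $i>2d$. That is false, and in fact it is exactly how the table lines arise: for $S=\Li_d(s)$ with $n=\frac{s^d-1}{s-1}$, a primitive prime divisor $t$ of $s^d-1$ divides $|S|$, is congruent to $1$ modulo $\frac{s^d-1}{s-1}-1=n-1$, and can perfectly well be a primitive prime divisor of $q^i-1$ for $i=n-1>n/2$. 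The condition $r_i\mid s^{2j}-1$ only says the order of $s$ modulo $r_i$ divides $2j\leqs 2d$; it places no useful upper bound on $r_i$, and Lemma \ref{l:Lemma A.1} cannot rescue the argument because it relates divisibility of powers of a \emph{single} base to the exponent $i$ --- it says nothing about $s^j-1$ for an unrelated base $s$. Your parenthetical acknowledgement that one must ``argue via Lemma \ref{l:Lemma A.1} that \dots $i$ divides something small'' is therefore not a fixable detail but the point where the argument breaks.

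The repair is to read off the third column of Table \ref{tab:GPPStab}: for every line except $(S,n,i)=(\Li_2(s),s+1,s-2)$ one has $n-2\leqs i\leqs n$, and in the exceptional line $i=n-3$. This makes the whole case analysis (sporadic lines, infinite families, and your proposed order computations for $|\Aut(S)|$) unnecessary. The paper simply takes the three even integers $j=2\lfloor\frac{n-1}{2}\rfloor$, $k=2\lfloor\frac{n-3}{2}\rfloor$, $l=2\lfloor\frac{n-5}{2}\rfloor$, all lying in $(\frac{n}{2},n)$ for $n\geqs 13$; evenness guarantees the corresponding primitive prime divisors $r_j,r_k,r_l$ exist and divide $|G_0|$ for every type of classical group, while $k,l\leqs n-3$ forces $r_k,r_l\nmid |H_0|$ in all table cases except the one $\Li_2(s)$ line, where instead $r_j,r_l\nmid|H_0|$. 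Either way $\pi(G_0)-\pi(H_0)\geqs 2$. You should also note that your choice of exponents ``$q^n-1$ or $q^{n-1}-1$ and $q^{n-2}-1$ or $q^{n-3}-1$'' is too close to the top of the range: when the table permits $i\in\{n-2,\dots,n\}$ only one of your candidates is guaranteed safe, so you would in any case need to descend to exponents at most $n-3$ and $n-5$ as the paper does.
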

\begin{proof}
Take $r_j$, $r_k$ and $r_l$ to be primitive prime divisors of $q^j-1$, $q^k-1$ and $q^l-1$ respectively, where $j:=2\lfloor \frac{n-1}{2} \rfloor$, $k:=2\lfloor \frac{n-3}{2} \rfloor$ and $l:=2\lfloor \frac{n-5}{2} \rfloor$. Note that $r_j,r_k$ and $r_l$ all exist and divide $|G_0|$, and $\frac{n}{2}<j,k,l<n$.
Proposition \ref{p:gpps} implies that if $(S,n)$ does not appear in Table \ref{tab:GPPStab}, $r_j$, $r_k$ and $r_l$ do not divide $|H_0|$ implying that $\pi(G_0)-\pi(H_0)\geqs 3$. Assume $(S,n,i)$ is found in Table \ref{tab:GPPStab} and $(S,n,i)\neq (\Li_2(s),s+1,s-2)$. By inspection of the table $n-2\leqs i\leqs n$, which implies that both $r_k$ and $r_l$ do not divide $|H_0|$, so $\pi(G_0)-\pi(H_0)\geqs 2$. 
Finally assume $(S,n,i)= (\Li_2(s),s+1,s-2)$. If $|H_0|$ is divisible by a primitive prime divisor of $q^t-1$ for $\frac{n}{2}<t\leqs n$ then $t=n-3$. Thus $r_j$ and $r_l$ do not divide $|H_0|$ so again $\pi(G_0)-\pi(H_0)\geqs 2$.
\end{proof}

\begin{lem}
Theorem \ref{t:primedivs} holds if $H\in\mathcal{S}$ and $n\leqs 12$.
\end{lem}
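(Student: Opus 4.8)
The plan is to handle the remaining low-dimensional cases ($n \leqs 12$) by a direct, case-by-case inspection using the explicit lists of maximal subgroups in Aschbacher's class $\mathcal{S}$ provided in \cite[Section 8.2]{BHR}, together with the known orders of the relevant simple groups. Since for each fixed $n \leqs 12$ the groups $G_0 \in \widetilde{\mathcal{G}}$ are $\Li_n(q)$, $\Un_n(q)$, ${\rm PSp}_n(q)$ and ${\rm P}\Omega^{\epsilon}_n(q)$ (as permitted by the dimension), and the socle $S$ of each $\mathcal{S}$-subgroup $H$ ranges over a finite explicit list (with constraints on $q$), the proof proceeds by running through the tables in \cite[Section 8.2]{BHR}. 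For each candidate pair $(G_0, H)$ the aim is to exhibit two distinct primitive prime divisors of integers $q^i - 1$ dividing $|G_0|$ but not $|H_0|$, which forces $\pi(G_0) - \pi(H_0) \geqs 2$.

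First I would organise the argument by the dimension $n$, and within each dimension by $G_0$. For each $G_0$ I would identify from \cite[Table 5.1.A]{KL} the factors $q^i - 1$ (for $i$ up to roughly $n$ or $2n$) dividing $|G_0|$, and note that a primitive prime divisor $r_i \in P_q^i$ satisfies $r_i \equiv 1 \imod i$ by Lemma \ref{l:Lemma A.1}, hence $r_i \geqs i+1$. The socle $S$ of an $\mathcal{S}$-subgroup is, for $n \leqs 12$, either a small alternating or sporadic group, or a classical/exceptional group of bounded order; in either case all prime divisors of $|H_0|$ are bounded by an explicit small constant (e.g. all prime divisors of $|A_m|$ with $m \leqs n+2 \leqs 14$ lie below $14$, and similarly for the sporadic cases). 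Thus whenever $|G_0|$ is divisible by two distinct primes exceeding this bound the conclusion is immediate; for $n$ not too small this follows from Corollary \ref{c:r>n+2} and Lemmas \ref{l:ppds4}, \ref{l:ppd2a3}, which control the sizes of $r_i$ for $i \in \{4,6,8,12\}$ and the like. This disposes of the bulk of the cases with only a short finite residue of small $q$ to examine.

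For the residual cases, which are precisely those where $|G_0|$ and $|H_0|$ have comparably-sized sets of prime divisors, the plan is to compute $|G_0|$ and $|H_0|$ explicitly using the orders in \cite[Table 5.1.A]{KL} and the structural data in \cite[Section 8.2]{BHR}, and to record directly whether $\pi(G_0) - \pi(H_0) \leqs 1$; the surviving pairs are exactly those listed in Tables \ref{tab:pi0}, \ref{tab:pi2}, \ref{tab:pi1} (for instance $G_0 = \Li_3(4)$ with $S = A_6$ and $r = 7$, or $G_0 = \Un_6(2)$ with $S = \Un_4(3)$ and $r = 11$, or $G_0 = {\rm PSp}_4(7)$ with $S = A_7$ and $r = 7$). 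These small computations are routine but numerous; where convenient they can be verified with {\sc Magma} \cite{Mag}. The main obstacle is organisational rather than conceptual: there is no uniform formula, so one must be careful to run through every entry of the $\mathcal{S}$-subgroup tables in \cite{BHR} for each $G_0$ with $n \leqs 12$, cross-referencing conditions on $q$ for existence and maximality of $H$, and in the borderline small-$q$ cases to check the prime divisor counts by hand (or by machine). A secondary technical point is that in a few cases one needs the uniqueness of a primitive prime divisor of $q^i - 1$ to land in Table \ref{tab:pi1} rather than Table \ref{tab:pi2}; this is exactly what the number-theoretic results of Section \ref{s:numthry} (Lemmas \ref{l:ppds4}, \ref{l:ppd2a3}, \ref{l:prime fac of f}) are designed to supply.
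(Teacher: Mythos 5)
Your overall strategy — run through the $\mathcal{S}$-subgroup tables in \cite[Section 8.2]{BHR}, hunt for two primitive prime divisors of $|G_0|$ missing from $|H_0|$, and fall back on explicit order computations for the survivors — is the same as the paper's, and most of your plan would go through. But there is a concrete gap in the premise on which your main reduction rests: you assert that for $n\leqs 12$ the socle $S$ of an $\mathcal{S}$-subgroup is always a small alternating/sporadic group or a group of \emph{bounded} order, so that all prime divisors of $|H_0|$ lie below a fixed constant. This is false. The tables in \cite{BHR} contain infinite defining-characteristic families such as ${\rm Sz}(q)<{\rm PSp}_4(q)$, $\Li_2(q)<{\rm PSp}_4(q)$, ${\rm G}_2(q)<{\rm PSp}_6(q)$ and $\Omega_7(q)$, and $\Omega_7(q)$, ${\rm Sp}_6(q)$, ${}^3D_4(q^{1/3})<{\rm P}\Omega_8^+(q)$, whose orders (hence prime divisors) grow with $q$. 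For these your "two primes above the bound" criterion never fires, and the residual cases are not a finite list of small $q$ that can be checked by hand or machine; they form infinite families requiring a genuine argument.

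The paper's treatment of exactly these cases shows what is needed: one compares which cyclotomic factors $q^i-1$ of $|G_0|$ actually divide $|H_0|$ (e.g.\ for $S={}^3D_4(q_0)$ with $q=q_0^3$, primitive prime divisors of $q_0^{18}-1$ and of $q_0^4-1$ are shown to miss $|H_0|$ using Lemmas \ref{l:Lemma A.1} and \ref{l:Lemma A.4}), and where only a single cyclotomic factor is missed one must solve a Diophantine condition to pin down the exceptions — for $S={\rm Sz}(q)$ the condition $q+1=r^l$ is resolved by Lemma \ref{l:btv}, reducing to $q=8$. Your proposal never supplies this machinery for the unbounded-socle families, so as written it does not close those cases. (A minor secondary point: your illustrative example $({\rm PSp}_4(7),A_7)$ actually lands in Table \ref{tab:pi0} with $\pi(G_0)=\pi(H_0)$, not in a table with a missing prime $r=7$.)
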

\begin{proof}
This can be shown by inspection of the appropriate Tables in \cite[Section 8.2]{BHR}. In most cases the proof is similar so we only provide the details for a handful of cases namely;
\begin{itemize}
\item[{\rm (a)}]$G_0={\rm P}\Omega^+_8(q)$ and $S={}^3D_4(q_0)$ with $q=q_0^3$.
\smallskip

\item[{\rm (b)}]$G_0=\Un_5(q)$ and $S=\Li_2(11)$.
\smallskip

\item[{\rm (c)}]$G_0={\rm PSp}_4(q)$ and $S={\rm Sz}(q)$ with $q\geqs 4$ even.
\smallskip

\item[{\rm (d)}]$G_0=\Li_2(q)$ and $S=A_5$.
\end{itemize} 

First we consider (a). Here $|H_0|=q_0^{12}(q_0^8+q_0^4+1)(q_0^6-1)(q_0^2-1).$
We immediately see that any primitive prime divisor of $q_0^{18}-1$ ($=q^6-1$) divides $|G_0|$ but not $|H_0|$. Take $s$ to be a primitive prime divisor of $q_0^4-1$. By Lemma \ref{l:Lemma A.1}, $s$ divides $q_0^{12}-1=q^3-1$, so $s$ divides $|G_0|$. Additionally, Lemma \ref{l:Lemma A.1} shows that $s\geqs 5$ and does not divide $q_0^{12}(q_0^6-1)(q_0^2-1)$. By Lemma \ref{l:Lemma A.4} we have that $(q_0^{12}-1)_s=(q_0^4-1)_s$, so $s$ does not divide $|H_0|$. Therefore $\pi(G_0)-\pi(H_0)\geqs 2$.

Next, let us turn to case (b). From \cite[Table 8.21]{BHR} we have $q=p\equiv 2,6,7,8,10\imod {11}$ and the prime divisors of $|H_0|$ are 2,3,5 and 11. Suppose $q\not\in\{2,7\}$ then by Lemma \ref{l:ppds4} there exist primitive prime divisors $r_4$ and $r_6$ of $q^4-1$ and $q^6-1$ respectively such that $r_4,r_6\geqs 13$, so $\pi(G_0)-\pi(H_0)\geqs 2$. The remaining cases $q=2$ and $7$ can be handled by a direct calculation showing that $\pi(G_0)-\pi(H_0)=0$ when $q=2$ and $\pi(G_0)-\pi(H_0)= 3$ when $q=7$.

Now consider case (c). By \cite[Table 8.14]{BHR}, $q=2^f$ with $f\geqs 3$ odd. Note that $|{\rm Sz}(q)|=q^2(q^2+1)(q-1)$ so any odd prime divisor of $q+1$ divides $|G_0|$ but not $|H_0|$. Therefore we may assume $q+1=r^l$ for some odd prime $r$ and some $l\geqs 1$. By Lemma \ref{l:btv} this occurs if and only if $f=3$, or $f=2^n$ and $r=2^f+1$ is a Fermat prime. However, $f\geqs 3$ is odd and so we reduce down to the case where $q=8$. Here it is straightforward to show that 3 is the only prime dividing $|G_0|$ that does not divide $|H_0|$. 

Finally we turn to case (d). From \cite[Table 8.2]{BHR} we have $q=p\equiv \pm 1\imod {10}$ or $q=p^2$ with $p\equiv \pm 3\imod {10}$. Note that the prime divisors of $|H_0|$ are 2,3 and 5. Suppose first that $p\geqs 7$. Then $p$ does not divide $|H_0|$ and the only other possible prime divisors of $|G_0|$ that do not divide $|H_0|$ are prime divisors $r$ of $q^2-1$ such that $r\geqs 7$. Therefore if $q^2-1=2^a.3^b.5^c$ for some $a,b,c\geqs 0$ we have $\pi(G_0)-\pi(H_0)=1$, otherwise $\pi(G_0)-\pi(H_0)\geqs 2$. Finally suppose $p\leqs 7$ then $q=9$, this case can be handled by direct computation showing that $\pi(G_0)-\pi(H_0)=0$.
\end{proof}

\section{Proof of Theorem \ref{t:mainthrm}}

In this final section we prove Theorem \ref{t:mainthrm}. First we apply Theorem \ref{t:primedivs} to reduce the problem to the cases listed in Tables \ref{tab:pi0}-\ref{tab:pi1}. Recall that $G\leqs{\rm Sym}(\Omega)$ is a finite almost simple primitive permutation group with socle $G_0\in \mathcal{G}$ where
\[
\mathcal{G}=\{\Li_n(q)\mid n\geqs 3\}\cup\{\Un_n(q), {\rm PSp}_n(q)\mid n\geqs 4\}\cup\{{\rm P}\Omega^{\epsilon}_n(q)\mid n\geqs 7\}\setminus \mathcal{F}.
\]
where $\mathcal{F}=\{\Li_3(2),\Li_4(2),{\rm PSp}_4(2)^{'},{\rm PSp}_4(3)\}$ and $q=p^f$ for $p$ prime and $f\geqs 1$.
Let $H$ be a point stabiliser and recall $H_0=H\cap G_0$. Additionally recall that $\pi(X)$ denotes the number of distinct prime divisors of $|X|$ and $P_a^b$ denotes the set of primitive prime divisors of $a^b-1$.

\begin{prop}\label{p:aereduc}
The following statements hold:
\begin{itemize}
\item[{\rm (i)}] If $G$ is almost elusive then $\pi(G_0)-\pi(H_0)\leqs 1$.
\item[{\rm (ii)}]If $(G_0,H,i)$ is found in Table \ref{tab:pi1} and $P_q^i=\{r\}$ then any element in $G_0$ of order $r$ is a derangement.
\end{itemize}
\end{prop}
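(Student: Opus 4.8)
The plan is to deduce both statements from a single elementary observation about normal subgroups, which I would record first: if $r$ is a prime with $r \mid |G_0|$ but $r \nmid |H_0|$, then every element $x \in G_0$ of order $r$ (such an element exists by Cauchy's theorem) is a derangement. Indeed, since $G_0$ is a normal subgroup of $G$, the conjugacy class $x^G$ is contained in $G_0$, so if $x^G \cap H \neq \emptyset$ then $x^G \cap H \subseteq G_0 \cap H = H_0$; but every element of $x^G$ has order $r$, forcing $r \mid |H_0|$, a contradiction. Hence $x^G \cap H = \emptyset$. I would also note that elements of distinct prime orders lie in distinct $G$-classes, so distinct primes of this type contribute distinct $G$-classes of derangements of prime order.

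Part (i) then follows immediately. Since $H_0 \leqs G_0$, every prime dividing $|H_0|$ divides $|G_0|$, so the number of primes dividing $|G_0|$ but not $|H_0|$ equals $\pi(G_0) - \pi(H_0)$. If this quantity is at least $2$, the observation above produces at least two $G$-classes of derangements of prime order, and therefore $G$ is not almost elusive. Taking the contrapositive gives $\pi(G_0) - \pi(H_0) \leqs 1$ whenever $G$ is almost elusive.

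For part (ii) I would argue as follows. Suppose $(G_0, H, i)$ occurs in Table \ref{tab:pi1} and $P_q^i = \{r\}$. By the way Table \ref{tab:pi1} is constructed inside the proof of Theorem \ref{t:primedivs}, the index $i$ is always chosen so that a primitive prime divisor of $q^i - 1$ divides $|G_0|$ but not $|H_0|$: the fact that such a prime divides $|G_0|$ is read off from the factorisation of $|G_0|$ in \cite[Table 5.1.A]{KL} together with Lemma \ref{l:Lemma A.1} (a primitive prime divisor of $q^i-1$ divides $q^j-1$ precisely when $i \mid j$), while non-division of $|H_0|$ is exactly what is checked case-by-case in Section \ref{s:primedivs}. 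Since $P_q^i = \{r\}$, this prime is $r$ itself, so the observation from the first paragraph applies and any $x \in G_0$ of order $r$ is a derangement.

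I do not expect a genuine obstacle here, since all the substantive work already resides in Theorem \ref{t:primedivs} and its tables. The only point demanding a little care is confirming that in every line of Table \ref{tab:pi1} the prime in question really is the unique primitive prime divisor of $q^i - 1$ and really does divide $|G_0|$; both are bookkeeping consequences of Lemma \ref{l:Lemma A.1} and the order formulas in \cite{KL}, and have in effect already been carried out in the proof of Theorem \ref{t:primedivs}.
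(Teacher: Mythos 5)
Your proof is correct and follows essentially the same route as the paper: the key observation that a prime dividing $|G_0|$ but not $|H_0|$ forces every element of that order in $G_0$ to be a derangement (via normality of $G_0$), applied twice. The paper's own proof is just a terser version of exactly this argument, with part (ii) dispatched by the same remark that the relevant prime $r$ divides $|G_0|$ but not $|H_0|$ by the construction of Table \ref{tab:pi1} in the proof of Theorem \ref{t:primedivs}.
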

\begin{proof}
Recall that $x\in G$ is a derangement if and only if $x^G\cap H=\emptyset$. Suppose $\pi(G_0)-\pi(H_0)\geqs 2$, then there exist distinct primes $r$ and $s$ that divide $|G_0|$ but not $|H_0|$. Thus every element of order $r$, or $s$, in $G_0$ is not an element of $H_0$. Since $G_0$ is a normal subgroup of $G$, any element of order $r$, or order $s$, in $G_0$ is a derangement. Therefore $G$ is not almost elusive. The second part of the proposition can easily be seen by using the same approach shown above.

\end{proof}

In view of Theorem \ref{t:primedivs}, we may assume for the remainder of the section that either $(G_0,H)$ is a case in Tables \ref{tab:pi0} or \ref{tab:pi2}, or $(G_0,H,i)$ is found in Table \ref{tab:pi1} and there exists a unique primitive prime divisor of $q^i-1$.

For certain low dimensional groups over $\mathbb{F}_q$ for small $q$, we can calculate the number of conjugacy classes of derangements of prime order directly using {\sc Magma} \cite{Mag}. Define $\mathcal{D}\cup\mathcal{O}\subset\mathcal{G}$ where 
$$\mathcal{D}=\{G_0\in\mathcal{G}\mid 3\leqs n \leqs 4 \text{ with } q\leqs 8 \text{ or } 5\leqs n\leqs 8 \text{ with } q=2\},$$ 
$$\mathcal{O}=\{\Omega_7(3),{\rm P}\Omega^{\epsilon}_8(3),\Omega^{\epsilon}_8(4),\Omega^{\epsilon}_{10}(2),\Omega^{\epsilon}_{12}(2)\}$$ 
The almost simple groups with socle in $\mathcal{D}\cup \mathcal{O}$ can be handled in {\sc Magma}. In particular, this handles most cases in Table \ref{tab:pi0} and all the relevant cases in Table \ref{tab:pi2}. 

\begin{prop}\label{p:smalldim}
Theorem \ref{t:mainthrm} holds for $G_0\in\mathcal{D}\cup\mathcal{O}$.
\end{prop}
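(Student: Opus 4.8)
The plan is to verify the proposition by direct computation in {\sc Magma} \cite{Mag}, making full use of the reduction already in place. By Proposition \ref{p:aereduc}(i) and Theorem \ref{t:primedivs}, for each $G_0 \in \mathcal{D}\cup\mathcal{O}$ it suffices to examine the pairs $(G_0,H)$ recorded in Tables \ref{tab:pi0} and \ref{tab:pi2}, together with the triples $(G_0,H,i)$ appearing in Table \ref{tab:pi1} for which $q^i-1$ has a unique primitive prime divisor. For the groups under consideration $q$ is very small, so whether $P_q^i=\{r\}$ can be decided simply by factorising $q^i-1$ (the number-theoretic lemmas of Section \ref{s:numthry} give the same information). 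In particular this leaves only a finite, explicit list of configurations $(G_0,H)$ to test.

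For each such configuration and each group $G$ with $G_0 \leqs G \leqs \Aut(G_0)$, I would construct a concrete copy of $G$ --- from standard generators of the classical groups together with the outer automorphisms, or from the database of maximal subgroups --- and then form its permutation representation $\rho : G \to {\rm Sym}(\Omega)$ on the right cosets of the relevant maximal subgroup $H$, the latter being located either among the output of the maximal-subgroups routine or built directly from its description in \cite{BHR}. Running through the conjugacy classes of $G$ of prime order, a class $x^G$ consists of derangements exactly when $\rho(x)$ has no fixed points, equivalently when $x^G \cap H = \emptyset$ (detectable, e.g., via the permutation character); counting the prime-order classes with this property determines whether $G$ is almost elusive. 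Comparing the outcome with Theorem \ref{t:mainthrm}: the almost elusive pairs with $G_0 \in \mathcal{D}\cup\mathcal{O}$ are precisely those listed in Table \ref{tab:maintab}, and for these the unique class of prime-order derangements has the order $r$ recorded there; every other configuration produces either no prime-order derangements or at least two classes of them. No example of type (i) of Theorem \ref{t:mainthrm} can occur here, since that case requires a prime $n\geqs 5$ dividing $q+1$ with $q$ even, impossible for the unitary groups in $\mathcal{D}$.

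Two points require care rather than ingenuity. First, one must enumerate the almost simple extensions $G$ correctly, including those obtained by adjoining diagonal, field and graph automorphisms and their products; for $G_0=\Un_4(3)$ the two classes of involutory graph automorphism must be distinguished --- it is the one with $C_{G_0}(\gamma)={\rm PSp}_4(3)$ that gives the entry $\Un_4(3).2_2$ in Table \ref{tab:maintab} --- and similarly the subgroups of index $2$ in $\Aut(\Li_3(4))$ behave differently, which is why the fourth column of Table \ref{tab:maintab} separates $\Li_3(4).2_1$, $\Li_3(4).2_2$, $\Li_3(4).2_3$ and $\Li_3(4).2^2$ for the cases with socle $\Li_3(4)$. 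Secondly, the groups in $\mathcal{O}$ of largest order --- $\Omega_{12}^{\epsilon}(2)$ and the groups with socle ${\rm P}\Omega_8^{\epsilon}(3)$ (where the outer automorphism group is large owing to triality) --- make the conjugacy class and maximal subgroup computations heavy, so where possible I would work with the coset action directly rather than materialising all of $\Aut(G_0)$, and cross-check the list of relevant maximal subgroups against \cite{BHR}. This bookkeeping over the extensions, together with the size of the largest $\Omega$-groups, is the main obstacle; once $\rho$ is in hand the derangement count itself is routine.
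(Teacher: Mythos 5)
Your proposal is correct and matches the paper's approach: the paper also proves this by a direct {\sc Magma} computation, obtaining $\Aut(G_0)$ via \verb|AutomorphismGroupSimpleGroup|, enumerating the almost simple groups with socle $G_0$ and their maximal subgroups, and counting the prime-order $G$-classes meeting each $H$. The only cosmetic difference is that the paper runs the check over all maximal subgroups rather than first filtering through Tables \ref{tab:pi0}--\ref{tab:pi1}, which does not change the argument.
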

\begin{proof}
This is an entirely straightforward {\sc Magma} \cite{Mag} calculation. For each $G_0\in\mathcal{D}\cup\mathcal{O}$ we use the command \verb|AutomorphismGroupSimpleGroup| to obtain $\Aut(G_0)$ as a permutation group. Then using \verb|LowIndexSubgroups| we obtain all almost simple groups with socle $G_0$. For each group we call \verb|MaximalSubgroups| and can identify the type of each maximal subgroup by its order or structure. Then for each group $G$ and each maximal subgroup $H$ of $G$ we look over all conjugacy classes of prime order elements in $G$ and see how many intersect with an $H$-class. 
\end{proof}

\begin{cor}
Theorem \ref{t:mainthrm} holds for cases ${\rm I-X}$, ${\rm XII}$ with $(n,q)=(6,2)$, ${\rm XIII}$, ${\rm XIV}$ and ${\rm XIX}$ in Table \ref{tab:pi0} and for all cases in Table \ref{tab:pi2}.
\end{cor}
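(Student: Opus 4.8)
The plan is to deduce this corollary directly from Proposition~\ref{p:smalldim} by checking that each listed case involves a socle that already lies in the set $\mathcal{D}\cup\mathcal{O}$ handled there. Recall that
$$\mathcal{D}=\{G_0\in\mathcal{G}\mid 3\leqs n \leqs 4 \text{ with } q\leqs 8 \text{ or } 5\leqs n\leqs 8 \text{ with } q=2\}$$
and $\mathcal{O}$ contains $\Omega_7(3)$, ${\rm P}\Omega^{\epsilon}_8(3)$, $\Omega^{\epsilon}_8(4)$, $\Omega^{\epsilon}_{10}(2)$ and $\Omega^{\epsilon}_{12}(2)$. So the first step is simply to read off the socles $(n,q)$ appearing in the relevant rows of Tables~\ref{tab:pi0} and~\ref{tab:pi2} and confirm membership.

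For Table~\ref{tab:pi0}: cases I and II have $G_0=\Li_6(2)$, cases III--V have $\Li_4(2)$ or $\Un_4(2)$, cases VI--X have $\Un_n(q)$ with $(n,q)\in\{(6,2),(5,2),(3,3),(4,3),(3,5)\}$, case XII with $(n,q)=(6,2)$ has $G_0={\rm PSp}_6(2)$, case XIII has ${\rm PSp}_4(7)$, case XIV has ${\rm P}\Omega_8^+(2)$, and case XIX has ${\rm P}\Omega_8^-(2)$. Every one of these socles has $3\leqs n\leqs 4$ with $q\leqs 8$, or $5\leqs n\leqs 8$ with $q=2$ --- note $\Li_4(2)\cong A_8$ and ${\rm PSp}_4(7)$ fall under $n=4$, $q\leqs 8$ --- and hence lies in $\mathcal{D}$. (A small point worth a sentence: although $\Li_4(2),{\rm PSp}_4(2)'\in\mathcal{F}$ are excluded from $\mathcal{G}$ itself, the \emph{socles} in cases III and XIII are $\Li_4(2)$ and ${\rm PSp}_4(7)$, which are legitimate members of $\mathcal{G}$; one should just double-check the isomorphism type of each listed socle against the exclusion set $\mathcal{F}$.) For Table~\ref{tab:pi2}, the socles appearing are $\Li_n(q)$, $\Un_n(q)$, ${\rm PSp}_n(q)$ and ${\rm P}\Omega^{\pm}_n(q)$ for $(n,q)$ such as $(7,2),(6,2),(5,3),(4,4),(3,8),(3,4),(8,2),(6,3),(4,8),(4,7),(4,5),(10,2),(8,3),(7,3)$; again each satisfies $3\leqs n\leqs 4$ with $q\leqs 8$ or $5\leqs n\leqs 8$ with $q=2$, except the rows with $(8,3)$, $(7,3)$ and $(10,2)$, which are exactly the members $\Omega_7(3)$, ${\rm P}\Omega^{\epsilon}_8(3)$, $\Omega^{\epsilon}_{10}(2)$ of $\mathcal{O}$.

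Given these membership checks, the corollary follows immediately: Proposition~\ref{p:smalldim} asserts that Theorem~\ref{t:mainthrm} holds for every almost simple group with socle in $\mathcal{D}\cup\mathcal{O}$, and every $(G,H)$ arising in the enumerated cases has such a socle, so Theorem~\ref{t:mainthrm} (equivalently, the determination of whether each such $(G,H)$ is almost elusive, and agreement with Table~\ref{tab:maintab}) holds in all of them. The proof is therefore essentially a bookkeeping argument.

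The only mild obstacle is making sure the case-by-case matching is exhaustive and that no isomorphism subtlety is overlooked: one must verify that each socle name in Tables~\ref{tab:pi0} and~\ref{tab:pi2} genuinely corresponds to a group in $\mathcal{G}$ (not a duplicate removed via Proposition 2.9.1 of~\cite{KL}) and that its parameters $(n,q)$ put it in $\mathcal{D}\cup\mathcal{O}$. This is routine, but it is where an error would hide, so I would present it as an explicit table-by-table, row-by-row check. No new mathematical input beyond Proposition~\ref{p:smalldim} is needed.

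\begin{proof}
By Proposition~\ref{p:smalldim}, Theorem~\ref{t:mainthrm} holds for every finite almost simple primitive permutation group whose socle lies in $\mathcal{D}\cup\mathcal{O}$. It therefore suffices to observe that in each of the cases listed in the statement the socle $G_0$ belongs to $\mathcal{D}\cup\mathcal{O}$.

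We first consider Table~\ref{tab:pi0}. In cases I and II we have $G_0=\Li_6(2)$; in cases III, IV and V we have $G_0\in\{\Li_4(2),\Un_4(2)\}$; in cases VI--X we have $G_0=\Un_n(q)$ with $(n,q)\in\{(6,2),(5,2),(3,3),(4,3),(3,5)\}$; in case XII with $(n,q)=(6,2)$ we have $G_0={\rm PSp}_6(2)$; in case XIII we have $G_0={\rm PSp}_4(7)$; in case XIV we have $G_0={\rm P}\Omega_8^+(2)$; and in case XIX we have $G_0={\rm P}\Omega_8^-(2)$. In every instance $G_0$ satisfies either $3\leqs n\leqs 4$ with $q\leqs 8$, or $5\leqs n\leqs 8$ with $q=2$, and hence $G_0\in\mathcal{D}$.

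Next we consider Table~\ref{tab:pi2}. Inspecting the second column, the socles that occur are $\Li_n(q)$ with $(n,q)\in\{(7,2),(6,2),(5,3),(4,4),(3,8),(3,4)\}$; $\Un_n(q)$ with $(n,q)\in\{(6,2),(5,2),(4,5),(4,3),(4,2),(3,9),(3,5),(3,4),(3,3)\}$; ${\rm PSp}_n(q)$ with $(n,q)\in\{(8,2),(6,3),(6,2),(4,8),(4,7),(4,5)\}$; ${\rm P}\Omega^{+}_n(q)$ with $(n,q)\in\{(10,2),(8,3),(8,2)\}$; ${\rm P}\Omega^{-}_n(q)$ with $(n,q)\in\{(10,2),(8,2)\}$; and $\Omega_n(q)$ with $(n,q)=(7,3)$. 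Each of these with $3\leqs n\leqs 4$ and $q\leqs 8$, or with $5\leqs n\leqs 8$ and $q=2$, lies in $\mathcal{D}$; the remaining socles are ${\rm P}\Omega^{\epsilon}_8(3)$, ${\rm P}\Omega^{\epsilon}_{10}(2)$ and $\Omega_7(3)$, each of which lies in $\mathcal{O}$.

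Thus in every case under consideration $G_0\in\mathcal{D}\cup\mathcal{O}$, and the result follows from Proposition~\ref{p:smalldim}.
\end{proof}
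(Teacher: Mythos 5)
Your approach is exactly the paper's: the corollary is stated there as an immediate consequence of Proposition \ref{p:smalldim}, with no further argument, so the socle-by-socle membership check you spell out is precisely what the paper leaves implicit. However, your bookkeeping contains errors worth flagging. First, $\Li_4(2)$ is \emph{not} a legitimate member of $\mathcal{G}$: it lies in $\mathcal{F}$ (it is isomorphic to $A_8$), so case III is vacuous for Theorem \ref{t:mainthrm} rather than being covered via $\mathcal{D}$; the same applies to the $\Li_2(q)$ and $\Un_3(q)$ rows of Table \ref{tab:pi2}, which lie outside $\mathcal{G}$. Second, case XIX has socle ${\rm P}\Omega_8^{+}(2)$, not ${\rm P}\Omega_8^{-}(2)$ (immaterial, as both lie in $\mathcal{D}$). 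Third, and most substantively, the rows of Table \ref{tab:pi2} with $(G_0,H)$ of the form $(\Li_5(3),{\rm M}_{11})$ and $({\rm PSp}_6(3),\Li_2(13))$ have socles satisfying neither $3\leqs n\leqs 4$ with $q\leqs 8$ nor $5\leqs n\leqs 8$ with $q=2$, and they do not appear in $\mathcal{O}$, so your blanket assertion that every socle in Table \ref{tab:pi2} lies in $\mathcal{D}\cup\mathcal{O}$ is not literally true as $\mathcal{D}$ is defined. This discrepancy is inherited from the paper itself and is presumably a defect in the stated definition of $\mathcal{D}$ rather than in the underlying computation, but an honest row-by-row verification must flag these two cases instead of sweeping them into $\mathcal{D}$.
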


It now remains to deal with cases XI, XII with $n\equiv 0\imod 4$, XV-XVIII and XX in Table \ref{tab:pi0} and all remaining cases in Table \ref{tab:pi1} (that is all cases except for L4, L6, U2 and U7 since in these cases $G_0\not\in \mathcal{G}$). 
Aschbacher's theorem \cite{asch} provides the framework for the proof. The subspace subgroups comprising the $\mathcal{C}_1$ collection require special attention. These are relatively large subgroups so finding prime order derangements can be more challenging.

Before we begin the proofs we provide some preliminary lemmas that will be useful for both non-subspace and subspace subgroups. We remind the reader that the notation for elements of classical groups was set up in Section \ref{s:conjclass}. The following lemmas are particular cases of \cite[Lemma 4.2.4]{BG_book}.

\begin{lem}\label{l:tisSO}
Suppose $G_0\in\{{\rm PSp}_n(q),{\rm P}\Omega_n^{\epsilon}(q)\}$ where $n\equiv 0\imod 4$. Suppose $r$ is a primitive prime divisor of $q^{n/2}-1$, and let $x=\hat{x}Z\in G_0$ be an element of order $r$ such that $\hat{x}:=[\Lambda,I_{n/2}]$. Then $x$ does not fix a totally singular $n/2$-space.
\end{lem}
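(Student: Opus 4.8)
The plan is to analyze the fixed subspaces of the semisimple element $\hat{x} = [\Lambda, I_{n/2}]$ directly, using the eigenvalue description from Section~\ref{s:conjclass}. Since $r$ is a primitive prime divisor of $q^{n/2}-1$, the block $\Lambda$ is an $(n/2)$-dimensional subspace $U$ on which $\hat{x}$ acts irreducibly (over $\mathbb{F}_q$), and $V = U \oplus C_V(\hat{x})$ with $\dim C_V(\hat{x}) = n/2$ and $C_V(\hat{x})$ the $1$-eigenspace. The first step is the linear-algebra observation: any $\hat{x}$-invariant subspace $W$ of $V$ must split as $W = (W \cap U) \oplus (W \cap C_V(\hat{x}))$, and since $\hat{x}|_U$ is irreducible, $W \cap U$ is either $0$ or all of $U$. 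Hence an $\hat{x}$-invariant subspace has dimension $0, n/2$, or $n$ in its $U$-part plus something in $C_V(\hat{x})$; the $(n/2)$-dimensional invariant subspaces are precisely $U$ itself and the $(n/2)$-dimensional subspaces of $C_V(\hat{x})$.

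**Key steps.** First I would record that $x$ fixes a totally singular $(n/2)$-space of $V$ if and only if $\hat{x}$ fixes one (passing between $G_0$ and the matrix group is harmless here since $Z$ is scalar and preserves the form). Second, I would rule out $U$: the form restricted to $U$ is non-degenerate, because $U = [\Lambda]$ is a non-degenerate block in the standard decomposition (this is exactly the content of \cite[Propositions 3.4--3.5]{BG_book} describing semisimple classes in $\mathrm{Sp}_n$ and $\mathrm{O}^\epsilon_n$ — the irreducible blocks $[\Lambda]$ attached to a primitive prime divisor of $q^{n/2}-1$ are non-degenerate), so $U$ is certainly not totally singular. Third, I would rule out any $(n/2)$-dimensional subspace $W \leqslant C_V(\hat{x})$: here $\dim C_V(\hat{x}) = n/2$, so such a $W$ would be the \emph{whole} of $C_V(\hat{x})$; but $C_V(\hat{x})$ is also non-degenerate (again by the structure of the standard decomposition, $V = U \perp C_V(\hat{x})$ is an orthogonal/symplectic direct sum), hence not totally singular. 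Since the invariant subspaces listed above exhaust all $(n/2)$-dimensional $\hat{x}$-invariant subspaces, and none is totally singular, $x$ fixes no totally singular $(n/2)$-space. I would phrase this cleanly by invoking \cite[Lemma 4.2.4]{BG_book} directly, which the statement already flags as the source.

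**Main obstacle.** The only subtlety is the orthogonal case when $q$ is even: there "totally singular" refers to the quadratic form, not merely the bilinear form, and one should make sure the decomposition $V = U \perp C_V(\hat{x})$ is an orthogonal decomposition with respect to $Q$, so that both summands are non-degenerate quadratic spaces and in particular not totally singular. This is guaranteed by the construction of semisimple classes in \cite[Chapter 3]{BG_book}, where the irreducible blocks of a semisimple element are taken to be non-degenerate (indeed anisotropic on no proper nonzero subspace) precisely because $r \nmid q^j - 1$ for $j < n/2$ forces each orbit of eigenvalues to pair up into a non-degenerate block. I expect the whole argument to be short once this structural fact is cited; the bulk of the work is just observing that the $(n/2)$-dimensional invariant subspaces are exactly $U$ and $C_V(\hat{x})$, both of which are non-degenerate.
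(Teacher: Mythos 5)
Your argument is correct and matches the paper, whose entire proof of this lemma is the remark that it is a particular case of \cite[Lemma 4.2.4]{BG_book}; your decomposition $V=U\perp C_V(\hat{x})$ with both summands non-degenerate (the $\Lambda$-block being elliptic because $n/2$ is even, so $\Lambda=\Lambda^{-1}$) and the observation that every $\hat{x}$-invariant $n/2$-space is forced to be $U$ or $C_V(\hat{x})$ is exactly the content of that cited result. No gaps.
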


\begin{lem}\label{l:tisU}
Suppose $G_0=\Un_n(q)$ and let $n\geqs 6$ be even such that $n\not\equiv 0\imod 8$. Suppose $r$ is a primitive prime divisor of $q^{i}-1$, where $i=n/2$ if $n\equiv 4\imod 8$ and $i=n$ if $n\equiv \pm2\imod 8$. Let $x=\hat{x}Z\in G_0$ be an element of order $r$ such that 
\begin{equation}\label{e:tis}
\hat{x}:=
\begin{cases}
[\Lambda_1^3,\Lambda_2] & n\equiv 4\imod 8\\
[\Lambda_1,\Lambda_2] & n\equiv \pm2\imod 8
\end{cases},
\end{equation}
where $\Lambda_1\neq \Lambda_2$. Then $x$ does not fix a totally singular $n/2$-space.
\end{lem}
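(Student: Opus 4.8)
The plan is to reduce the claim to a statement about eigenspace dimensions and then apply Lemma 4.2.4 of \cite{BG_book} (the cited source for both preliminary lemmas). Recall the setup: $G_0=\Un_n(q)$ with $n$ even, $n\not\equiv 0\imod 8$, and $r$ is a primitive prime divisor of $q^i-1$ with $i=n/2$ when $n\equiv 4\imod 8$ and $i=n$ when $n\equiv\pm2\imod 8$. Since $r\in P_q^i$ with $i\equiv 2\imod 4$ (note $n/2$ is even when $n\equiv 4\imod 8$, and $n$ is $\equiv 2\imod 4$ when $n\equiv\pm 2\imod 8$), the relevant parameter governing the irreducible block size in the unitary group is $c=i/2$, so $\hat{x}$ acts irreducibly on non-degenerate $c$-spaces. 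In the two cases $c=n/4$ (so $n\equiv 4\imod 8$) or $c=n/2$ (so $n\equiv\pm2\imod 8$); in both cases the given $\hat{x}$ has the right total support: $3c+c=n$ when $n\equiv4\imod 8$, and $c+c=n$ when $n\equiv\pm2\imod 8$, so $C_V(\hat{x})=0$ and $\dim C_V(\hat{x})=0$.

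The key step is to compute, for each nonzero scalar $\mu\in\overline{\mathbb{F}}_q$, the dimension of the $\mu$-eigenspace of $\hat{x}$ on $V\otimes\overline{\mathbb{F}}_q$, and then invoke the fixed-space criterion from \cite[Lemma 4.2.4]{BG_book}, which states (in the form needed here) that a semisimple element fixes a totally singular $m$-space only if for every eigenvalue $\mu$ the eigenspace dimensions satisfy a balancing condition forced by the form — roughly, $\dim V_\mu(\hat{x}) = \dim V_{\bar\mu^{-1}}(\hat{x})$ paired appropriately, together with an inequality bounding $m$ in terms of these dimensions. Concretely, each $\Lambda_j$ contributes, over $\overline{\mathbb{F}}_q$, a set of $c$ distinct eigenvalues each with multiplicity equal to the multiplicity $a_j$ of $\Lambda_j$ in $\hat{x}$. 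The crucial arithmetic input is that the $\sigma^2$-orbits $\Lambda_1,\Lambda_2$ are \emph{unequal} and (being orbits of size $c$ in $T(r)$) are either disjoint or equal; since $\Lambda_1\neq\Lambda_2$ they are disjoint, so no eigenvalue receives contributions from both blocks. Hence in the case $n\equiv 4\imod 8$ the eigenvalue multiset has some eigenvalues of multiplicity $3$ (from $\Lambda_1^3$) and others of multiplicity $1$ (from $\Lambda_2$), while in the case $n\equiv\pm2\imod 8$ all eigenvalues have multiplicity $1$ but split into two Galois-distinct orbits.

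The heart of the argument is then the parity/counting obstruction: a totally singular $n/2$-space $W$ fixed by $x$ would be $\hat{x}$-invariant, hence a direct sum of eigenspaces (and root spaces) compatible with the Hermitian form, and the form pairs the $\mu$-eigenspace with the $\mu^{-q}$-eigenspace. One shows that the multiset of eigenvalue-multiplicities of $\hat{x}$ is incompatible with splitting off a totally singular half-dimensional invariant subspace — precisely the situation covered by \cite[Lemma 4.2.4]{BG_book}, where the relevant hypothesis (an eigenvalue whose multiplicity on $V$ is odd, or the precise multiplicity pattern $3,3,\dots,1,1,\dots$ that cannot be halved while respecting the form-pairing) fails. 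I would state the eigenvalue data explicitly, verify that it matches the hypotheses of \cite[Lemma 4.2.4]{BG_book} in each of the two congruence classes for $n$, and conclude. The main obstacle I anticipate is bookkeeping: correctly tracking how the unitary-specific pairing $\mu\leftrightarrow\mu^{-q}$ interacts with the $\sigma^2$-orbits $\Lambda_1,\Lambda_2$ (in particular checking whether $\Lambda_1$ is paired with itself, with $\Lambda_2$, or with neither, since this determines which case of \cite[Lemma 4.2.4]{BG_book} applies), and making sure the boundary case $n\equiv 4\imod 8$ with the asymmetric multiplicities $3$ and $1$ genuinely obstructs a totally singular $n/2$-space rather than merely a totally singular space of some other dimension.
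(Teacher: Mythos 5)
Your approach is the same as the paper's, which proves nothing beyond observing that the lemma is a special case of \cite[Lemma 4.2.4]{BG_book}; your identification of the block size $c=i/2$, the disjointness of $\Lambda_1$ and $\Lambda_2$, the computation $C_V(\hat{x})=0$, and the reduction to an eigenvalue-multiplicity obstruction are all correct. The one point you flag but leave open --- whether each $\Lambda_j$ is paired with itself or with the other orbit under $\mu\mapsto\mu^{-q}$ --- resolves favourably and is genuinely essential: in both cases $i\equiv 2\imod 4$, so $r$ divides $q^{i/2}+1$ with $i/2$ odd, hence $\mu^{-q}=(\mu^{q^{i/2}})^{q}=\mu^{q^{i/2+1}}$ with $i/2+1$ even, which places $\mu^{-q}$ in the same $\sigma^2$-orbit as $\mu$; thus each $\Lambda_j$ is self-paired, each block is non-degenerate, and for an $\hat{x}$-invariant totally singular $W$ the Galois-stability condition $\dim W_\mu=\dim W_{\mu^{-q}}$ together with $\dim W_\mu+\dim W_{\mu^{-q}}\leqs a_j$ forces $\dim W_\mu\leqs\lfloor a_j/2\rfloor$, so $\dim W\leqs n/4$ when $\hat{x}=[\Lambda_1^3,\Lambda_2]$ and $\dim W=0$ when $\hat{x}=[\Lambda_1,\Lambda_2]$, both less than $n/2$. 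Note this check cannot be waved away as bookkeeping: if instead $\Lambda_1$ and $\Lambda_2$ were cross-paired, the span of the $\Lambda_1$-eigenvectors of $[\Lambda_1,\Lambda_2]$ would be a fixed totally singular $n/2$-space and the lemma would fail.
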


\subsection{Non-subspace subgroups} \label{ss:nonsub}

In this section we prove Theorem \ref{t:mainthrm} for the maximal subgroups contained in one of the collections $\mathcal{C}_2,\dots,\mathcal{C}_8$, and those contained in the $\mathcal{N}$ and $\mathcal{S}$ collections (see Section \ref{s:subgrp struc} for more details on these subgroup collections). 

\begin{lem}\label{l:symppd4}
Let $G_0={\rm PSp}_n(q)$ such that $n=4,6$ and suppose $P_q^4=\{r\}$. Then either $G_0$ contains at least two $G$-classes of elements of order $r$ or one of the following holds;
\begin{itemize}
\item[{\rm (i)}] $(n,q)=(4,3), (4,7), (6,2), (6,3), (6,7)$ and $r=5$; or 
\item[{\rm (ii)}] $(n,q)=(4,4)$, $G=\Aut(G_0)$ and $r=17$.
\end{itemize}
\end{lem}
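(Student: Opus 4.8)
The plan is to count the $G$-classes of elements of order $r$ inside $G_0$ using the machinery of Section~\ref{s:conjclass}, and then to invoke the number-theoretic Lemma~\ref{l:ppds4} to pin down the small exceptional cases. Since $r$ is a primitive prime divisor of $q^4-1$ and $n=4$ or $6$, the relevant invariant $c$ from Section~\ref{s:conjclass} equals $i=4$, so every element of order $r$ in ${\rm PGSp}_n(q)$ lifts to a unique $\hat{x}$ of order $r$ in ${\rm Sp}_n(q)$, which acts on $V\otimes\mathbb{F}_{q^4}$ with eigenvalue multisets built from the $\sigma$-orbits $\Lambda_1,\dots,\Lambda_t$ (with $t=(r-1)/4$), arising in inverse pairs. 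For $n=4$ the only possibility is $\hat{x}=[\Lambda_j,\Lambda_j^{-1}]$ for some $j$, giving exactly $t/2$ classes in ${\rm PGSp}_4(q)$; for $n=6$ we get $\hat{x}=[\Lambda_j,\Lambda_j^{-1},I_2]$, again $t/2$ classes, together with the caveat that $\Lambda_j$ could coincide with $\Lambda_j^{-1}$ only if $r$ divides $q^2+1$, which it does not. By Theorem~\ref{t:semisimp} the ${\rm Inndiag}(G_0)$-classes coincide with the $G_0$-classes, and the group $\langle\ddot\phi\rangle$ of field automorphisms permutes the $\Lambda_j$ by $\mu\mapsto\mu^p$; so the number of $G$-classes is $t/2$ divided by the size of the orbits this permutation group induces. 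In particular $G_0$ has a unique $G$-class of elements of order $r$ only if $t/2$ is small, and exactly one $G$-class forces $t=2$, i.e. $r=4f_0+1$ for the relevant exponent, \emph{and} $G$ must contain an appropriate field automorphism.

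First I would set up the above class count carefully, handling $n=4$ and $n=6$ uniformly since the extra trivial Jordan block $I_2$ in the $n=6$ case does not change the analysis (the only freedom is still the choice of inverse pair $\{\Lambda_j,\Lambda_j^{-1}\}$). Next I would observe that if $t/2\geqs 2$ then no matter how the field automorphisms act we cannot collapse down to a single orbit unless some very special coincidence occurs; more precisely, a field automorphism of order $k$ acting on the $t/2$ inverse pairs has orbits of size at most $k$, and $2f/k$ copies of each, so getting down to one class requires $t/2\mid k$ and in fact $t=2$ with a full-length field automorphism present. Then I would appeal to Lemma~\ref{l:ppds4} with $n=4$: if $P_q^4=\{r\}$ then either $r\geqs 17f+1$ (far too large, forcing $t=(r-1)/4$ to be well above $2$ and hence at least two $G$-classes), or $(n,q)$ — in the notation of that lemma — is one of $(4,2),(4,3),(4,7)$ with $r=5$, or $(4,4),(8,2)$ with $r=17$, or $(4,5),(4,239)$ with $r=13$. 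Translating: for our $q$ the relevant value of $q^4-1$ has unique primitive prime divisor only when $q\in\{2,3,4,5,7,239\}$, and I would then run through these finitely many $q$ (for both $n=4$ and $n=6$) computing $t/2$ and the field-automorphism action explicitly. For $q=3,7$ we get $r=5$, $t=1$ (only one inverse pair already over $\mathbb{F}_q$, since the field automorphism is trivial), landing in case~(i); for $q=4$ we get $r=17$, $t=4$, two inverse pairs swapped only by the order-$2$ field automorphism in $\Aut(G_0)$, giving case~(ii); for $q=2$ we get $r=5$, $t=1$, again case~(i); the value $q=239$ gives $r=13$, $t=3$, hence $t/2$ is not an integer which means $\Lambda_j=\Lambda_j^{-1}$ is impossible and we actually have $(t-1)/2+\dots$ — I would double-check this edge case but it yields at least two classes (and is excluded anyway if $q=239$ is not a prime power of the right shape); $q=5$ gives $r=13$ similarly with at least two classes. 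Finally the groups with $q=2$ fall under Proposition~\ref{p:smalldim}'s {\sc Magma} range ($G_0\in\mathcal{D}$), providing an independent check.

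The main obstacle I anticipate is getting the field-automorphism orbit count exactly right in the borderline small cases — in particular keeping track of whether a given inverse pair $\{\Lambda_j,\Lambda_j^{-1}\}$ is already a single $\langle\phi\rangle$-orbit over the prime field or only over $\mathbb{F}_q$, since this is precisely what distinguishes ``unique $G$-class for $G=\Aut(G_0)$'' from ``unique $G$-class for all $G$''. This is the $q=4$, $r=17$ subtlety: one needs $t=4$, the two inverse pairs to be interchanged by $\phi$, and hence the condition $G=\Aut(G_0)$ in part~(ii); whereas for $q=3,7$ one has $t=2$ but the pairs are already fused over $\mathbb{F}_q$ (equivalently $r=nf+1$ with $f=1$), so no field automorphism is needed and part~(i) holds for all $G$ with socle $G_0$. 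I would verify these by direct computation of the $\sigma$-orbit structure of the $r$-th roots of unity in $\mathbb{F}_{q^4}$, which is short for each of the finitely many pairs $(n,q)$ surviving the Lemma~\ref{l:ppds4} reduction, and cross-check against {\sc Magma} where applicable.
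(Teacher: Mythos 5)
Your overall strategy is the same as the paper's (count the ${\rm PGSp}_n(q)$-classes of order-$r$ elements via their eigenvalue multisets, divide by the index of ${\rm PGSp}_n(q)$ in $\Aut(G_0)$, and use Lemma \ref{l:ppds4} to reduce to finitely many $q$ which are then checked directly), but there is a genuine error in the central class count. You assert that the $\sigma$-orbits $\Lambda_1,\dots,\Lambda_t$ arise in inverse pairs and that ``$\Lambda_j$ could coincide with $\Lambda_j^{-1}$ only if $r$ divides $q^2+1$, which it does not.'' This is backwards: $r$ is a primitive prime divisor of $q^4-1=(q^2-1)(q^2+1)$ and does not divide $q^2-1$, so $r$ \emph{does} divide $q^2+1$; hence $\lambda^{q^2}=\lambda^{-1}$ for every nontrivial $r$-th root of unity and each $\Lambda_j$ is already closed under inversion. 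The inverse-pairing phenomenon in ${\rm Sp}_n(q)$ described in Section \ref{s:conjclass} occurs when $i$ is \emph{odd}, not when $i=4$. Consequently the representatives are $\hat{x}=[\Lambda_j]$ (for $n=4$) and $[\Lambda_j,I_2]$ (for $n=6$) --- your proposed $[\Lambda_j,\Lambda_j^{-1}]$ would be $8$-dimensional --- and the number of ${\rm PGSp}_n(q)$-classes is $t=(r-1)/4$, not $t/2$.

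This is not merely cosmetic: halving the count halves your lower bound on the number of $G$-classes. Writing $r=4fd+1$, the correct bound is $(r-1)/4\cdot(kf)^{-1}\geqs d/2$ with $k\leqs 2$, so $d\geqs 3$ already forces at least two $G$-classes and disposes of $q=5$ and $q=239$ (where $d=3$) without further work; your bound of roughly $d/4$ does not, and indeed you run into the symptom yourself when you find ``$t/2$ is not an integer'' for $r=13$ and cannot resolve the edge case. With the count corrected, the argument closes cleanly: $d\geqs 3$ gives two classes, and the surviving values $q\in\{2,3,4,7\}$ are handled by direct ({\sc Magma}) computation exactly as in Proposition \ref{p:smalldim}, yielding cases (i) and (ii). (A minor additional slip: Lemma \ref{l:ppds4} gives $r\geqs 16f+1$, not $17f+1$, and your statements ``$t=1$'' and later ``$t=2$'' for $q=3,7$ are inconsistent --- the correct value is $t=1$.)
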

\begin{proof}
Here any element $x=\hat{x}Z\in G_0$ of order $r$ is going to have the form $\hat{x}=[\Lambda]$ if $n=4$ and $\hat{x}=[\Lambda,I_2]$ if $n=6$ (see Section \ref{s:conjclass} and \cite[Section 3.4]{BG_book}). Since $r$ is the unique primitive prime divisor of $q^4-1$, it is also the unique primitive prime divisor of $p^{4f}-1$, so $r=4fd+1$ for some $d\geqs 1$. Therefore $G_0$ contains $(r-1)/4= fd$ distinct ${\rm PGSp}_n(q)$-classes of elements of order $r$ (see \cite[Proposition 3.4.3]{BG_book}). Since $|\Aut(G_0){:}{\rm PGSp}_n(q)|=kf$ where $k=2$ if $(n,p)=(4,2)$ and 1 otherwise, it follows that there are at least $(r-1)/4kf\geqs d/2$ distinct $G$-classes of elements of order $r$ in $G_0$.

By Lemma \ref{l:ppds4} either $d\geqs 4$ or $(d,q)=(3,5),(3,239),(2,4),(1,2),(1,3)$ or $(1,7)$. Suppose first that $d\geqs 3$ then by the argument above there are at least 2 distinct $G$-classes in $G_0$ of elements of order $r$. For the remaining cases, that is $q\in\{2,3,4,7\}$, it is a straightforward {\sc Magma} calculation.
\end{proof}

Using this result we can immediately handle a large number of the cases with symplectic socle in Table \ref{tab:pi1}.

\begin{prop}\label{p:symp}
Theorem \ref{t:mainthrm} holds for cases {\rm S4}, {\rm S5}, {\rm S7}, {\rm S10} and {\rm S11} in Table \ref{tab:pi1}.
\end{prop}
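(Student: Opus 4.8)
The plan is to use Lemma~\ref{l:symppd4} to dispense with the bulk of each case, leaving only a short list of small $q$ to be checked by other means (chiefly {\sc Magma} via Proposition~\ref{p:smalldim}, since all the residual groups have socle in $\mathcal{D}$). For each of the cases S4, S5, S7, S10, S11 in Table~\ref{tab:pi1} the relevant value is $i=n$ with $n=4$ (for S10 we have $n=6$ and $i=n-2=4$), so the hypothesis of Theorem~\ref{t:mainthrm} forces $P_q^4=\{r\}$ for a single primitive prime divisor $r$. First I would fix such a case and observe that, by Proposition~\ref{p:aereduc}(ii), every element of $G_0$ of order $r$ is a derangement; hence if $G_0$ has two or more $G$-classes of elements of order $r$ then $G$ is not almost elusive and the case is eliminated. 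Lemma~\ref{l:symppd4} tells us exactly when this fails: only for $(n,q)=(4,3),(4,7),(6,2),(6,3),(6,7)$ with $r=5$, or $(n,q)=(4,4)$ with $G=\Aut(G_0)$ and $r=17$. So the entire ``generic'' part of each case is handled in one stroke.

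Next I would go through the finitely many residual pairs. For S4, S5, S7, S11 we have $n=4$, so the residue is $q\in\{3,4,7\}$ (noting $q\geqs 4$ must be a square for S7, leaving $q=4$; and $\Li_2(q)$ in S11 needs $q$ large enough for maximality); for S10 ($n=6$, $G_0={\rm PSp}_6(q)$, $S={\rm G}_2(q)$) the residue is $q\in\{2,3,7\}$. In every residual instance the socle lies in $\mathcal{D}$ (the $n\leqs 4$, $q\leqs 8$ part, or ${\rm PSp}_6(2)$), except for ${\rm PSp}_6(3)$ and ${\rm PSp}_6(7)$, which I would handle by a direct {\sc Magma} computation exactly as in Proposition~\ref{p:smalldim}: list the almost simple overgroups, their maximal subgroups of the relevant type, and count $G$-classes of prime-order elements meeting a point stabiliser. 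The outcome, to be read off, is that ${\rm PSp}_6(2)$ with $H$ of type ${\rm O}_6^{\pm}(2)$ and ${\rm PSp}_4(7)$ with $H=P_2$ survive (these are already in Table~\ref{tab:maintab}, but those are $\mathcal{C}_1$/$\mathcal{C}_8$ subgroups, not the $\mathcal{C}_2,\mathcal{C}_3,\mathcal{C}_5,\mathcal{S}$ subgroups under consideration here), so none of S4, S5, S7, S10, S11 contributes a new almost elusive pair, and the statement follows.

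I would also double-check the one place where Lemma~\ref{l:symppd4} leaves room, namely $(n,q)=(4,4)$ with $G=\Aut(G_0)$ and $r=17$: here one must verify whether the unique $G$-class of order-$17$ elements is in fact a class of derangements for the particular subgroup type (S4: ${\rm GL}_2(4).2$, S5: ${\rm Sp}_2(4)\wr S_2$, S7: ${\rm Sp}_4(2)$), and whether there is some \emph{other} prime whose order-elements also all lie outside $H$, which would again break almost elusivity. This is a single explicit group and is dispatched by {\sc Magma} (it is covered by $G_0\in\mathcal{D}$). The main obstacle, such as it is, is purely bookkeeping: making sure the residual small-$q$ list for each of the five cases is complete after imposing the existence/maximality conditions from \cite[Section 8.2]{BHR} (e.g. that $S={\rm G}_2(q)$ requires $q$ odd or appropriate congruences, that $S=\Li_2(q)\leqs {\rm PSp}_4(q)$ imposes its own restrictions, that $\mathcal{C}_5$ forces $q$ a square), so that every leftover pair genuinely lands in $\mathcal{D}\cup\mathcal{O}$ or is computed by hand; there is no conceptual difficulty beyond invoking Lemma~\ref{l:symppd4}, Proposition~\ref{p:aereduc}(ii) and Proposition~\ref{p:smalldim}.
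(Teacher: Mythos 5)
Your proposal is correct and follows essentially the same route as the paper: Lemma \ref{l:symppd4} together with Proposition \ref{p:aereduc}(ii) handles the generic situation, and Proposition \ref{p:smalldim} absorbs the small cases. One correction to your bookkeeping: the embedding ${\rm G}_2(q)\leqs {\rm PSp}_6(q)$ in case S10 exists (and is maximal) only for $q$ \emph{even}, not odd as you suggest, so the residual pairs ${\rm PSp}_6(3)$ and ${\rm PSp}_6(7)$ that you propose to check by computer do not arise at all --- the paper simply records that $q\geqs 4$ is even in case S10 and observes that, after the reduction of Proposition \ref{p:smalldim}, none of the exceptional pairs in Lemma \ref{l:symppd4} survives.
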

\begin{proof}
In all of these cases $G_0={\rm PSp}_n(q)$ such that $n=4$ or $6$. Additionally we are assuming that there exists a unique primitive prime divisor, $r$, of $q^4-1$ which is the unique prime dividing $|G_0|$ and not $|H_0|$. Note if $n=6$ we are in case S10 and $q\geqs 4$ is even. Additionally, by Proposition \ref{p:smalldim}, if $n=4$ then we can assume $q\geqs 9$. Thus by Lemma \ref{l:symppd4}, $G$ contains at least two conjugacy classes of derangements of order $r$, so $G$ not almost elusive.
\end{proof}

 We organise the rest of this subsection by the relevant Aschbacher collections. Before we begin we state some useful notation; 

\begin{nota}
Assume $(G_0,H,i)$ is a case in Table \ref{tab:pi1}. Then we use $r_i$ to denote the unique primitive prime divisor of $q^i-1$.  
\end{nota}
Additionally we recall that if $(G_0,H,i)$ is found in Table \ref{tab:pi1} then by Proposition \ref{p:aereduc} any element of order $r_i$ in $G_0$ is a derangement.

\subsubsection{$\mathcal{C}_2$ subgroups}

\begin{prop}
Theorem \ref{t:mainthrm} holds for case {\rm U4} in Table \ref{tab:pi1}.
\end{prop}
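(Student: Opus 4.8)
In this case $G_0=\Un_n(q)$ with $n\in\{4,6\}$, and $H$ is the stabiliser in $G$ of a pair $\{U_1,U_2\}$ of complementary totally singular $(n/2)$-spaces, of type ${\rm GL}_{n/2}(q^2).2$. Write $r:=r_{2n-2}$ for the primitive prime divisor of $q^{2n-2}-1$, which we may assume to be unique (by Theorem \ref{t:primedivs} and the subsequent reduction); by Proposition \ref{p:aereduc}(ii) every element of order $r$ in $G_0$ is a derangement. Hence it suffices to produce a second $G$-class of derangements of prime order, for then $G$ is not almost elusive. By Proposition \ref{p:smalldim} we may assume $q\geqs 3$ if $n=6$ and $q\geqs 9$ if $n=4$.

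Suppose first that $n=6$, so that $r=r_{10}$. The subgroup of $H$ fixing $U_1$ has index $2$, so every element of odd order in $H$ — in particular every element of order $r_6$ — fixes the totally singular $3$-space $U_1$. On the other hand $6\equiv -2\imod 8$, and since $r_6\equiv 1\imod 6$ there are at least two distinct $\sigma^2$-orbits of $r_6$-th roots of unity (see \eqref{e:unilam}); so Lemma \ref{l:tisU} applies and yields an element $x=\hat{x}Z\in G_0$ of order $r_6$ with $\hat{x}=[\Lambda_1,\Lambda_2]$ and $\Lambda_1\neq\Lambda_2$, which fixes no totally singular $3$-space. Thus no $G$-conjugate of $x$ lies in $H$, so $x$ is a derangement; as $r_6$ is a prime different from $r=r_{10}$, we have a second class of prime order derangements and $G$ is not almost elusive.

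Now suppose $n=4$, so that $r=r_6$; here Lemma \ref{l:tisU} is unavailable (it requires $n\geqs 6$), so we count classes of order $r_6$ instead. An element of order $r_6$ in $G_0$ has the form $x=\hat{x}Z$ with $\hat{x}=[\Lambda,I_1]$ (a single $3$-dimensional irreducible block together with a $1$-dimensional fixed space), and by \cite[Proposition 3.3.2]{BG_book} and Theorem \ref{t:semisimp} there are exactly $(r_6-1)/3$ such $G_0$-classes. By Lemma \ref{l:phiuni} the field automorphisms permute the corresponding $\sigma^2$-orbits in orbits of length $2f$, so $G$ has at least $(r_6-1)/(6f)$ classes of derangements of order $r_6$. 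By Lemma \ref{l:ppd2a3} (applied with its parameter taken to be $6$) one of the following holds: $r_6\geqs 24f+1$, whence this count is at least $4$; or $r_6=12f+1$ and $q=23$, whence it is at least $2$; or $r_6=6f+1$ and $q\in\{3,4,5,8,19\}$, which, after Proposition \ref{p:smalldim} removes $q\leqs 8$, leaves only $G_0=\Un_4(19)$. For this last group one verifies directly with {\sc Magma} \cite{Mag} that $G$ is not almost elusive; alternatively, the unipotent element of $\Un_4(19)$ of Jordan type $[J_2,J_1^2]$ (of order $19$) is a derangement, since every unipotent element of ${\rm GL}_2(q^2).2$ has Jordan type $[J_1^4]$, $[J_2^2]$ or $[J_4]$. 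In all cases $G$ has at least two $G$-classes of derangements of prime order, and the proposition follows.

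The step I expect to require the most care is showing that the candidate second derangement is genuinely not conjugate into $H$. For $n=6$ this rests on pinning down, via \eqref{e:unilam}, exactly which $\sigma^2$-orbit patterns occur for order-$r_6$ elements of ${\rm GL}_3(q^2).2$, so as to be sure Lemma \ref{l:tisU} is applied to a type that $H$ avoids; for $n=4$ it comes down to the residual group $\Un_4(19)$, where the number-theoretic bounds fall just short and one must argue by hand or by machine.
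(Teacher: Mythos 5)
Your proof is correct and follows essentially the same route as the paper: for $n=6$ it uses Lemma \ref{l:tisU} with an element $[\Lambda_1,\Lambda_2]Z$ of order a primitive prime divisor of $q^6-1$ (together with the observation that odd-order elements of $H$ cannot swap the two summands), and for $n=4$ it counts $G$-classes of elements of order $r_6$ via Lemma \ref{l:ppd2a3}, isolating $q=19$ as the only surviving exception. The one genuine divergence is the treatment of $\Un_4(19)$: the paper produces a semisimple derangement of order $5$ with $\hat{x}=[\Lambda,\Lambda^{-1},I_2]$, whose eigenvalues have odd multiplicity and which therefore fixes no totally singular $2$-space, whereas you produce a unipotent derangement of Jordan type $[J_2,J_1^2]$, noting that odd-order unipotent elements of $H_0$ act on $V=V_1\oplus V_2$ as a ``doubled'' Jordan type $[J_1^4]$ or $[J_2^2]$. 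Both arguments are valid; yours has the mild advantage of avoiding the eigenvalue-multiplicity criterion of \cite[Lemma 4.2.4]{BG_book}, at the cost of a short representation-theoretic check on the dual action. Two small points of care: your appeal to Lemma \ref{l:phiuni} for the orbit length $2f$ is technically outside that lemma's stated hypotheses (it is set up for $i\geqs 10$, whereas here $i=6$), but the bound you need, namely at least $(r_6-1)/(6f)$ classes, already follows from the trivial estimate using $|\Aut(G_0){:}{\rm PGU}_4(q)|=2f$; and $[J_4]$ cannot actually arise for an odd-order element preserving the decomposition, though listing it does no harm.
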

\begin{proof}
Here $(G_0,i)=(\Un_n(q),2n-2)$ with $n=4$ or $6$, and $H$ is the stabiliser in $G$ of a decomposition $V=V_1\oplus V_2$, where the $V_j$ are both maximal totally singular spaces of dimension $n/2$. Recall by Proposition \ref{p:smalldim} we may assume $q\geqs 3$.

Suppose first that $n=6$. Take $s$ to be a primitive prime divisor of $q^{6}-1$ and let $x=\hat{x}Z\in G_0$ be an element of order $s$ where $\hat{x}$ is defined as in \eqref{e:tis}. Then $x$ does not fix $V_1$ or $V_2$ by Lemma \ref{l:tisU} and does not interchange $V_1$ and $V_2$ since $|x|=s>2$. Thus $x$ is a derangement of order $s$, so $G$ is not almost elusive. 

Finally suppose $n=4$. In this case, by Proposition \ref{p:smalldim}, we may assume $q\geqs 9$, thus either $q=19$ or $r_6\geqs 13$ by Lemma \ref{l:ppd2a3}. Assume $q\neq 19$, then $\Un_4(q)$ contains $(r_6-1)/3\geqs 4$ distinct ${\rm PGU}_4(q)$-classes of elements of order $r_6$ in $G_0$. Since $|\Aut(G_0){:}{\rm PGU}_4(q)|=2$ there are at least $(r_6-1)/6\geqs 2$ distinct $G$-classes of elements of order $r_6$ in $\Un_4(q)$, so $G$ is not almost elusive. Finally assume $q=19$ and take $x=\hat{x}Z\in G_0$ to be an element of order 5 (the unique primitive prime divisor of $q^2-1$) such that $\hat{x}=[\Lambda,\Lambda^{-1},I_2]$. Since the eigenvalues of $\hat{x}$ (on $V\otimes \mathbb{F}_{q^2}$) have odd multiplicity, $x$ is a derangement.
\end{proof}

\begin{prop}
Theorem \ref{t:mainthrm} holds for case {\rm O8} in Table \ref{tab:pi1}.
\end{prop}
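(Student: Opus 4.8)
The plan is to produce a single prime-order derangement whose order is different from $r_6$: since case O8 lies in Table~\ref{tab:pi1}, every element of order $r_6$ in $G_0$ is already a derangement by Proposition~\ref{p:aereduc}, so one further derangement of a different prime order gives $G$ at least two conjugacy classes of prime-order derangements, whence $G$ is not almost elusive (consistent with O8 being absent from Table~\ref{tab:maintab}). By Proposition~\ref{p:smalldim} we may moreover assume $q\geqs 5$, the cases $q=3,4$ being covered by the {\sc Magma} computation since ${\rm P}\Omega_8^+(3),\,\Omega_8^+(4)\in\mathcal{O}$, although the argument below works uniformly.

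In this case $G_0={\rm P}\Omega^+_8(q)$ and $H$ is the stabiliser in $G$ of a decomposition $V=V_1\oplus V_2$ into complementary totally singular $4$-spaces; in particular every element of $H$ permutes $\{V_1,V_2\}$, so every element of $H$ of odd order fixes both $V_1$ and $V_2$. I would let $s$ be a primitive prime divisor of $q^4-1$ (which exists by Zsigmondy's theorem); then $s\equiv 1\imod 4$ is odd, and $s\neq r_6$ since no primitive prime divisor of $q^6-1$ can divide $q^4-1$. Next take $x=\hat{x}Z\in G_0$ of order $s$ with $\hat{x}=[\Lambda,I_4]$: such an $x$ genuinely lies in the simple group because $\hat{x}$ has odd order (hence trivial spinor norm, resp. Dickson invariant, and determinant $1$, so $\hat{x}\in\Omega^+_8(q)$), and because ${\rm O}_4^-(q)\perp{\rm O}_4^-(q)$ has plus type, so this really is an element of ${\rm O}^+_8(q)$.

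Now Lemma~\ref{l:tisSO}, applied with $n=8$ (so $n\equiv 0\imod 4$) and $r=s$ a primitive prime divisor of $q^{n/2}-1=q^4-1$, shows that $x$ fixes no totally singular $4$-space of $V$. Since $|x|=s$ is odd, $x$ cannot lie in $H$, as otherwise it would fix both $V_1$ and $V_2$, which are totally singular $4$-spaces. Hence $x$ is a derangement of order $s\neq r_6$, and together with the order-$r_6$ derangements furnished by Proposition~\ref{p:aereduc} this yields at least two conjugacy classes of prime-order derangements, so $G$ is not almost elusive. The only slightly delicate points, and where I would be most careful, are the verification that the element $\hat{x}=[\Lambda,I_4]$ really lies in $G_0$ (handled by the odd-order remark together with the orthogonal-type bookkeeping ${\rm O}^-_4\perp{\rm O}^-_4={\rm O}^+_8$) and the identification of $H$ with the set-stabiliser of $\{V_1,V_2\}$ taken from \cite{KL}; note that no class-counting is required, since exhibiting a single extra derangement of order distinct from $r_6$ already suffices.
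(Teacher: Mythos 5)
Your argument is correct and follows essentially the same route as the paper: the paper also takes $s$ a primitive prime divisor of $q^4-1$, the element $\hat{x}=[\Lambda,I_4]$, applies Lemma \ref{l:tisSO} to rule out fixing either totally singular $4$-space, and notes $|x|=s>2$ excludes interchanging $V_1$ and $V_2$, yielding derangements of the two distinct prime orders $s$ and $r_6$. Your additional checks (that $\hat{x}$ lies in $\Omega_8^+(q)$ and that $s\neq r_6$) are sound but left implicit in the paper.
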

\begin{proof}
In this case $(G_0,i)=({\rm P}\Omega^+_8(q),6)$, and $H$ is the stabiliser in $G$ of a decomposition $V=V_1\oplus V_2$, where the $V_j$ are both maximal totally singular spaces of dimension $4$. 

Take $s$ to be a primitive prime divisor of $q^{4}-1$. Let $x=\hat{x}Z\in G_0$ be an element of order $s$ where $\hat{x}=[\Lambda,I_{n/2}]$. Then $x$ does not fix $V_1$ or $V_2$ by Lemma \ref{l:tisSO}, and does not interchange $V_1$ and $V_2$ since $|x|=s>2$. Thus $x$ is a derangement. It follows that $G$ contains derangements of order $r_6$ and $s$, so is not almost elusive. 
\end{proof}

\subsubsection{$\mathcal{C}_3$ subgroups}

For the following lemma we recall that $V$ denotes the natural $G_0$-module. 

\begin{lem}\label{l:c3}
Let $H\in \mathcal{C}_3$ be a subgroup arising from a field extension of prime degree $k$. Let $x\in H_0$ be an element of order $p$. Then either;
\begin{itemize}
\item[{\rm (i)}] $x$ has Jordan form $[J_p^{ka_p},\dots ,J_1^{ka_1}]$ on $V$; or 
\item[{\rm (ii)}] $k=p$ and $x$ has Jordan form $[J_k^{n/k}]$ on $V$.
\end{itemize}
\end{lem}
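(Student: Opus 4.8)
\section*{Proof proposal for Lemma \ref{l:c3}}

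The plan is to use the defining geometry of a $\mathcal{C}_3$ subgroup together with a little module theory. Write $E/F$ for the degree-$k$ field extension underlying $H$ (so $F=\mathbb{F}_q$ and $E=\mathbb{F}_{q^k}$ in the linear, symplectic and orthogonal cases, while $F=\mathbb{F}_{q^2}$ and $E=\mathbb{F}_{q^{2k}}$ with $k$ odd in the unitary case), and recall that $H$ stabilises a structure making $V$ an $E$-space of dimension $m:=n/k$, the Jordan form on $V$ being computed over $F$ throughout. Restricting to $H_0$ the homomorphism that records the induced Galois action yields $\rho\colon H_0\to\operatorname{Gal}(E/F)\cong C_k$, whose kernel $K$ consists of the $E$-linear members of $H_0$. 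Since $x$ has prime order $p$ and $k$ is prime, the image $\rho(x)$ has order $1$ or $k$: either $x\in K$, or $\rho(x)$ is a generator of $C_k$. I expect these two alternatives to produce conclusions (i) and (ii) respectively, and I would treat them in turn.

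In the first case $x$ is an $E$-linear unipotent element of order $p$, so over $E$ it has Jordan form $[J_p^{a_p},\dots,J_1^{a_1}]$ (all blocks of size at most $p$, since $(x-1)^p=x^p-1=0$). The one computation needed here is that restriction of scalars from $E$ to $F$ carries a single block $J_i$ to $J_i^{\,k}$: realising the $E$-module underlying $J_i$ as $E[t]/(t^i)$ with $x$ acting as $1+t$, one checks that multiplication by $t^j$ has kernel of $F$-dimension $jk$ for $0\le j\le i$, so there are exactly $k$ Jordan blocks of size $i$ over $F$. Summing over the blocks of $x$ then gives the $F$-Jordan form $[J_p^{ka_p},\dots,J_1^{ka_1}]$ on $V$, which is (i).

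In the second case $k=|\rho(x)|$ divides $|x|=p$, and as both are prime $k=p$; it remains to pin down the Jordan form as $[J_p^{\,m}]=[J_k^{n/k}]$. Set $\sigma:=\rho(x)$, a generator of $\operatorname{Gal}(E/F)$; then $x$ is $\sigma$-semilinear with $x^p=1$, so letting $\tau$ act on $V$ as $x$ makes $V$ a module over the cyclic crossed product $A=(E/F,\sigma,1)=E\langle\tau\mid \tau\lambda\tau^{-1}=\sigma(\lambda),\ \tau^p=1\rangle$. This algebra is split, $A\cong\operatorname{End}_F(E)\cong M_p(F)$ (equivalently, invoke the normal basis theorem), so $V$, of $F$-dimension $n$, is a direct sum of $m$ copies of the unique simple $A$-module $S\cong E$ (of $F$-dimension $p$). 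On $S\cong E$ the element $\tau=x$ acts as a power of the Frobenius of $E/F$; since $E$ is a cyclic $F[\tau]$-module (normal basis theorem again), this operator has minimal polynomial equal to its characteristic polynomial $t^p-1=(t-1)^p$, hence is a single block $J_p$. Therefore $x$ has $F$-Jordan form $[J_p^{\,m}]$ on $V$, which is (ii).

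The routine parts are the scalar-restriction count and the splitting of the crossed product $A$; the part needing care will be checking that these arguments run uniformly over all four families of $\mathcal{C}_3$ subgroups, and under the $\Un_n(q)$-convention of taking Jordan forms over $\mathbb{F}_{q^2}$ — but since everything here depends only on the $E$-vector-space structure of $V$ and never on the attached bilinear, sesquilinear or quadratic form, the form plays no role and the argument applies verbatim in each case. I would also record that when $k=p$ and $x\in K$ one lands in case (i), consistently with the inclusive ``or'' in the statement.
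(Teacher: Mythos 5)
Your argument is correct, but the paper does not actually prove this lemma from first principles: its entire proof is the citation ``This follows by applying Lemmas 5.3.2 and 5.3.11 in \cite{BG_book}'', where those two lemmas carry out precisely the two cases you identify (the $E$-linear case, where restriction of scalars multiplies each Jordan block multiplicity by $k$, and the $\sigma$-semilinear case with $k=p$, where the normal basis theorem forces a single block on each simple constituent). So you have, in effect, supplied self-contained proofs of the cited results. Your restriction-of-scalars count via $E[t]/(t^i)$ is exactly the standard argument, and your treatment of the semilinear case via the split crossed product $(E/F,\sigma,1)\cong M_p(F)$ is a clean packaging of the usual normal-basis computation: it buys you, with no extra work, the fact that \emph{every} order-$p$ element inducing a generator of ${\rm Gal}(E/F)$ has Jordan form exactly $[J_k^{n/k}]$, rather than having to exhibit or classify such elements by hand. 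Your closing remark that the form on $V$ plays no role is the right observation for why the argument is uniform across the four families, and your note that a $k=p$ element lying in the kernel of $\rho$ falls under case (i) correctly matches the inclusive disjunction in the statement. The only cosmetic point is that, strictly, one should first lift $x\in H_0\leqs G_0$ to an element $\hat{x}$ of order $p$ in the relevant matrix group (as in \cite[Lemma 3.1.3]{BG_book}) before speaking of its action on $V$; this is implicit in the paper's conventions and does not affect your argument.
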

\begin{proof}
This follows by applying Lemmas 5.3.2 and 5.3.11 in \cite{BG_book}.
\end{proof}

\begin{prop}
Theorem \ref{t:mainthrm} holds for cases {\rm L5} and {\rm S6} in Table \ref{tab:pi1}.
\end{prop}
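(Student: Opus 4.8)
The plan is to produce, in each of cases L5 and S6, a derangement of order $p$, and to combine it with a known derangement of a different prime order. Recall that we are in a case of Table \ref{tab:pi1} with $P_q^i=\{r_i\}$, where $i=n-1$ in case L5 (so $G_0=\Li_n(q)$ with $n\in\{4,6\}$ and $H$ is of type ${\rm GL}_{n/2}(q^2)$) and $i=4=n-2$ in case S6 (so $G_0={\rm PSp}_6(q)$ and $H$ is of type ${\rm Sp}_2(q^3)$). By Proposition \ref{p:aereduc}, every element of order $r_i$ in $G_0$ is a derangement, and since $r_i$ is a primitive prime divisor of $p^{fi}-1$ we have $r_i\neq p$. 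Hence it suffices to exhibit a single derangement of order $p$: this gives at least two conjugacy classes of prime order derangements, so $G$ is not almost elusive. The relevant $\mathcal{C}_3$ subgroups arise from a field extension of prime degree $k$, namely $k=2$ in case L5 and $k=3$ in case S6, so Lemma \ref{l:c3} is available.

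First I take $t\in G_0$ to be the image of a transvection: a transvection in ${\rm SL}_n(q)$ in case L5, and a symplectic transvection in ${\rm Sp}_6(q)$ in case S6. In both cases $t$ has order $p$, has Jordan form $[J_2,J_1^{n-2}]$ on the natural module $V$ (which is consistent with Lemma \ref{l:osjordan} in the symplectic case, the only odd-dimensional block being $J_1$ with even multiplicity $n-2$), and is nontrivial in $G_0$ since it is not scalar (respectively not $\pm I$). To see that $t$ is a derangement, suppose $t^g\in H$ for some $g\in G$. Then $t^g\in H\cap G_0=H_0$ has order $p$, and $t^g$ is again a transvection because every automorphism of $G_0$ carries transvections to transvections, so $t^g$ has Jordan form $[J_2,J_1^{n-2}]$. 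But Lemma \ref{l:c3} forces every element of order $p$ in $H_0$ to have Jordan form $[J_p^{ka_p},\dots,J_1^{ka_1}]$, or else $k=p$ and Jordan form $[J_k^{n/k}]$; since the multiplicity of $J_2$ in $[J_2,J_1^{n-2}]$ is $1$, which is not divisible by $k\in\{2,3\}$, and $[J_2,J_1^{n-2}]\neq[J_k^{n/k}]$ because $n\geqs 4$, this is a contradiction. Thus no $G$-conjugate of $t$ lies in $H$, i.e.\ $t$ is a derangement, and together with the order $r_i$ derangements this shows $G$ is not almost elusive.

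The argument is essentially routine; the one place meriting care is the application of Lemma \ref{l:c3}, where one must treat uniformly every value of $p$ modulo $k$ and, in particular, handle the additional possibility $[J_k^{n/k}]$ that occurs precisely when $p=k$ and check that it is distinct from $[J_2,J_1^{n-2}]$ for the dimensions $n\in\{4,6\}$ at hand. Finally, I note that no genuinely small cases are lost, since the transvection argument is insensitive to the size of $q$ and the only members of $\mathcal{G}$ of the relevant type over very small fields, such as $\Li_6(2)$ and ${\rm PSp}_6(2)$, are already covered by Proposition \ref{p:smalldim}.
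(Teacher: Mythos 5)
Your proposal is correct and follows essentially the same route as the paper: combine the derangements of order $r_i$ guaranteed by Proposition \ref{p:aereduc} with a unipotent derangement of Jordan form $[J_2,J_1^{n-2}]$, which Lemma \ref{l:c3} excludes from $H_0$ since the multiplicity of $J_2$ is not divisible by $k$ and the form differs from $[J_k^{n/k}]$. The paper states this in two lines; your version merely spells out the details (invariance of the Jordan form under $G$-conjugacy and the $k=p$ subcase), which are all checked correctly.
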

\begin{proof}
In both cases we know that $G_0$ contains derangements of order $r_i$. By Lemma \ref{l:c3} any element in $G_0$ of order $p$ with Jordan form $[J_2,J_1^{n-2}]$ on $V$ is also a derangement. Thus $G_0$ contains derangements of order $p$ and $r_i$, so $G$ is not almost elusive.
\end{proof}

\begin{prop}
Theorem \ref{t:mainthrm} holds for case {\rm O9} in Table \ref{tab:pi1}.
\end{prop}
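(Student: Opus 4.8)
The plan is to produce derangements of two distinct prime orders, which forces $G$ to fail to be almost elusive. By the reduction made at the start of this section we may assume $P_q^3=\{r_3\}$, and then Proposition \ref{p:aereduc}(ii) already supplies derangements of order $r_3$ in $G_0={\rm P}\Omega^+_8(q)$; so it will suffice to exhibit one further derangement whose order is a prime different from $r_3$. The relevant feature of $H$ is that it stabilises an $\mathbb{F}_{q^2}$-structure on $V$ carrying a unitary form, so every element of $H$ of odd order acts on $V$ (regarded as $\mathbb{F}_q^8$) as an element of ${\rm GU}_4(q)$, and it is this rigidity that I would exploit.

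For $p$ odd I would argue along the same lines as cases {\rm L5} and {\rm S6}. Since the field extension has degree $k=2\neq p$, Lemma \ref{l:c3} shows every element of $H_0$ of order $p$ has Jordan form $[J_p^{2a_p},\dots,J_1^{2a_1}]$ on $V$, with each Jordan block occurring with even multiplicity; whereas (using Lemma \ref{l:osjordan}, there being no even-dimensional blocks) $\Omega^+_8(q)$ contains an element $u$ of order $p$ with Jordan form $[J_3,J_1^5]$, realised as a regular unipotent element of $\Omega_3(q)$ together with the identity inside $\Omega_3(q)\times\Omega_5(q)\leqs\Omega^+_8(q)$ (the summands chosen so that the sum has $+$-type). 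As $J_3$ occurs with multiplicity $1$ in $u$, this $u$ cannot be conjugate into $H$, so $u$ is a derangement of order $p$, and with the order $r_3$ derangements we are done.

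The case $p=2$ is the one requiring a new idea, since now the unipotent argument collapses: the involutions of $\Omega^+_8(q)$ have Jordan form $[J_2^2,J_1^4]$ or $[J_2^4]$, and both occur in $H_0$ (by parts (i) and (ii) of Lemma \ref{l:c3}, using $k=p=2$). Here I would instead take a semisimple element. Let $s$ be a primitive prime divisor of $q^4-1$ (which exists by Zsigmondy's theorem, divides $q^2+1$, and is $\neq r_3$ since $P_q^3\cap P_q^4=\emptyset$), and let $x=\hat{x}Z\in G_0$ have order $s$ with $\hat{x}=[\Lambda,I_4]$, where $\Lambda$ is a non-degenerate minus-type $4$-space on which $\hat{x}$ acts irreducibly --- such an $x$ exists because $s$ divides the order $q^2+1$ of a cyclic maximal torus of $\Omega^-_4(q)$ and, $s$ being odd, has trivial spinor norm, so $[\Lambda,I_4]$ lies in $\Omega^-_4(q)\perp\Omega^-_4(q)\leqs\Omega^+_8(q)$. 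Any element of $H_0$ of order $s$ acts on $V$ as an element of ${\rm GU}_4(q)$, and the point I would make is that such an element has trivial fixed space on $V$: the eigenvalues of an order-$s$ element of ${\rm GU}_4(q)$ on the natural $4$-dimensional $\mathbb{F}_{q^2}$-module form a single orbit of size $4$ under $\mu\mapsto\mu^{-q}$ (because $s\nmid q^2-1$ and $s\nmid q+1$), hence are four distinct nontrivial scalars. Since $\dim C_V(x)=4$, the element $x$ is not conjugate into ${\rm GU}_4(q)$, so not into $H$, so $x$ is a derangement of order $s\neq r_3$ and again $G$ is not almost elusive.

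The main obstacle is this last verification --- that an element of order $s$ in the unitary subgroup cannot centralise a nonzero subspace of $V$ --- which rests entirely on the $\mu\mapsto\mu^{-q}$-orbit computation; the remaining ingredients (existence of $u$ and $x$ inside $\Omega^+_8(q)$, and that elements of $H_0$ of odd order lie in ${\rm GU}_4(q)$) are routine, using orthogonal direct-sum decompositions and the triviality of spinor norms of odd-order elements.
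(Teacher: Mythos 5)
Your proposal is correct, and for $p$ odd it coincides with the paper's argument: both use Lemma \ref{l:c3} to see that unipotent elements of $H_0$ have all Jordan block multiplicities even, so an element of $G_0$ with Jordan form $[J_3,J_1^5]$ on $V$ is a derangement. For $p=2$ you diverge genuinely from the paper. The paper stays with unipotent elements and runs a fusion argument: there are two $G_0$-classes of involutions with Jordan form $[J_2^2,J_1^4]$ on $V$ (the classes $a_2$ and $c_2$ of \cite[Section 3.5.4]{BG_book}), but only one $H_0$-class of involutions with Jordan form $[J_2,J_1^2]$ on the natural ${\rm GU}_4(q)$-module, so one of the two $G_0$-classes must miss $H$. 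You instead take a semisimple element of order $s\in P_q^4$ of the form $[\Lambda,I_4]$ and observe that every order-$s$ element of the $\mathbb{F}_{q^2}$-linear part of $H_0$ has eigenvalues forming a single $\mu\mapsto\mu^{-q}$-orbit of length $4$ (since $s\nmid q^2-1$ and $s\nmid q+1$), hence acts fixed-point-freely on $V$, while your element has a $4$-dimensional $1$-eigenspace; together with the fact that odd-order elements of $H_0$ must lie in that index-two linear part, this gives a derangement of order $s\neq r_3$. Your route buys independence from the delicate characteristic-$2$ involution class data ($a_2$ versus $c_2$ and the corresponding unitary classification), at the cost of invoking the existence of $[\Lambda,I_4]$ inside $\Omega^-_4(q)\perp\Omega^-_4(q)\leqs\Omega^+_8(q)$ --- but the paper itself uses exactly this element elsewhere (Proposition \ref{p:mspace2}), so this is unobjectionable; note also that your semisimple argument is characteristic-free and would in fact dispose of both cases at once. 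Both arguments implicitly rely on $G$ not containing a triality automorphism (so that eigenspace dimensions and Jordan forms on $V$ are $G$-invariants), which is guaranteed here since the class of ${\rm GU}_4(q)$-subgroups is not triality-stable, so maximality of $H$ forces $G\leqs{\rm P\Gamma O}_8^+(q)$.
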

\begin{proof}
Here $(G_0, i)=( {\rm P}\Omega_8^+(q), 3)$ and $H$ of type ${\rm GU}_{4}(q)$. 
Assume first that $p\neq 2$. Then any element in $G_0$ of order $p$ with Jordan form $[J_3,J_1^5]$ on $V$ is a derangement by Lemma \ref{l:c3}. Thus $G_0$ contains semisimple and unipotent derangements, so $G$ is not almost elusive. 

Finally assume $p=2$. Note that $H_0=C_{q+1}.{\rm PGU}_4(q).\langle \psi \rangle$, where $\psi$ is an involutary graph automorphism of $\Un_4(q)$ arising from an involutary field automorphism of $\mathbb{F}_{q^2}$ (see \cite[Lemma 5.3.6]{BG_book}). If $x\in H_0$ is an involution with Jordan form $[J_2^2,J_1^4]$ on $V$, then $x$ has Jordan form $[J_2,J_1^2]$ on the natural $\Un_4(q)$-module. In $G_0$ there are precisely two $G_0$-classes of involutions with Jordan form $[J_2^2,J_1^4]$ on $V$ (these are represented by the elements $a_2$ and $c_2$; see \cite[Section 3.5.4]{BG_book} for more details). However there is a unique class of involutions in $H_0$ with Jordan form $[J_2,J_1^2]$ on the natural $\Un_4(q)$-module (see \cite[Proposition 3.3.7]{BG_book}), so we conclude that $G_0$ must contain derangements of order $p=2$. Thus $G$ is not almost elusive. 
\end{proof}

\begin{prop}
Theorem \ref{t:mainthrm} holds for case ${\rm XI}$ in Table \ref{tab:pi0}.
\end{prop}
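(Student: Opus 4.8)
The plan is to show that no such $G$ is almost elusive — consistent with the absence of this case from Table~\ref{tab:maintab} — by exhibiting two $G$-classes of derangements of prime order. Here $G_0={\rm PSp}_4(q)$ and $H$ is of type ${\rm Sp}_2(q^2)$, a $\mathcal{C}_3$-subgroup attached to the quadratic extension $\mathbb{F}_{q^2}/\mathbb{F}_q$. By Proposition~\ref{p:smalldim} we may assume $q\geqs 9$, and from \cite{KL} and \cite[Table 8.14]{BHR} we have $H_0={\rm L}_2(q^2).2$, with ${\rm L}_2(q^2)$ normal of index $2$, the outer factor being induced by a field automorphism; crucially, that table also shows $H$ is maximal only when $G$ contains no graph automorphism of $G_0$ (for $q$ even the graph automorphism interchanges the $\mathcal{C}_3$-class of type ${\rm Sp}_2(q^2)$ with the $\mathcal{C}_8$-class of type ${\rm O}^-_4(q)$, and for $q$ odd there is no graph automorphism at all). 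Consequently every automorphism occurring in $G$ preserves the Jordan form on $V$, and the eigenvalue multiset on $V$ up to a Frobenius twist, of an element of $G_0$; in particular $\dim C_V(x)$ is a $G$-class invariant. Since $x\in G_0$ is a derangement precisely when no $G$-conjugate of $x$ lies in $H_0$, it is enough to find two prime-order elements of $G_0$ that are separated from all elements of the same order in $H_0$ by one of these invariants.

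For the first derangement I would take $y\in G_0$ of order $p$ with Jordan form $[J_2,J_1^2]$ on $V$, i.e. a symplectic transvection (which exists for all $q$). Applying Lemma~\ref{l:c3} with $k=2$ shows that any element of order $p$ in $H_0$ has Jordan form on $V$ in which every block size has even multiplicity; as $\dim V=4$, a nontrivial such element is of type $[J_2^2]$, never $[J_2,J_1^2]$. Hence $y$ is a derangement.

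For the second derangement I would produce a semisimple element of odd prime order $s\neq p$. Since $q\geqs 9$, the number $q^2-1$ has an odd prime divisor $s$: if $q$ is even this is clear, while if $q$ is odd with $q-1$ a power of $2$ then $q+1=2(2^{a-1}+1)$ with $2^{a-1}+1\geqs 5$ odd. As $s\mid(q-1)(q+1)$ it divides exactly one of $q\mp1$, so a generator $\zeta$ of the subgroup of order $s$ of $\mathbb{F}_{q^2}^{\times}$ satisfies $\zeta^q\in\{\zeta,\zeta^{-1}\}$; then there is an element $z\in{\rm Sp}_4(q)$ of order $s$ lying in a subgroup ${\rm Sp}_2(q)\perp{\rm Sp}_2(q)$ as $(z_0,1)$, with eigenvalue multiset $\{\zeta,\zeta^{-1},1,1\}$ on $V$, so $\dim C_V(z)=2$. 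On the other hand, any element of $H_0$ of order $s$ lies in ${\rm L}_2(q^2)$ (since $s$ is an odd prime and $H_0/{\rm L}_2(q^2)\cong C_2$), and such an element lifts to a split semisimple element ${\rm diag}(\gamma,\gamma^{-1})$ of ${\rm SL}_2(q^2)$ with $|\gamma|=s$; viewing the natural ${\rm SL}_2(q^2)$-module as a $4$-dimensional $\mathbb{F}_q$-space and using $\gamma^q\in\{\gamma,\gamma^{-1}\}$, it has eigenvalue multiset $\{\gamma,\gamma,\gamma^{-1},\gamma^{-1}\}$ on $V$, hence $\dim C_V=0$ because $s\geqs3$. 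As $\dim C_V$ is a $G$-class invariant, $z$ is a derangement, and $z$ lies in a different $G$-class from $y$ because $|z|=s\neq p=|y|$. Thus $G$ has at least two classes of prime-order derangements, so it is not almost elusive.

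The step I expect to need the most care is the maximality bookkeeping for $q$ even: one must confirm from \cite{BHR} that a point stabiliser of type ${\rm Sp}_2(q^2)$ never sits inside an extension of $G_0$ by a graph automorphism, for otherwise the Jordan-type invariant used for $y$ is not $G$-invariant — the graph automorphism of ${\rm PSp}_4(q)$ in characteristic $2$ exchanges the classes $[J_2,J_1^2]$ and $[J_2^2]$, and the latter does meet $H_0$. (For $q$ odd no graph automorphism exists, so this subtlety is absent.)
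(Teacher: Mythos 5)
Your proposal is correct and follows essentially the same route as the paper: both exhibit a unipotent derangement with Jordan form $[J_2,J_1^2]$ via Lemma \ref{l:c3}, and a semisimple derangement of odd prime order dividing $q^2-1$ with $2$-dimensional fixed space, ruled out of $H_0$ because order-$s$ elements there act fixed-point-freely on $V$ (the paper cites \cite[Lemma 5.3.2]{BG_book} where you compute the eigenvalues directly). Your explicit check that maximality of $H$ excludes graph(-field) automorphisms for $q$ even, so that Jordan type and $\dim C_V$ are genuine $G$-class invariants, is left implicit in the paper but is a correct and worthwhile precaution.
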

\begin{proof}
Here $G_0={\rm PSp}_4(q)$ and $H$ is of type ${\rm Sp}_2(q^2)$, so $H_0={\rm PSp}_2(q^2).2$ (see \cite[Proposition 4.3.10]{KL}). Note we may assume $q\geqs 8$ by Proposition \ref{p:smalldim}. From Lemma \ref{l:c3} any element in $G_0$ of order $p$ with Jordan form $[J_2,J_1^2]$ is a derangement.
Take $r$ to be a primitive prime divisor of $q^i-1$ where $i=1$ if $q$ is a Mersenne prime and $i=2$ otherwise (note $r$ always exists by Lemma \ref{l:btv} and Zsigmondy's theorem). Let $x=\hat{x}Z\in G_0$ be an element of order $r$ such that 
\begin{equation*}
\hat{x}:=
\begin{cases}
[\Lambda,I_2] & i=2\\
[\Lambda,\Lambda^{-1},I_2] &i=1.
\end{cases}
\end{equation*} 
Then \cite[Lemma 5.3.2]{BG_book} implies that any element of order $r$ in $H_0$ must have a trivial one-eigenspace, implying that $x$ is a derangement. Therefore the result holds.
\end{proof}

\subsubsection{$\mathcal{C}_5$ subgroups}

\begin{prop}
Theorem \ref{t:mainthrm} holds for case ${\rm U5}$ in Table \ref{tab:pi1}.
\end{prop}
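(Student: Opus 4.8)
This is case~U5 of Table~\ref{tab:pi1}: $G_0=\Un_n(q)$ with $n\in\{4,6\}$, $H$ of type ${\rm Sp}_n(q)$ (a $\mathcal{C}_5$-subgroup), and $i=2n-2$. By Proposition~\ref{p:smalldim} we may assume $q\geqs 9$ when $n=4$ and $q\geqs 3$ when $n=6$, and by Proposition~\ref{p:aereduc} every element of $G_0$ of order $r_i$ is a derangement. The plan is, in each case, to exhibit a second derangement whose order is distinct from $r_i$ (or a second $G$-class of derangements of order $r_i$), which forces $G$ not to be almost elusive. Two facts will be used repeatedly: by \cite[Table~3.5.B]{KL} (and the tables in \cite{BHR} for $n\leqs 12$), $H_0$ has a normal subgroup isomorphic to $\PSp_n(q)$ whose index is coprime to $p$ and to every primitive prime divisor of $q^j-1$ with $j\geqs n$; and the inclusion ${\rm Sp}_n(q)\hookrightarrow{\rm SU}_n(q)$ is realised through the identification $V\cong V_0\otimes_{\mathbb{F}_q}\mathbb{F}_{q^2}$ of the natural module with the natural ${\rm Sp}_n(q)$-module extended to $\mathbb{F}_{q^2}$, so that a unipotent element of ${\rm Sp}_n(q)$ has the same Jordan form on $V$ as on $V_0$, and the eigenvalue configuration of a semisimple element of ${\rm Sp}_n(q)$ on $V$ is obtained from that on $V_0$ by extending scalars.

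For $n=4$ and $q$ odd I would take $x\in G_0$ of order $p$ with Jordan form $[J_3,J_1]$ on $V$. By Lemma~\ref{l:osjordan} no element of order $p$ in ${\rm Sp}_4(q)$ has a Jordan block of odd size with odd multiplicity, hence neither does any element of order $p$ in $H_0$, so $x$ is a derangement; since $p\neq r_6$ this settles the case. For $n=4$ and $q$ even (so $q\geqs 16$), $|{\rm Sp}_4(q)|$ involves only the cyclotomic factors $q^2-1$ and $q^4-1$ and $6$ divides neither $2$ nor $4$, so $r_6\nmid|H_0|$ and every element of $G_0$ of order $r_6$ is a derangement; it remains to produce two $G$-classes of them. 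We may assume $P_q^6=\{r_6\}$ (otherwise $\pi(G_0)-\pi(H_0)\geqs 2$ and we are done), so Lemma~\ref{l:ppd2a3} gives $r_6\geqs 4\cdot 6f+1$ (the exceptional values of $q$ there being all smaller than $16$); then $G_0$ contains $(r_6-1)/3\geqs 8f$ distinct ${\rm PGU}_4(q)$-classes of elements of order $r_6$, and since diagonal automorphisms fix these classes (Theorem~\ref{t:semisimp}) while field automorphisms fuse them in orbits of length at most $2f$, at least two $G$-classes remain.

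For $n=6$ I would combine the derangements of order $r_{10}$ with one of order $r_6$ (which exists since $q\geqs 3$). An element of order $r_6$ in ${\rm Sp}_6(q)$ acts irreducibly on the $6$-dimensional natural module, so on extending scalars to $\mathbb{F}_{q^2}$ its Frobenius orbit of six eigenvalues splits into exactly two distinct orbits of size $3$; hence every element of order $r_6$ in $H_0$ has the form $[\Lambda_1,\Lambda_2]$ with $\Lambda_1\neq\Lambda_2$ and, in particular, trivial $1$-eigenspace on $V$. Taking instead $x=\hat xZ\in G_0$ with $\hat x=[\Lambda_1,I_3]$ produces an element of order $r_6$ with a $3$-dimensional $1$-eigenspace, which is therefore not $G_0$-conjugate into $H_0$ and hence is a derangement; as $r_6\neq r_{10}$, $G$ is not almost elusive.

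The step I expect to be the main obstacle is the $n=6$ analysis, which depends on pinning down how the cyclic torus of ${\rm Sp}_6(q)$ containing the elements of order $r_6$ embeds in $\Un_6(q)$ — specifically, that the length-$6$ eigenvalue orbit breaks into two length-$3$ orbits — which I would extract from \cite[Chapter~3]{BG_book} together with the explicit form of the $\mathcal{C}_5$ embedding in \cite{KL}; by contrast, checking $r_6\nmid|H_0|$ and the class count in the $n=4$, $q$ even case is routine once Lemma~\ref{l:ppd2a3} is in hand, and the $n=4$, $q$ odd case is immediate.
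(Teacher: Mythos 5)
Your proof is correct, and two of its three strands coincide with the paper's: for $n=4$ with $q$ odd the paper takes exactly the unipotent derangement $[J_3,J_1]$ via Lemma \ref{l:osjordan}, and for $n=6$ the paper (in the $p=2$ case) uses precisely the element $[\Lambda,I_3]$ of order a primitive prime divisor of $q^6-1$, justified by the inversion-closure of symplectic spectra ($\Lambda\neq\Lambda^{-1}$) rather than by your equivalent observation that elements of that order in ${\rm Sp}_6(q)$ have trivial $1$-eigenspace on $V$; your version also uniformly covers odd $p$ for $n=6$, where the paper reverts to the unipotent argument. The genuine divergence is the $n=4$, $q$ even case: the paper produces a derangement of a \emph{second} prime order by taking $s$ an odd prime divisor of $q+1$ and the element $[\lambda I_1,I_3]$, whose eigenvalue multiset is not inversion-closed, whereas you count $G$-classes of derangements of order $r_6$ using the bound $r_6\geqs 24f+1$ from Lemma \ref{l:ppd2a3} together with Theorem \ref{t:semisimp} and the index $|\Aut(G_0):{\rm PGU}_4(q)|=2f$. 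Both are sound; the paper's route avoids invoking the number-theoretic lower bound and is more uniform with its $n=6$ treatment, while yours reuses the class-counting template that the paper itself deploys elsewhere (e.g.\ cases U1 and U4). One cosmetic slip: your parenthetical that the exceptional values of $q$ in Lemma \ref{l:ppd2a3} are ``all smaller than $16$'' is not literally true ($q=19,23$ occur there), but those are odd and you have already restricted to $q$ even, so the conclusion $r_6\geqs 24f+1$ stands.
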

\begin{proof}
Here $(G_0, i)=(\Un_n(q), 2n-2)$ with $n=4$ or $6$, and $H$ is of type ${\rm Sp}_n(q)$. Note that by Proposition \ref{p:smalldim} we can assume $q\geqs 9$ when $n=4$ and $q\geqs 3$ when $n=6$.

Assume $p\geqs 3$ and take $x\in G_0$ to be an element of order $p$ with Jordan form $[J_3,J_1^{n-3}]$, then $x\not\in H_0$ (recall all odd sized Jordan blocks must have even multiplicity, see Lemma \ref{l:osjordan}). Thus for $p\geqs 3$ we are done. 

Now assume $p=2$. Suppose $n=6$ and take $s$ to be a primitive prime divisor of $q^6-1$. Take $x=\hat{x}Z\in G_0$ to be an element of order $s$, such that $\hat{x}=[\Lambda,I_3]$. Then $x\not\in H$ since $\Lambda\neq \Lambda^{-1}=\{\lambda^{-1}\mid \lambda\in\Lambda\}$ (see \cite[Proposition 3.4.3]{BG_book}), so $x$ is a derangement. Similarly suppose $n=4$ and take $s$ to be a primitive prime divisor of $q^2-1$. Let $x=\hat{x}Z\in{\rm PGU}_4(q)$ be an element of order $s$ such that $\hat{x}=[\lambda I_1,I_3]$. Then by the same reasoning as for the $n=6$ case, $x$ is a derangement, and $x\in G_0$ since $(4,s)=1$ (see \cite[Proposition 3.2.2]{BG_book}). Thus $G$ is not almost elusive.
\end{proof}

\begin{prop}
Theorem \ref{t:mainthrm} holds for case ${\rm U6}$ in Table \ref{tab:pi1}.
\end{prop}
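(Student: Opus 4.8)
The plan is to show that $G$ has derangements of two distinct prime orders, which at once rules out almost elusivity. Here $(G_0,i)=(\Un_4(q),6)$ and $H$ is of type ${\rm O}^-_4(q)$; since a $\mathcal{C}_5$-subgroup of this type is maximal only when $q$ is odd (in characteristic $2$ it lies in the symplectic-type $\mathcal{C}_5$-subgroup; see \cite[Table 8.10]{BHR}), I may assume $p$ is odd, and by Proposition~\ref{p:smalldim} I may assume $q\geqs 9$. By Proposition~\ref{p:aereduc} every element of $G_0$ of order $r_6$ is already a derangement, so it suffices to exhibit a single further derangement whose order is a prime $\neq r_6$ (note $p\nmid q^6-1$, so $p\neq r_6$).

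My candidate is a unipotent element. First I would take $u\in G_0$ of order $p$ whose Jordan form on the natural module $V$ is $[J_2,J_1^2]$; such an element exists because the unipotent classes of ${\rm SU}_4(q)$ correspond to the partitions of $4$, and it still has order $p$ in $\Un_4(q)$. The key step is to observe that $H_0$ has no element of order $p$ with this Jordan form: any element of order $p$ in $H_0$ lifts to a unipotent element of ${\rm SU}_4(q)$ which (after conjugation) lies in ${\rm O}^-_4(q)$, and by Lemma~\ref{l:osjordan} a unipotent element of ${\rm O}^-_4(q)$ of order $p$ has an even number of Jordan blocks of each even size, a condition the partition $(2,1,1)$ violates. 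Since conjugation in ${\rm GL}(V)$ and the field automorphisms of $G_0$ preserve the Jordan form of a unipotent element, every $G$-conjugate of $u$ again has Jordan form $[J_2,J_1^2]$ and so lies outside $H_0$; hence $u^G\cap H=\emptyset$, so $u$ is a derangement of prime order $p\neq r_6$, and $G$ is not almost elusive.

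I expect the only genuinely delicate point to be the reduction to odd $p$, which rests on the maximality conditions of \cite{BHR}; I would verify this explicitly. Should a subgroup of this type nonetheless be maximal for some even $q$, the unipotent argument fails, because orthogonal groups in characteristic $2$ contain transvections, of Jordan form $[J_2,J_1^2]$. In that event I would instead use the semisimple element $x=\hat{x}Z$ with $\hat{x}=[\mu I_1,I_3]$ of prime order $r\in P_q^2$ (such $r$ exists, as $q=2^f$ is never a Mersenne prime), where $\mu$ has order $r$, so $\mu\notin\mathbb{F}_q$; this $x$ lies in $\Un_4(q)$ since $q+1$ is odd. Every element of $H_0$ has characteristic polynomial on $V$ lying in $\mathbb{F}_q[X]$, because $H_0$ is conjugate into the subfield subgroup ${\rm O}^-_4(q)$ whose matrices are defined over $\mathbb{F}_q$; but the characteristic polynomial of $x$, and of each of its $G$-conjugates $[\mu^{p^j}I_1,I_3]Z$, is not invariant under $\lambda\mapsto\lambda^q$, since $\mu^{p^j}$ has order $r\nmid q-1$. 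Thus $x$ is a derangement of prime order $r\neq r_6$. Either route yields the second prime order, and any residual small values of $q$ are absorbed into the {\sc Magma} computation of Proposition~\ref{p:smalldim}.
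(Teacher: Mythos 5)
Your proposal is correct and follows essentially the same route as the paper: the case is restricted to $q$ odd by the maximality conditions, and the paper likewise exhibits a unipotent derangement of order $p$ with Jordan form $[J_2,J_1^2]$ via Lemma \ref{l:osjordan}, pairing it with the derangement of order $r_6$ from Proposition \ref{p:aereduc}. Your additional fallback argument for even $q$ is not needed, since that case does not arise.
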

\begin{proof}
In this case $(G_0,i)=(\Un_4(q),6)$ and $H$ is of type ${\rm O}^{-}_4(q)$ with $q$ odd. Take $x\in G_0$ to be an element of order $p$ with Jordan form $[J_2,J_1^{n-2}]$. Then $x$ is a derangement (recall all even sized blocks in the Jordan form of a unipotent element in $H_0$ must have even multiplicity, see Lemma \ref{l:osjordan}). Thus $G$ is not almost elusive, since $G_0$ contains derangements of order $p$ and $r_6$ (the unique primitive prime divisor of $q^6-1$).
\end{proof}

\begin{prop}
Theorem \ref{t:mainthrm} holds for case ${\rm O10}$ in Table \ref{tab:pi1}.
\end{prop}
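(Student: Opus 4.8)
In this case $(G_0,i)=({\rm P}\Omega^-_8(q^{1/2}),6)$ and $H$ is of type ${\rm O}^-_8(q^{1/2})$, a subfield subgroup, so that $H_0$ is essentially ${\rm P}\Omega^-_8(q^{1/2}).[c]$ for a small constant $c$, and $q=q_0^2$ with $q_0=p^{f/2}$. By Remark \ref{r:pi tables}(d) we are in the situation where there is a unique primitive prime divisor of $(q^{1/2})^{2i}-1=q_0^{12}-1$, equivalently a unique $r_6\in P_q^6$, and by Proposition \ref{p:aereduc} every element of order $r_6$ in $G_0$ is a derangement. The plan is to exhibit a \emph{second}, distinct prime order for which $G_0$ contains a derangement, so that $G$ fails to be almost elusive.

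First I would dispose of the small cases: by Proposition \ref{p:smalldim} the group $\Omega^-_8(2)$ and $\Omega^-_8(4)$ are already covered (these lie in $\mathcal{O}$), so we may assume $q_0\geqs 3$, hence $q\geqs 9$, which guarantees the existence of the primitive prime divisors used below. The natural candidate for the second prime is $p$: take $x\in G_0$ a unipotent element of order $p$ with Jordan form $[J_2^2,J_1^4]$ on $V$ (valid in $\Omega^-_8(q)$ by Lemma \ref{l:osjordan} — even-sized blocks have even multiplicity — and both for $p$ odd and $p=2$), and argue that $x$ cannot be conjugated into $H_0$. The point is that a subfield subgroup of type ${\rm O}^-_8(q^{1/2})$ has unipotent elements whose Jordan form on $V\otimes\overline{\mathbb{F}}_q$ is obtained from Jordan forms over $\mathbb{F}_{q^{1/2}}$; a $[J_2^2,J_1^4]$ class that does not arise this way will be a derangement. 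I would extract the precise statement from the conjugacy analysis underlying \cite[Chapter 5]{BG_book} (in the spirit of Lemma \ref{l:c3}, which is the analogous statement for $\mathcal{C}_3$ extension-field subgroups): either every unipotent element of $H_0$ of order $p$ has Jordan form $[J_p^{pa_p},\dots,J_1^{pa_1}]$ on $V$ (the "blow-up" case), or $p$ is involved in a degenerate way. In the first case $[J_2^2,J_1^4]$ is simply not of that shape (since $4$ is not a multiple of $p$ when $p\neq 2$, and for $p=2$ one uses that the relevant subfield class has the wrong $\mathbb{F}_q$-form — here one should compare with the $a_2$ versus $c_2$ type distinction used in the proof of case O9), so $x$ is a derangement and we are done.

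If instead one prefers a uniform semisimple argument, the alternative is to take $s\in P_q^4$ (which exists since $q\geqs 9$) and $x=\hat{x}Z\in G_0$ of order $s$ with $\hat{x}=[\Lambda,I_4]$; one then shows using \cite[Lemma 5.3.2]{BG_book} that every element of order $s$ in the subfield subgroup $H_0$ acts on $V\otimes\mathbb{F}_{q^s}$ with eigenvalue multiplicities inherited from an action over $\mathbb{F}_{q^{1/2}}$, and in particular cannot have a one-eigenspace of dimension exactly $4$ together with a single Frobenius orbit $\Lambda$ of size $4$ — forcing $x\notin H_0$. Combined with the derangements of order $r_6$ this again gives two $G$-classes of derangements of distinct prime order, so $G$ is not almost elusive.

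I expect the main obstacle to be the $p=2$ sub-case: there the Jordan form alone does not determine the $G_0$-class of a unipotent involution (recall the remark at the end of Section \ref{s:conjclass} that $[J_2^s,J_1^{n-2s}]$ splits into two classes for $s$ even), so one must track the extra invariant distinguishing the classes and verify that the subfield subgroup ${\rm O}^-_8(q^{1/2})$ meets only one of the two $G_0$-classes of involutions with form $[J_2^2,J_1^4]$ — exactly the kind of bookkeeping carried out for case O9 above. Once that is pinned down (using \cite[Section 3.5.4]{BG_book} together with the description of involutions in $H_0$ coming from $\Omega^-_8(q^{1/2})$), the argument closes, and I would present it following the same template as the O9 proof.
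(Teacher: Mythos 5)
Your proposal reaches the right conclusion only via its second, ``alternative'' route; the primary route rests on a misconception, and the setup contains a slip: in case O10 we have $G_0={\rm P}\Omega^{+}_8(q)$ with $H$ of type ${\rm O}^{-}_8(q_0)$, where $q=q_0^2$ (you wrote $G_0={\rm P}\Omega^-_8(q^{1/2})$, which is the type of $H$, not the socle). More seriously, the ``blow-up'' of Jordan forms that you invoke is a feature of the $\mathcal{C}_3$ field-\emph{extension} subgroups (Lemma \ref{l:c3}), where the natural module of $H$ is defined over $\mathbb{F}_{q^k}$ and $V$ is obtained by restriction of scalars. For a $\mathcal{C}_5$ \emph{subfield} subgroup the situation is the opposite: the natural module $V_0$ of $H_0$ is defined over the subfield $\mathbb{F}_{q_0}$ and $V=V_0\otimes_{\mathbb{F}_{q_0}}\mathbb{F}_q$, so Jordan forms are preserved, not multiplied by the field degree. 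In particular $\Omega^-_8(q_0)$ certainly contains unipotent elements with Jordan form $[J_2^2,J_1^4]$ on $V$, so such an element is not automatically a derangement; deciding whether every $G_0$-class with that Jordan form meets $H_0$ would require the finer class invariants (discriminants of the forms on the block spaces for $p$ odd, the $a_2$/$c_2$ distinction for $p=2$), which you do not carry out. As written, your primary argument does not go through.

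Your alternative semisimple argument is essentially the paper's proof. The paper takes $s$ to be a primitive prime divisor of $q_0^8-1$ (equivalently of $q^4-1$) and $x=\hat{x}Z\in G_0$ with $\hat{x}=[\Lambda,I_4]$, which has a $4$-dimensional $1$-eigenspace on $V$; any element of order $s$ in $H_0$ acts on the $8$-dimensional $\mathbb{F}_{q_0}$-module with eigenvalues forming a single Frobenius orbit of length $8$, hence has trivial $1$-eigenspace after extending scalars to $\mathbb{F}_q$ (the paper cites \cite[Proposition 3.5.4]{BG_book} here rather than the $\mathcal{C}_3$ lemma you reference). Thus $x$ is a derangement, and together with the derangements of order $r_6$ this gives derangements of two distinct prime orders, so $G$ is not almost elusive. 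If you restructure the proof around this argument alone, dropping the unipotent route, it is correct and matches the paper.
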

\begin{proof}
Here $(G_0,i)=({\rm P}\Omega_8^{+}(q),6)$ and $H$ is of type ${\rm O}^{-}_8(q_0)$, where $q=q_0^2$. 
Take $s$ to be a primitive prime divisor of $q_0^8-1$ and note that $s$ is also a primitive prime divisor of $q^4-1$. Let $x=\hat{x}Z\in G_0$ be an element of order $s$ such that $\hat{x}=[\Lambda,I_4]$. By \cite[Proposition 3.5.4]{BG_book} any element $y=\hat{y}Z'\in H_0$ of order $s$ must have the form $\hat{y}=[\Lambda',I_1]$, where $Z'=Z(O^-_8(q_0))$, $\Lambda'=\{\lambda,\lambda^{q_0},\dots,\lambda^{q_0^7}\}$ and $\lambda$ is some non-trivial $s^{th}$ root of unity in $\mathbb{F}_{q^4}$. Thus $x\not\in H_0$ since it has a 4-dimensional 1-eigenspace, so $x$ is a derangement. Thus $G_0$ contains derangements of distinct prime order, implying that $G$ is not almost elusive.  
\end{proof}

\subsubsection{$\mathcal{C}_8$ subgroups}

\begin{prop}
Theorem \ref{t:mainthrm} holds for case ${\rm L7}$ in Table \ref{tab:pi1}.
\end{prop}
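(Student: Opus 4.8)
The aim is to show that $G$ is not almost elusive in case L7, by producing at least two conjugacy classes of derangements of prime order. In this case $G_0=\Li_n(q)$ with $n\in\{4,6\}$, the subgroup $H$ is of type ${\rm Sp}_n(q)$ (so $H_0$ is essentially ${\rm PSp}_n(q)$), and $i=n-1$ is odd; by hypothesis $P_q^{n-1}=\{r\}$, and Proposition \ref{p:aereduc}(ii) already guarantees that every element of order $r$ in $G_0$ is a derangement. By Proposition \ref{p:smalldim} we may assume $q\geqs 9$ when $n=4$ and $q\geqs 3$ when $n=6$. Throughout I will use the elementary fact that $x^G\cap H=x^G\cap H_0$ for every $x\in G_0$, since $x^G\subseteq G_0$ and $H\cap G_0=H_0$.

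First I would dispose of the case $p$ odd. Take $x\in G_0$ to be a unipotent element with Jordan form $[J_3,J_1^{n-3}]$ on the natural module $V$; such an element exists in $\Li_n(q)$ and, as $p\geqs 3$, it has order $p$. Any $G$-conjugate of $x$ has the same multiset of Jordan block sizes, so if $x$ failed to be a derangement then $H_0$ would contain a unipotent element of Jordan type $[J_3,J_1^{n-3}]$. But every unipotent element of $H_0$ preserves the defining symplectic form (the associated similitude factor is a $p'$-element, hence trivial on a $p$-element) and so lies in ${\rm Sp}_n(q)$, and by Lemma \ref{l:osjordan} no unipotent element of ${\rm Sp}_n(q)$ has Jordan type $[J_3,J_1^{n-3}]$, since the odd-sized block $J_3$ occurs here with the odd multiplicity $1$. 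Hence $x$ is a derangement of order $p$, and as $r\neq p$ this already shows that $G$ is not almost elusive.

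It then remains to handle $p=2$, so $q=2^f$. If $n=4$ then $q\geqs 16$, hence $q\notin\{2,4\}$, and applying Lemma \ref{l:ppd2a3} to $P_q^{3}=\{r\}$ yields $r\geqs 12f+1$; if $n=6$ then applying Lemma \ref{l:lic1prime} to $P_q^{5}=\{r\}$ yields $r\geqs 20f+1$. Writing $r=(n-1)fk+1$ --- which is legitimate since $r$ is the unique primitive prime divisor of $2^{(n-1)f}-1$ --- we obtain $k\geqs 4$ in both cases. Since $r$ does not divide $(n,q-1)$, every element of order $r$ in ${\rm PGL}_n(q)$ lies in $G_0$; moreover, by the description of semisimple conjugacy classes recalled in Section \ref{s:conjclass} (here $c=n-1\geqs 2$), the elements of order $r$ in $G_0$ fall into exactly $(r-1)/(n-1)=fk$ distinct ${\rm PGL}_n(q)$-classes, with representatives $\hat{x}=[\Lambda_j,I_1]$ for $1\leqs j\leqs fk$, and by Theorem \ref{t:semisimp} these coincide with the $G_0$-classes. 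As $|\Aut(G_0){:}{\rm PGL}_n(q)|=2f$, the $fk$ classes are fused into at least $fk/(2f)=k/2\geqs 2$ distinct $G$-classes, each consisting of derangements; hence $G$ is not almost elusive.

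The proof is short once the reductions of Propositions \ref{p:smalldim} and \ref{p:aereduc} are available; the only step needing genuine input is the $p=2$, $n=6$ subcase, where the sharp bound $r\geqs 4nf+1$ on the unique primitive prime divisor is precisely Lemma \ref{l:lic1prime} (which itself rests on the Nagell--Ljunggren-type result Proposition \ref{p:blnag}). I do not anticipate any real obstacle beyond having that lemma in hand.
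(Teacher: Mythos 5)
Your proof is correct and follows essentially the same route as the paper: for $p$ odd it uses the unipotent element with Jordan form $[J_3,J_1^{n-3}]$ excluded from ${\rm Sp}_n(q)$ by Lemma \ref{l:osjordan}, and for $p=2$ it invokes Lemmas \ref{l:ppd2a3} and \ref{l:lic1prime} to get $r\geqs 4(n-1)f+1$ and then counts at least $(r-1)/2f(n-1)\geqs 2$ distinct $G$-classes of derangements of order $r$. The only difference is that you spell out a few details the paper leaves implicit (e.g.\ that unipotent elements of $H_0$ genuinely lie in ${\rm Sp}_n(q)$), which is harmless.
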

\begin{proof}
Here $(G_0,i)=({\rm L}_n(q),n-1)$ with $H$ of type ${\rm Sp}_n(q)$ and $n=4$ or $6$. Assume first that $p\geqs 3$ and let $x\in G_0$ be an element of order $p$ with Jordan form $[J_3,J_1^{n-3}]$. Then $x$ is a derangement since all odd sized blocks in symplectic groups must have even multiplicity (see Lemma \ref{l:osjordan}). Thus for the remainder of the proof we may assume that $p=2$. By Proposition \ref{p:smalldim} we may assume that $q\geqs 16$ when $n=4$, and $q\geqs 4$ when $n=6$. Thus $r_{n-1}\geqs 4(n-1)f+1$ by Lemmas \ref{l:ppd2a3} and \ref{l:lic1prime}. In the usual manner $G_0$ contains $(r_{n-1}-1)/(n-1)=4f$ distinct ${\rm PGL}_n(q)$-classes of elements of order $r_{n-1}$. Since $|\Aut(G_0){:}{\rm PGL}_n(q)|=2f$, there are at least $2$ distinct $G$-classes of elements of order $r_{n-1}$ in $G_0$.  
\end{proof}

\begin{prop}
Theorem \ref{t:mainthrm} holds for case ${\rm L8}$ in Table \ref{tab:pi1}.
\end{prop}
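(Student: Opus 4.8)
The plan is to produce two derangements in $G_0$ of distinct prime orders $p$ and $r_3$, which immediately shows that $G$ is not almost elusive. Here $(G_0,i)=(\Li_n(q),3)$ with $H$ of type ${\rm O}_3(q)$ (so $n=3$) or ${\rm O}^-_4(q)$ (so $n=4$). In both rows the $\mathcal{C}_8$ orthogonal subgroup is present only when $q$ is odd (one of the existence conditions in \cite[Table 3.5.A]{KL}), so throughout $p\geqs 3$; this is exactly what makes the argument shorter than in case {\rm L7}, since no separate characteristic-two analysis is required. Recall that, by Proposition \ref{p:aereduc}, $G_0$ already contains derangements of order $r_3$, so it remains only to exhibit a derangement of order $p$.

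I would take $x\in G_0$ to be the image of an element of ${\rm SL}_n(q)$ with Jordan form $[J_2,J_1^{n-2}]$ on $V$ (indeed $x$ is a transvection); it has determinant $1$, is non-central, and has order exactly $p$, so it is a genuine element of order $p$ in $G_0$. The key claim is that $x$ is a derangement. Since $x\in G_0\normeq G$ we have $x^G\subseteq G_0$, hence $x^G\cap H=x^G\cap H_0$, so it suffices to show that $H_0$ contains no element of order $p$ with Jordan form $[J_2,J_1^{n-2}]$ on $V$. But any element of order $p$ in $H_0$ is a unipotent element of $\Li_n(q)$ that, after lifting to ${\rm SL}_n(q)$, lies in the orthogonal group ${\rm O}^{\epsilon}_n(q)$; so by Lemma \ref{l:osjordan} (with $p$ odd) its Jordan form $[J_p^{a_p},\dots,J_1^{a_1}]$ has $a_i$ even for every even $i$, and in particular $a_2$ is even. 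This rules out $[J_2,J_1^{n-2}]$, so $x$ is a derangement, and as $p\neq r_3$ the group $G$ is not almost elusive.

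I do not anticipate a genuine obstacle. The only points needing a little care are confirming that $q$ (hence $p$) is odd in both rows of case {\rm L8}, and checking that the image in $\Li_n(q)$ of a $[J_2,J_1^{n-2}]$-element of ${\rm SL}_n(q)$ still has order $p$ — both are immediate, the latter because the element is a non-central unipotent matrix.
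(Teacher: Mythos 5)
Your proposal is correct and follows essentially the same route as the paper: the paper also takes a unipotent element with Jordan form $[J_2,J_1^{n-2}]$ and observes that it cannot lie in $H_0$ because even-sized Jordan blocks in odd-characteristic orthogonal groups must occur with even multiplicity (Lemma \ref{l:osjordan}), then pairs this with the semisimple derangement of order $r_3$ from Proposition \ref{p:aereduc}. Your write-up simply makes explicit the details (oddness of $q$, the lift to ${\rm SL}_n(q)$, and the reduction of $x^G\cap H$ to $x^G\cap H_0$) that the paper leaves implicit.
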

\begin{proof}
Here $(G_0,i)=({\rm L}_n(q),3)$ and $H$ is of type ${\rm O}^{\epsilon}_n(q)$ such that $(\epsilon,n)=(\circ, 3),(-,4)$ and $q$ is odd. Take $x\in G_0$ be a unipotent element with Jordan form $[J_2,J_1^{n-2}]$. Then $x$ is a derangement (since in orthogonal groups even sized Jordan blocks must have even multiplicity). Thus $G_0$ contains unipotent and semisimple derangements, so $G$ is not almost elusive.
\end{proof}

\begin{prop}\label{p:S7}
Theorem \ref{t:mainthrm} holds for cases ${\rm S8}$ and ${\rm S9}$ in Table \ref{tab:pi1} and case ${\rm XII}$ in Table \ref{tab:pi0}.
\end{prop}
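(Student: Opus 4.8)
These cases all have $G_0={\rm PSp}_n(q)$ and $H$ of type ${\rm O}^{\epsilon}_n(q)$; an orthogonal-type $\mathcal{C}_8$-subgroup of a symplectic group exists only when $q$ is even, so $q=2^f$, $G_0={\rm Sp}_n(q)$ and $H_0={\rm O}^{\epsilon}_n(q)$. By Proposition \ref{p:smalldim} we may assume $G_0\notin\mathcal{D}$, so $n\geqs 6$, with $q\geqs 4$ when $n\in\{6,8\}$. In each case the aim is to exhibit two $G$-classes of derangements of prime order. The guiding principle, read off from the description of semisimple classes in \cite[Lemma 4.2.4]{BG_book} together with Lemma \ref{l:Lemma A.1}, is that a semisimple $x=\hat xZ\in G_0$ acting without nonzero fixed points on the natural module, preserving an orthogonal decomposition into irreducible non-degenerate blocks, is conjugate into exactly one of ${\rm O}^+_n(q),{\rm O}^-_n(q)$ --- the one whose sign is the product of the signs of the blocks --- whereas an element with a nonzero fixed space lies in both (the fixed space may be taken of either sign); moreover an irreducible block of even dimension $c$ carrying an element of order a primitive prime divisor of $q^c-1$ has minus sign (since then ${\rm O}^+_c(q)$ has no such element), while a block of the form $\Lambda\oplus\Lambda^{-1}$ of dimension $2j$ with $j$ odd is hyperbolic, of plus sign.

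Consider first cases {\rm S8} ($\epsilon=+$, $i=n$) and {\rm S9} ($\epsilon=-$, $n\equiv 2\imod 4$, $i=n/2$), which lie in Table \ref{tab:pi1}, so by Proposition \ref{p:aereduc}(ii) every element of $G_0$ of order $r_i$ is already a derangement and only one further class is needed. Counting the classes of elements of order $r_i$ (all of which act on the natural module as a single irreducible symplectic block) and the action of the outer automorphisms, $G_0$ has at least roughly $(r_i-1)/(nf)$ many $G$-classes of such elements (an extra factor $2$ entering when $n=4$, from the graph automorphism); bounding $r_i$ from below via Lemmas \ref{l:ppds4} and \ref{l:ppd2a3} shows this is at least $2$ apart from a short list of $(n,q)$. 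For those remaining pairs one instead produces a fixed-point-free semisimple element of prime order different from $r_i$ and of sign $-\epsilon$: for {\rm S8} an orthogonal sum of an odd number of minus-type blocks of dimension $2^k$, where $n=2^km$ with $m$ odd and $k$ chosen so that $q^{2^k}-1$ has a primitive prime divisor; for {\rm S9} an orthogonal sum using a hyperbolic block $\Lambda\oplus\Lambda^{-1}$ of plus sign, or an even number of minus-type blocks. Either way this yields a second class of prime order derangements, so $G$ is not almost elusive.

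Finally consider case {\rm XII} ($\epsilon=-$, $n\equiv 0\imod 4$), where $\pi(G_0)=\pi(H_0)$ and $(n,q)=(6,2)$ has been treated in Proposition \ref{p:smalldim}. Here both classes are constructed directly. Since $n/2$ is even there is a unipotent involution $x$ of ${\rm Sp}_n(q)$ with $[V,x]=C_V(x)$ a totally singular subspace of dimension $n/2$; such an $x$ forces every invariant quadratic form on $V$ to have plus sign, so $x$ is not conjugate into ${\rm O}^-_n(q)$ and is a derangement. A second derangement is a fixed-point-free plus-sign semisimple element of order a primitive prime divisor of $q^{n/2}-1$ (acting as $[\Lambda^{2}]$ or $[\Lambda_1,\Lambda_2]$ with each block of minus sign, so that the product sign is plus), or, in the single exceptional configuration $(n,q)=(12,2)$ where $q^{n/2}-1$ has no primitive prime divisor, an element of order $7$ built from two plus-sign $6$-dimensional blocks; the low-dimensional case $n=4$ is dispatched with {\sc Magma} \cite{Mag}. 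The main obstacle throughout is exactly this combinatorial bookkeeping for $q$ even: verifying that ${\rm Sp}_n(q)$ really does contain a fixed-point-free semisimple element of prime order and of the opposite sign to $\epsilon$ --- equivalently, choosing an even block dimension $c$ with $q^c-1$ admitting a primitive prime divisor and with $n/c$ odd, which is delicate precisely when $n$ is divisible by a large power of $2$ --- and tracking any fusion of the relevant $G_0$-classes under field (or, when $n=4$, graph) automorphisms, so that the surviving $G$-class is genuinely a class of derangements.
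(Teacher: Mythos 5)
Your central mechanism --- reading off membership in ${\rm O}^{\epsilon}_n(q)$ from the product of the signs of the irreducible blocks of a fixed-point-free semisimple element --- is exactly the engine of the paper's proof (via \cite[Remark 3.5.5]{BG_book}), and your unipotent $a_{n/2}$-involution for case XII is a correct additional device that the paper does not use. But the organisation around that engine has two concrete problems. First, the opening reduction is wrong: $G_0\notin\mathcal{D}$ does \emph{not} give $n\geqs 6$, since $\mathcal{D}$ only removes $n=4$ with $q\leqs 8$, so ${\rm PSp}_4(q)$ with $q\geqs 16$ survives in cases S8 and XII; in particular your later statement that ``the low-dimensional case $n=4$ is dispatched with {\sc Magma}'' is not available for those $q$ (your general constructions do in fact cover $n=4$, but as written you have excluded it and then appealed to a computation that does not exist).

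Second, and more seriously, the class-counting step for S8/S9 cannot be run ``via Lemmas \ref{l:ppds4} and \ref{l:ppd2a3}'' for general $n$: those lemmas bound the unique primitive prime divisor only when the relevant exponent is $2^a$ or $2^a3$, and for other exponents no such bound is known (this is precisely the obstruction discussed in Remark \ref{r:n=2aj}). Your fallback of inserting blocks of smaller even dimension rescues S8 whenever $n$ is not a $2$-power, but for S9 with $q=2$ and $n/2$ an odd prime $\geqs 5$ (e.g.\ $(n,q)=(10,2)$, which is not in $\mathcal{D}$) the fallback produces nothing: $q-1=1$, so $q^{n/2}-1$ has no odd prime divisor other than possibly $r_{n/2}$ itself, and neither cited lemma applies. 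One must there invoke Lemma \ref{l:lic1prime}, or better, do what the paper does throughout: observe that once the block-sign construction fails to produce a second prime, $q^{n/2}\mp 1$ is forced to be a prime power, so Lemma \ref{l:btv} identifies the prime as a Fermat or Mersenne prime $2^{fn/2}\pm 1$, whose size makes the count of $G$-classes of that order at least $2$ outside cases already in $\mathcal{D}$. That reduction is the step your write-up is missing; with it the case analysis on the $2$-adic structure of $n$ becomes unnecessary.
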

\begin{proof}
Here $G_0={\rm PSp}_n(q)$ with $q$ even and $H$ is of type ${\rm O}^{\epsilon}_n(q)$ (recall $n\geqs 4$ since $G_0\in\mathcal{G}$). The cases $\epsilon=+$ and $\epsilon=-$ are similar, so we only provide details for the $\epsilon=+$ case.

Assume $\epsilon=+$. Take $s$ to be an odd prime divisor of $q^{n/2}+1$ and let $j$ be such that $s$ is a primitive prime divisor of $q^j-1$. Then $j$ divides $n$ and does not divide $n/2$, implying $j$ is even and $n/j$ is odd. 
Let $x=\hat{x}Z\in G_0$ be an element of order $s$ such that $\hat{x}=[\Lambda^{n/j}]\in{\rm Sp}_n(q)$, then by \cite[Remark 3.5.5]{BG_book} $x\not\in {\rm O}^{+}_n(q)$, so is a derangement. Thus we may assume that $q^{n/2}+1=s^l$ for some $l\geqs 1$. By Lemma \ref{l:btv} this occurs if and only if one of the following holds; 
\begin{itemize}
\item[{\rm (i)}] $(n,q)=(6,2)$
\item[{\rm (ii)}] $fn=2^m$ with $m\geqs 2$, and $s$ is a Fermat prime. 
\end{itemize}

The case (i) was handled in Proposition \ref{p:smalldim} thus we may assume that $(n,q)$ is as in case (ii). Here $s=2^{fn/2}+1$ and $G_0$ contains $(s-1)/n$ distinct ${\rm PGSp}_n(q)$-classes of elements of order $s$ (see \cite[Section 3.4.1]{BG_book}). Since $|\Aut(G_0){:}{\rm PGSp}_n(q)|=af$ where $a=2$ when $n=4$ and $a=1$ otherwise (see \cite[Section 2.4]{BG_book}), there are at least $c:=(s-1)/afn$ distinct $G$-classes of elements of order $s$ in $G_0$. It is straightforward to see that $c=2^{fn/2}/afn\geqs 2$ for $(n,q)\neq(4,2),(4,4)$. Thus $G$ is not almost elusive when $(n,q)\neq(4,2),(4,4)$. The remaining cases $(n,q)=(4,2),(4,4)$ have been handled already in Proposition \ref{p:smalldim}.
\end{proof}

We have now handled the cases in which $H$ is contained in one of the Aschbacher collections $\mathcal{C}_2,\dots,\mathcal{C}_8$. Thus to complete the proof of Theorem \ref{t:mainthrm} for $H\not\in \mathcal{C}_1$ it remains for us to handle the remaining subgroups in $\mathcal{N}$ (the novelty subgroups) and $\mathcal{S}$ (the non-geometric subgroups). That is cases ${\rm XVII}$ and ${\rm XVIII}$ in Table \ref{tab:pi0} and cases ${\rm O11}$ and ${\rm O20}$ in Table \ref{tab:pi1} (recall that cases S10 and S11 have been handled already in Proposition \ref{p:symp}). 

\subsubsection{Novelty subgroups}

\begin{prop}
Theorem \ref{t:mainthrm} holds for case ${\rm O11}$ in Table \ref{tab:pi1}.
\end{prop}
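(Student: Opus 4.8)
In this case $(G_0,i)=({\rm P}\Omega^+_8(q),4)$ and $H$ is a novelty subgroup of type $G_2(q)$, so $H_0\cong G_2(q)$; write $r=r_4$ for the unique primitive prime divisor of $q^4-1$. By Proposition \ref{p:aereduc} every element of $G_0$ of order $r$ is a derangement, so the plan is simply to exhibit a derangement of some prime order different from $r$; then $G_0$ contains derangements of two distinct prime orders and $G$ is not almost elusive. By Proposition \ref{p:smalldim} I may assume $q\geqs 5$ (note ${\rm P}\Omega^+_8(q)\in\mathcal{O}$ for $q\in\{3,4\}$, and ${\rm P}\Omega^+_8(2)$ is covered since $G_0\in\mathcal{D}$).

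The key input is the action of a $G_2(q)$-subgroup on the natural module $V=\mathbb{F}_q^8$. Using the chain $G_2(q)<\Omega_7(q)<{\rm P}\Omega^+_8(q)$, the order-$3$ graph automorphism (triality) of $G_0$, and the fact that each of the three $8$-dimensional modules for ${\rm P}\Omega^+_8(q)$ restricts to $\Omega_7(q)$, and hence to $G_2(q)$, in the same way, one sees that any subgroup of $G_0$ which is $G$-conjugate to $H_0$ acts on $V$ with a nonzero trivial submodule: when $p$ is odd the restriction is the perpendicular sum of the natural $7$-dimensional module and a $1$-space, while when $p=2$ the composition factors are the $6$-dimensional irreducible and two trivial modules (and $V$ is self-dual, so a trivial composition factor forces a trivial submodule). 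In particular every element of every $G$-conjugate of $H_0$ fixes a nonzero vector of $V$. The point that requires most care here is precisely this module-theoretic statement — in particular that it is insensitive to which of the (possibly three) $G_0$-classes of $G_2(q)$-subgroups is taken.

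Suppose first that $q$ is not a Mersenne prime, so that $q^2-1$ has a primitive prime divisor $s$; here $s$ is odd, $s\neq p$, and $s\neq r$ (as $s$ divides $q+1$, $r$ divides $q^2+1$, and $\gcd(q+1,q^2+1)$ divides $2$). I would take $x=\hat x Z\in G_0$ of order $s$ with $\hat x=[\Lambda^4]$, where $\Lambda=\{\mu,\mu^{-1}\}$ for some $\mu\in\mathbb{F}_{q^2}^{\times}$ of order $s$; thus $\hat x$ preserves a decomposition of $V$ into four non-degenerate $2$-spaces of minus type, acting irreducibly on each, and lies in $\Omega^+_8(q)$ since $(-)^4=+$. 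The eigenvalues of $\hat x$ on $V\otimes\mathbb{F}_{q^2}$ are $\mu^{\pm1}\neq 1$, so $\hat x$ fixes no nonzero vector of $V$; by the previous paragraph $x$ lies in no $G$-conjugate of $H_0$, so $x$ is a derangement of order $s$, as required. If instead $q=p$ is a Mersenne prime, then $p\geqs 7$, and I would argue with a unipotent element. Every unipotent class of $G_2(q)$ acts on the natural $7$-dimensional module with all Jordan blocks of size at most $3$, except the regular class, which acts as $[J_7]$; hence on $V$ every unipotent element of a $G$-conjugate of $H_0$ has Jordan form $[J_7,J_1]$ or has all Jordan blocks of size at most $3$. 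Taking $x\in G_0$ of order $p$ with Jordan form $[J_5,J_1^3]$ on $V$ — such an element exists in $\Omega^+_8(q)$, and hence in ${\rm P}\Omega^+_8(q)$, for a suitable quadratic form of plus type, being of odd order — we see that $x$ lies in no $G$-conjugate of $H_0$, so $x$ is a derangement of order $p\neq r$, again giving the conclusion.

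Besides the module restriction mentioned above, the remaining points to check are routine: the description of the unipotent classes of $G_2(q)$ on its $7$-dimensional module, and the verification that the classes $[\Lambda^4]$ and $[J_5,J_1^3]$ are realised inside ${\rm P}\Omega^+_8(q)$ rather than ${\rm P}\Omega^-_8(q)$; both follow from standard references on $G_2$-subgroups of $\Omega^+_8(q)$ and on unipotent classes in classical and exceptional groups. (Alternatively, when $q\notin\{2,3,7\}$ one could avoid the case division by a counting argument: Lemma \ref{l:ppds4} gives $(r-1)/4f\geqs 2$, there are at least $(r-1)/4f$ distinct $G_0$-classes of order-$r$ elements of $[\Lambda,I_4]$-type, all derangements, and one checks these lie in distinct $G$-classes, using Theorem \ref{t:semisimp} for diagonal automorphisms and the fact that triality moves $[\Lambda,I_4]$-type classes out of this family.)
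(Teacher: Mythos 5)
Your proof is correct, and the key geometric input is the same as the paper's (namely that $G_2(q)$ sits inside a reducible subgroup of type ${\rm O}_7(q)$, so every element of every conjugate of $H_0$ fixes a nonsingular $1$-space), but you then take a genuinely different and more self-contained route. The paper simply cites Kleidman for the containment and then observes that any derangement for the action of $G_0$ on nonsingular $1$-spaces is automatically a derangement here, so it can delegate entirely to Propositions \ref{p:spn-1} and \ref{p:nd1}, which have already produced two classes of prime-order derangements for those actions; no new elements need to be constructed and no unipotent class data for $G_2(q)$ is needed. You instead construct explicit derangements: the semisimple element $[\Lambda^4]$ of order $s\in P_q^2$ (which is fine: four elliptic $2$-spaces give plus type, $s\neq r_4$ since $(q+1,q^2+1)$ divides $2$, and an eigenvalue-$1$-free element fixes no $1$-space), and, when $q$ is Mersenne, a unipotent $[J_5,J_1^3]$, which requires the extra fact that the unipotent classes of $G_2(q)$ act on the $7$-dimensional module with Jordan blocks of size at most $3$ apart from the regular class $[J_7]$ --- note this step cannot be run through the ``fixes a nonzero vector'' criterion, since $[J_5,J_1^3]$ has a $4$-dimensional fixed space, so you are right to lean on the block-size data there. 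What the paper's approach buys is brevity and reuse of earlier work; what yours buys is an explicit list of derangements and independence from the $\mathcal{C}_1$ propositions, at the cost of importing the $G_2$ unipotent class data and the check that your chosen classes land in $\Omega_8^+(q)$ rather than $\Omega_8^-(q)$.
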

\begin{proof}
In this case $(G_0,i)=({\rm P}\Omega^{+}_8(q),4)$ and $H_0=H\cap G_0=G_2(q)$. By \cite[Proposition 3.1.1 (vi)]{Klei} every element of $H_0$ fixes a nonsingular 1-space (a reducible subgroup of type ${\rm O}_7(q)$). Thus any element in $G_0$ that does not fix a nonsingular 1-space is a derangement. Therefore $G$ is not almost elusive by Propositions \ref{p:spn-1} and \ref{p:nd1}.
\end{proof}

\subsubsection{Non-geometric subgroups}

Here we handle the remaining subgroups $H\in \mathcal{S}$. We recall that here \textit{type of} $H$ refers to the socle of $H$. We will use $S$ to denote type of $H$, that is $S={\rm Soc}(H)$. 

\begin{prop}
Theorem \ref{t:mainthrm} holds for cases ${\rm XVII}$ and ${\rm XVIII}$ in Table \ref{tab:pi0}.
\end{prop}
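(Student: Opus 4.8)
The plan is to reduce both cases to subspace-subgroup cases already treated, by exploiting a triality automorphism of $G_0={\rm P}\Omega^{+}_8(q)$. Recall that here $H$ is a non-geometric subgroup with socle $H_0=\Omega_7(q)$ when $p$ is odd and $H_0={\rm Sp}_6(q)$ when $p=2$, acting (absolutely) irreducibly on the natural $8$-dimensional module $V$ via a spin representation. First I would invoke the classification of the maximal subgroups of ${\rm P}\Omega^{+}_8(q)$ and their behaviour under the exceptional automorphisms (see \cite{Klei} and \cite{KL}): there is a triality automorphism $\tau\in\Aut(G_0)$ carrying $H_0$ to a $\mathcal{C}_1$-subgroup of $G_0$. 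Concretely, when $p$ is odd $H_0^{\tau}$ is conjugate to the stabiliser of a non-degenerate $1$-space, i.e.\ a subgroup of type ${\rm O}_1(q)\perp{\rm O}_7(q)$ (case XV of Table \ref{tab:pi0}); when $p=2$ it is the stabiliser of a non-singular $1$-space, i.e.\ a subgroup of type ${\rm Sp}_6(q)={\rm Sp}_{n-2}(q)$ (case XVI of Table \ref{tab:pi0}).

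Next I would observe that conjugation by $\tau$ gives a permutation isomorphism between the action of $G$ on the cosets of $H$ and the action of $G^{\tau}$ on the cosets of $H^{\tau}$, where $G^{\tau}$ is again an almost simple group with socle $G_0$ and $H^{\tau}$ is a maximal subgroup in the $\mathcal{C}_1$ collection of the type above. Hence $(G,H)$ is almost elusive if and only if $(G^{\tau},H^{\tau})$ is. For $q\leqslant 4$ we have $G_0\in\mathcal{D}\cup\mathcal{O}$, so $(G^{\tau},H^{\tau})$ is covered by Proposition \ref{p:smalldim}; for $q\geqslant 5$ it is covered by the proposition handling non-degenerate $1$-space stabilisers (Proposition \ref{p:nd1}) when $p$ is odd and by the proposition handling type ${\rm Sp}_{n-2}(q)$ stabilisers (Proposition \ref{p:spn-1}) when $p=2$. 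Since no group with socle ${\rm P}\Omega^{+}_8(q)$ appears in Table \ref{tab:maintab}, the conclusion in each instance is that $G^{\tau}$ is not almost elusive, and therefore neither is $G$.

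The main obstacle is the bookkeeping in the triality reduction: one must pin down precisely which $\mathcal{C}_1$-subgroup $H_0^{\tau}$ is, with the correct treatment in characteristic $2$ of the non-singular point versus the radical, and one must confirm that the cited subspace-subgroup propositions are stated uniformly enough to apply to the particular almost simple overgroup $G^{\tau}$ obtained from $G$ by conjugating by $\tau$, so that the triality move does not take us outside their scope. If one wished to avoid triality entirely, the same conclusion could be reached directly by restricting the spin representation: compute the Jordan forms on $V$ of the unipotent elements of $H_0$ and the eigenvalue configurations on $V$ of semisimple elements of $H_0$ of order a primitive prime divisor of $q^4-1$, and then exhibit a unipotent element of $G_0$ of order $p$ and a semisimple element of $G_0$ of that prime order whose actions on $V$ occur in no element of $H_0$, yielding derangements of two distinct prime orders. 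The triality argument is preferable because it reuses the subspace-subgroup analysis already carried out.
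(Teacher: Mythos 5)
Your proposal is correct and follows essentially the same route as the paper: the paper likewise invokes the triality graph automorphism $\tau$ (citing \cite[Proposition 2.2.4]{Klei}) to carry $H_0$ to a $\mathcal{C}_1$-subgroup of type ${\rm O}_1(q)\perp{\rm O}_7(q)$ for $q$ odd or ${\rm Sp}_6(q)$ for $q$ even, and then concludes via Propositions \ref{p:nd1} and \ref{p:spn-1}. Your extra care about the permutation isomorphism induced by $\tau$ and the small-$q$ cases covered by Proposition \ref{p:smalldim} is sound but not a different argument.
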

\begin{proof}
Here $G_0={\rm P}\Omega_8^+(q)$ and $S=H_0=\Omega_7(q)$ if $q$ is odd or $S=H_0={\rm Sp}_6(q)$ if $q$ is even. By \cite[Proposition 2.2.4]{Klei} there exists a triality graph automorphism $\tau$ of $G_0$ such that $H_0^{\tau}$ is a $\mathcal{C}_1$ subgroup of type ${\rm O}_1(q)\perp{\rm O}_7(q)$ when $q$ is odd and ${\rm Sp}_6(q)$ (a stabiliser in $G_0$ of a nonsingluar 1-space) if $q$ is even. Then, again $G$ is shown to be not almost elusive in Propositions \ref{p:spn-1} and \ref{p:nd1}. 
\end{proof}

\begin{prop}
Theorem \ref{t:mainthrm} holds for case ${\rm O20}$ in Table \ref{tab:pi1}.
\end{prop}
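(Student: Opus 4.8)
The plan is to show that, for every admissible $G$, the socle $G_0$ contains derangements of two distinct prime orders, or at least two $G$-classes of derangements of the same prime order. Here $(G_0,i)=(\Omega_7(q),4)$ and $H$ is of type ${\rm G}_2(q)$, so $H_0={\rm G}_2(q)$ acts irreducibly on the natural module $V$. Since $\Omega_7(2^f)\cong{\rm PSp}_6(2^f)\notin\mathcal{G}$ we have $q=p^f$ with $p$ odd, and by Proposition~\ref{p:smalldim} (recall $\Omega_7(3)\in\mathcal{O}$) we may assume $q\geqs5$. We are also assuming $P_q^4=\{r_4\}$, so by Proposition~\ref{p:aereduc} every element of order $r_4$ in $G_0$ is a derangement (indeed $r_4\nmid|{\rm G}_2(q)|=q^6(q^6-1)(q^2-1)$). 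It therefore suffices to exhibit one further $G$-class of prime order derangements.

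For most cases I would argue by counting the $G$-classes of elements of order $r_4$ in $G_0$. Writing $x=\hat xZ\in G_0$ for such an element, since $r_4\in P_q^4$ and $4\equiv 0\imod 4$ the element $\hat x$ preserves an orthogonal decomposition of $V$ into non-degenerate $4$-spaces afforded by $\sigma$-orbits $\Lambda_j$ of $r_4$-th roots of unity, together with $C_V(\hat x)$; as $4\sum_j a_j\leqs7$ this forces $\hat x=[\Lambda_j,I_3]$ for some $1\leqs j\leqs(r_4-1)/4$ (see \cite[Chapter~3]{BG_book}). Distinct $\Lambda_j$ give distinct eigenvalue multisets, so by Theorem~\ref{t:semisimp} there are exactly $(r_4-1)/4$ such classes in $G_0$; since $|\Aut(G_0){:}{\rm Inndiag}(G_0)|=f$, at least $(r_4-1)/(4f)$ of these are distinct $G$-classes. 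By Lemma~\ref{l:ppds4} (the even values of $q$ and $q=3$ being excluded) either $r_4\geqs16f+1$, whence $(r_4-1)/(4f)\geqs4$, or $(q,r_4)\in\{(7,5),(5,13),(239,13)\}$; for the latter two $f=1$ and $(r_4-1)/(4f)=3$. In all of these cases $G$ has at least two classes of order $r_4$ derangements, so $G$ is not almost elusive.

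This leaves only $(q,r_4)=(7,5)$, where the count gives a single class. Here $p=7$ is a good prime for ${\rm G}_2$, and the only unipotent class of ${\rm G}_2(q)$ whose Jordan form on $V$ has a block of size $\geqs5$ is the regular class, with Jordan form $[J_7]$ on $V$. On the other hand $\Omega_7(q)$ contains a unipotent element $x$ of order $7$ with Jordan form $[J_5,J_1^2]$ on $V$ — for instance a regular unipotent element of a subgroup of type ${\rm O}_5(q)\perp{\rm O}_3(q)$, which lies in $\Omega_7(q)$ since it has odd order. Because the Jordan form on $V$ is preserved by $\Aut(G_0)$ and $[J_5,J_1^2]$ never occurs in ${\rm G}_2(q)$, no $G$-conjugate of $x$ lies in $H$, so $x$ is a derangement of order $7\neq5=r_4$, and again $G$ is not almost elusive. (As a fallback, since $G_0=\Omega_7(7)$ here, this single case can instead be settled directly with {\sc Magma}.)

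The step I expect to need the most care is this last one: correctly identifying which unipotent classes of ${\rm G}_2(q)$ (for $p=7$) arise, and checking that the $[J_5,J_1^2]$-element can be chosen inside $\Omega_7(q)$ rather than only ${\rm SO}_7(q)$ — this requires precise references on unipotent classes in exceptional groups and on conjugacy in orthogonal groups. The remaining ingredients, namely the eigenvalue bookkeeping for the count of order $r_4$ classes and the elementary bounds from Lemma~\ref{l:ppds4}, are routine.
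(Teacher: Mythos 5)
Your proposal is correct, and for the generic case it is the paper's argument: after reducing to $q\geqs 5$ via Proposition \ref{p:smalldim}, both proofs count $G$-classes of derangements of order $r_4$, using Lemma \ref{l:ppds4} together with $|\Aut(G_0){:}{\rm Inndiag}(G_0)|=f$ to get at least two such classes, which settles everything except $(q,r_4)=(7,5)$. The two proofs diverge only on that one exceptional case. The paper handles $\Omega_7(7)$ by a {\sc Magma} computation, comparing the three classes of semisimple involutions in $G_0$ with the unique class of involutions in ${\rm G}_2(7)$ to produce a derangement of order $2$; you instead exhibit a unipotent derangement of order $7$ with Jordan form $[J_5,J_1^2]$ on $V$. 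Your argument is sound: by Lemma \ref{l:osjordan} this Jordan form is realised in $\Omega_7(q)$ (no even blocks, and the element has odd order so lies in $\Omega$ rather than just ${\rm SO}$), any element of order $7$ in $H$ lies in $H_0={\rm G}_2(q)$ since $|H{:}H_0|\leqs 2$, and for $p\geqs 5$ the unipotent classes of ${\rm G}_2(q)$ act on $V$ with Jordan forms $[J_1^7]$, $[J_2^2,J_1^3]$, $[J_3,J_2^2]$, $[J_3^2,J_1]$ and $[J_7]$, so $[J_5,J_1^2]$ never occurs. The one caveat is that this last fact about unipotent classes of exceptional groups is imported from outside the paper (it would need a citation, e.g.\ to Lawther's tables of Jordan block sizes), whereas the paper's treatment stays within its established toolkit at the cost of a computer check; your {\sc Magma} fallback coincides with what the paper actually does.
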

\begin{proof}
Here $(G_0,i)=(\Omega_7(q),4)$ and $S={\rm G}_2(q)$. By Proposition \ref{p:smalldim} we may assume that $q\geqs 5$. Since $r_4$ is the unique primitive prime divisor of $q^4-1$, by Lemma \ref{l:ppds4} we may assume that either $q=7$ and $r_4=5$ or $r_4\geqs 12f+1$. 
Assume first that $r_4\geqs 12f+1$. Then $G_0$ contains $(r_4-1)/4\geqs3f$ distinct ${\rm PGO}_7(q)$-classes of derangements of order $r_4$. Additionally since $|\Aut(G_0){:}{\rm PGO}_7(q)|=f$ we have that $G_0$ contains at least $(r_4-1)/4f\geqs 3$ distinct $G$-classes of derangements of order $r_4$ and so the result follows. 
Finally assume $q=7$. It is straightforward to check using {\sc Magma} that there are three conjugacy classes of semisimple involutions in $G_0$, and that there is a unique class of involutions in ${\rm G}_2(q)$. Therefore we conclude that $G_0$ contains derangements of order 2 and $r_4$, so the result follows.  
\end{proof}

In view of all the propositions proved in this section and Proposition \ref{p:smalldim} we have shown the following; 
\begin{prop}\label{p:allnonsub}
Let $G\leqs{\rm Sym}(\Omega)$ be a finite almost simple primitive permutation group with classical socle $G_0\in\mathcal{G}$ and point stabiliser $H\not\in \mathcal{C}_1$. Then $G$ is almost elusive if and only if $(G,H)$ is a case recorded in Table \ref{tab:maintab} with $H\not\in \mathcal{C}_1$.
\end{prop}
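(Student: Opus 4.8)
The plan is to collate the results already established in Section~\ref{s:primedivs} and in the present subsection, together with Proposition~\ref{p:smalldim}. First I would apply Proposition~\ref{p:aereduc}(i): if $G$ is almost elusive then $\pi(G_0)-\pi(H_0)\leqs 1$, so by Theorem~\ref{t:primedivs} either $(G_0,H)$ appears in Table~\ref{tab:pi0} or Table~\ref{tab:pi2}, or $(G_0,H,i)$ appears in Table~\ref{tab:pi1} and $q^i-1$ has a unique primitive prime divisor. Imposing $H\notin\mathcal{C}_1$ restricts attention to the rows of these tables whose Aschbacher type lies in $\mathcal{C}_2\cup\dots\cup\mathcal{C}_8\cup\mathcal{N}\cup\mathcal{S}$; among these I would further discard the rows with $G_0\notin\mathcal{G}$ (for example cases L4, L6, U2 and U7 of Table~\ref{tab:pi1}, where $G_0$ is a two-dimensional linear or three-dimensional unitary group). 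What remains is a short explicit list of pairs $(G_0,H)$.

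Next I would invoke Proposition~\ref{p:smalldim}, which proves Theorem~\ref{t:mainthrm} --- including the assertion that every group listed in Table~\ref{tab:maintab} is almost elusive --- for all $G_0\in\mathcal{D}\cup\mathcal{O}$. This settles every relevant row of Table~\ref{tab:pi2}, the small-dimensional non-subspace rows of Table~\ref{tab:pi0} (cases III, V--X, XIII, XIX, and case XII with $(n,q)=(6,2)$), and, crucially for the reverse implication, every pair $(G,H)$ with $H\notin\mathcal{C}_1$ occurring in Table~\ref{tab:maintab}, since there $G_0\in\{\Li_3(4),\Un_6(2),\Un_5(2),\Un_4(2),{\rm PSp}_6(2)\}\subset\mathcal{D}$. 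For all of these the conclusion is exactly the output of the {\sc Magma} computation in Proposition~\ref{p:smalldim}.

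It then remains to handle the non-subspace rows of Tables~\ref{tab:pi0} and~\ref{tab:pi1} with $G_0\in\mathcal{G}$ not covered above, and to show $G$ is not almost elusive in each. This is precisely what the individual propositions of this subsection do: Proposition~\ref{p:symp} for cases S4, S5, S7, S10, S11; the $\mathcal{C}_2$ propositions for U4 and O8; the $\mathcal{C}_3$ propositions for L5, S6, O9 and XI; the $\mathcal{C}_5$ propositions for U5, U6 and O10; the $\mathcal{C}_8$ propositions for L7 and L8 together with Proposition~\ref{p:S7} for S8, S9 and case XII with $n\equiv 0\imod 4$; the novelty-subgroup proposition for O11; and the $\mathcal{S}$-subgroup propositions for XVII, XVIII and O20. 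Each of these exhibits at least two $G$-classes of derangements of prime order --- usually one unipotent, detected via the Jordan-form constraints of Lemmas~\ref{l:osjordan} and~\ref{l:c3}, together with one semisimple, or else two semisimple classes separated using the conjugacy-class description of Section~\ref{s:conjclass} and the primitive-prime-divisor estimates of Section~\ref{s:numthry}.

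Assembling the two directions then gives the equivalence. I expect the one genuine obstacle to be exhaustiveness rather than any new mathematics: one has to confirm that the rows disposed of by Proposition~\ref{p:smalldim} and by the propositions of this subsection together account for every non-$\mathcal{C}_1$ row of Tables~\ref{tab:pi0}, \ref{tab:pi2} and~\ref{tab:pi1} with socle in $\mathcal{G}$ --- a mechanical but error-prone cross-check against Theorem~\ref{t:primedivs}. Once that is verified, the forward implication follows because every such row outside Table~\ref{tab:maintab} has been shown to yield a group that is not almost elusive, and the reverse implication is the ``moreover'' clause of Proposition~\ref{p:smalldim}.
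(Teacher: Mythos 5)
Your proposal is correct and follows essentially the same route as the paper: Proposition \ref{p:allnonsub} is stated there without a separate argument, precisely as the collation of Proposition \ref{p:aereduc}, Theorem \ref{t:primedivs}, Proposition \ref{p:smalldim} and the case-by-case propositions of the non-subspace subsection that you list. The only minor imprecision is that cases O11, XVII and XVIII are disposed of by embedding $H_0$ (or its triality image) in a non-degenerate 1-space stabiliser and deferring to Propositions \ref{p:spn-1} and \ref{p:nd1}, rather than by directly exhibiting two derangement classes, but this does not affect the correctness of your assembly.
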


\subsection{Subspace subgroups}

Here we complete the proof of Theorem \ref{t:mainthrm} by handling the cases when $H$ is a subspace subgroup, that is $H$ contained in the $\mathcal{C}_1$ Aschbacher subgroup collection. Once again we recall that when $(G_0,H,i)$ is a case in Table \ref{tab:pi1} we are assuming there exists a unique primitive prime divisor $r_i$ of $q^i-1$, and any element in $G_0$ of order $r_i$ is a derangement.  

\subsubsection{Symplectic groups}

The remaining cases with $G_0={\rm PSp}_n(q)$ are the following cases in Table \ref{tab:pi1}; 
\begin{itemize}
\item[{\rm (a)}] Case S1: $H$ is of type ${\rm P}_1$ and $i=n$ with $n\equiv 0\imod4$;
\item[{\rm (b)}] Case S2: $H$ is of type ${\rm P}_2$ and $i=n$ with $n=4$;
\item[{\rm (c)}] Case S3: $H$ is of type ${\rm Sp}_2(q)\perp{\rm Sp}_{n-2}(q)$ and $i=n$ with $n\equiv 0\imod4$.
\end{itemize}

\begin{prop}
Theorem \ref{t:mainthrm} holds for cases ${\rm S1}$, ${\rm S2}$ and ${\rm S3}$ in Table \ref{tab:pi1} with $n=4$.
\end{prop}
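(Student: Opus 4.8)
Since $n=4$, the three cases concern $G_0 = {\rm PSp}_4(q)$ with $H$ of type ${\rm P}_1$, ${\rm P}_2$, or ${\rm Sp}_2(q)\perp{\rm Sp}_2(q)$, and $i=4$, so $r_4$ is the unique primitive prime divisor of $q^4-1$; by Proposition \ref{p:aereduc} every element of order $r_4$ in $G_0$ is a derangement. By Proposition \ref{p:smalldim} we may assume $q\geqs 9$. The strategy is to produce a \emph{second} $G$-class of derangements of prime order, so that $G$ fails to be almost elusive. The natural candidate is a semisimple element of order $r_2$, the primitive prime divisor of $q^2-1$ (an odd prime dividing $q+1$), realised as $x=\hat x Z$ with $\hat x = [\Lambda,I_2] \in {\rm Sp}_4(q)$; alternatively one uses more $r_4$-classes via the counting argument from Lemma \ref{l:symppd4}.

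\textbf{Step 1: count $r_4$-classes.} First I would invoke Lemma \ref{l:symppd4}: for $q\geqs 9$ either $G_0$ contains at least two $G$-classes of elements of order $r_4$, or $(n,q)=(4,9)$ is not among the listed exceptions — indeed the exceptions in Lemma \ref{l:symppd4} all have $q\leqs 7$ (or $q=4$), so for $q\geqs 9$ we are always in the ``at least two $G$-classes'' case. Since every element of order $r_4$ is a derangement, this immediately gives two $G$-classes of derangements of order $r_4$, hence $G$ is not almost elusive. This disposes of all three cases at once, with no case division by the type of $H$ needed.

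\textbf{Step 2: safety net via $r_2$.} If one prefers an argument not appealing to the Magma computation hidden inside Lemma \ref{l:symppd4}, I would instead exhibit a derangement of order $r_2\neq r_4$ directly. Take $r=r_2$, an odd prime dividing $q+1$, and $x=\hat xZ$ with $\hat x=[\Lambda,I_2]$. For $H$ of type ${\rm P}_1$ and ${\rm P}_2$, apply Lemma \ref{l:tisSO} (with $n=4$, $r$ a primitive prime divisor of $q^{2}-1=q^{n/2}-1$): $x$ fixes no totally singular $2$-space, so $x\notin H$ for $H$ of type ${\rm P}_2$; and $x$ cannot fix a totally singular $1$-space either, since on such a space $\hat x$ would act as a scalar eigenvalue, forcing the eigenvalue to lie in $\mathbb{F}_q$, contradicting that $\Lambda$ is an $\mathbb{F}_{q^2}$-orbit of size $2$. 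For $H$ of type ${\rm Sp}_2(q)\perp{\rm Sp}_2(q)$, the eigenvalue structure $[\Lambda,I_2]$ shows $\hat x$ does not preserve a decomposition $V=V_1\perp V_2$ into non-degenerate $2$-spaces (the $\Lambda$-block spans an irreducible $2$-space while the complement is the $1$-eigenspace), so $x\notin H$. In each case $x$ is a derangement of order $r_2$, distinct from $r_4$, and we conclude.

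\textbf{Main obstacle.} The only real subtlety is handling $H$ of type ${\rm Sp}_2(q)\perp{\rm Sp}_2(q)$: here one must check that an element with Jordan-type $[\Lambda,I_2]$ genuinely lies outside the subgroup, which requires the classification of conjugacy classes of semisimple elements of order $r_2$ in ${\rm Sp}_2(q)\times{\rm Sp}_2(q)$ — every such element has its $r_2$-part acting irreducibly on one of the two factors and trivially on the other, giving eigenvalue multiset $[\Lambda,I_2]$, so this actually \emph{can} occur inside $H$, and one must instead use a different element (e.g. with $\hat x=[\Lambda,\Lambda^{-1}]$ when the $\Lambda$'s come in inverse pairs for $i$ even, or more simply fall back on Step 1). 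For this reason I expect the cleanest route is to rely on Step 1 (Lemma \ref{l:symppd4}) for all three cases, and this is what I would write up.
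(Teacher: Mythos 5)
Your Step~1 is correct and is essentially the paper's own argument: the paper does not cite Lemma~\ref{l:symppd4} but performs the identical class count directly from Lemma~\ref{l:ppds4}, splitting into $r_4\geqs 16f+1$ and the exceptional case $q=239$ (a case your appeal to Lemma~\ref{l:symppd4} silently absorbs, since $(4,239)$ is not among that lemma's exceptions). Combined with Proposition~\ref{p:aereduc}, this gives at least two $G$-classes of prime order derangements, and since you commit to Step~1 in the write-up, the proof stands.

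Be aware, though, that Step~2 is not a viable fallback. Beyond the ${\rm Sp}_2(q)\perp{\rm Sp}_2(q)$ obstacle you already flag, the argument for $H$ of type ${\rm P}_1$ is wrong: $\hat x=[\Lambda,I_2]$ has a $2$-dimensional $1$-eigenspace, and in a symplectic space every $1$-space is totally isotropic, so $\hat x$ fixes many totally singular $1$-spaces --- your eigenvalue argument overlooks the eigenvalue $1$. In addition, $r_2$ need not exist when $q$ is a Mersenne prime. So the class-counting route is not merely the cleanest of your two options; it is the only one that works.
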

\begin{proof}
By Proposition \ref{p:smalldim} we may assume that $q\geqs 9$, so either $q=239$ and $r_4=13$ or $r_4\geqs 16f+1$ by Lemma \ref{l:ppds4}. 
First suppose that $r_4\geqs 16f+1$. Then there are $(r_4-1)/4\geqs 4f$ distinct ${\rm PGSp}_4(q)$-classes of derangements of order $r_4$ in $G_0$ (see \cite[Proposition 3.4.3]{BG_book}). It follows that there are at least $(r_4-1)/8f\geqs 2$ distinct $G$-classes of derangements of order $r_4$ in $G_0$ since $| \Aut(G_0){:}{\rm PGSp}_4(q)|\leqs 2f$. 

Finally suppose that $q=239$. Then by a similar argument to before (noting that in this case $| \Aut(G_0){:}{\rm PGSp}_4(q)|=f=1$) we conclude that there are at least $(r_4-1)/4=3$ distinct $G$-classes of derangements of order $r_4=13$ in $G_0$, so $G$ is not almost elusive. 
\end{proof}

\begin{prop}
Theorem \ref{t:mainthrm} holds for cases ${\rm S1}$ and ${\rm S3}$ in Table \ref{tab:pi1} with $n\geqs 8$.
\end{prop}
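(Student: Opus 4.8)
The plan is to show that in both cases $G$ fails to be almost elusive by exhibiting a derangement of prime order different from $r_n$, the unique primitive prime divisor of $q^n-1$ supplied by the hypothesis (recall that by Proposition~\ref{p:aereduc}(ii) every element of order $r_n$ in $G_0$ is already a derangement). Since $n\equiv 0\imod 4$ and $n\geqs 8$, the integer $4$ divides $n$ with $4<n$, and Zsigmondy's theorem guarantees a primitive prime divisor $r$ of $q^4-1$ for every prime power $q$; note that $r\geqs 5$ is odd, $r\neq p$, and $r\neq r_n$ because $r$ divides $q^4-1$ and $4<n$. So it suffices to build a semisimple element of order $r$ in $G_0$ that is a derangement in case S1 (where $\Omega$ is the set of $1$-spaces of $V$) and in case S3 (where, since $2\neq n-2$, the stabiliser of type ${\rm Sp}_2(q)\perp{\rm Sp}_{n-2}(q)$ is the stabiliser of a non-degenerate $2$-space, so $\Omega$ is identified with the set of non-degenerate $2$-spaces of $V$).

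First I would record that, since $r$ divides $q^4-1$ but not $q^2-1$, it divides $q^2+1$; hence for a primitive $r$-th root of unity $\mu\in\mathbb{F}_{q^4}$ the Frobenius orbit $\Lambda=\{\mu,\mu^q,\mu^{q^2},\mu^{q^3}\}$ has size $4$ and satisfies $\Lambda^{-1}=\Lambda$. By \cite[Proposition 3.4.3]{BG_book} there is then an element $\hat x=[\Lambda^{n/4}]\in{\rm Sp}_n(q)$ of order $r$; put $x=\hat xZ\in G_0$, an element of prime order $r$. The key structural observation is that the minimal polynomial of $\hat x$ over $\mathbb{F}_q$ is $\prod_{\nu\in\Lambda}(t-\nu)$, namely the degree-$4$ minimal polynomial of $\mu$, so $\mathbb{F}_q[\hat x]\cong\mathbb{F}_{q^4}$ and $V$ becomes a vector space over this field of dimension $n/4\geqs 2$. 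Consequently every $\hat x$-invariant $\mathbb{F}_q$-subspace of $V$ has dimension divisible by $4$; in particular $\hat x$, and hence $x$, fixes no $1$-space and no $2$-space of $V$. Thus $x$ is a derangement in both cases, and $G$ contains derangements of the two distinct prime orders $r$ and $r_n$, so $G$ is not almost elusive.

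I do not expect a serious obstacle: the argument is uniform in $q$ (including $q=2$, which is anyway covered for $n=8$ by Proposition~\ref{p:smalldim}) and in $n\geqs 8$. The only point needing a little care is the claim that the $\hat x$-invariant subspaces all have $\mathbb{F}_q$-dimension a multiple of $4$, which is why I would spell out that $\mathbb{F}_q[\hat x]$ is the field $\mathbb{F}_{q^4}$ acting on $V$. One could instead attach $\Lambda$ to a primitive prime divisor of $q^{n/2}-1$ and run the same computation, but that introduces the exceptional case $(n,q)=(12,2)$ with $P_q^{n/2}=\emptyset$, so working with the divisor $4$ of $n$ is cleaner.
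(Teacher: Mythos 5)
Your proof is correct, and the core idea is the same as the paper's: exhibit a semisimple element of $G_0$ whose invariant subspaces all have dimension at least $4$, so that it fixes no $1$-space and no non-degenerate $2$-space, giving a derangement of prime order distinct from $r_n$. The one genuine difference is your choice of primitive prime divisor: the paper takes $s\in P_q^{n/2}$ and the element $[\Lambda^2]$ with $|\Lambda|=n/2$, which forces a separate ad hoc treatment of $(n,q)=(12,2)$ where $P_q^{6}=\emptyset$; you instead take $r\in P_q^{4}$ and $[\Lambda^{n/4}]$ with $|\Lambda|=4$, which is legitimate since $4\mid n$, $\Lambda=\Lambda^{-1}$ (as $r\mid q^2+1$), and $P_q^4\neq\emptyset$ for every prime power $q$ by Zsigmondy. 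This buys a uniform argument with no exceptional case, at the cost of nothing; your justification that $\mathbb{F}_q[\hat{x}]\cong\mathbb{F}_{q^4}$ forces all $\hat{x}$-invariant subspaces to have dimension divisible by $4$ is exactly the right point to spell out, and the checks that $r\neq p$ and $r\neq r_n$ are in order.
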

\begin{proof}

Assume first $(n,q)\neq (12,2)$ and take $s$ to be a primitive prime divisor of $q^{n/2}-1$. Let $x=\hat{x}Z\in G_0$ be an element of order $s$ such that $\hat{x}=[\Lambda^2]$. Since $\dim C_V(\hat{x})=0$ and $|\Lambda|=n/2$, $x$ does not fix a 1-space or a 2-space and thus is a derangement. Therefore $G$ contains derangements of order $s$ and $r_n$ (the unique primitive prime divisor of $q^n-1$).

For the final case $(n,q)=(12,2)$ it is easy to see that elements in $G_0$ of order 13 of the form $[\Lambda]Z$ are derangements. Similarly elements $[\Lambda^3]Z\in G_0$ of order 5 are also derangements.
\end{proof}

\subsubsection{Linear groups}

We begin by recalling that throughout this subsection we will be taking $n\geqs 3$ (the cases for $n=2$ were handled in a previous paper  \cite{BHall}). Before we handle the remaining linear group cases we first provide a result on the number of conjugacy classes of elements of certain orders in $G_0$. In the following lemma we let $K_{G_0}(G,r)$ denote the number of $G$-classes of elements of order $r$ in $G_0$ and $\phi$ denotes a field automorphism of $\Li_n(q)$ of order $f$.

\begin{lem}\label{l:classeslinear}
Let $G_0=\Li_n(q)$ where $q=p^f$. Suppose $r$ is a primitive prime divisor of $p^{fn}-1$ such that $r=knf+1$ for some positive integer $k$. Then $K_{G_0}(G,r)\geqs k/2$ for any group $G\leqs \Aut(G_0)$. In particular if $G\leqs \langle {\rm PGL}_n(q),\phi\rangle$ then $K_{G_0}(G,r)\geqs k$.
\end{lem}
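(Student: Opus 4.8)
The plan is to count the $G_0$-classes of elements of order $r$ in $G_0$ and then determine how the automorphisms of $G_0$ fuse them. Since $r$ divides $p^{fn}-1$, which is coprime to $p$, we have $r\neq p$, so any element of order $r$ in $G_0$ is semisimple. By Lemma~\ref{l:prime fac of f} we have $r\in P_p^{fn}\subseteq P_q^n$, so $r$ is a primitive prime divisor of $q^n-1$; thus in the notation of Section~\ref{s:conjclass} the relevant parameter is $i=c=n\geqs 3$. Consequently, an element of order $r$ in ${\rm PGL}_n(q)={\rm Inndiag}(G_0)$ is represented by $[\Lambda_1^{a_1},\dots,\Lambda_t^{a_t},I_e]$ with $t=(r-1)/n=kf$, and since each $\Lambda_j$ has size $n$ and $n\sum_j a_j+e=n$, exactly one $a_j$ is $1$ and $e=0$. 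Identifying $T(r)$ with $(\Z/r\Z)^{\times}$ so that $\sigma:\mu\mapsto\mu^q$ becomes multiplication by $q$, the ${\rm PGL}_n(q)$-classes of such elements are therefore in bijection with the $kf$ cosets of $\langle q\rangle$ in $(\Z/r\Z)^{\times}$; by Theorem~\ref{t:semisimp} the same holds for the $G_0$-classes of elements of order $r$ in $G_0$.

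Next I would describe the action of $\Aut(G_0)$ on this set of $kf$ classes. As these classes are semisimple, Theorem~\ref{t:semisimp} shows ${\rm PGL}_n(q)$ fixes each of them, so the action factors through $\Aut(G_0)/{\rm PGL}_n(q)$; since $n\geqs 3$ this quotient is generated by the images $\bar\phi$ and $\bar\gamma$ of a field automorphism $\phi$ of order $f$ and the inverse-transpose graph automorphism $\gamma$, and $\langle\bar\phi\rangle$ is normal of index $2$ in it (see \cite[Section 2]{BG_book}). By tracking eigenvalues, $\phi$ acts on the coset space $(\Z/r\Z)^{\times}/\langle q\rangle$ as multiplication by $p$, and $\gamma$ as multiplication by $-1$. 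The key point is that $\langle\bar\phi\rangle$ has exactly $k$ orbits here: since $r\in P_p^{fn}$, the cyclic group $\langle p\rangle\leqs(\Z/r\Z)^{\times}$ has order $fn$ and contains $\langle q\rangle$, so its $(r-1)/(fn)=k$ cosets each consist of $f$ cosets of $\langle q\rangle$ cyclically permuted by multiplication by $p$.

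Finally I would assemble the bounds. If $G\leqs\langle{\rm PGL}_n(q),\phi\rangle$ then the image of $G$ in $\Aut(G_0)/{\rm PGL}_n(q)$ lies in $\langle\bar\phi\rangle$, so $K_{G_0}(G,r)$ is at least the number of $\langle\bar\phi\rangle$-orbits, namely $k$. For an arbitrary $G\leqs\Aut(G_0)$, $K_{G_0}(G,r)$ is at least the number of orbits of $\Aut(G_0)/{\rm PGL}_n(q)$ on the $kf$ classes; since $\langle\bar\phi\rangle$ is a normal subgroup of index $2$ in this quotient, each orbit is a union of at most two $\langle\bar\phi\rangle$-orbits, so there are at least $k/2$ of them, giving $K_{G_0}(G,r)\geqs k/2$. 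The step needing the most care is verifying that $\phi$ and $\gamma$ act on the class-parametrising coset space exactly by multiplication by $p$ and $-1$ — this rests on the description of semisimple conjugacy classes of $\Li_n(q)$ in \cite[Chapter 3]{BG_book} — together with the elementary orbit count for $\langle\bar\phi\rangle$; everything else is bookkeeping.
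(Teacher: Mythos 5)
Your proof is correct and follows essentially the same route as the paper: both count the $(r-1)/n=kf$ distinct ${\rm PGL}_n(q)$-classes of elements of order $r$ (which coincide with the $G_0$-classes by Theorem \ref{t:semisimp}) and then bound the fusion using the indices $|\Aut(G_0){:}{\rm PGL}_n(q)|=2f$ and $|\langle{\rm PGL}_n(q),\phi\rangle{:}{\rm PGL}_n(q)|=f$. Your explicit computation that $\langle\bar\phi\rangle$ has exactly $k$ orbits (via $\langle q\rangle\leqs\langle p\rangle\leqs(\Z/r\Z)^{\times}$) is a sharper version of what the paper obtains directly from the crude index bound, but it yields the same conclusion.
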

\begin{proof}
By Lemma \ref{l:prime fac of f} we know that $r$ is also a primitive prime divisor of $q^n-1$, so any element in $G_0$ of order $r$ must have the form $x=[\Lambda]Z$ (see Section \ref{s:conjclass}). Thus by \cite[Proposition 3.2.1]{BG_book}, $G_0$ contains $(r-1)/n=kf$ distinct ${\rm PGL}_n(q)$-classes of elements of order $r$ since $n\geqs3$. Note that $|\Aut(G_0){:}{\rm PGL}_n(q)|=2f$ and $|\langle {\rm PGL}_n(q),\phi\rangle{:}{\rm PGL}_n(q)|=f$, so the result follows.
\end{proof}

The remaining cases left to handle with $G_0=\Li_n(q)$ are the following cases found in Table \ref{tab:pi1}; 
\begin{itemize}
\item[{\rm (a)}] Case L1: $H$ is of type ${\rm P}_1$ and $i=n$;
\item[{\rm (b)}] Case L2: $H$ is of type ${\rm GL}_1(q)\oplus {\rm GL}_{n-1}(q)$ and $i=n$;
\item[{\rm (c)}] Case L3: $H$ is of type ${\rm P}_{1,n-1}$ and $i=n=3$ and $q=p$ a Mersenne prime.
\end{itemize}

\begin{prop}
Theorem \ref{t:mainthrm} holds for case ${\rm L3}$ in Table \ref{tab:pi1}.
\end{prop}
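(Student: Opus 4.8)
The plan is to show that for every such $G$ there are at least two $G$-classes of derangements of prime order, so that $G$ is not almost elusive (note that $(G_0,H)$ does not appear in Table \ref{tab:maintab}). Here $G_0=\Li_3(q)$ with $q=p$ a Mersenne prime, so $f=1$, and $H$ is of type ${\rm P}_{1,2}$, which is the stabiliser in $G$ of a complete flag; thus $H_0=H\cap G_0$ is a Borel subgroup of $G_0$. The first thing to notice is that the standard trick of exhibiting a unipotent derangement is unavailable: every $p$-element of $G_0$ lies in a Sylow $p$-subgroup, hence in (the unipotent radical of) a conjugate of the Borel $H_0$, so no unipotent element of $G_0$ is a derangement. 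We are therefore forced to work with the semisimple elements of order $r_3$, which are derangements by Proposition \ref{p:aereduc}(ii) since $(G_0,H,3)$ lies in Table \ref{tab:pi1} and $P_q^3=\{r_3\}$ by hypothesis.

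The key step is a lower bound on $r_3$. Writing $n=3=2^0\cdot 3$ and applying Lemma \ref{l:ppd2a3} (with $f=1$), the exceptional cases $(n,q)=(3,2)$ and $(3,4)$ cannot occur because $q$ is a Mersenne prime, so we conclude $r_3\geqs 4\cdot 3\cdot f+1=13$. (Alternatively one may first invoke Proposition \ref{p:smalldim} to assume $q\geqs 31$, since $\Li_3(3),\Li_3(7)\in\mathcal{D}$; but the bound holds uniformly for all Mersenne primes $q\geqs 3$.) Now write $r_3=3k+1$, so $k=(r_3-1)/3\geqs 4$. By Lemma \ref{l:classeslinear}, $K_{G_0}(G,r_3)\geqs k/2\geqs 2$, i.e.\ $G_0$ contains at least two $G$-classes of elements of order $r_3$. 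Since each of these classes consists of derangements, $G$ has at least two $G$-classes of derangements of order $r_3$ and hence is not almost elusive.

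The only real obstacle is recognising that $H_0$ is a Borel subgroup, which rules out the unipotent-derangement argument used for most $\mathcal{C}_1$ subgroups and forces the class-counting approach via Lemma \ref{l:classeslinear}; once that is seen, the whole proof reduces to the number-theoretic input $r_3\geqs 13$ from Lemma \ref{l:ppd2a3}. This is precisely where the hypothesis that $q$ is a \emph{Mersenne} prime (so that $q\notin\{2,4\}$ and $f=1$) is used.
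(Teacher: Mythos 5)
Your proof is correct and follows essentially the same route as the paper: the bound $r_3\geqs 13=4nf+1$ from Lemma \ref{l:ppd2a3} (using that a Mersenne prime $q$ avoids the exceptional cases $(3,2)$ and $(3,4)$), combined with Lemma \ref{l:classeslinear}, yields at least two $G$-classes of derangements of order $r_3$. The extra observations about $H_0$ being a Borel subgroup and the unavailability of unipotent derangements are accurate but not needed.
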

\begin{proof}
Here by assumption $q=p$ is a Mersenne prime, so $r_3\geqs 13=4nf+1$ by Lemma \ref{l:ppd2a3}. Therefore $G_0$ contains at least 2 distinct $G$-classes of derangements of order $r_3$ by Lemma \ref{l:classeslinear}, so $G$ is not almost elusive. 
\end{proof} 

\begin{prop}
If $n$ is composite, then Theorem \ref{t:mainthrm} holds for cases ${\rm L1}$ and ${\rm L2}$ in Table \ref{tab:pi1}.
\end{prop}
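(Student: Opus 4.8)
The plan is to produce, in both cases L1 and L2, a derangement of prime order different from $r_i$ (where $r_i = r_n$ denotes the unique primitive prime divisor of $q^n-1$); since by Proposition \ref{p:aereduc} every element of $G_0$ of order $r_n$ is already a derangement, this shows $G$ is not almost elusive. In one residual case I will instead exhibit at least two $G$-classes of derangements of order $r_n$.

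Since $n$ is composite, I would first fix a divisor $j$ of $n$ with $1 < j < n$ and $P_q^j \neq \emptyset$: if $n$ is not a power of $2$, take $j$ to be an odd prime divisor of $n$, so $j < n$ and $P_q^j \neq \emptyset$ by Zsigmondy's theorem (the exceptional exponents $j = 2$ with $q$ Mersenne, and $(j,q) = (6,2)$, do not occur for $j$ an odd prime); if $n = 2^a$ with $a \geq 3$, take $j = 4 < n$, where $P_q^4 \neq \emptyset$ for every $q$. This covers every composite $n$ except $n = 4$. Given such a $j$, choose $s \in P_q^j$ and let $x = \hat{x}Z \in G_0$ be a semisimple element of order $s$ with $\hat{x} = [\Lambda^{n/j}]$, where $\Lambda$ is a $\sigma$-orbit of size $j$ on the nontrivial $s$-th roots of unity; such an element exists and lies in $G_0$ because $s$ is odd, $s \nmid p$, and $s \nmid (n, q-1)$ (as $s \nmid q - 1$). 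Since $(n/j)\cdot j = n$ we have $\dim C_V(\hat{x}) = 0$, and every eigenvalue of $\hat{x}$ is a nontrivial $s$-th root of unity, so $\hat{x}$ has no eigenvalue in $\mathbb{F}_q$ and hence stabilises neither a $1$-space nor a hyperplane of $V$. Therefore $x$ lies in no conjugate of $H$, in case L1 ($H$ of type $\mathrm{P}_1$) and in case L2 ($H$ of type $\mathrm{GL}_1(q) \oplus \mathrm{GL}_{n-1}(q)$, noting that any element in a conjugate of such an $H$ — even one involving a graph automorphism — stabilises a $1$-space or a hyperplane of $V$). Thus $x$ is a derangement of order $s$, and $s \neq r_n$ since $s \mid q^j - 1$ with $j < n$; so $G$ has derangements of the two distinct prime orders $s$ and $r_n$.

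It remains to treat $n = 4$, where the only candidate divisor is $j = 2$. By Proposition \ref{p:smalldim} we may assume $q \geq 9$. If $q$ is not a Mersenne prime then $P_q^2 \neq \emptyset$ and the argument above applies with $j = 2$. If $q$ is a Mersenne prime then $q \geq 31$, so Lemma \ref{l:ppds4} forces $r_4 \geq 16f + 1 = 4nf + 1$; moreover $P_p^{4f} \subseteq P_q^4 = \{r_4\}$ with $P_p^{4f} \neq \emptyset$ by Lemma \ref{l:prime fac of f} and Zsigmondy's theorem, so $r_4 \in P_p^{4f}$ and $r_4 = 4fk + 1 = nfk + 1$ with $k \geq 4$. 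Lemma \ref{l:classeslinear} then gives $K_{G_0}(G, r_4) \geq k/2 \geq 2$, so $G_0$ contains at least two $G$-classes of derangements of order $r_4$, and $G$ is not almost elusive.

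The main obstacle will be the bookkeeping around Zsigmondy's exceptional exponents, which is concentrated entirely in the case $n = 4$ with $q$ a Mersenne prime: there no useful proper divisor of $n$ is available, and one must instead extract a second $G$-class of order-$r_4$ elements from the size bound of Lemma \ref{l:ppds4} via Lemma \ref{l:classeslinear}; the rest is a short eigenvalue computation. A secondary point to check with care is that, in case L2, membership of $x$ in a conjugate of $H$ genuinely forces $x$ to stabilise a $1$-space or a hyperplane of the natural module, so that the presence of a graph automorphism in $G$ causes no difficulty.
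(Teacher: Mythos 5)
Your proposal is correct and follows essentially the same route as the paper: for composite $n$ one produces a semisimple derangement of order a primitive prime divisor of $q^j-1$ for a proper divisor $j$ of $n$, whose eigenvalues all lie outside $\mathbb{F}_q$ so that it fixes neither a $1$-space nor a hyperplane, and the single residual case $n=4$ with $q$ a Mersenne prime is settled by combining Lemma \ref{l:ppds4} with Lemma \ref{l:classeslinear} to exhibit at least two $G$-classes of derangements of order $r_4$. Your bookkeeping of the Zsigmondy exceptions when choosing the divisor $j$ is slightly more explicit than the paper's, but the argument is the same.
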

\begin{proof}
Write $n=jh$ such that $j,h\neq 1$. First assume $n=4$ and $q=p$ is a Mersenne prime. Note that we may assume $q>8$ by Proposition \ref{p:smalldim}. Additionally we note $r_4$ is also a primitive prime divisor of $p^{4f}-1$, and $r_4\geqs 4nf+1$ by Lemma \ref{l:ppds4}. Thus by Lemma \ref{l:classeslinear}, $G_0$ contains at least 2 distinct $G$-classes of derangements of order $r_4$, so $G$ is not almost elusive. In the remaining cases without loss of generality there always exists a primitive prime divisor, $s$, of $q^h-1$. Take $x=\hat{x}Z\in G_0$ to be an element of order $s$ such that $\hat{x}=[\Lambda^s]$. Then $x$ does not fix a 1-dimensional subspace of $V$, so $x$ is a derangement. Therefore $G$ is not almost elusive since $G_0$ contains derangements of order $s$ and $r_n$. 
\end{proof}

\begin{prop}\label{p:lic1prime}
Assume $n$ is prime. Then Theorem \ref{t:mainthrm} holds for cases ${\rm L1}$ and ${\rm L2}$ in Table \ref{tab:pi1}.
\end{prop}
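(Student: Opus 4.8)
The plan is to combine a count of conjugacy classes of elements of order $r_n$ (via Lemma~\ref{l:classeslinear}) with the lower bounds on $r_n$ from Lemmas~\ref{l:ppd2a3} and~\ref{l:lic1prime}, and, in the residual cases, to exhibit a derangement of a second prime order. First I would invoke Proposition~\ref{p:smalldim} to assume $G_0\notin\mathcal{D}\cup\mathcal{O}$; in particular $q\geqs 9$ if $n=3$. Writing $r=r_n$ for the unique primitive prime divisor of $q^n-1$ (so that every element of $G_0$ of order $r$ is a derangement, by Proposition~\ref{p:aereduc}), I would record that $r$ is in fact a primitive prime divisor of $p^{fn}-1$: this follows from $P_p^{fn}\subseteq P_q^n=\{r\}$ (Lemma~\ref{l:prime fac of f}) together with $P_p^{fn}\neq\emptyset$ (Zsigmondy's theorem, using $n\geqs 3$ prime and $(n,q)\neq(3,4)$). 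Hence $r=knf+1$ for some integer $k\geqs 1$ by Lemma~\ref{l:Lemma A.1}, and note $r\neq 2$ and $r\neq p$.

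By Lemma~\ref{l:classeslinear}, $K_{G_0}(G,r)\geqs k/2$, so $k\geqs 4$ already forces at least two $G$-classes of derangements of order $r$; and when $H$ has type ${\rm P}_1$ (case~L1) the maximality of $H$ in $G$ forces $G\leqs\langle{\rm PGL}_n(q),\phi\rangle$ (a graph automorphism cannot stabilise a $1$-space), whence $K_{G_0}(G,r)\geqs k$ and $k\geqs 2$ already suffices. Now if $n=3$ then the case $a=0$ of Lemma~\ref{l:ppd2a3}, together with our exclusions, gives $r\geqs 4\cdot 3f+1$, i.e. $k\geqs 4$; and if $n\geqs 5$ is prime with $q=2^f$ then Lemma~\ref{l:lic1prime} gives $k\geqs 4$. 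In all these situations $G$ is not almost elusive, so the only remaining cases are $n\geqs 5$ prime and $q=p^f$ odd (so $p$ odd) with $k\in\{1,2\}$.

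For case~L2 in this residual range I would produce a unipotent derangement of order $p$. The key observation is that a unipotent element $u$ fails to be a derangement for the stabiliser of a decomposition $V=V_1\oplus V_{n-1}$ with $\dim V_1=1$ if and only if $u$ has a Jordan block of size $1$: every invariant hyperplane of $u$ contains $(u-1)V$ and every invariant $1$-space is spanned by a fixed vector, so $u$ stabilises such a decomposition if and only if $C_V(u)\not\subseteq(u-1)V$, i.e. if and only if some Jordan block has size $1$. Since $n$ is odd, the element with Jordan form $[J_3,J_2^{(n-3)/2}]$ has all blocks of size at least $2$ and, as $p\geqs 3$, has order $p$; it is therefore a derangement, and since $p\neq r$ the group $G$ has derangements of two distinct prime orders and is not almost elusive.

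For case~L1 in this residual range Lemma~\ref{l:classeslinear} already handles $k=2$, so suppose $k=1$, i.e. $r=nf+1$. As $r$ is an odd prime and $n$ is odd, $f$ must be even; then $q-1=(p^{f/2}-1)(p^{f/2}+1)$, and if $q-1$ were a power of $2$ both factors would be, forcing $p^{f/2}=3$, i.e. $q=9$ --- but then $P_9^n=P_{3^2}^n$ strictly contains the nonempty set $P_3^{2n}$ (Lemma~\ref{l:prime fac of f} with $b=3$, $c=2$; the equality cases require $2$ to divide $n$), so $|P_q^n|\geqs 2$, contrary to hypothesis. Hence $q-1$ has an odd prime divisor $t\neq r$, and a suitable non-scalar diagonal element of ${\rm SL}_n(q)$ whose entries are nontrivial $t$-th roots of unity (with product $1$) is a semisimple element of $G_0$ of order $t$ fixing no $1$-space, hence a derangement; so $G$ is not almost elusive. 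Finally, since no case~L1 or~L2 with $G_0\in\mathcal{G}\setminus(\mathcal{D}\cup\mathcal{O})$ appears in Table~\ref{tab:maintab}, this gives the proposition. I expect the main obstacle to be precisely this case~L1 with $k=1$: here the order-$r$ derangements may fuse into a single $G$-class, so one is forced to extract a derangement of a different prime order, which in turn is what makes the parity argument on $f$ and the elimination of $q=9$ necessary.
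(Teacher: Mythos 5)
Most of your argument coincides with the paper's: the class count via Lemma \ref{l:classeslinear}, the bounds $k\geqs 4$ extracted from Lemmas \ref{l:ppd2a3} and \ref{l:lic1prime}, the reduction of case L1 to $G\leqs\langle{\rm PGL}_n(q),\phi\rangle$, and the unipotent derangement with Jordan form $[J_3,J_2^{(n-3)/2}]$ in case L2 for $p$ odd are exactly the steps the paper takes. The problem is your treatment of case L1 with $k=1$. The element you exhibit there is not a derangement: a non-scalar diagonal element of ${\rm SL}_n(q)$ with entries in $\mathbb{F}_q$ is diagonalisable over $\mathbb{F}_q$, so it fixes each coordinate $1$-space $\langle e_i\rangle$, and hence its image in $\Li_n(q)$ fixes at least $n$ points of $\Omega$ when $H$ is of type $P_1$. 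Requiring the entries to be nontrivial $t$-th roots of unity changes nothing, since a $1$-space $\langle v\rangle$ is fixed whenever $v$ is an eigenvector for \emph{any} eigenvalue, not just the eigenvalue $1$. So as written the $k=1$ subcase of L1 is not proved.

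The missing observation, which is how the paper disposes of this, is that $k=1$ cannot occur at all. Since $P_q^n=\{r_n\}$ and $n$ is prime, Lemma \ref{l:prime fac of f} (applied with $b=p$ and $c=f$; the exceptional case $(n,c,b)=(3,2,2)$, i.e. $(n,q)=(3,4)$, is excluded by Proposition \ref{p:smalldim}) forces $f=n^j$ for some $j\geqs 0$, so in particular $f$ is odd. Then $r_n=knf+1$ is an odd prime with $nf$ odd, so $k$ must be even, and $k\geqs 2$ finishes case L1 by your own counting argument. This is consistent with, and explains, your deduction that $f$ would have to be even if $k=1$: that conclusion contradicts $f=n^j$, so the subcase is vacuous --- but your proof needs to establish this (or produce a genuine derangement of a second prime order) rather than rest on the faulty diagonal construction.
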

\begin{proof}
Since $r_n$ is the unique primitive prime divisor of $q^n-1$, it must also be the unique primitive prime divisor of $p^{fn}-1$ (note by Proposition \ref{p:smalldim} we may assume $P_p^{fn}\neq\emptyset$). Thus it follows that $r_n=kfn+1$ for some $k\geqs 1$. By Lemma \ref{l:prime fac of f}, $f=n^j$ for some $j\geqs 0$ and since $r_n$ and $n$ are both odd primes it follows that $k\geqs 2$ is even. 

Assume first that case L1 holds, that is $H$ is of type $P_1$. To ensure maximality of $H$ we require $G\leqs \langle {\rm PGL}_n(q),\phi\rangle$ where $\phi$ is a field automorphism of $\Li_n(q)$ of order $f$ (this is since the inverse-transpose graph automorphism interchanges the stabilisers of $m$-spaces and $(n-m)$-spaces). Thus by Lemma \ref{l:classeslinear} we conclude that $G$ is not almost elusive. 

For the remainder of the proof we may assume that case L2 holds, that is $H$ is of type ${\rm GL}_1(q)\oplus{\rm GL}_{n-1}(q)$. 
First suppose $p\geqs 3$ and let $x=\hat{x}Z\in H_0=H\cap G_0$ be an element of order $p$. Then $\hat{x}\in {\rm GL}_1(q) \oplus {\rm GL}_{n-1}(q)$ is ${\rm GL}_n(q)$-conjugate to $[J_p^{a_p},\dots,J_2^{a_2},J_1^{a_1+1}]$ with $a_t\geqs 0$ for all $t$ and $\sum_{t=1}^p ta_t=n-1$. Therefore any element in $G_0$ of order $p$ with Jordan form $[J_3,J_2^{(n-3)/2}]$ is a derangement. Thus $G$ is not almost elusive since $G_0$ contains derangements of order $r_n$ and $p$. 
Finally suppose $p=2$ and recall that if $n=3$ then by Proposition \ref{p:smalldim} we may assume $q\geqs 9$. Thus $k\geqs 4$ by Lemma \ref{l:lic1prime}, so by Lemma \ref{l:classeslinear} $G$ is not almost elusive. 
\end{proof}

\subsubsection{Unitary groups}

The remaining cases in which $G_0=\Un_n(q)$ are the cases in Table \ref{tab:pi1} outlined below 
\begin{itemize}
\item[{\rm (a)}] Case U1: $H$ is of type ${\rm P}_{n/2}$ and $i=2n-2$ with $n=4$ or $6$;
\item[{\rm (b)}] Case U3: $H$ is of type ${\rm GU}_1(q)\perp{\rm GU}_{n-1}(q)$ and $i$ is defined as follows;
 \[
i:=
\begin{cases}
n & n\equiv 0\imod 4\\
n/2 & n\equiv 2\imod 4\\
2n &\mbox{otherwise}
\end{cases}.
\]
\end{itemize}
\vs

\begin{prop}
Theorem \ref{t:mainthrm} holds for case ${\rm U1}$ in Table \ref{tab:pi1}.
\end{prop}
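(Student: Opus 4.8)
We are in case U1: $G_0=\Un_n(q)$ with $n=4$ or $6$, $H$ of type $\mathrm{P}_{n/2}$ (the stabiliser of a maximal totally singular subspace of dimension $n/2$), and $i=2n-2$, so $r_{2n-2}$ is the unique primitive prime divisor of $q^{2n-2}-1$ and every element of $G_0$ of order $r_{2n-2}$ is a derangement by Proposition \ref{p:aereduc}. To prove $G$ is not almost elusive I need to produce a derangement of a second prime order, or else show $G_0$ contains at least two $G$-classes of derangements of order $r_{2n-2}$. By Proposition \ref{p:smalldim} I may assume $q\geqs 3$ (and $q\geqs 9$ when $n=4$, since $\Un_4(q)$ for $q\leqs 8$ is covered by the {\sc Magma} computation).

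\textbf{The case $n=6$.} Here $2n-2=10\equiv 2\imod 4$ but I will instead exploit $q^{n}-1 = q^6-1$. Note $6\equiv -2\imod 8$, so Lemma \ref{l:tisU} applies with $i=n=6$: taking a primitive prime divisor $s$ of $q^6-1$ and $x=\hat{x}Z\in G_0$ of order $s$ with $\hat{x}=[\Lambda_1,\Lambda_2]$, $\Lambda_1\neq\Lambda_2$, the element $x$ does not fix a totally singular $3$-space, hence is a derangement of order $s$. Since $s$ is a primitive prime divisor of $q^6-1$ and $r_{10}$ is a primitive prime divisor of $q^{10}-1$, these are distinct primes, so $G$ is not almost elusive. (One should check $\hat x$ as displayed has all the right properties to lie in $\mathrm{GU}_6(q)$ and give an element of $\Un_6(q)$ of order $s$ — this is immediate from the description in Section \ref{s:conjclass} since $s\in P_q^6$ forces the block size $i/2=3$.)

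\textbf{The case $n=4$.} Now $2n-2=6$ and by Proposition \ref{p:smalldim} we have $q\geqs 9$, so by Lemma \ref{l:ppd2a3} either $q=19$ or $r_6\geqs 13$. If $r_6\geqs 13$, then $\Un_4(q)$ contains $(r_6-1)/3\geqs 4$ distinct ${\rm PGU}_4(q)$-classes of elements of order $r_6$ (these have the form $[\Lambda]Z$ by \cite[Proposition 3.3.2]{BG_book}), and since $|\Aut(G_0){:}{\rm PGU}_4(q)|=2$ there are at least $(r_6-1)/6\geqs 2$ distinct $G$-classes of derangements of order $r_6$; hence $G$ is not almost elusive. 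If $q=19$, then $r_6=13$, which gives only $\geqs 2$ classes by the same count — so this already works; alternatively produce a second-prime derangement, e.g. an element $x=\hat xZ$ of order $5$ (the unique primitive prime divisor of $q^2-1$) with $\hat x=[\Lambda,\Lambda^{-1},I_2]$, whose eigenvalues on $V\otimes\mathbb{F}_{q^2}$ have odd multiplicity, hence $x$ is a derangement (compare the $\mathcal{C}_2$ argument for case U4). Either route closes the $q=19$ case.

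\textbf{Main obstacle.} The delicate point is verifying that the specific semisimple elements I write down genuinely fail to fix a totally singular $n/2$-space — this is exactly what Lemmas \ref{l:tisSO} and \ref{l:tisU} are designed to deliver, so provided the residues of $n$ modulo $8$ line up (which they do: $n=6$ is $-2\imod 8$, handled by Lemma \ref{l:tisU}; for $n=4$ I avoid the totally-singular-space question entirely and count classes instead), the argument is routine. The only genuinely case-specific work is the small-$q$ bookkeeping, which is subsumed by Proposition \ref{p:smalldim} and the explicit bounds in Lemmas \ref{l:ppd2a3} and \ref{l:ppds4}.
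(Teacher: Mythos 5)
Your argument follows the paper's proof essentially verbatim: for $n=6$ you invoke Lemma \ref{l:tisU} with a primitive prime divisor of $q^6-1$, and for $n=4$ with $q\neq 19$ you count $G$-classes of elements of order $r_6$, exactly as the paper does. The one genuine error is your claim that $r_6=13$ when $q=19$: in fact $19^2-19+1=343=7^3$, so the unique primitive prime divisor of $19^6-1$ is $r_6=7$ (this is precisely why $(6,19)$ appears in Lemma \ref{l:ppd2a3}(i) with $r=nf+1=7$), and the class count then yields only $(r_6-1)/6=1$ class, so your ``this already works'' route fails for $q=19$. Fortunately your fallback --- exhibiting a derangement of order $5$ --- is exactly how the paper closes this case; note only that the paper uses $\hat{x}=[\mu,\mu^2,\mu^3,\mu^4]$ with $\mu$ a primitive fifth root of unity, for which the assertion that every eigenvalue has odd multiplicity is literally true, whereas for your $[\Lambda,\Lambda^{-1},I_2]$ the eigenvalue $1$ has multiplicity $2$ and one must instead check directly (via \cite[Lemma 4.2.4]{BG_book}) that the eigenvalue multiset does not decompose as $S\cup S^{-q}$, which it does not since $\lambda$ occurs with multiplicity one. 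With that repair the proof is complete and coincides with the paper's.
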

\begin{proof}
Here $H$ is the stabiliser of a totally singular $n/2$-space with $n=4$ or $6$. Note that by Proposition \ref{p:smalldim} we may assume that $q\geqs 3$.
Suppose first that $n=6$. Let $s$ be a primitive prime divisor of $q^{6}-1$ and take an element $x=\hat{x}Z\in G_0$ of order $s$ defined as in \eqref{e:tis}. Then $x$ is a derangement by Lemma \ref{l:tisU}. 
Finally suppose $n=4$. By Proposition \ref{p:smalldim} we may assume $q>8$, so either $q=19$ or $r_6\geqs 13$ by Lemma \ref{l:ppd2a3}. Assume $q\neq 19$, then in $G_0$ there are $(r_6-1)/3\geqs 4$ distinct ${\rm PGU}_4(q)$-classes of elements of order $r_6$. Since $|\Aut(G_0){:}{\rm PGU}_4(q)|=2$ there are at least $(r_6-1)/6\geqs 2$ distinct $G$-classes of elements of order $r_6$ in $G_0$. Finally assume $q=19$ and take $x=\hat{x}Z\in G_0$ to be an element of order 5 (the unique primitive prime divisor of $q^2-1$) such that $\hat{x}=[\mu,\mu^2,\mu^3,\mu^4]$ with $\mu\in\mathbb{F}_{q^2}$ a primitive $5^{{\rm th}}$ root of unity.  Since the eigenvalues of $\hat{x}$ (on $V\otimes \mathbb{F}_{q^2}$) have odd multiplicity, $x$ is a derangement (see \cite[Lemma 4.2.4]{BG_book}). 
\end{proof}

It now remains to deal with case U3 in Table \ref{tab:pi1}. In particular, this leads to the special case appearing in Theorem \ref{t:mainthrm} (see part (i)). 
Recall here we will use $\phi$ to denote a field automorphism of $G_0=\Un_n(q)$ of order $2f$, and note that $\phi^f=\gamma$ is a graph-automorphism. Additionally we note that $\Aut(G_0)=\langle {\rm PGU}_n(q),\phi\rangle$ and we remind the reader that the notation for prime order elements in classical groups was set up in Section \ref{s:conjclass}. We first prove that all prime order derangements must exist in ${\rm PGU}_n(q)$ when $n$ is odd and $H$ is of type ${\rm GU}_1(q)\perp{\rm GU}_{n-1}(q)$.

\begin{lem}\label{l:phiconj}
Let $G_0=\Un_n(q)$ such that $n$ is odd and take $x\in\Aut(G_0)\setminus {\rm PGU}_n(q)$ to be an element of prime order. Then $x$ is ${\rm PGU}_n(q)$-conjugate to $\phi^i$ for some $1\leqs i<2f$.
\end{lem}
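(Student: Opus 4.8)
The plan is to reduce to a counting argument using Steinberg's theorem on the structure of elements of $\Aut(G_0)$ and the fact that $n$ is odd makes the relevant outer automorphism group cyclic modulo the field automorphisms. Recall from Section \ref{s:conjclass} that, by Steinberg's theorem, any element of $\Aut(G_0)$ is a product of an inner, a diagonal, a field and a graph automorphism, and that $\Aut(G_0)=\langle {\rm PGU}_n(q),\phi\rangle$ where $\phi$ has order $2f$ and $\phi^f=\gamma$ is a graph automorphism. Since $n$ is odd, $(n,q+1)$ is coprime to... well, more to the point, when $n$ is odd one checks directly that the diagonal automorphisms of order dividing $(n,q+1)$ are realised inside ${\rm Inndiag}(G_0)={\rm PGU}_n(q)$, and the key structural point is that ${\rm Out}(G_0)=\langle\ddot\phi\rangle$ is cyclic of order $2f$ when $n$ is odd (there is no separate diagonal factor to worry about in the quotient, or rather it is absorbed — compare \cite[Proposition 2.3.5]{KL} and the discussion before Lemma \ref{l:phiuni}). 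Thus $x\in\Aut(G_0)\setminus{\rm PGU}_n(q)$ of prime order projects to a nontrivial element of the cyclic group $\langle\ddot\phi\rangle$, and I would fix the coset $\ddot\phi^{\,j}$ of prime order that $x$ lies in.

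The heart of the argument is then a conjugacy statement: within the coset ${\rm PGU}_n(q)\,\phi^{j}$, all elements of the given prime order $\ell$ are ${\rm PGU}_n(q)$-conjugate. First I would dispose of the semisimple/unipotent distinction: the order of $x$ is a prime $\ell$, and since $x\notin{\rm PGU}_n(q)$ the image $\ddot\phi^{\,j}$ has order $\ell$, forcing $\ell\mid 2f$. I would handle $\ell$ odd and $\ell=2$ together where possible. The standard tool here is the description of conjugacy classes in cosets of field automorphisms, e.g. via \cite[Chapter 2]{BG_book} or Lang's theorem applied to the coset: the map $g\mapsto g^{-1}\,g^{\phi^{j}}$ (a twisted form of Lang's map) is surjective on the relevant algebraic group by connectedness, which shows that any two elements of ${\rm PGU}_n(q)\phi^{j}$ of the same order, and more precisely lying in the "same type" of coset, are conjugate by an element of ${\rm PGU}_n(q)$ — using that $n$ odd means $\mathbb{F}_{q^2}^\times$-scalars cause no obstruction. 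I would then invoke the fact (again from \cite[Chapter 2]{BG_book}) that in the unitary case with $n$ odd there is a unique class of field-type automorphisms of each order in each coset, whose representative can be taken to be $\phi^{i}$ with $i=j\cdot(2f/\ell)$, say, i.e. precisely a power of $\phi$ in the range $1\leqslant i<2f$.

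Concretely the key steps, in order, are: (1) observe $x$ has prime order $\ell$ with $\ell\mid 2f$ and project $x$ to the cyclic group ${\rm Out}(G_0)=\langle\ddot\phi\rangle$, landing in a prime-order coset $C={\rm PGU}_n(q)\phi^{i}$ with $1\leqslant i<2f$; (2) cite the classification of automorphisms of $\Un_n(q)$ in \cite[Chapter 2]{BG_book} — which I may assume — to the effect that every element of $C$ of order $\ell$ (equivalently, every element of $C$ that is not inner-diagonal-times-unipotent of larger order, but here the order is exactly $\ell$) is ${\rm PGU}_n(q)$-conjugate to $\phi^{i}$; (3) conclude. The main obstacle I anticipate is step (2): making the conjugacy statement inside the coset genuinely precise, in particular ruling out that $x$ could be a product $\phi^{i}$ times an inner automorphism of order a multiple of $\ell$ (so that the product still has order $\ell$ but is not conjugate to $\phi^{i}$), and confirming that the hypothesis $n$ odd is exactly what makes $C_{{\rm PGU}_n(q)}(\phi^{i})$ connected enough (via Lang–Steinberg on the fixed-point subgroup, which is a unitary group over a subfield) to force a single class. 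If the cited results in \cite{BG_book} already package this as "all involutory graph (or graph-field) automorphisms of $\Un_n(q)$, $n$ odd, in a fixed coset are conjugate", then the proof is short; otherwise I would fill the gap with the twisted Lang-map surjectivity argument sketched above.
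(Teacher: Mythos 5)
Your proposal follows essentially the same route as the paper: decompose $\Aut(G_0)$ into the cosets ${\rm PGU}_n(q)\phi^i$, observe that prime order forces $x$ into a coset with $|\phi^i|$ prime, and then invoke the one-class-per-coset result you flag as step (2) --- the paper fills that step exactly as you anticipated, citing \cite[Lemma 3.1.17]{BG_book} for the field/graph-field cosets $i\neq f$ and \cite[Proposition 3.3.15]{BG_book} for the graph-automorphism coset $i=f$, which is the only place the hypothesis that $n$ is odd is actually needed. One small correction to your writeup: ${\rm Out}(G_0)\cong C_{(n,q+1)}{:}C_{2f}$ need not be cyclic for $n$ odd; what your argument really uses (and you state correctly elsewhere) is that the diagonal automorphisms lie in ${\rm PGU}_n(q)$, so that $\Aut(G_0)/{\rm PGU}_n(q)$ is cyclic generated by the image of $\phi$.
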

\begin{proof}
The group $\Aut(G_0)$ may be split up into a union cosets of ${\rm PGU}_n(q)$, namely $\Aut(G_0)= {\rm PGU}_n(q)\cup {\rm PGU}_n(q)\phi\cup \dots\cup{\rm PGU}_n(q)\phi^{2f-1}$. Thus if $x\in\Aut(G_0)\setminus {\rm PGU}_n(q)$ is an element of prime order $r$, we may assume that $x\in{\rm PGU}_n(q)\phi^i$ such that $|\phi^i|$ has order $r$.
Assume first that $i\neq f$. By \cite[Lemma 3.1.17]{BG_book} every element of prime order in ${\rm PGU}_n(q)\phi^i$ is ${\rm PGU}_n(q)$-conjugate to $\phi^i$, so the result holds. 
Finally assume $i=f$. Then $\phi^i=\gamma$ which implies that $r=2$ and $x$ is a graph automorphism. Note every involutary graph automorphism of $G_0$ is contained in ${\rm PGU}_n(q)\gamma$. Then by \cite[Proposition 3.3.15]{BG_book},  $x$ is ${\rm PGU}_n(q)$-conjugate to $\gamma$. Thus the result follows. 
\end{proof}

\begin{cor}\label{c:phiconj}
Let $G_0=\Un_n(q)$ such that $n$ is odd and take $x\in\Aut(G_0)\setminus {\rm PGU}_n(q)$ to be an element of prime order. Let $V$ denote the natural $G_0$-module. Then $x$ fixes a non-degenerate m-space for $1\leqs m\leqs n$. 
\end{cor}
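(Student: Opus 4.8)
The plan is to reduce, via Lemma~\ref{l:phiconj}, to the case in which $x$ itself equals a power of the field automorphism $\phi$, and then to exhibit an explicit non-degenerate $1$-space fixed by that power of $\phi$. By Lemma~\ref{l:phiconj}, $x$ is ${\rm PGU}_n(q)$-conjugate to $\phi^i$ for some $1\leqs i<2f$. The property of fixing a non-degenerate $m$-space (that is, of normalising the $G$-stabiliser of such a subspace) is preserved under conjugation in $\Aut(G_0)$: if $y$ fixes a non-degenerate subspace $U$ and $g\in\Aut(G_0)$, then $g^{-1}yg$ fixes $U^{g}$, which has the same dimension as $U$ and is again non-degenerate. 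Hence it suffices to establish the claim for $x=\phi^i$.

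Recall that, with respect to an orthonormal basis $\{v_1,\dots,v_n\}$ of the natural module $V$, the field automorphism $\phi$ may be taken to be induced by the semilinear Frobenius map $\sum_j\lambda_jv_j\longmapsto\sum_j\lambda_j^pv_j$ (this is the choice of $\phi$ fixed in the run-up to Lemma~\ref{l:phiuni}). This map fixes each basis vector $v_j$, so $\phi^i$ fixes $v_j$ for every $i$ and $j$; in particular $\phi^i$ stabilises the $1$-space $\langle v_1\rangle$. Since $(v_1,v_1)=1\neq 0$, the space $\langle v_1\rangle$ is non-degenerate, and hence $x=\phi^i$ fixes a non-degenerate $m$-space with $m=1$ (and therefore also its perpendicular complement, a non-degenerate $(n-1)$-space).

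There is essentially no obstacle here, as all of the substance lies in Lemma~\ref{l:phiconj}; once $x$ is conjugate to a power of $\phi$, the explicit action of $\phi$ on the orthonormal basis makes the conclusion immediate. The one point worth recording for the eventual application in Proposition~\ref{p:uniprop} is that any two non-degenerate $1$-spaces of the unitary space $V$ are isometric --- because the norm map $\mathbb{F}_{q^2}^{*}\to\mathbb{F}_q^{*}$ is surjective, a unit vector spanning one can be rescaled to a unit vector spanning any other --- so that $G$ acts transitively on them and the conclusion ``$x$ fixes $\langle v_1\rangle$'' genuinely places a $G$-conjugate of $x$ inside the maximal subgroup of type ${\rm GU}_1(q)\perp{\rm GU}_{n-1}(q)$.
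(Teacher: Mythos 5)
Your proof is correct and follows essentially the same route as the paper's: reduce to $x=\phi^i$ via Lemma~\ref{l:phiconj} and observe that the standard field automorphism fixes the span of orthonormal basis vectors, which is non-degenerate. One tiny remark: the statement asserts the existence of a fixed non-degenerate $m$-space for every $1\leqs m\leqs n$, not just $m=1$ and $m=n-1$, but since you note that $\phi^i$ fixes each $v_j$, the spaces $\langle v_1,\dots,v_m\rangle$ do the job for all $m$ exactly as in the paper.
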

\begin{proof}
Let $\{v_1,\dots,v_n\}$ be a orthonormal basis for $V=(\mathbb{F}_{q^2})^n$. We recall that the standard field automorphisms are defined as 
$$\phi^i:\sum_j \lambda_jv_j\longmapsto\sum_j \lambda_j^{p^i}v_j.$$
Thus each $\phi^i$ fixes the non-degenerate $m$-space $\langle v_1,\dots, v_m\rangle$ for all $1\leqs i<2f$. Thus the result follows by Lemma \ref{l:phiconj}.
\end{proof}

We are now in a position to handle case U3. 

\begin{prop}\label{p:uniprop}
Theorem \ref{t:mainthrm} holds for case ${\rm U3}$ in Table \ref{tab:pi1}.
\end{prop}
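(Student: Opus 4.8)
The plan is to split the analysis according to the parity of $n$ and the value of $q$. First I would dispose of the case when $n$ is odd, where Corollary~\ref{c:phiconj} does most of the work: every prime order element of $\Aut(G_0)\setminus {\rm PGU}_n(q)$ fixes a non-degenerate $m$-space, hence lies in a conjugate of $H$; so all prime order derangements must already occur inside ${\rm PGU}_n(q)=\langle {\rm PGU}_n(q)\rangle$, and more precisely inside $G_0$ since $r_i$ is coprime to $(n,q+1)$ for the relevant primitive prime divisor $r_i$. Then, for $G$ almost elusive, we would need $G/G_0$ to project onto $\langle\ddot\phi\rangle$ (otherwise the $G$-classes of order $r_i$ in $G_0$ already number at least two, by Corollary~\ref{c:unitary case cor} — applied with $i=2n$, since $n$ is odd gives $c=2n$), and additionally $r_{2n}=2nf+1$. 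I would then observe that for $q$ odd, Lemma~\ref{l:uniq ppd} forces $r_{2n}\geqs 4nf+1$ unless $n$ divides $q+1$ or $(n,q)=(5,2)$; the case $q$ odd, $n\mid q+1$ still gives (via Lemma~\ref{l:uniq ppd}'s constraint $f=2^an^b$) room to construct a second derangement of prime order — here I would look for a second small primitive prime divisor, e.g.\ of $q-1$ or $q^2-1$, or exhibit a unipotent derangement: an element of $G_0$ of order $p$ with Jordan form $[J_2,J_1^{n-2}]$, noting that $H$ of type ${\rm GU}_1(q)\perp {\rm GU}_{n-1}(q)$ does contain such elements, so instead one wants a Jordan type not realised in $H_0$, e.g.\ $[J_3,J_1^{n-3}]$ when $p\geqs 3$ (which forces $G$ to fail to be almost elusive whenever it also has an order-$r_{2n}$ derangement). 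This pins the surviving odd-$n$ cases down to $q=2^f$ even, which is precisely part (i) of the theorem.

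For the even $n$ cases, I expect the argument to be genuinely different and I would treat $n\equiv 0\imod 4$ and $n\equiv 2\imod 4$ separately, using $i=n$ and $i=n/2$ respectively (as dictated by Remark~\ref{r:pi tables}(f)). In both sub-cases the strategy is to produce two distinct prime order derangements. A natural first derangement is one of order $r_i$; for the second I would again split on $p$. When $p$ is odd, a unipotent element of $G_0$ with Jordan form $[J_3,J_1^{n-3}]$ is a derangement because $H_0$ of type ${\rm GU}_1(q)\perp {\rm GU}_{n-1}(q)$ can only realise unipotent elements with a $J_1$-block of multiplicity at least one coming from the ${\rm GU}_1$-factor, and moreover on the ${\rm GU}_{n-1}$-factor the Jordan type is constrained — so one checks $[J_3,J_1^{n-3}]$ is not of this shape. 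When $p=2$, I would instead exhibit a semisimple derangement of order a primitive prime divisor $s$ of $q^{n/2}\pm 1$ (whichever is relevant) via the block structure of Lemma~\ref{l:tisU}/Lemma~\ref{l:tisSO}-type arguments (more precisely the unitary analogue in \cite[Chapter 3]{BG_book}): an element $\hat x=[\Lambda^{n/(n/2)}]$ with trivial $1$-eigenspace cannot fix a non-degenerate $1$-space. Combined with the order-$r_i$ derangement this shows $G$ is not almost elusive.

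The main obstacle will be the case $q=2^f$ with $n$ odd a prime divisor of $q+1$ — exactly the exceptional case (i) in the statement. Here there is genuinely only one available small primitive prime divisor, $r_{2n}$, and no odd-characteristic unipotent trick; one must show that under the precise numerical conditions ($r_{2n}=2nf+1$, $G/G_0$ projecting onto $\langle\ddot\phi\rangle$) there is in fact exactly one $G$-class of order-$r_{2n}$ elements in $G_0$ (Corollary~\ref{c:unitary case cor} with $k=w=1$) and that \emph{no} other prime order element of $G$ is a derangement. The latter requires Corollary~\ref{c:phiconj} to kill all outer prime order candidates, and then a careful check — using the conjugacy class descriptions of Section~\ref{s:conjclass} together with Lemma~\ref{l:m<n-1} and the orders in \cite[Proposition 4.1.4]{KL} — that every semisimple or unipotent element of $G_0$ of prime order other than $r_{2n}$ fixes some non-degenerate $1$-space, hence lies in $H$. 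This is where the interplay of "$n\mid q+1$", "$n$ prime", "$q$ even" and the uniqueness of the primitive prime divisor all get used, and it is why the statement can only conclude "$G$ is almost elusive \emph{only if}" rather than giving an existence result, matching Remark~\ref{r:rmk1}.
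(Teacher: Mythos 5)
Your overall architecture tracks the paper's: eliminate most cases with a unipotent derangement, reduce to $n$ odd and $p=2$, then use class counting for elements of order $r_{2n}$, Lemma \ref{l:uniq ppd}, Lemma \ref{l:phiconj}/Corollary \ref{c:phiconj} and Corollary \ref{c:unitary case cor} to isolate the family in part (i). However, your central step fails. An element $\hat{x}$ of order $p$ lies in a conjugate of $H_0$ precisely when it preserves a decomposition $V=U\perp U^{\perp}$ with $U$ a non-degenerate $1$-space, which happens precisely when its Jordan form on $V$ contains at least one $J_1$-block (split off a $[J_1]$ acting on $U$). Your proposed derangement $[J_3,J_1^{n-3}]$ has $n-3\geqs 1$ Jordan $1$-blocks for $n\geqs 4$, so it \emph{does} lie in $H_0$: its restriction to the ${\rm GU}_{n-1}(q)$-factor is $[J_3,J_1^{n-4}]$, a perfectly valid unipotent element of ${\rm GU}_{n-1}(q)$, since there are no parity constraints on Jordan multiplicities in unitary groups (Lemma \ref{l:osjordan} applies only to the symplectic and orthogonal cases, contrary to your remark that "the Jordan type is constrained" on that factor). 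The correct criterion is that a unipotent element of $G_0$ is a derangement if and only if its Jordan form has \emph{no} $J_1$-block; the right witnesses are $[J_2^{n/2}]$ for $n$ even and $[J_3,J_2^{(n-3)/2}]$ for $n$ odd with $p\geqs 3$, and it is exactly the nonexistence of such a form when $n$ is odd and $p=2$ that produces the exceptional case. As written, your argument does not eliminate $p\geqs 3$, so it does not establish that $q$ must be even in part (i).

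There are two further gaps. First, for $n$ odd and composite with $p=2$ you cannot invoke Lemma \ref{l:uniq ppd} (it assumes $n$ is an odd prime); the paper disposes of this case by exhibiting a second semisimple derangement $[\Lambda^h]Z$ of order a primitive prime divisor of $q^{2t}-1$, where $n=th$ with $t\geqs 5$ (and $(n,q)=(9,2)$ treated separately). Second, Corollary \ref{c:phiconj} only controls $\Aut(G_0)\setminus{\rm PGU}_n(q)$: when $n$ divides $q+1$ the coset ${\rm PGU}_n(q)\setminus G_0$ contains derangements of order $n$, namely $[\Lambda]Z$ with $\Lambda$ a single $\sigma^2$-orbit of an element of order $n(q+1)_n$ (see \cite[Proposition 3.3.3]{BG_book}). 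So your assertion that every prime order derangement lies in $G_0$ is false, and one must add the condition $G\cap{\rm PGU}_n(q)=G_0$ before Corollary \ref{c:unitary case cor} can be used to conclude; restricting your "careful check" to elements of $G_0$ misses precisely these diagonal elements. (Minor points: Lemma \ref{l:uniq ppd} is stated only for $q=2^f$, so it cannot be applied when $q$ is odd; and for $i=2n$ with $n$ odd one has $i\equiv 2\imod 4$ and hence $c=i/2=n$, not $2n$.)
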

\begin{proof}
Let $x=\hat{x}Z\in H_0$ be an element of order $p$. Then $\hat{x}$ fixes a non-degenerate 1-space $U$ and the non-degenerate $(n-1)$-space, $U^{\perp}$, so $\hat{x}\in {\rm GU}_1(q)\times {\rm GU}_{n-1}(q)$. Therefore $\hat{x}$ is ${\rm GU}_n(q)$-conjugate to $[J_p^{a_p},\dots,J_2^{a_2},J_1^{a_1+1}]$, where $\sum_{t=1}^pta_t=n-1$. Thus a unipotent element in $G_0$ is a derangement if and only if its Jordan form does not contain a Jordan 1-block. This implies that $G_0$ does not contain a derangement of order $p$ if and only if $n$ is odd and $p=2$. 

Assume $n$ is even, or $n$ is odd with $p\geqs 3$. Then by the argument above $G_0$ contains both unipotent and semisimple derangements, so $G$ is not almost elusive. Thus for the remainder of the proof we may assume that $n$ is odd and $p=2$. We recall that in this case any element of order $r_i=r_{2n}$ in $G_0$ is a derangement, where $r_{2n}$ is the unique primitive prime divisor of $q^{2n}-1$ (see Proposition \ref{p:aereduc}).

Suppose first that $n$ is not prime and $(n,q)\neq (9,2)$. Then without loss of generality we can write $n=th$ for positive integers $t$ and $h$ such that $t,h\neq 1$ and $t\geqs 5$. Take $s$ to be a primitive prime divisor of $q^{2t}-1$ and let $x=\hat{x}Z\in G_0$ be an element of order $s$ such that $\hat{x}=[\Lambda^h]$. Then $x$ is a derangement, so $G_0$ contains semisimple derangements of distinct prime order (namely $s$ and $r_{2n}$). Thus $G$ is not almost elusive. 

Next suppose that $(n,q)=(9,2)$. Then 3 is a divisor of $|\Omega|$ and in particular it is the unique primitive prime divisor of $q^2-1$. Take $x=\hat{x}Z\in G_0$ to be an element of order 3 such that $\hat{x}=[\Lambda^3]$ with $\Lambda=\{\mu,\mu^{q^2},\mu^{q^4}\}$ for some $\mu\in\mathbb{F}_{q^6}$ of order $9$. Note that $x\in G_0$ since $(9)_3>(q+1)_3$ (see \cite[Proposition 3.3.3]{BG_book}) and $x$ is a derangement. 

Finally assume $n$ is prime. Note that $r_{2n}$ is also the unique primitive prime divisor of $2^{2nf}-1$, so $r_{2n}=2nfd+1$ for some $d\geqs 1$. Thus $G_0$ contains $(r_{2n}-1)/n=2fd$ distinct ${\rm PGU}_n(q)$-classes of elements of order $r_{2n}$. Since $|\Aut(G_0){:}{\rm PGU}_n(q)|=2f$ there are at least $(r_{2n}-1)/2nf=d$ distinct $G$-classes of elements of order $r_{2n}$ in $G_0$. Therefore $G$ is not almost elusive if $d\geqs 2$, so we may assume $r_{2n}=2nf+1$. By Lemma \ref{l:uniq ppd} either $(n,q,r_{2n})=(5,2,11)$ or $n$ divides $q+1$. The case $(n,q,r_{2n})=(5,2,11)$ has already been handled in Proposition \ref{p:smalldim}, so we may assume that $n$ divides $q+1$. 

We note that the only prime divisors of $|\Omega|$ are $2$, $r_{2n}$ and $n$ (see \cite[Case III of Table 4.1.2]{BG_book} and Remark \ref{r:n=2aj}). Thus these are the only possible primes for prime order derangements in $G$.
Additionally, we note that by Lemma \ref{l:phiconj} and Corollary \ref{c:phiconj} any prime order derangement in $G$ must be contained in ${\rm PGU}_n(q)$. Thus by arguments at the beginning of the proof there are no derangements of order $p=2$ in $G$. 

Note that $n$ is a primitive prime divisor of $q^2-1$. Let $x=\hat{x}Z\in {\rm PGU}_n(q)$ be an element of order $n$. Then by \cite[Proposition 3.3.3]{BG_book}, either $x$ fixes a non-degenerate 1-space, or $x\not\in G_0$ and is such that $\hat{x}=[\Lambda]$ with $\Lambda=\{\mu,\mu^{q^2},\dots,\mu^{q^2(n-1)}\}$ for some $\mu\in\mathbb{F}_{q^{2n}}$ of order $n(q+1)_n$. Thus ${\rm PGU}_n(q)$ contains a derangement of order $n$ and $G_0$ does not. We conclude that if ${\rm PGU}_n(q)\leqs G$ then $G$ is not almost elusive.

Thus we are left to handle the case in which $G\cap {\rm PGU}_n(q)=G_0$. In this case the only possible derangements of prime order in $G$ are the elements of order $r_{2n}=2nf+1$ in $G_0$. Write $G=G_0.J$ where 
$$J\leqs {\rm Out}(G_0)=\langle \ddot{\delta} \rangle {:} \langle \ddot{\phi} \rangle = C_n {:} C_{2f}$$
Then by Corollary \ref{c:unitary case cor}, $G$ is almost elusive if and only if $J$ projects onto $\la \ddot{\phi} \ra $. This completes the proof of the proposition.
\end{proof}

\begin{rmk}
This leaves us with our only potentially infinite family of almost simple almost elusive groups with socle $G_0\in \mathcal{G}$. However, due to the severe number theoretic restrictions in this case (namely $r_{2n}=2nf+1$ being the unique primitive prime divisor of $q^{2n}-1$ with $q=2^f$ and $n$ dividing $q+1$), we anticipate there are in fact no groups that satisfy all the required conditions. See Remarks \ref{r:rmk1} and \ref{r:n=2aj} for more discussion on this.
\end{rmk}

\subsubsection{Orthogonal groups}
To complete the proof of Theorem \ref{t:mainthrm} it remains to handle the orthogonal groups with point stabiliser in $\mathcal{C}_1$. These cases are outlined in Table \ref{tab:orthc1}. We begin with a definition.
\begin{defn}
Let $G_0={\rm P}\Omega^{\epsilon}_n(q)$ with natural module $V$ and let $Q$ denote the associated quadratic form. When $n$ is odd we say that $Q$ is \emph{parabolic} (here $\epsilon=\circ$). When $n$ is even and $Q$ has Witt defect 1 we say $Q$ is \emph{elliptic} (here $\epsilon=-$). Similarly for $n$ even and $Q$ with Witt defect 0 we say $Q$ is \emph{hyperbolic} (here $\epsilon=+$). Additionally we say that a subspace $W$ of $V$ is parabolic (elliptic or hyperbolic) if the restriction of $Q$ to $W$ is parabolic (elliptic or hyperbolic).
\end{defn}

Now we note that if $x\in{\rm P}\Omega^{\epsilon}_n(q)$ is an element of order $r$, such that $r$ is a primitive prime divisor of $q^i-1$ with $i$ even, then $x=\hat{x}Z$ and $\hat{x}$ fixes an orthogonal decomposition of the form 
$$V=U_1\perp\dots \perp U_t \perp {\rm C}_V(\hat{x})$$
where each $U_j$ is an elliptic $i$-space on which $\hat{x}$ acts irreducibly, and ${\rm C}_V(\hat{x})$ is non-degenerate or trivial. We note this is similar to the description of prime order elements in linear groups as discussed in Section \ref{s:conjclass}.

\begin{table}[h!]
\[
\begin{array}{lllll} \hline
\mbox{Case} & G_0 & \mbox{Type of } H & \mbox{Conditions} & i \\ \hline
\mbox{O1} &{\rm P}\Omega^{+}_{n}(q) & {\rm P}_1 &n\equiv 0\imod 4& n-2\\
\mbox{O2} &                                        &  {\rm P}_4 & n=8 &n-2\\
\mbox{O3} &                                        & {\rm Sp}_{n-2}(q) &  n\equiv 2\imod 4 & n/2\\
\mbox{XVI} &                                       &   {\rm Sp}_{n-2}(q) & n\equiv 0\imod 4\\
\mbox{O4} &                                        & {\rm O}_1(q)\perp{\rm O}_{n-1}(q) & n\equiv 2\imod 4 & n/2\\
\mbox{XV} &                                    &  {\rm O}_1(q)\perp{\rm O}_{n-1}(q) & n\equiv 0\imod 4 \\
\mbox{O5} &                                        & {\rm O}^{+}_2(q)\perp{\rm O}^{+}_{n-2}(q)& n\equiv 0\imod 4 & n-2\\
\mbox{O6}&                                         & {\rm O}_2^-(q)\perp{\rm O}^-_{n-2}(q) & n\equiv 0\imod 4& (n-2)/2\\
\mbox{O7} &                                        &  {\rm O}^{-}_2(q)\perp{\rm O}^{-}_{n-2}(q) & n\equiv 2\imod 4 & n/2\\
\mbox{O12} & {\rm P}\Omega^{-}_n(q)  & {\rm P}_1 &   n\equiv 2\imod 4& n\\
\mbox{O13} &                                     &  {\rm Sp}_{n-2}(q) & & n  \\
\mbox{O14} &                                     &  {\rm O}_1(q)\perp{\rm O}_{n-1}(q) & &n \\
\mbox{O15} &                                     &  {\rm O}^{+}_2(q)\perp{\rm O}^{-}_{n-2}(q)&n\equiv 2 \imod 4  &n \\
\mbox{O16} &  \Omega_n(q)  & {\rm P}_1 & n\equiv 1\imod 4 & n-1\\
\mbox{O17} &                      & {\rm O}_1(q)\perp{\rm O}^{+}_{n-1}(q) & &n-1\\
\mbox{O18} &                      & {\rm O}_1(q)\perp{\rm O}^{-}_{n-1}(q) &  n\equiv 3\imod 4 \mbox &(n-1)/2\\
\mbox{XX} &                        &  {\rm O}_1(q)\perp{\rm O}^-_{n-1}(q) & n\equiv 1\imod 4 \\
\mbox{O19} &                      &  {\rm O}^{\epsilon}_{2}(q)\perp{\rm O}_{n-2}(q) & n\equiv 1\imod 4&n-1\\
\hline
\end{array}
\]
\caption{Orthogonal groups with $\mathcal{C}_1$ subgroups from Tables \ref{tab:pi0} and \ref{tab:pi1}}
\label{tab:orthc1}
\end{table}

In Propositions \ref{p:mspace1}, \ref{p:mspace2} and \ref{p:mspace3} we handle the cases in which $H$ is the stabiliser of a totally singular $m$-space for particular $m$. 

\begin{prop}\label{p:mspace1}
Theorem \ref{t:mainthrm} holds for case ${\rm O1}$ in Table \ref{tab:pi1}.
\end{prop}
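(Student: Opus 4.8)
The plan is to produce a derangement whose (prime) order is different from $r_{n-2}$; since any element of order $r_{n-2}$ in $G_0$ is already a derangement by Proposition \ref{p:aereduc}, this will show that $G$ is not almost elusive. Here $G_0={\rm P}\Omega^{+}_n(q)$, $n\equiv 0\imod 4$ and $H$ is the stabiliser of a totally singular $1$-space; as $G_0\in\mathcal{G}$ we have $n\geqs 8$, and by Proposition \ref{p:smalldim} we may assume $(n,q)\neq(12,2)$, so that $P_q^{n/2}\neq\emptyset$ by Zsigmondy's theorem.

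First I would fix $s\in P_q^{n/2}$. Since $n\geqs 8$ we have $n/2\neq n-2$, so Lemma \ref{l:Lemma A.1} gives $s\neq r_{n-2}$, and $s$ is an odd prime. Because $4\mid n$, the integer $i=n/2$ is even, so by the description of prime order semisimple elements in orthogonal groups recalled just before Table \ref{tab:orthc1} (see also \cite[Chapter 3]{BG_book}) there is an element $x=\hat{x}Z\in G_0$ of order $s$ with $\hat{x}=[\Lambda^2]$: that is, $\hat{x}$ fixes an orthogonal decomposition $V=U_1\perp U_2$ in which each $U_j$ is an elliptic $(n/2)$-space acted on irreducibly by $\hat{x}$. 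This is consistent with $G_0$ having plus type, since an orthogonal sum of two elliptic spaces is hyperbolic, and such an $\hat{x}$, having odd order, lies in $\Omega^{+}_n(q)$.

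The key observation is then that $C_V(\hat{x})=0$ and every eigenvalue of $\hat{x}$ is a non-trivial $s$-th root of unity lying in $\mathbb{F}_{q^{n/2}}\setminus\mathbb{F}_q$ (as $s\nmid q-1$); hence $\hat{x}$, and therefore $x$, fixes no $1$-dimensional subspace of $V$ at all. In particular $x$ fixes no totally singular $1$-space, so $x$ is a derangement of order $s$. Together with the derangements of order $r_{n-2}$ this gives derangements of two distinct prime orders, so $G$ is not almost elusive, as required. The only real obstacle is routine bookkeeping: confirming that the class $[\Lambda^2]$ exists and meets $G_0$ (rather than only ${\rm SO}^{+}_n(q)$ or ${\rm GO}^{+}_n(q)$) and disposing of the Zsigmondy exception $(n,q)=(12,2)$, both of which are immediate from the conjugacy-class results of \cite{BG_book} already used in this section and from Proposition \ref{p:smalldim}.
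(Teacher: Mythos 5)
Your proposal is correct and follows essentially the same route as the paper: take $s\in P_q^{n/2}$ (which exists since $(n,q)\neq(12,2)$ by Proposition \ref{p:smalldim}), use the element $\hat{x}=[\Lambda^2]$ with trivial $1$-eigenspace to produce a derangement of order $s$, and combine this with the derangements of order $r_{n-2}$ from Proposition \ref{p:aereduc}. The extra verifications you flag (that $s\neq r_{n-2}$, that the class meets $G_0$, and that the eigenvalues lie outside $\mathbb{F}_q$) are exactly the routine details the paper leaves implicit.
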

\begin{proof}
Here $H$ is the stabiliser of a totally singular 1-space and $$|\Omega|=(q^{n/2}-1)(q^{(n-2)/2}+1)/(q-1).$$ Let $s$ be a primitive prime divisor of $q^{n/2}-1$ and note that $n/2$ is even (by Proposition \ref{p:smalldim} we are assuming $(n,q)\neq(12,2)$ so $s$ always exists). Take $x=\hat{x}Z\in G_0$ to be an element of order $s$ such that $\hat{x}=[\Lambda^2]$. Then $x$ does not fix a 1-space, so is a derangement. Therefore $G$ is not almost elusive since $G_0$ contains derangements of order $s$ and of order $r_{n-2}$ (where $r_{n-2}$ denotes the unique primitive prime divisor of $q^{n-2}-1$).
\end{proof}

\begin{prop}\label{p:mspace2}
Theorem \ref{t:mainthrm} holds for case ${\rm O2}$ in Table \ref{tab:pi1} .
\end{prop}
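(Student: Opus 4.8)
The goal is to show that $G$ is not almost elusive for every $G$ with socle $G_0={\rm P}\Omega^{+}_8(q)$, where $H$ is the stabiliser in $G$ of a totally singular $4$-space; note that since $n=8$ and $n/2=4$, such a subspace is a maximal totally singular subspace of the natural module $V$. As $(G_0,H,i)$ with $i=n-2=6$ appears in Table \ref{tab:pi1}, Proposition \ref{p:aereduc}(ii) tells us that every element of order $r_6$ in $G_0$ is a derangement, where $r_6$ denotes the unique primitive prime divisor of $q^6-1$. So it suffices to exhibit a derangement in $G$ of prime order different from $r_6$. First I would invoke Proposition \ref{p:smalldim} to reduce to $q\geqs 5$; this also guarantees $P_q^6\neq\emptyset$, so that $r_6$ is well-defined.

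The construction mirrors the proof of Proposition \ref{p:mspace1}. Since $n\equiv 0\imod 4$, Lemma \ref{l:tisSO} is available here. I would take $s$ to be a primitive prime divisor of $q^{n/2}-1=q^4-1$ (which exists by Zsigmondy's theorem) and let $x=\hat x Z\in G_0$ be an element of order $s$ with $\hat x=[\Lambda,I_{n/2}]=[\Lambda,I_4]$, where $\Lambda$ is an elliptic $4$-space on which $\hat x$ acts irreducibly. Such an element exists inside $G_0$ because $s=4k+1\geqs 5$ is odd, hence coprime to $|Z|$ and to the index $|{\rm SO}^{+}_8(q){:}\Omega^{+}_8(q)|$. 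By Lemma \ref{l:tisSO}, $x$ fixes no totally singular $4$-space of $V$, so $x\in G_0\leqs G$ is a derangement.

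Finally I would note that $s\neq r_6$: since $r_6$ is an odd primitive prime divisor of $q^6-1$, Lemma \ref{l:Lemma A.1} shows it divides $q^4-1$ only if $6$ divides $4$, which is false. Hence $G$ contains derangements of the two distinct prime orders $s$ and $r_6$, and so $G$ is not almost elusive, as required. There is no genuine obstacle in this case: the only points needing (routine) care are the existence of the primitive prime divisors $s$ and $r_6$, handled by Zsigmondy's theorem together with the reduction to $q\geqs 5$, and the fact that the semisimple element $x$ can be chosen inside $G_0$ rather than merely in an overgroup, which follows from the coprimality observation above.
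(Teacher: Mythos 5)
Your proposal is correct and follows essentially the same route as the paper: the paper likewise takes $s$ to be a primitive prime divisor of $q^4-1$, forms $x=\hat{x}Z\in G_0$ with $\hat{x}=[\Lambda,I_4]$, and concludes via Lemma \ref{l:tisSO} that $x$ is a derangement, giving derangements of the two distinct prime orders $s$ and $r_6$. Your additional remarks (the reduction to $q\geqs 5$, the check $s\neq r_6$, and the coprimality argument placing $x$ in $G_0$) are routine verifications the paper leaves implicit.
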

\begin{proof}
Here $(G_0,i)=({\rm P}\Omega^+_8(q),6)$ and $H$ is the stabiliser of a totally singular 4-space. In this case $|\Omega|=(q+1)(q^2+1)(q^3+1)$. Let $s$ be a primitive prime divisor of $q^4-1$ and take $x=\hat{x}Z\in G_0$ to be an element of order $s$ such that $\hat{x}=[\Lambda,I_4]$. Then $x$ is a derangement by Lemma \ref{l:tisSO}. Thus the result follows.   
\end{proof}

\begin{prop}\label{p:mspace3}
Theorem \ref{t:mainthrm} holds for cases ${\rm O12}$ and ${\rm O16}$ in Table \ref{tab:pi1}.
\end{prop}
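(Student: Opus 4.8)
The plan is, in each of cases O12 and O16, to produce a $G$-class of derangements whose prime order is different from $r_i$; since by Proposition \ref{p:aereduc} every element of order $r_i$ in $G_0$ is already a derangement, this exhibits derangements of two distinct prime orders and so shows $G$ is not almost elusive. In both cases $H$ is the stabiliser of a totally singular $1$-space, and the guiding principle is the following. If $x=\hat{x}Z\in G_0$ has odd prime order $s$ with $s\nmid q-1$ --- in particular if $s$ is a primitive prime divisor of $q^j-1$ for some $j\geqs 2$ --- then $1$ is the only eigenvalue of $\hat{x}$ on $V$ over $\mathbb{F}_q$, with eigenspace $C_V(\hat{x})$; hence $x$ fixes a totally singular $1$-space only when $C_V(\hat{x})$ contains a nonzero singular vector. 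Recalling that over $\mathbb{F}_q$ the only anisotropic nondegenerate orthogonal spaces are the trivial space, ${\rm O}_1(q)$ and ${\rm O}^-_2(q)$, it therefore suffices to build an element $x=\hat{x}Z\in G_0$ of prime order $s\neq r_i$ whose fixed space $C_V(\hat{x})$ is of one of these three types, using the description of semisimple elements of order a primitive prime divisor of $q^j-1$ ($j$ even) recorded just before Table \ref{tab:orthc1}.

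For case O16, $G_0=\Omega_n(q)$ with $n\equiv 1\imod 4$, so $q$ is odd, $n\geqs 9$, and $m:=(n-1)/2$ is even with $m\geqs 4$; thus $P_q^m\neq\emptyset$ by Zsigmondy's theorem, and $s\neq r_{n-1}$ for $s\in P_q^m$ since a prime lies in at most one set $P_q^k$ and $m\neq n-1$. Take $x=\hat{x}Z\in G_0$ of order $s$ with $\hat{x}=[\Lambda^2,I_1]$, where $\Lambda$ is an elliptic $m$-space on which $\hat{x}$ acts irreducibly (such a block exists because the multiplicative order of $q$ modulo $s$ is $m$). Then $V\cong{\rm O}^-_m(q)\perp{\rm O}^-_m(q)\perp{\rm O}_1(q)\cong{\rm O}^+_{n-1}(q)\perp{\rm O}_1(q)$, a legitimate natural module for $\Omega_n(q)$, and $C_V(\hat{x})\cong{\rm O}_1(q)$ is anisotropic, so $x$ is a derangement of order $s$. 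For case O12, $G_0={\rm P}\Omega^-_n(q)$ with $n\equiv 2\imod 4$; excluding the duplicate $n=6$ we have $n\geqs 10$, so $j:=(n-2)/2$ is even with $j\geqs 4$. If $(n,q)\neq(14,2)$ then $P_q^j\neq\emptyset$, and for $s\in P_q^j$ we have $s\neq r_n$ (again $j\neq n$); take $x=\hat{x}Z\in G_0$ of order $s$ with $\hat{x}=[\Lambda^2,I_2]$, where $\Lambda$ is an elliptic $j$-space of order $s$ and $I_2$ is the identity on an ${\rm O}^-_2(q)$ subspace. Then $V\cong{\rm O}^-_j(q)\perp{\rm O}^-_j(q)\perp{\rm O}^-_2(q)\cong{\rm O}^+_{n-2}(q)\perp{\rm O}^-_2(q)\cong{\rm O}^-_n(q)$ and $C_V(\hat{x})\cong{\rm O}^-_2(q)$ is anisotropic, so $x$ is a derangement of order $s$. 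In the one remaining configuration $(n,q)=(14,2)$, for which $P_2^6=\emptyset$, take instead $s=3\in P_2^2$ and $x=\hat{x}Z$ with $\hat{x}=[\Lambda^6,I_2]$, where $\Lambda$ is an elliptic $2$-space of order $3$ and $I_2$ is the identity on an ${\rm O}^-_2(2)$ subspace: now $V$ is an orthogonal sum of seven copies of ${\rm O}^-_2(2)$, isometric to ${\rm O}^+_{12}(2)\perp{\rm O}^-_2(2)\cong{\rm O}^-_{14}(2)$, while $C_V(\hat{x})\cong{\rm O}^-_2(2)$ is anisotropic and $3\neq 43=r_{14}$.

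The step that needs the most care is the orthogonal-form bookkeeping: one must check that the prescribed block data really does assemble into an element of $\Omega^{\epsilon}_n(q)$ with the correct sign $\epsilon$ and with the stated fixed space, which comes down to the identity ${\rm O}^-_k(q)\perp{\rm O}^-_k(q)\cong{\rm O}^+_{2k}(q)$ for even $k$ together with keeping track of the parity of the number of minus-type summands so that the total Witt defect is as required. Granting this, the derangement property follows at once from the eigenvalue observation of the first paragraph, the order of $x$ equals $s$ (as $s$ is odd and coprime to $|Z|$), and $s\neq r_i$ holds by the argument indicated above. Finally, all small cases with $G_0\in\mathcal{D}\cup\mathcal{O}$, in particular ${\rm P}\Omega^{\epsilon}_{10}(2)$, are already handled by Proposition \ref{p:smalldim}, and the unique Zsigmondy failure $(n,q)=(14,2)$ in case O12 has been treated directly above; hence in both cases $G_0$ contains derangements of two distinct prime orders and $G$ is not almost elusive.
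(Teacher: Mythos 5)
Your proposal is correct and follows essentially the same route as the paper: the same primitive prime divisors $s\in P_q^{(n-2)/2}$ (case O12) and $s\in P_q^{(n-1)/2}$ (case O16), the same elements $[\Lambda^2,I_2]$ and $[\Lambda^2,I_1]$ with anisotropic fixed space, and a separate treatment of the Zsigmondy exception $(n,q)=(14,2)$. The only cosmetic differences are that you justify the anisotropy of the $1$-eigenspace directly rather than citing \cite[Remark 3.5.5]{BG_book}, and in the $(14,2)$ case you use $[\Lambda^6,I_2]$ where the paper uses $[\Lambda^7]$ (both are fixed-point-free on singular $1$-spaces).
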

\begin{proof}
Suppose that $(G_0,H,i)$ is as in case ${\rm O12}$ (respectively, ${\rm O16}$) in Table \ref{tab:pi1} and all relevant conditions hold. Then $H$ is the stabiliser of a totally singular 1-space and $$|\Omega|=(q^{n/2}+1)(q^{(n-2)/2}-1)/(q-1)$$ (respectively, $(q^{n-1}-1)/(q-1)$).

 Assume first that $(n,q)\neq (14,2)$ (note this initial assumption is only necessary for case ${\rm O12}$) and take $s$ to be a primitive prime divisor of $q^{(n-2)/2}-1$ (respectively, $q^{(n-1)/2}-1$). Let $x=\hat{x}Z\in G_0$ be an element of order $s$ such that $\hat{x}=[\Lambda^2,I_2]$ (respectively, $\hat{x}=[\Lambda^2,I_1]$). By \cite[Remark 3.5.5]{BG_book} the 1-eigenspace of $x$ is elliptic (respectively parabolic). Therefore $x$ is a derangement, so $G_0$ contains derangements of order $r_{n}$ (respectively, $r_{n-1}$) and order $s$. Finally assume $(n,q)= (14,2)$ and note that 3 is a primitive prime divisor of $q^2-1$. Take $x=\hat{x}Z\in G_0$ to be an element of order 3 such that $\hat{x}=[\Lambda^7]$. Then $x$ does not fix a 1-space, so is a derangement. Thus the result follows since $s\neq r_{n}$. 
\end{proof}

In the following proposition we handle the cases in which $H$ is the stabiliser of a non-singular 1-space. 

\begin{prop}\label{p:spn-1}
Theorem \ref{t:mainthrm} holds for cases ${\rm O3}$ and ${\rm O13}$ in Table \ref{tab:pi1} and case ${\rm XVI}$ in Table \ref{tab:pi0}.
\end{prop}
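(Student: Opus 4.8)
The plan is to show that $G$ is not almost elusive in each of these cases, by exhibiting two $G$-classes of derangements of prime order. Since $H_0\cong{\rm Sp}_{n-2}(q)$, the field $q=2^f$ is even, the centre of ${\rm O}^{\epsilon}_n(q)$ is trivial so $G_0=\Omega^{\epsilon}_n(q)$, and $H$ is the stabiliser in $G$ of a nonsingular $1$-space of $V$; write $n=2m$. By Proposition \ref{p:smalldim} we may assume $G_0\notin\mathcal{D}\cup\mathcal{O}$, which removes the relevant small-dimensional groups.

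The first step is a derangement criterion: an element $x=\hat{x}Z\in G_0$ of prime order is a derangement if and only if $\hat{x}$ fixes no nonsingular vector of $V$. Since $q$ is even, every nondegenerate quadratic subspace of positive dimension contains a nonsingular vector, so for semisimple $x$ of order coprime to $q-1$ (in particular for order $r_i$ in cases O3 and O13) this says that $x$ is a derangement precisely when $C_V(\hat{x})=0$, i.e. $\hat{x}$ is \emph{hollow}; and for a unipotent involution $x$ it says that $x$ is a derangement precisely when $C_V(\hat{x})$ is totally singular. A short computation with Witt indices and Dickson invariants shows that this never occurs for an involution of $\Omega^{-}_{2m}(q)$, nor of $\Omega^{+}_{2m}(q)$ with $m$ odd, so in those situations there is no derangement of order $2$; this is one of the points that makes the argument delicate.

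The second step builds hollow semisimple derangements. In cases O3 and O13 every element of order $r_i$ is already a derangement by Proposition \ref{p:aereduc}(ii), so it suffices to find a derangement of a second prime order; in case XVI two are needed. The building blocks are the irreducible minus-type orthogonal block of dimension $j$ associated with a primitive prime divisor of $q^j-1$ ($j$ even), and the plus-type block of dimension $2j$ associated with one for $j$ odd; stacking copies of a single block into a hollow element imposes a parity constraint on the number of blocks that determines its type. Thus $m$ copies of a $2$-dimensional block give a hollow element of order an odd prime divisor of $q+1$, of type $(-1)^m$, hence a derangement of $\Omega^{+}_{2m}(q)$ when $m$ is even (case XVI, which also gains a derangement of order $r_m\in P_q^m$ from two $m$-dimensional blocks) and of $\Omega^{-}_{2m}(q)$ when $m$ is odd (case O13); when $\epsilon=+$ and $q\geqs 4$ one may instead take ${\rm diag}(\lambda I_m,\lambda^{-1}I_m)$ of order an odd prime divisor of $q-1$; and if $m$ has a divisor $j$ with $1<j<m$ and $m/j$ odd one obtains a hollow element of order a primitive prime divisor of $q^{j}-1$ or $q^{2j}-1$. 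Each such element has odd order, hence lies in $\Omega^{\epsilon}_{2m}(q)$, and its order differs from $r_i$, so $G$ is not almost elusive unless $m$ is prime or a power of $2$ and $q$ is too small for any of these constructions.

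Those remaining cases form the main obstacle. Here $|\Omega|$ equals $q^{m-1}(q^m-1)$ or $q^{m-1}(q^m+1)$ according as $\epsilon=+$ or $-$, and its only prime divisors are $2$ — excluded by the first step — and the divisors of $q^m\mp1$, which a short argument using Lemma \ref{l:btv} forces to equal $r_i$; moreover $P_q^i=\{r_i\}$, so Lemmas \ref{l:ppds4}, \ref{l:ppd2a3} and \ref{l:lic1prime} provide a lower bound for $r_i$. Since the number of ${\rm Inndiag}(G_0)$-classes of hollow elements of order $r_i$ equals $(r_i-1)/c$ with $c$ the dimension of the corresponding block, and $|{\rm Out}(G_0)|$ is small for $q$ even (a bounded multiple of $f$, with no triality here), these bounds yield at least two $G$-classes of derangements of order $r_i$. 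The finitely many genuinely small groups escaping all of the above lie in $\mathcal{D}\cup\mathcal{O}$ and were settled in Proposition \ref{p:smalldim}, completing the proof.
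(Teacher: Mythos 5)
Your proposal is correct and follows essentially the same route as the paper's proof: both build hollow semisimple elements out of irreducible blocks attached to the odd prime divisors of $q^{n/2}-\epsilon$ (with the parity-of-blocks constraint determining the type $\epsilon$), observe that such elements fix no nonsingular $1$-space and hence are derangements, and then, in the residual cases where $q^{n/2}-\epsilon$ has a single odd prime divisor (necessarily $r_i$), count conjugacy classes of elements of order $r_i$ using the lower bounds on unique primitive prime divisors. The only differences are cosmetic: your aside on involutions is not needed, and you invoke Lemmas \ref{l:ppds4} and \ref{l:lic1prime} for the final class count where the paper instead pins down the Mersenne/Fermat structure via Lemma \ref{l:btv} and computes directly.
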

\begin{proof}
Here $G_0={\rm P}\Omega_n^{\epsilon}(q)$ and $H$ is the stabiliser of a non-singular 1-space. Recall by Proposition \ref{p:smalldim} we may assume that $q>2$ for $n=8,10$ and $12$. Note that here $q=2^f$ is even and $|\Omega|=q^{n/2-1}(q^{n/2}-\epsilon)$.
Let $r$ be an odd prime divisor of $q^{n/2}-\epsilon$ such that $r$ is a primitive prime divisor of $q^j-1$ for some $j\geqs1$. Note that if $\epsilon=-$ then $j$ divides $n$ but not $n/2$, so $j$ is even and $n/j$ is odd. Similarly if $\epsilon=+$ then $j$ divides $n/2$, so $n/j$ is even. Take $x=\hat{x}Z\in G_0$ to be an element of order $r$ such that 
\[
\hat{x}:=
\begin{cases}
[(\Lambda,\Lambda^{-1})^{(n/2j)}] & \epsilon=+ \mbox{ and } j \mbox{ is odd}\\
[\Lambda^{n/j}] & \mbox{otherwise}
\end{cases}.
\]

Then $x$ does not fix a 1-space, so is a derangement. Therefore we may assume that $q^{n/2}-\epsilon=r^l$ for some odd prime $r$ and $l\geqs 1$. It follows by Lemma \ref{l:btv} that one of the following is satisfied:
\begin{itemize}
\item[{\rm (i)}] $\epsilon=+$ and $r=2^{fn/2}-1$ is a Mersenne prime; or
\item[{\rm (ii)}] $\epsilon =-$ and $r=2^{fn/2}+1$ is a Fermat prime, $n=2^w$ for some $w\geqs 3$ and $f=2^u$ for some $u\geqs 0$.
\end{itemize}

Suppose (i) holds. Then in particular $j=n/2$ is prime and $f=1$, so $\Aut(G_0)={\rm PGO}_n^+(q)$ . It follows that $G_0$ contains $(r-1)/(n/4)=(2^{(n/2+2)}-8)/n\geqs 2$ distinct $G$-classes of derangements of order $r$. Thus we conclude that $G$ is not almost elusive.

Now suppose that (ii) holds. Then $j=n=2^w$ for some $w\geqs 3$ and $f=2^u$ for some $u\geqs 0$. Thus $G_0$ contains $(r-1)/n=2^{(2^k-w)}$ distinct ${\rm PGO}_n^-(q)$-classes of derangements of order $r$, where $k=w+u-1$. Now $|\Aut(G_0){:}{\rm PGO}_n^-(q)|=f$ so we conclude that $G_0$ contains at least $(r-1)/fn=2^{2^k-(k+1)}$ distinct $G$-classes of derangements of order $r$. It is straightforward to check that $k\geqs 3$ and so $2^{2^k-(k+1)}\geqs 2$. Therefore $G$ is not almost elusive.
\end{proof}

In Propositions \ref{p:nd1}, \ref{p:nd2} and \ref{p:nd3} we handle the cases in which $H$ is the stabiliser of a decomposition $V=U\perp W$ of the natural module, where $W$ is a non-degenerate $(n-1)$-dimensional space of type $\epsilon\in\{+,-,\circ\}$. Note that in all of these cases $q=p^f$ is odd. Additionally if $x=\hat{x}Z\in H_0$ is a element of order $p$ then $\hat{x}\in\Omega^{\epsilon}_{n-1}(q)$, and so the Jordan form of $x$ must contain at least one Jordan 1-block.
 
\begin{prop}\label{p:nd1}
Theorem \ref{t:mainthrm} holds for case ${\rm O4}$ in Table \ref{tab:pi1} and case ${\rm XV}$ in Table \ref{tab:pi0}.
\end{prop}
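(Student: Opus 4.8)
In both cases $G_0={\rm P}\Omega^+_n(q)$ with $q=p^f$ odd, and $H$ is the stabiliser of a non-degenerate $1$-space $\langle v\rangle$, so $H_0=(G_0)_{\langle v\rangle}$. As observed in the paragraph preceding the proposition, an element of $H_0$ of order $p$ fixes the nonzero vector $v$ (since $(p,q-1)=1$) and hence has a Jordan $J_1$-block on $V$; since the Jordan form is a conjugacy invariant, it follows that \emph{any} element of $G_0$ of order $p$ whose Jordan form contains no $J_1$-block lies in no $G$-conjugate of $H_0$ and is therefore a derangement. The plan is to exhibit such a unipotent derangement in every case, and then to locate a second class of prime-order derangements: in case ${\rm O4}$ this is immediate, since $(G_0,H,n/2)$ appears in Table \ref{tab:pi1} and so every element of $G_0$ of order $r_{n/2}$ is a derangement, with $r_{n/2}\ne p$; in case ${\rm XV}$, which lies in Table \ref{tab:pi0}, a second derangement must be built explicitly.

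For the unipotent derangement, recall from Lemma \ref{l:osjordan} that a unipotent element of $\Omega^+_n(q)$ of order $p$ has Jordan form $[J_p^{a_p},\dots,J_1^{a_1}]$ with $a_i$ even for every even $i$. When $n\equiv 0\imod 4$ I would take the Jordan form $[J_2^{n/2}]$, which is admissible as $n/2$ is even; when $n\equiv 2\imod 4$ I would take $[J_3^2,J_2^{(n-6)/2}]$, admissible since $(n-6)/2$ is even and positive (here $n\geqs 10$). Neither form contains a $J_1$-block, so a representative in $G_0$ of either type is a derangement of order $p$. The one delicate point is to confirm, using the classification of unipotent classes in odd characteristic in \cite[Chapter 3]{BG_book}, that a representative of the stated Jordan type really does lie in $\Omega^+_n(q)$ (that the ambient quadratic form has $+$-type and that the spinor norm is trivial, after choosing the component spaces appropriately); I expect this bookkeeping to be the main obstacle. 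For the handful of small pairs $(n,q)$ where it is cleanest to avoid this (for instance certain cases with $n\in\{8,10\}$), the statement follows instead from Proposition \ref{p:smalldim}.

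It remains to produce the second derangement in case ${\rm XV}$. Here $n/2$ is even and at least $4$, so $P_q^{n/2}\ne\emptyset$ by Zsigmondy's theorem ($q$ is odd); fix $r\in P_q^{n/2}$ and let $x=\hat x Z\in G_0$ with $\hat x=[\Lambda^2]$, where $\Lambda$ is an eigenvalue-set spanning an elliptic $(n/2)$-space on which $\hat x$ acts irreducibly. Then $\hat x$ acts on an orthogonal direct sum of two elliptic $(n/2)$-spaces, which is of $+$-type, so $\hat x\in O^+_n(q)$; since $r$ is odd, $\hat x$ in fact lies in $\Omega^+_n(q)$, and as $C_V(\hat x)=0$ the element $x$ fixes no $1$-space of $V$ and hence is a derangement of order $r\ne p$. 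Together with the unipotent derangement constructed above, this gives at least two $G$-classes of prime-order derangements, so $G$ is not almost elusive. In case ${\rm O4}$ the unipotent derangement of order $p$ together with the derangements of order $r_{n/2}$ already supply two classes of distinct prime order, and the result follows.
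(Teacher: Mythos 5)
Your proposal is correct and follows the same overall strategy as the paper: the derangement criterion ``no $J_1$-block in the Jordan form'' for unipotent elements, plus a semisimple derangement $[\Lambda^2]Z$ of order a primitive prime divisor of $q^{n/2}-1$ in case XV (and the free class of order $r_{n/2}$ in case O4). In case O4 your unipotent class $[J_3^2,J_2^{(n-6)/2}]$ is exactly the paper's choice. The one genuine difference is in case XV, where you take $[J_2^{n/2}]$ while the paper uses $[J_3^4,J_2^{(n-12)/2}]$ for $n\geqs 12$ and is then forced into a separate treatment of $n=8$ (Jordan form $[J_5,J_3]$ when $p\geqs 5$, and a second semisimple element $[\Lambda^4]Z$ of order in $P_q^2$ when $p=3$). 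Your choice streamlines this: $[J_2^{n/2}]$ is admissible whenever $n\equiv 0\imod 4$ and covers $n=8$ uniformly. The ``delicate point'' you flag is not actually an obstacle and does not require falling back on Proposition \ref{p:smalldim}: for $q$ odd the subspace supporting $J_i^{a_i}$ with $i$ even is hyperbolic (each pair of $J_i$-blocks spans a $2i$-dimensional hyperbolic space), so $[J_2^{n/2}]$ preserves a form of type $+$, and any element of odd order in ${\rm O}^+_n(q)$ automatically lies in $\Omega^+_n(q)$ since ${\rm O}^+_n(q)/\Omega^+_n(q)$ is a $2$-group; both facts are in \cite[Chapter 3]{BG_book}. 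With that observation supplied, your argument is complete and marginally cleaner than the paper's.
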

\begin{proof}
In both cases $G_0={\rm P}\Omega_n^+(q)$ with $q$ odd and $H$ is of type ${\rm O}_1(q)\perp {\rm O}_{n-1}(q)$. 
Assume first we are in case O4. Then $n\equiv 2\imod 4$ and any element in $G_0$ of order $r_{n/2}$ is a derangement by Proposition \ref{p:aereduc}. Take $x\in G_0$ to be an element of order $p$ with Jordan form $[J_3^2,J_2^{(n-6)/2}]$ on $V$. Then $x$ is a derangement since it does not contain a Jordan 1-block. Thus $G$ is not almost elusive.

We may assume for the remainder of the proof that we are in case XV, so in particular $n\equiv 0\imod 4$. Take $r$ to be a primitive prime divisor of $q^{n/2}-1$ (note that by Proposition \ref{p:smalldim} $r$ always exists) and let $x=\hat{x}Z\in G_0$ be an element of order $r$ such that $\hat{x}=[\Lambda^2]$. Then $x$ is a derangement. 
Suppose $n\geqs 12$ and take $y\in G_0$ to be an element of order $p$ with Jordan form $[J_3^4,J_2^{(n-12)/2}]$ on $V$. Then $y\not\in H_0$ since the Jordan form does not contain a Jordan 1-block, so we conclude that $G$ is not almost elusive. 
Finally suppose $n=8$. If $p\geqs 5$ then any element in $G_0$ of order $p$ with Jordan form $[J_5,J_3]$ on $V$ is a derangement. Thus we may assume $p=3$ and by Proposition \ref{p:smalldim} $q\neq 3$. Take $s$ to be a primitive prime divisor of $q^2-1$ and let $y=\hat{y}Z\in G_0$ be an element of order $s$ such that $\hat{y}=[\Lambda^4]$. Then $y$ is a derangement, so again $G$ is not almost elusive.
\end{proof}

\begin{prop}\label{p:nd2}
Theorem \ref{t:mainthrm} holds for case ${\rm O14}$ in Table \ref{tab:pi1}.
\end{prop}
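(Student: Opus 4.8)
The plan is to show that $G$ is not almost elusive by exhibiting prime-order derangements beyond the single class forced by the reduction. Here $G_0={\rm P}\Omega^-_n(q)$ with $q=p^f$ odd and $n\geqs 8$ even, $H$ is the stabiliser of a decomposition $V=U\perp W$ with $\dim U=1$ and $W$ non-degenerate of dimension $n-1$, and by the reduction we may assume $P_q^n=\{r_n\}$, so every element of $G_0$ of order $r_n$ is already a derangement (Proposition \ref{p:aereduc}). As recorded just before Proposition \ref{p:nd1}, any element $x=\hat xZ\in H_0$ of order $p$ lies in $\Omega_{n-1}(q)$ and so has a Jordan block $J_1$ on $V$; hence it will suffice to produce a unipotent element of $G_0$ of order $p$ whose Jordan form on $V$ contains no $J_1$ block, since then $G$ has derangements of the two distinct primes $p$ and $r_n$.

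When $n\equiv 2\imod 4$ the integer $(n-6)/2$ is even, and I would take $x\in G_0$ of order $p$ with Jordan form $[J_3^2,J_2^{(n-6)/2}]$ on $V$: this is compatible with Lemma \ref{l:osjordan}, and it can be realised in a minus-type space by the usual choice of discriminant on the odd-block summand (the even blocks contribute a hyperbolic summand). When $n\equiv 0\imod 4$ with $n\neq 8$, so $n\geqs 12$, the integer $(n-12)/2$ is even and the same argument applies with $[J_3^4,J_2^{(n-12)/2}]$. In both cases $x$ is a derangement, so $G$ is not almost elusive.

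The case $n=8$ is the one I expect to be the real obstacle, and it needs a separate argument: in ${\rm P}\Omega^-_8(q)$ with $p=3$ the parity conditions of Lemma \ref{l:osjordan} rule out any unipotent element of order $3$ without a $J_1$ block, and because $V$ has minus type the semisimple element $[\Lambda^4]$ used for the $\Omega^+_8$ analogue in Proposition \ref{p:nd1} is unavailable (it has plus type) --- indeed every fixed-point-free semisimple element of prime order in $\Omega^-_8(q)$ has order $r_8$, so there is no ``new'' prime to exploit. Instead I would count classes of order-$r_8$ elements. Since $q=3$ is handled by Proposition \ref{p:smalldim} (as ${\rm P}\Omega^-_8(3)\in\mathcal{O}$), assume $q\geqs 5$; then $r_8\in P_q^8=P_p^{8f}$ gives $r_8=8fk+1$ for some $k\geqs 1$, and Lemma \ref{l:ppds4} forces $k\geqs 4$ (the exceptional pair $(8,2)$ being excluded as $q$ is odd). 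The elements of order $r_8$ in $G_0$ have the form $[\Lambda]Z$ with $\Lambda$ a single irreducible $8$-dimensional block, so $G_0$ has $(r_8-1)/8=fk$ such $G_0$-classes (Theorem \ref{t:semisimp}); as the diagonal automorphisms fix these classes and $|\Aut(G_0):{\rm Inndiag}(G_0)|=f$, at least $fk/f=k\geqs 2$ distinct $G$-classes of derangements of order $r_8$ remain, so $G$ is not almost elusive.

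The one technical point to handle with care throughout is realising the prescribed Jordan and eigenvalue types inside $\Omega^-_n(q)$ rather than $\Omega^+_n(q)$; for the constructions above this is arranged by the discriminant of the odd-block summand, and it is precisely the failure of this mechanism in dimension $8$ (forcing the count of $r_8$-classes, hence the bound $k\geqs 4$ from Lemma \ref{l:ppds4}) that makes the $n=8$ case exceptional.
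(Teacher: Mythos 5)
Your proof is correct and follows essentially the same route as the paper: for $n>8$ the same unipotent Jordan forms $[J_3^2,J_2^{(n-6)/2}]$ and $[J_3^4,J_2^{(n-12)/2}]$ (no $J_1$-block, hence derangements), and for $n=8$ the same count of $G$-classes of order-$r_8$ elements via Lemma \ref{l:ppds4} after discarding $q=3$ by Proposition \ref{p:smalldim}. The motivational asides about why $n=8$ is exceptional are not load-bearing (and ruling out $[J_2^4]$ needs the minus-type constraint of \cite[Proposition 3.5.12]{BG_book} rather than Lemma \ref{l:osjordan} alone), but the argument you actually run is exactly the paper's.
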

\begin{proof}
Here $(G_0,i)=({\rm P}\Omega^{-}_n(q),n)$ and $H$ is of type ${\rm O}_1(q)\perp {\rm O}_{n-1}(q)$. Suppose first that $n>8$. Let $x\in G_0$ be an element of order $p$ with Jordan form $[J_3^2,J_2^{(n-6)/2}]$ if $n\equiv 2\imod 4$ and $[J_3^4,J_2^{(n-12)/2}]$ if $n\equiv 0\imod 4$. Then $x$ is a derangement since there are no Jordan 1-blocks in its Jordan form on $V$. Thus $G_0$ contains both unipotent and semisimple derangements. 
Finally assume $n=8$. By Proposition \ref{p:smalldim} we may assume $q\geqs 5$, so $r_8\geqs 32f+1$ by Lemma \ref{l:ppds4}. Therefore $G_0$ contains $(r_8-1)/8=4f$ distinct ${\rm PGO}^-_n(q)$-classes of elements of order $r_8$ (see \cite[Propositions 3.5.4 and 3.5.8]{BG_book}). Since $|\Aut(G_0){:}{\rm PGO}^-_n(q)|=f$ there are at least $(r_8-1)/8f\geqs 4$ distinct $G$-classes of elements of order $r_8$ in $G_0$, so $G$ is not almost elusive.
\end{proof}

\begin{prop}\label{p:nd3}
Theorem \ref{t:mainthrm} holds for cases {\rm O17} and {\rm O18}  in Table \ref{tab:pi1} and case {\rm XX} in Table \ref{tab:pi0}.
\end{prop}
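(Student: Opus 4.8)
The plan is to exhibit, in each of the three cases, at least two $G$-conjugacy classes of derangements of prime order; this is enough, since an almost elusive group has exactly one such class. Throughout we have $G_0=\Omega_n(q)$ with $n\geqs 7$ odd and $q=p^f$ odd, and $H$ is the stabiliser of an orthogonal decomposition $V=U\perp W$ of the natural module with $\dim U=1$ and $W=U^{\perp}$ of type $+$ (case O17) or of type $-$ (cases O18 and XX); by Proposition \ref{p:smalldim} we may assume $(n,q)\neq(7,3)$. Note that $G_0$ is normal in $G$, so $x^{G}\subseteq G_0$ and hence $x^{G}\cap H=x^{G}\cap H_0$ for every $x\in G_0$; thus it suffices to find two classes of prime-order elements $x\in G_0$ with $x^{G}\cap H_0=\emptyset$.

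First I would produce a unipotent derangement of order $p$. As recorded in the paragraph before Proposition \ref{p:nd1}, any element of $H_0$ of order $p$ has a Jordan $1$-block in its Jordan form on $V$; since Jordan type is a conjugacy invariant, any $x\in G_0$ of order $p$ whose Jordan form on $V$ has no $J_1$ block is therefore a derangement. I would take $x$ of Jordan type $[J_3,J_2^{(n-3)/2}]$ when $n\equiv 3\imod 4$, and of type $[J_3^3,J_2^{(n-9)/2}]$ when $n\equiv 1\imod 4$ (here $n\geqs 9$): in both cases the exponent of $J_2$ is a non-negative even integer, so such an element exists in ${\rm O}_n(q)$ by Lemma \ref{l:osjordan}, and it lies in $\Omega_n(q)$ because every unipotent element of ${\rm O}_n(q)$ has odd order and hence trivial spinor norm (as $q$ is odd). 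As it has no $J_1$ block, $x$ is a derangement of order $p$.

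Next I would find a semisimple derangement of order coprime to $p$. For cases O17 and O18 this is immediate: both appear in Table \ref{tab:pi1}, so by Proposition \ref{p:aereduc} every element of $G_0$ of order $r_i$ is a derangement, where $r_i$ denotes the unique primitive prime divisor of $q^i-1$ with $i=n-1$ (O17) or $i=(n-1)/2$ (O18), and $r_i\neq p$. For case XX, in which $W$ is elliptic and $n\equiv 1\imod 4$, no such prime is supplied by the reduction, so I would argue directly. Since $n\geqs 9$, the integer $(n-1)/2\geqs 4$ is even, and since $q$ is odd Zsigmondy's theorem yields a primitive prime divisor $r$ of $q^{(n-1)/2}-1$. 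I would take $x=\hat x Z\in G_0$ of order $r$ with $\hat x=[\Lambda^2,I_1]$, where $\Lambda$ is a Frobenius orbit of primitive $r$-th roots of unity of size $(n-1)/2$; then $C_V(\hat x)$ is a non-degenerate $1$-space $U_0$, and as $(n-1)/2$ is even each $\Lambda$-block carries an elliptic form, so $U_0^{\perp}=C_V(\hat x)^{\perp}$ is an orthogonal sum of two elliptic $(n-1)/2$-spaces and hence is hyperbolic (type $+$). Because $r\nmid q-1$, the only eigenvalue of $x$ in $\mathbb{F}_q$ is $1$, so the only $x$-invariant $1$-subspaces of $V$ are those inside $C_V(x)=U_0$; hence $x$ fixes no non-degenerate $1$-space whose perpendicular is elliptic, which gives $x^{G}\cap H_0=\emptyset$. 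So $x$ is a derangement of order $r\neq p$, and in every case $G$ has two classes of prime-order derangements and is therefore not almost elusive.

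The unipotent step is routine, relying only on Lemma \ref{l:osjordan} and the remark before Proposition \ref{p:nd1}. The point needing care is the semisimple construction in case XX: one must confirm that $\hat x=[\Lambda^2,I_1]$ genuinely defines an element of $G_0$ (it does, being of odd order $r$) and, crucially, that the $1$-eigenspace it fixes has perpendicular of type $+$ rather than type $-$, so that no $G$-conjugate of $x$ can lie in the stabiliser of a non-degenerate $1$-space of the kind stabilised by $H$.
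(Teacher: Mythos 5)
Your proof is correct and follows the same overall strategy as the paper: a unipotent derangement detected by the absence of a Jordan $1$-block, plus a semisimple derangement (supplied automatically by Proposition \ref{p:aereduc} in cases O17 and O18, and constructed explicitly via the elliptic/hyperbolic type argument in case XX — your type computation for $\hat{x}=[\Lambda^2,I_1]$ is essentially identical to the paper's). There are two small points where you diverge. First, for the unipotent derangement in case XX ($\epsilon=-$, $n\equiv 1\imod 4$) the paper takes $[J_2^{(n-1)/2},J_1]$, which \emph{does} contain a $J_1$-block, and must therefore invoke the finer fact \cite[Proposition 3.5.12]{BG_book} (that an order-$p$ element of $\Omega^-_{n-1}(q)$ has some odd block of multiplicity at least $2$); your choice $[J_3^3,J_2^{(n-9)/2}]$ has no $J_1$-block, so the elementary criterion suffices and that citation is avoided. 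Second, in case O17 with $n=7$ the paper only uses the Jordan form $[J_3,J_2^2]$ when $p\geqs 5$, and for $p=3$ it switches to counting $G$-classes of elements of order $r_6$ (using $q\geqs 9$ and Lemma \ref{l:ppd2a3}); you apply $[J_3,J_2^2]$ uniformly for all odd $p$. Your version does go through — an order-$3$ element of $H_0$ still acts trivially on the $1$-space $U$ and so carries a $J_1$-block, and the class $[J_3,J_2^2]$ is realised in $\Omega_7(q)$ for $p=3$ since unipotent elements of ${\rm O}_7(q)$ lie in $\Omega_7(q)$ and the single odd block allows both discriminants — but since the paper's author deliberately sidestepped this instance, you should state the existence of that order-$3$ class explicitly rather than leaving it to Lemma \ref{l:osjordan}, which only gives a necessary condition on Jordan forms.
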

\begin{proof}
Here $G_0=\Omega_n(q)$ and $H$ is the stabiliser of a non-degenerate $(n-1)$-space of type $\epsilon\in\{+,-\}$.
Assume $n\geqs 9$ when $\epsilon=+$ and take $x=\hat{x}Z\in G_0$ to be an element of order $p$ with the following Jordan form:

\begin{table}[h!]
\[
\begin{array}{l|ll} 
 & n\equiv 1\imod 4 & n\equiv 3\imod 4 \\ \hline
\rule{0pt}{1.10\normalbaselineskip}\epsilon =+& [J_3^3,J_2^{(n-9)/2}] &  [J_3,J_2^{(n-3)/2}]\\
\rule{0pt}{1.10\normalbaselineskip}\epsilon=- &  [J_2^{(n-1)/2},J_1] & [J_3,J_2^{(n-3)/2}]\\
\end{array}
\]
\end{table}
 
By \cite[Proposition 3.5.12]{BG_book} if $\hat{x}\in \Omega^-_{n-1}(q)$ is an element of order $p$ with Jordan form $[J_p^{a_p},\dots,J_1^{a_1}]$, then $a_i>1$ for some odd $i$. Thus $x$ is a derangement.
It follows that in case O17 with $n\geqs 9$ or in case O18, $G_0$ contains both semisimple and unipotent derangements and we are done. 

Suppose we are in case O17 with $n=7$. Then $\epsilon=+$ and by assumption $G_0$ contains semisimple derangements. If $p\geqs 5$ then any element in $G_0$ with Jordan form $[J_3,J_2^2]$ is a derangement, so we may assume $p=3$. By Proposition \ref{p:smalldim} we may additionally assume $q\geqs 9$, so $r_i=r_6\geqs 24f+1$ by Lemma \ref{l:ppd2a3}. Thus continuing in the usual manner, $G_0$ contains $4f$ distinct ${\rm PGO}_7(q)$-classes of order $r_6$, and since $|\Aut(G_0):{\rm PGO}_7(q)|=f$, $G_0$ contains at least 4 distinct $G$-classes of elements of order $r_6$. Thus the result follows.

Finally assume we are as in case XX, then $\epsilon =-$ and $n\equiv 1\imod 4$. Let $s$ be a primitive prime divisor of $q^{(n-1)/2}-1$ 
and take $x=\hat{x}Z\in G_0$ to be an element of order $s$ such that $\hat{x}=[\Lambda^2,I_1]$. Now suppose that $x$ fixes a non-degenerate $(n-1)$-space $W$ of type $\epsilon=-$. Then $\hat{x}$ acts non-trivially on $W$ since $\dim C_V(\hat{x})=1<n-1$, so we obtain a decomposition $W=W_1\perp W_2$ where $W_1$ and $W_2$ are elliptic $(\frac{n-1}{2})$-spaces. This forces $W$ to be a hyperbolic space (of type $\epsilon=+$) which is a contradiction, so we conclude that $x$ is a derangement. Thus $G$ is not almost elusive. 
\end{proof}

The last three propositions deal with the cases in which $H$ is the stabiliser of a decomposition $V=U\perp W$ of the natural module, where $W$ is a non-degenerate $2$-dimensional space of type $\epsilon\in\{+,-\}$.

\begin{prop}
Theorem \ref{t:mainthrm} holds for cases ${\rm O5}$ and ${\rm O6}$ in Table \ref{tab:pi1}.
\end{prop}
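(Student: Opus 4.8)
The plan is to show that in both cases $G_0$ contains derangements of at least two distinct prime orders, which at once rules out $G$ being almost elusive. Recall the common set-up from the preamble: $G_0={\rm P}\Omega^+_n(q)$ with $n\equiv 0\imod 4$ and $n\geqs 8$, $q=p^f$ odd, and $H$ is the stabiliser of an orthogonal decomposition $V=U\perp W$ in which $U$ is a non-degenerate $2$-space and $W$ a non-degenerate $(n-2)$-space, both of type $+$ (case O5, with $i=n-2$) or both of type $-$ (case O6, with $i=(n-2)/2$). By Proposition \ref{p:aereduc}(ii) every element of order $r_i$ in $G_0$ is already a derangement, so it is enough to exhibit one further derangement of prime order $\neq r_i$; by Proposition \ref{p:smalldim} I may assume $q\geqs 5$ when $n=8$. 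The key observation, valid for both values of the sign, is that an element $y=\hat yZ\in H_0$ of order $p$ must act trivially on $U$, since $\Omega^{\pm}_2(q)$ has no element of order $p$ when $q$ is odd; hence $U\subseteq C_V(\hat y)$ and the Jordan form of $y$ on $V$ carries at least two Jordan $1$-blocks.

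For $n\geqs 12$ I would take $x\in G_0$ with Jordan form $[J_3^4,J_2^{(n-12)/2}]$ on $V$; this is a genuine unipotent class of order $p$ in $\Omega^+_n(q)$ (the multiplicities are consistent with Lemma \ref{l:osjordan} because $(n-12)/2$ is even, and the Witt type comes out $+$, exactly as in the analogous case XV treated in Proposition \ref{p:nd1}). Since $x$ has no Jordan $1$-block, it cannot be $\Aut(G_0)$-conjugate to any element of $H_0$ by the observation above, so $x$ is a derangement of order $p\neq r_i$ and $G$ is not almost elusive.

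For $n=8$ a unipotent class of the required shape is unavailable when $p=3$, so I would instead use a semisimple element. Let $s$ be a primitive prime divisor of $q^4-1$, which exists by Zsigmondy's theorem; then $s$ is odd, $s\neq p$, and $s\neq r_i$ because $\gcd(q^4-1,q^6-1)=q^2-1$ in case O5 and $\gcd(q^4-1,q^3-1)=q-1$ in case O6, neither being divisible by $r_i$. Take $x=\hat xZ\in G_0$ of order $s$ with $\hat x=[\Lambda^2]$, where $\Lambda$ is a $4$-dimensional elliptic block on which $\hat x$ acts irreducibly; then $C_V(\hat x)=0$ and the underlying space is ${\rm O}^-_4(q)\perp{\rm O}^-_4(q)$, of type $+$, so $x$ really is an element of ${\rm P}\Omega^+_8(q)$. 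If some $y=\hat yZ\in H_0$ had order $s$, then $y$ would fix $U$ (it cannot interchange $U$ and $W$, as $\dim U\neq\dim W$), so $\hat y|_U\in{\rm O}^{\epsilon}_2(q)$ would have order dividing the prime $s$; but $|{\rm O}^{+}_2(q)|=2(q-1)$ and $|{\rm O}^{-}_2(q)|=2(q+1)$ both divide $2(q^2-1)$, which is coprime to $s$, forcing $\hat y|_U=\mathrm{id}$ and hence $\dim C_V(\hat y)\geqs 2>0=\dim C_V(\hat x)$. Thus $y$ is not conjugate to $x$, so $x$ is a derangement of order $s\neq r_i$, and again $G$ is not almost elusive.

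Assembling the two cases proves Theorem \ref{t:mainthrm} for O5 and O6. The only non-routine point I foresee is confirming that the prescribed unipotent class $[J_3^4,J_2^{(n-12)/2}]$ and the semisimple class $[\Lambda^2]$ genuinely occur inside $\Omega^+_n(q)$ (correct Witt defect and trivial spinor norm) rather than only inside ${\rm SO}^+_n(q)$ or ${\rm O}^+_n(q)$; I expect this to present no real difficulty, as it is of the same nature as the class bookkeeping already carried out for case XV in Proposition \ref{p:nd1} and is covered by the tables in \cite[Chapter 3]{BG_book}.
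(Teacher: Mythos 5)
There is a genuine gap: you have imported the hypothesis that $q$ is odd, but cases O5 and O6 carry no such restriction. The preamble stating ``$q=p^f$ is odd'' applies to the stabilisers of decompositions $V=U\perp W$ with $\dim W=n-1$ (Propositions \ref{p:nd1}--\ref{p:nd3}), not to the $2$-space stabilisers, and Table \ref{tab:pi1} imposes no parity condition in cases O5 and O6. Even values of $q$ genuinely occur here: for instance $G_0={\rm P}\Omega^{+}_{12}(4)$ with $H$ of type ${\rm O}^{+}_2(4)\perp{\rm O}^{+}_{10}(4)$ is a case of O5 in which $P_4^{10}=\{41\}$. For such $q$ your $n\geqs 12$ argument collapses twice over: in characteristic $2$ the only unipotent Jordan forms are $[J_2^s,J_1^{n-2s}]$, so the class $[J_3^4,J_2^{(n-12)/2}]$ does not exist; and your key observation that an order-$p$ element of $H_0$ acts trivially on $U$ also fails, since ${\rm O}^{\epsilon}_2(q)\cong D_{2(q-\epsilon)}$ contains involutions when $q$ is even. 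So the even-characteristic, $n\geqs 12$ subcase is left unproved.

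The repair is already contained in your own treatment of $n=8$, and it is exactly what the paper does uniformly for all $n$: take $s$ to be a primitive prime divisor of $q^{n/2}-1$ (which exists for $(n,q)\neq(12,2)$, that case being absorbed by Proposition \ref{p:smalldim}) and let $x=\hat{x}Z\in G_0$ with $\hat{x}=[\Lambda^2]$, where $\Lambda$ is an elliptic $n/2$-space on which $\hat{x}$ acts irreducibly. Since $C_V(\hat{x})=0$ and every $\hat{x}$-invariant subspace is a sum of the two irreducible $n/2$-dimensional summands, $x$ fixes no $2$-space and is a derangement of order $s\neq r_i$, valid in every characteristic. Your unipotent detour for $n\geqs 12$ and your careful separation of the $n=8$ case are therefore unnecessary as well as incomplete; replacing both by this single semisimple argument closes the gap.
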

\begin{proof}
Here $G_0={\rm P}\Omega_n^+(q)$, $n\equiv 0\imod 4$ and $H$ stabilises a non-degenerate $2$-space of type $\epsilon\in\{+,-\}$. Let $s$ be a primitive prime divisor $q^{n/2}-1$ and note that by Proposition \ref{p:smalldim} we assume $(n,q)\neq(12,2)$, so $s$ always exists. Take $x=\hat{x}Z\in G_0$ to be an element of order $s$ such that $\hat{x}=[\Lambda^2]$. Then $x$ does not fix a 2-space, so $x$ is a derangement. Thus $G_0$ contains derangements of order $r_i$ and order $s$, so the result follows.
\end{proof}

\begin{prop}
Theorem \ref{t:mainthrm} holds for cases ${\rm O7}$ and ${\rm O15}$ in Table \ref{tab:pi1}.
\end{prop}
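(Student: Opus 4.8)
The plan is to produce, in each of these cases, a semisimple element of $G_0$ of prime order $s\neq r_i$ that is a derangement; since every element of $G_0$ of order $r_i$ is already a derangement by Proposition~\ref{p:aereduc}, this yields at least two conjugacy classes of prime order derangements, so $G$ is not almost elusive. Throughout, $G_0={\rm P}\Omega^{\epsilon}_n(q)$ with $n\equiv 2\imod 4$ (so $n/2$ is odd), and $H$ is the stabiliser of an orthogonal decomposition $V=U\perp U^{\perp}$ with $\dim U=2$: in case {\rm O7}, $\epsilon=+$ and $U$ is elliptic (type $-$), while in case {\rm O15}, $\epsilon=-$ and $U$ is hyperbolic (type $+$). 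As $\dim U^{\perp}=n-2\neq 2$ (recall $n\geqslant 10$), an element of $G$ lies in a conjugate of $H$ if and only if it fixes some non-degenerate $2$-space of the same type as $U$.

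The key step I would use is the following. Let $x=\hat{x}Z\in G_0$ be semisimple of prime order $s\in P_q^j$, and suppose $x$ fixes a non-degenerate $2$-space $W$. Then $\hat{x}|_W$ lies in ${\rm O}^{\pm}_2(q)$, of order $2(q\mp 1)$; as soon as $s\nmid q\mp 1$ --- which holds here because $s\in P_q^j$ with $j\geqslant 3$, so $s$ is odd and does not divide $q^2-1$ by Lemma~\ref{l:Lemma A.1} --- we get $\hat{x}|_W=1$, i.e.\ $W\leqslant C_V(\hat{x})$. Now $C_V(\hat{x})$ is non-degenerate, being the $1$-eigenspace of a semisimple element, and its Witt type is determined by $\mathrm{type}(V)=\mathrm{type}(C_V(\hat{x}))\cdot\prod_B\mathrm{type}(B)$, the product running over the irreducible blocks $B$ of $\hat{x}$, each of which is even-dimensional and hence elliptic. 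So it suffices to choose $\hat{x}$ with $\dim C_V(\hat{x})\leqslant 2$ and, when this is exactly $2$, with $C_V(\hat{x})$ of type \emph{opposite} to $U$: a hyperbolic plane contains no elliptic $2$-subspace and vice versa, so then $x$ fixes no non-degenerate $2$-space of the type of $U$ and is a derangement.

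Concretely: in case {\rm O7} I would take $s\in P_q^{n-2}$ --- nonempty since $n-2\geqslant 8$ and $(n-2,q)\neq(6,2)$ --- and $\hat{x}=[\Lambda,I_2]$ with $\Lambda$ an irreducible elliptic $(n-2)$-block; then $C_V(\hat{x})$ is a $2$-space of type $(+)\cdot(-)=(+)$, hyperbolic, opposite to the elliptic $U$, and $s\neq r_{n/2}$ as $n-2\neq n/2$. In case {\rm O15} with $(n,q)\neq(14,2)$ I would take $s\in P_q^{n/2-1}$ --- nonempty since $n/2-1\geqslant 4$ and $(n/2-1,q)\neq(6,2)$ --- and $\hat{x}=[\Lambda^2]$ with $\Lambda$ an irreducible elliptic $(n/2-1)$-block; then $C_V(\hat{x})$ is a $2$-space of type $(-)\cdot(-)^2=(-)$, elliptic, opposite to the hyperbolic $U$, and $s\neq r_n$. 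For the remaining pair $(n,q)=(14,2)$ of {\rm O15} (not covered by Proposition~\ref{p:smalldim}, and where $P_2^6=\emptyset$) I would instead take $s=3\in P_2^2$ and $\hat{x}=[\Lambda^7]$ with $\Lambda$ an elliptic $2$-block, which lies in $\Omega^-_{14}(2)$ since $(-)^7=(-)=\mathrm{type}(V)$; here $C_V(\hat{x})=0$, so $x$ fixes no non-degenerate subspace at all and is a derangement of order $3\neq r_{14}$.

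I expect the main obstacle to be the bookkeeping of Witt types: it is precisely the hypothesis $n\equiv 2\imod 4$, i.e.\ $n/2$ odd, that forces the parities so that $C_V(\hat{x})$ comes out with the required opposite type, and one must also check that each chosen $\hat{x}$ genuinely lies in $\Omega^{\epsilon}_n(q)$ (automatic for odd order elements, which have trivial spinor norm). The small exceptional pair $(n,q)=(14,2)$ for {\rm O15}, where Zsigmondy fails for the natural choice of $j$, is the other point requiring separate care.
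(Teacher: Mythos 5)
Your general framework is sound, and for case O15 your element is essentially the one the paper uses (the paper phrases the argument through the elliptic $(n-2)$-space $U^{\perp}$ rather than the $2$-space $U$, but by taking perps these are equivalent). However, your treatment of case O7 contains a fatal Witt-type error. With $G_0={\rm P}\Omega^+_n(q)$ and $\hat{x}=[\Lambda,I_2]$, where $\Lambda$ is an irreducible \emph{elliptic} $(n-2)$-block, the type of $C_V(\hat{x})$ is determined by $\mathrm{type}(V)=\mathrm{type}(\Lambda\mbox{-block})\cdot\mathrm{type}(C_V(\hat{x}))$, i.e. $(+)=(-)\cdot t$, so $t=(-)$: in the group $\{+,-\}$ one has $(+)\cdot(-)=(-)$, not $(+)$ as you wrote (a hyperbolic space is the orthogonal sum of two elliptic ones, not of an elliptic and a hyperbolic one). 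Hence $C_V(\hat{x})$ is an \emph{elliptic} $2$-space, which is exactly the type of $U$ in case O7, so your $x$ stabilises a conjugate of $U$ and is \emph{not} a derangement. Worse, every element of order $s\in P_q^{n-2}$ in $G_0$ has this shape, so no element of that order can serve and the choice $j=n-2$ is unsalvageable.

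The repair is to use in case O7 the same element you used in case O15: take $s\in P_q^{(n-2)/2}$ (note $(n-2)/2=n/2-1$ is even since $n\equiv 2\imod 4$) and $\hat{x}=[\Lambda^2,I_2]$; then $\mathrm{type}(C_V(\hat{x}))=(+)\cdot(-)\cdot(-)=(+)$ is hyperbolic, opposite to the elliptic $U$, and $s\neq r_{n/2}$. This is what the paper does, uniformly in both cases. With this choice $(n,q)=(14,2)$ becomes exceptional for O7 as well, since $P_2^6=\emptyset$; the paper handles it by showing there are $9$ distinct ${\rm PGO}^+_{14}(2)$-classes of derangements of order $r_7=127$. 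Finally, a small point in your $(14,2)$ sub-case of O15: the element $[\Lambda^7]$ certainly fixes non-degenerate subspaces (each elliptic $2$-block, for instance), so the claim that it ``fixes no non-degenerate subspace at all'' is false, and your key step does not apply there since $3\mid q^2-1$; the correct reason it is a derangement is that ${\rm O}^+_2(2)$ has order $2$, so an element of order $3$ stabilising a hyperbolic $2$-space would have to centralise it, contradicting $C_V(\hat{x})=0$.
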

\begin{proof}
Here $G_0={\rm P}\Omega^{\pm}_n(q)$ with $n\equiv 2\imod 4$ and we note in both cases all elements of $H$ stabilise an $(n-2)$-dimensional non-degenerate elliptic (of type $\epsilon=-$) space. For the case when $(n,q)\neq (14,2)$, we refer the reader to the final paragraph in the proof of Proposition \ref{p:nd3} since the proof here is similar.

Now assume $(n,q)=(14,2)$. Take $(G_0,H,i)$ to be as in case ${\rm O7}$ (respectively case ${\rm O15}$), then any element in $G_0$ of order $r_7=127$ (resp. $r_{14}=43$) is a derangement. The elements $x=\hat{x}Z\in G_0$ of order $r_7$ (resp. $r_{14}$) have the form $\hat{x}=[(\Lambda,\Lambda^{-1})]$ (resp. $[\Lambda]$). Therefore by \cite[Proposition 3.5.4]{BG_book} there are 9 (resp. 3) distinct $\bar{G}={\rm PGO}^+_{14}(2)$ (resp. ${\rm PGO}^-_{14}(2)$)-classes of derangements of order $r_{7}$ (resp. $r_{14}$) in $G_0$. Therefore since $\Aut(G_0)=\bar{G}$ we conclude that $G$ is not almost elusive.
\end{proof}

\begin{prop}
Theorem \ref{t:mainthrm} holds for cases {\rm O19} in Table \ref{tab:pi1}.
\end{prop}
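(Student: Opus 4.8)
\end{prop}
\begin{proof}
Here $G_0=\Omega_n(q)$ with $q=p^f$ odd and $n\equiv 1\imod 4$; since membership of $\mathcal{G}$ forces $n\geqs 7$, we in fact have $n\geqs 9$. The point stabiliser $H$ is of type ${\rm O}^{\mu}_2(q)\perp{\rm O}_{n-2}(q)$ for some $\mu\in\{+,-\}$, so $H$ is the stabiliser in $G$ of an orthogonal decomposition $V=U\perp W$ of the natural module with $\dim U=2$ and $W=U^{\perp}$; as $\dim U\neq\dim W$ the subgroup $H$ fixes each of $U$ and $W$. By Proposition \ref{p:aereduc} we may assume that there is a unique primitive prime divisor $r_{n-1}$ of $q^{n-1}-1$ and that every element of order $r_{n-1}$ in $G_0$ is a derangement. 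The plan is to exhibit a derangement of order $p$; since $r_{n-1}$ is a primitive prime divisor of $q^{n-1}-1$ we have $r_{n-1}\neq p$, so this will show that $G$ is not almost elusive.

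First we note that $|{\rm O}^{\epsilon}_2(q)|=2(q-\epsilon)$ is indivisible by $p$ for $\epsilon=\pm 1$, so ${\rm O}^{\epsilon}_2(q)$ has no nontrivial element of order $p$. Hence if $x=\hat{x}Z\in H_0$ has order $p$, then $\hat{x}$ preserves $V=U\perp W$ and acts trivially on $U$, so the Jordan form of $\hat{x}$ on $V$ contains at least two Jordan blocks $J_1$. It follows that any element of $G_0$ of order $p$ whose Jordan form on $V$ contains no Jordan block $J_1$ is a derangement.

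It remains to construct such an element, and this is where the only (mild) care is needed. Using $n\equiv 1\imod 4$ and $n\geqs 9$, the integers $(n-5)/2$ and $(n-9)/2$ are non-negative and even; hence by Lemma \ref{l:osjordan} and the description of unipotent classes in \cite[Chapter 3]{BG_book}, the group $G_0$ contains an element $y$ of order $p$ with Jordan form $[J_5,J_2^{(n-5)/2}]$ when $p\geqs 5$ and $[J_3^3,J_2^{(n-9)/2}]$ when $p=3$ (so, for example, $[J_5,J_2^{2}]$ or $[J_3^{3}]$ when $n=9$). The shapes are chosen so that every even-index block multiplicity is even, as Lemma \ref{l:osjordan} requires; in either case $y$ has no Jordan block $J_1$, so $y$ is a derangement by the previous paragraph. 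Therefore $G_0$ contains derangements of the distinct primes $p$ and $r_{n-1}$, whence $G$ is not almost elusive.
\end{proof}
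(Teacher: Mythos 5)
Your proof is correct, but it takes a genuinely different route from the paper. The paper produces a \emph{second semisimple} derangement: it takes $s$ a primitive prime divisor of $q^{(n-1)/2}-1$ (which always exists here since $q$ is odd and $(n-1)/2\geqs 4$) and the element $\hat{x}=[\Lambda^2,I_1]$ of order $s$, whose invariant subspaces all have dimension congruent to $0$ or $1$ modulo $(n-1)/2$, so it fixes no non-degenerate $2$-space; together with the elements of order $r_{n-1}$ this gives derangements of two distinct prime orders. You instead produce a \emph{unipotent} derangement, observing that $p\nmid|{\rm O}_2^{\mu}(q)|=2(q-\mu)$ forces every $p$-element of $H_0$ to act trivially on $U$ and hence to carry at least two $J_1$-blocks on $V$, and then exhibiting admissible Jordan forms $[J_5,J_2^{(n-5)/2}]$ (for $p\geqs 5$) and $[J_3^3,J_2^{(n-9)/2}]$ (for $p=3$) with no $J_1$-block; your parity checks against Lemma \ref{l:osjordan} are right, and since $n$ is odd and $p$ is odd such unipotent classes exist in $\Omega_n(q)$ (any odd-order element of ${\rm SO}_n(q)$ lies in $\Omega_n(q)$). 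Your argument is essentially the strategy the paper uses for the neighbouring non-degenerate hyperplane cases (Propositions \ref{p:nd1}--\ref{p:nd3}), transplanted to the $2$-space stabiliser. What each buys: the paper's argument is one line and uniform in $p$, with no case division; yours avoids invoking a further primitive prime divisor at the cost of splitting on $p$ and appealing to the existence of specific unipotent classes. Both are complete.
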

\begin{proof}
Here $G_0=\Omega_n(q)$ with $n\equiv 1\imod 4$ and $H$ is of type ${\rm O}_2^{\epsilon}(q)\perp{\rm O}_{n-2}(q)$ with $\epsilon\in\{+,-\}$. Recall that any element in $G_0$ of order $r_{n-1}$ is a derangement, where $r_{n-1}$ is the unique primitive prime divisor of $q^{n-1}-1$. Let $s$ be a primitive prime divisor of $q^{(n-1)/2}-1$ and take $x=\hat{x}Z\in G_0$ to be an element of order $s$ such that $\hat{x}=[\Lambda^2,I_1]$. Then $x$ does not fix a non-degenerate 2-space of type $\epsilon$, so $G_0$ contains derangements of order $r_{n-1}$ and $s$, implying that $G$ is not almost elusive. 
\end{proof}

This completes the proof of Theorem \ref{t:mainthrm} for the subspace subgroups. In particular, in view of Propositions \ref{p:smalldim} and \ref{p:allnonsub}, this completes the proof of Theorem \ref{t:mainthrm} entirely.


\begin{thebibliography}{999}
\bibitem{AG} M. Arezoomand and M. Ghasemi,  \emph{On 2-closed elusive permutation groups of degrees $p^2q$ and $p^2qr$}. Comm. Algebra, \textbf{49} (2021), no. 2, 614–620.

\bibitem{asch} M. Aschbacher, \emph{On the maximal subgroups of the finite classical groups}. Invent. Math.\textbf{ 76} (1984) 469–514.

\bibitem{Numthry} M. W. Baldoni, C. Ciliberto and G. M. Piacentini Cattaneo, \emph{Elementary Number Theory, Cryptography and Codes.} D. A. Gewurz (trans.), Springer-Verlag, Berlin ,(2009).

\bibitem{BL}
M. A. Bennett and A. Levin, \emph{The Nagell-Ljunggren equation via Runge's method}, Monatsh. Math., \textbf{177} (2015), no. 1, 15–31.

\bibitem{Mag}
W. Bosma, J. Cannon and C. Playoust, \emph{The Magma algebra system I: The user language}, J. Symb. Comput. \textbf{24} (1997), 235–265.

\bibitem{BHR}
J. N. Bray, D. F. Holt and C. M. Roney-Dougal, \emph{The Maximal Subgroups of the Low-Dimensional Finite Classical Groups} London Mathematical Society Lecture Note Series,
vol. 407, Cambridge University Press, Cambridge, 2013.

\bibitem{BG_book} 
T. C. Burness and M. Giudici, \emph{Classical groups, derangements and primes}, Aust. Math. Soc. Lecture Series, vol. 25, Cambridge University Press, 2016. 

\bibitem{BHall}
T. C. Burness and E. V. Hall, \emph{Almost elusive permutation groups}, J. Algebra \textbf{594} (2022), 519–543.

\bibitem{BTV1} 
T. C. Burness and H. P. Tong-Viet, \emph{Derangements in primitive permutation groups, with an application to character theory}, Quart. J. Math. \textbf{66} (2015), 63--96.

\bibitem{BTV}
T. C. Burness and H. P. Tong-Viet, \emph{Primitive permutation groups and derangements of prime power order}, Manuscripta Math. \textbf{105} (2016), 255--291.

\bibitem{CGJKKMN} 
P. J. Cameron, M. Giudici, G. A. Jones, W. M. Kantor, M. H. Klin, D. Maru\v{s}i\v{c} and L. A. Nowitz, \emph{Transitive permutation groups without semiregular subgroups}, J. London Math. Soc. \textbf{66} (2002), 325--333.

\bibitem{FKS} 
B. Fein, W. M. Kantor and M. Schacher, \emph{Relative Brauer groups II}, J. Reine Agnew. Math. \textbf{328} (1981), 39--57.

\bibitem{G}
M. Giudici, \emph{Quasiprimitive groups with no fixed point free elements of prime order}, J. Lond. Math. Soc. \textbf{67} (2003), 73--84.

\bibitem{GK}
M. Giudici and S. Kelly, \emph{Characterizing a family of elusive permutation groups}, J. Group Theory \textbf{12} (2009), 95--105.

\bibitem{GMPV}
M. Giudici, L. Morgan, P. Potočnik and G. Verret, \emph{Elusive groups of automorphisms of digraphs of small valency}, European J. Combin. \textbf{46} (2015), 1--9.

\bibitem{GLS} D. Gorenstein, R. Lyons and R. Solomon, \emph{The Classification of the Finite Simple Groups. Number 3}. Mathematical Surveys and Monographs, vol. 40. American Mathematical Society, Providence, RI, 1998.

\bibitem{Gur}
R. M. Guralnick, \emph{Conjugacy classes of derangements in finite transitive groups}, Proc. Steklov Inst. Math. \textbf{292} (2016), 112--117.

\bibitem{GPPS}
R. M. Guralnick, T. Penttila, C. Praeger, J. Saxl, \emph{Linear groups with orders having certain large prime divisors}, Proc. Lond. Math. Soc. \textbf{78} (1999) 167–214.

\bibitem{Jordan} 
C. Jordan, \emph{Recherches sur les substitutions}, J. Math. Pures Appl. (Liouville) \textbf{17} (1872), 351--367.

\bibitem{Klei}
P. B. Kleidman, \emph{The maximal subgroups of the finite 8-dimensional orthogonal groups ${\rm P}\Omega^+_8(q)$ and of their automorphism groups}, J. Algebra \textbf{110} (1) (1987) 173–242

\bibitem{KL}
P. B. Kleidman and M. W. Liebeck, \emph{The subgroup structure of the finite classical groups}, LMS Lecture Note Series, vol. 129, Cambridge University Press, 1990.

\bibitem{LPS}
M. W. Liebeck, C. E. Praeger, J. Saxl, \emph{Transitive subgroups of primitive permutation groups}, J. Algebra \textbf{234} (2000), 291--361.

\bibitem{Mih} P. Mih\u{a}ilescu, \emph{New bounds and conditions for the equation of Nagell-Ljunggren}, J. Number Theory \textbf{124} (2007), 380--395. 

\bibitem{Nag} T. Nagell, \emph{Des \'{e}quations ind\'{e}termin\'{e}es $x^2+x+1=y^n$ et $x^2+x+1=3y^n$}, Nordsk. Mat. Forenings Skr. \textbf{2} (1920), 12--14.

\bibitem{P93}
C. E. Praeger, \emph{An O'Nan‐Scott theorem for finite quasiprimitive permutation groups and an application to $2$‐arc transitive graphs}, J. London Math. Soc. \textbf{47} (1993), 227--239.

\bibitem{serre} 
J. -P. Serre, \emph{On a theorem of Jordan}, Bull. Amer. Math. Soc. \textbf{40} (2003), 429--440.

\bibitem{Stein}
R. Steinberg, \emph{Lectures on Chevalley Groups}, Department of Mathematics, Yale University (1968). 

\bibitem{WebAt}
R. A. Wilson et~al., \emph{A {W}orld-{W}ide-{W}eb {A}tlas of finite group representations}, {\texttt{http://brauer.maths.qmul.ac.uk/Atlas/v3/}}.

\bibitem{Xu}
 J. Xu, \emph{On elusive permutation groups of square-free degree}, Comm. Algebra \textbf{37} (2009), 3200--3206.

\bibitem{ZLT}
H. Zhu, M. Le and A. Togb\'e, \emph{On the exponential Diophantine equation $x^2+p^{2m}=2y^n$}, Bull. Aust. Math. Soc. \textbf{86} (2012), 303--314.

\bibitem{Zsig}
K. Zsigmondy, \emph{Zur Theorie der Potenzreste}, Monatsh. F\"{u}r Math. u. Phys. \textbf{3} (1892) 265--284.

\end{thebibliography}
\end{document}